\documentclass{article}
\usepackage{arxiv}
\usepackage[utf8]{inputenc}
\usepackage[T1]{fontenc}
\usepackage{url}
\usepackage{mathptmx}
\usepackage{amsmath}
\usepackage{amsthm}
\usepackage{amssymb}
\usepackage{tikz}
\usepackage{float}
\newcommand{\llbracket}{[\mkern-3.5mu[}
\newcommand{\rrbracket}{]\mkern-3.5mu]}
\usepackage[hidelinks]{hyperref}

\newtheoremstyle{thm-break}{10pt}{3pt}{\normalfont}{}{\bfseries}{}{\newline}{\thmname{#1}\thmnumber{ #2}\thmnote{ (#3)}\par\vspace{0.5\baselineskip}}
\newtheoremstyle{remark-example-space}{10pt}{3pt}{\normalfont}{}{\bfseries}{.}{0.5em}{}
\theoremstyle{thm-break}
\newtheorem{theorem}{Theorem}[section]
\theoremstyle{remark-example-space}
\newtheorem{remark}[theorem]{Remark}
\theoremstyle{thm-break}
\newtheorem{definition}[theorem]{Definition}
\theoremstyle{thm-break}
\newtheorem{proposition}[theorem]{Proposition}
\theoremstyle{thm-break}
\newtheorem{lemma}[theorem]{Lemma}
\theoremstyle{thm-break}
\newtheorem{corollary}[theorem]{Corollary}
\theoremstyle{remark-example-space}
\newtheorem{example}[theorem]{Example}
\theoremstyle{remark-example-space}
\newtheorem{convention}[theorem]{Convention}

\newcounter{casei}
\newlength{\caselabelwidth}
\newenvironment{caselist}{%
  \begin{list}{(\roman{casei})}{%
    \usecounter{casei}%
    \setlength{\labelwidth}{\caselabelwidth}%
    \setlength{\labelsep}{0.4em}%
    \setlength{\leftmargin}{1.5em}%
    \setlength{\itemsep}{0.35\baselineskip}%
    \setlength{\topsep}{0.35\baselineskip}%
    \setlength{\parsep}{0pt}%
    \setlength{\partopsep}{0pt}%
  }%
}{\end{list}}
\providecommand{\keywords}[1]{}
\renewcommand{\keywords}[1]{%
\begin{center}
\small\textbf{Keywords:} #1
\end{center}
}
\renewenvironment{proof}[1][Proof]{\par\noindent\textbf{#1. }\normalfont}{\hfill$\square$\par}
\newcommand{\doi}[1]{\href{https://doi.org/#1}{\nolinkurl{https://doi.org/#1}}}
\newcommand{\MMLP}{\ifmmode\text{\textsc{MMLP}}\else\textsc{MMLP}\fi}
\newcommand{\MProlog}{\ifmmode\text{\textsc{MProlog}}\else\textsc{MProlog}\fi}
\newcommand{\KSys}{\ifmmode\mathrm{KDI4s5}\else KDI4s5\fi}
\newcommand{\Kmod}{\mathcal K}
\newcommand{\Amod}{\mathcal A}
\newcommand{\Idx}{I}
\newcommand{\Frames}{\mathfrak F}
\newcommand{\NNF}{\mathsf{nnf}}
\newcommand{\Corr}{\operatorname{Corr}}
\newcommand{\Comp}{\operatorname{Comp}}
\newcommand{\Inst}{\operatorname{Inst}}
\newcommand{\Flex}{\operatorname{Flex}}
\newcommand{\FV}{\operatorname{FV}}
\newcommand{\dom}{\operatorname{dom}}
\newcommand{\leadstoA}[1]{\mathrel{\leadsto_{#1}^{\Amod}}}
\newcommand{\eps}{\epsilon}
\newcommand{\sem}[1]{\llbracket #1\rrbracket}

\title{Multimodal Logic Programming with Full Formulas}
\author{%
\href{https://orcid.org/0000-0001-8878-9824}{Kenji Tokuo}\\
{\normalfont Department of Information Engineering, Oita College}\\
{\normalfont National Institute of Technology}\\
{\normalfont Oita 870-0152, Japan}\\
{\normalfont \texttt{tokuo@oita-ct.ac.jp}}%
}
\date{}
\renewcommand{\undertitle}{A preprint}

\begin{document}
\maketitle

\begin{abstract}
This paper presents a first-order multimodal logic programming system called \MMLP{}. The system accepts arbitrary formulas as both programs and queries, without restricting either side to Horn clauses or a separate goal grammar. Its declarative semantics is given independently by a Hilbert system for selected \(\mathrm{D}\), \(\mathrm{T}\), \(\mathrm{I}\), \(\mathrm{B}\), \(4\), and \(5\) modal principles. Execution uses a nested proof calculus with finite grammar certificates for modal propagation. Certificate reachability is equivalent to the associated Horn closure, certificate existence is decidable, and the calculus is cut-free complete. For quantified answer computation, we give a unification algorithm based on permission sets that controls eigenparameter scope. The algorithm always terminates and fails exactly when no admissible solution exists. A successful run returns a unifier that is itself admissible and through which all admissible solutions factor. Computed answers are declaratively correct, and each declaratively correct answer is an ordinary output instance of a computed answer. In proof search, \MMLP{} admits syntactic focalization and a fair and complete enumeration of focused answers. It represents exactly the same answer substitutions as Nguyen's pure logical \(\KSys\)-\MProlog{} on their common fragment, while admitting a strictly larger program and query language.
\end{abstract}

\keywords{logic programming, multimodal logic, first-order logic, computed answers, proof search, unification, nested sequents}

\section{Introduction}\label{sec:introduction}

\subsection{Motivation}\label{sec:motivation}

Strong syntactic restrictions on programs and goals contribute substantially to the operational clarity of classical logic programming. Definite programs use Horn clauses, while work on hereditary Harrop formulas and uniform proofs extends the language through controlled grammars for programs and goals~\cite{nadathur1993harrop,miller1991uniform,miller2022survey}. Classical goal-directed proof theory also studies fragments with disjunctive assumptions and answer substitutions~\cite{harland1997goal,nadathur1998uniform}. These restrictions permit structured proof search and yield familiar accounts of computed answers. They also exclude many formulas that are natural in a modal specification.

This paper adopts a different design choice. A program formula and a query formula may be any formula of a fixed first-order multimodal language. Boolean connectives, first-order quantifiers, and indexed modalities may occur at arbitrary depth. Thus a query may have an outer disjunction, implication may occur inside a program formula, and modalities may occur inside quantified contexts without a preliminary translation into a Horn clause language. We use the phrase \emph{full formulas} for this absence of a designated clause or goal fragment. The phrase is relative to the source language fixed in Section~\ref{sec:language}.

This change creates two proof-theoretic obligations. First, the modal proof theory must admit direct proof search over formulas without placing semantic accessibility atoms in the runtime syntax. Second, symbolic execution must preserve computed answer information across quantifiers and modal branches. Ordinary unification is insufficient for the second obligation because a proof metavariable created before an eigenparameter may not later acquire that eigenparameter through substitution.

We call the system \MMLP{}, for multimodal logic programming. Its proof theory addresses both requirements through a certificate calculus and scoped unification. The certificates are checked syntactically, and soundness and completeness of the calculus are established using Kripke semantics.

\subsection{Separation of declarative specification from execution}\label{sec:layers}

The declarative specification is independent of the execution calculus. A finite modal module $\Amod$ selects indexed schemes from the families $\mathrm{D}$, $\mathrm{T}$, $\mathrm{I}$, $\mathrm{B}$, $4$, and $5$. Throughout the paper, the first-order signature $\Sigma$ is fixed, and all worlds in each model share a single domain. We therefore suppress these fixed parameters from the notation for the proof systems. The symbol $H_{\Amod}$ denotes the Hilbert system, $\mathcal N_{\Amod}$ denotes the ordinary certificate calculus, and $\mathcal F_{\Amod}$ denotes its focused form. The Hilbert system contains classical first-order logic, normal indexed modalities, principles for constant domains, and the selected modal schemes. A user answer substitution $\theta$ for a query $G$ is declaratively correct for a finite program $P$ when $H_{\Amod}$ proves the implication from the conjunction of the closed program formulas to the universal closure of $G\theta$.

The operational development consists of a certificate calculus together with symbolic and focused execution. In the certificate calculus, formulas are arranged in indexed nested brackets. Symbolic execution adds substitutions and unification constraints, while focused execution adds scheduling data. A modal propagation step is annotated with a finite signed walk through the nested tree and a finite grammar derivation certifying the required modal reachability. Seriality remains a primitive rule that generates successors. The other selected modal principles contribute grammar productions and optional derived structural transformations.

This separation keeps declarative correctness, certificate derivability, and symbolic answer computation as distinct notions whose relationships are established by the results below.

\subsection{Main results}\label{sec:main-results}

The first group of results concerns the modal kernel. The selected $\mathrm{T}$, $\mathrm{I}$, $\mathrm{B}$, $4$, and $5$ conditions generate both a Horn closure on the edges of a finite nested tree and a closed semi-Thue grammar. Seriality $\mathrm D$ instead remains a primitive rule because it requires the existence of a successor rather than closure of an existing relation. We prove a constructive equivalence between the two presentations. Every Horn consequence yields a signed walk with a grammar derivation, and every such certificate yields the corresponding Horn consequence. This equivalence gives semantic soundness for certificate propagation, persistence under tree extension, and a decision procedure that returns a witness certificate when one exists. A fair saturation construction then gives cut-free semantic completeness for the ordinary certificate calculus. Independent Hilbert arguments yield full equivalence among Hilbert theoremhood, certificate derivability, and semantic validity.

The second group concerns scoped symbolic execution. Each flexible variable has a fixed permission set specifying the eigenparameters that may occur in its eventual value. The scoped unification algorithm combines ordinary decomposition, occurs checking, binding, and permission restriction. The central unification theorem proves termination and characterizes failure by the absence of admissible solutions. When the algorithm succeeds, the returned unifier is admissible and all admissible solutions factor through it.

The third group concerns computed answers. Symbolic atomic closure contributes first-order equations, while existential rules introduce fresh proof metavariables. After every branch ends in a symbolic truth or atomic leaf, scoped unification is run on the accumulated terminal constraints. Scoped instantiation converts every successful symbolic computation into an ordinary certificate derivation. The converse direction uses proof abstraction and scoped lifting. These results show that computed answers are declaratively correct and that each declaratively correct answer is an ordinary output instance of a computed answer. For every successful symbolic computation, fair iterative deepening eventually reaches a canonical reference derivation with the same instance closure on the user answer variables. It follows that the instance closure of computed answers coincides with the set of declaratively correct answers.

The fourth group concerns focused execution. The focused calculus alternates maximal asynchronous phases with progressive synchronous focuses. The syntactic normalization argument uses height-preserving invertibility, deep formula weakening, eigenparameter regularization, component renaming, and certificate persistence. Every ordinary certificate derivation has a focused derivation of the same root. Focused lifting establishes the corresponding computed answer results for the focused symbolic calculus.

The final group concerns conservativity. Nguyen writes $\KSys$ for the normal multimodal logic obtained from $\mathrm K$ by adding the $\mathrm D$ and $\mathrm I$ families, the strong positive introspection schemes $4_s\colon\Box_iA\to\Box_j\Box_iA$, and $5$. His $\KSys$-\MProlog{} provides a concrete comparison system with selective linear definite clause (SLD) resolution and computed answers~\cite{nguyen2006multimodal}. We compare its connected frame normal form with the frame class determined by a finite \MMLP{} modal module. The resulting equivalence relates \MMLP{} implication validity to Nguyen's local consequence. Nguyen's soundness and completeness theorems then show that the computed answers of \MProlog{} and \MMLP{} have the same instance closure on the common fragment. The same closure is obtained by focused \MMLP{} execution.

An implementation of \MMLP{} is publicly available. Repository information is given in the \hyperref[sec:code-availability]{Code availability section}.

\subsection{Proof-theoretic context}\label{sec:proof-context}

The modal kernel is formulated using deep and nested sequents~\cite{brunnler2009deep,brunnler2010nested}. Modal reasoning with grammars also has a substantial history in proof theory and decision procedures. Demri and de Nivelle study regular grammar logics with converse through a first-order translation~\cite{demri2005regular}. Tiu, Ianovski, and Gor\'e develop nested calculi for multimodal grammar logics with converse and closed semi-Thue systems~\cite{tiu2012grammar}. Their propagation method uses signed paths through nested trees and reduces rule applicability to language intersection. Lyon extends reachability rules parameterized by grammars to first-order modal calculi~\cite{lyon2022firstorder}. Lyon and Orlandelli develop quantified nested systems~\cite{lyon2023quantified}, and their later work provides reachability calculi for Horn-characterizable quantified modal logics~\cite{lyon2026horn}. \MMLP{} uses these proof-theoretic methods to obtain explicit witnesses for indexed modal propagation.

Work on modal logic programming includes direct systems and methods based on translation. Early direct work includes MOLOG and a declarative semantics for modal logic programs~\cite{farinas1986molog,balbiani1988declarative}. Debart, Enjalbert, and Lescot translate multimodal programs into order-sorted equational logic~\cite{debart1992multimodal}, while Nonnengart develops an optimized translation that preserves Horn form when the source is Horn~\cite{nonnengart1994modalities}. Orgun and Ma survey the early temporal and modal logic programming literature~\cite{orgun1994overview}. Baldoni, Giordano, and Martelli develop multimodal program structures with goal-directed proof procedures and sequent calculi~\cite{baldoni1996framework,baldoni1998modal}. Nguyen gives fixpoint and SLD semantics for modal logic programs~\cite{nguyen2003fixpoint}, extends the framework to multimodal logics~\cite{nguyen2006multimodal}, and later develops modal deductive databases and MDatalog~\cite{nguyen2007databases}. \MMLP{} follows the direct proof-theoretic tradition while permitting arbitrary source formulas in program and query positions and using a separate certificate kernel.

The use of richer formulas in logic programming has an independent proof-theoretic history. Hereditary Harrop formulas and uniform proofs support controlled implications and quantification in goals and programs~\cite{nadathur1993harrop,miller1991uniform}. Harland studies goal-directed provability and disjunctive information in classical logic~\cite{harland1997goal}, and Nadathur gives a related analysis of uniform provability in classical logic~\cite{nadathur1998uniform}. Gabbay and Olivetti develop goal-directed proof theory for a range of nonclassical logics, including modal systems~\cite{gabbay2000goal}. Stone gives a sound and complete goal-directed proof system for first-order multimodal logic with disjunction, modular goals, and local assumptions~\cite{stone2005disjunction}. \MMLP{} admits any formula of the source language fixed in Section~\ref{sec:language} in program and query positions, with no designated clause or goal fragment. Miller's survey gives a broader account of the structural proof-theoretic foundations of logic programming~\cite{miller2022survey}.

Scope constraints in unification are likewise an established concern in the proof theory of logic programming. Miller studies unification under a mixed prefix~\cite{miller1992unification}. Nadathur, Jayaraman, and Kwon treat mixed universal and existential quantification operationally by attaching scope information to constants and variables and modifying unification accordingly~\cite{nadathur1995scoping}. \MMLP{} addresses this first-order dependency problem with scoped unification based on permission sets that remain fixed after introduction.

Andreoli introduced focused proofs for linear logic~\cite{andreoli1992focusing}, and later work developed focusing for linear, intuitionistic, and classical systems~\cite{liang2009focusing}. Chaudhuri, Pfenning, and Price relate focusing ideas to forward and backward chaining in the inverse method~\cite{chaudhuri2008logical}. Focused calculi have also been developed directly for nested modal proof systems. Chaudhuri, Marin, and Stra{\ss}burger develop focused and synthetic nested sequents for classical modal logics~\cite{chaudhuri2016focusednested} and modular focused nested systems for intuitionistic modal logics~\cite{chaudhuri2016modularfocused}. The focalization theorem for \MMLP{} is specific to its certificate calculus and propagation side conditions. It establishes equivalence between ordinary and focused certificate derivability. The phase discipline does not distinguish program occurrences from query occurrences, so the result is a focalization theorem rather than a characterization in terms of uniform proofs.

\subsection{Paper structure}\label{sec:paper-structure}

Section~\ref{sec:language} fixes the source language, Hilbert semantics, and declarative answers. Section~\ref{sec:certificates} presents finite grammar certificates and the exact certificate calculus. Section~\ref{sec:unification} gives scoped unification. Section~\ref{sec:symbolic} develops symbolic proofs and computed answers. Section~\ref{sec:focusing} proves focalization and focused answer completeness. Section~\ref{sec:conservativity} establishes the $\KSys$-\MProlog{} comparison. The appendices contain normalization results, detailed Hilbert certification, and the detailed structural soundness argument.

\section{Declarative framework}\label{sec:language}

The source language distinguishes the language presented to users from the negation normal form (NNF) language used by the proof calculus. This section fixes the source syntax and the independent Hilbert specification. The normalization facts required by the calculus appear in Appendix~\ref{app:normalization}.

\subsection{Source syntax}\label{sec:source-language}

Fix a countably infinite effectively enumerable set $\mathsf{Var}$ of object variables. Let $\Sigma$ be a countable effectively presented first-order signature with predicate, constant, and function symbols. Here effective presentation means that the symbols and their arities admit a computable enumeration and that equality of symbols is decidable. Terms have the grammar $t ::= x \mid c \mid f(t_1,\ldots,t_n)$, where $x\in\mathsf{Var}$. For a finite tuple of terms $t_1,\ldots,t_n$, write $\bar t=(t_1,\ldots,t_n)$. For a term $t$, write $\FV(t)$ for its set of object variables.
Let the finite nonempty set of modal indices be
$\Idx=\{1,\ldots,m\}.$

\begin{definition}[Source formulas]\label{def:source-formulas}
The \emph{source formulas} of \MMLP{} are generated by
\[
 A ::= p(t_1,\ldots,t_n) \mid \bot \mid \neg A \mid A\land A \mid A\lor A
 \mid A\to A \mid \forall xA \mid \exists xA \mid \Box_iA \mid \Diamond_iA.
\]
Here $i\in\Idx$. Equality is not included in the object language. An arbitrary source formula may occur as a program formula or as a query formula.
\end{definition}

For a source formula $B$, write $\FV(B)$ for its set of free object variables. For source formulas, $A\equiv_\alpha B$ means that $A$ and $B$ are alpha equivalent.

At the Hilbert level, $A\leftrightarrow B$ abbreviates $(A\to B)\land(B\to A)$ throughout.

The proof calculus uses the following NNF language. We give its syntax here because the certificate calculus introduced later in the paper uses this language. The proofs of the normalization facts remain in Appendix~\ref{app:normalization}.

\begin{definition}[Core NNF language]\label{def:core-nnf}
The \emph{core formulas} are
\[
 C ::= p(\bar t)\mid\overline p(\bar t)\mid\bot\mid\top
 \mid C\land C\mid C\lor C\mid\forall xC\mid\exists xC
 \mid\Box_iC\mid\Diamond_iC.
\]
The proof calculus writes $\overline p(\bar t)$ for the negative literal corresponding to $\neg p(\bar t)$ in Hilbert formulas, while $\top$ corresponds to the abbreviation $\neg\bot$. Whenever a core formula occurs in a Hilbert derivability judgment, it denotes the source formula obtained by replacing each $\overline p(\bar t)$ with $\neg p(\bar t)$ and each $\top$ with $\neg\bot$, while leaving all other constructors unchanged. Their semantic clauses are fixed together with the model semantics in Definition~\ref{def:core-semantics}.
\end{definition}

\begin{definition}[NNF translations]\label{def:nnf-translations}
The two \emph{NNF translations} are defined simultaneously by
\[
\begin{array}{ll}
\NNF^+(p(\bar t))=p(\bar t), & \NNF^-(p(\bar t))=\overline p(\bar t),\\
\NNF^+(\bot)=\bot, & \NNF^-(\bot)=\top,\\
\NNF^+(\neg A)=\NNF^-(A), & \NNF^-(\neg A)=\NNF^+(A),\\
\NNF^+(A\land B)=\NNF^+(A)\land\NNF^+(B), & \NNF^-(A\land B)=\NNF^-(A)\lor\NNF^-(B),\\
\NNF^+(A\lor B)=\NNF^+(A)\lor\NNF^+(B), & \NNF^-(A\lor B)=\NNF^-(A)\land\NNF^-(B),\\
\NNF^+(A\to B)=\NNF^-(A)\lor\NNF^+(B), & \NNF^-(A\to B)=\NNF^+(A)\land\NNF^-(B),\\
\NNF^+(\forall xA)=\forall x\NNF^+(A), & \NNF^-(\forall xA)=\exists x\NNF^-(A),\\
\NNF^+(\exists xA)=\exists x\NNF^+(A), & \NNF^-(\exists xA)=\forall x\NNF^-(A),\\
\NNF^+(\Box_iA)=\Box_i\NNF^+(A), & \NNF^-(\Box_iA)=\Diamond_i\NNF^-(A),\\
\NNF^+(\Diamond_iA)=\Diamond_i\NNF^+(A), & \NNF^-(\Diamond_iA)=\Box_i\NNF^-(A).
\end{array}
\]
\end{definition}

\subsection{Hilbert specification}\label{sec:hilbert-specification}

The target modal logic is specified independently of the operational calculus. A finite modal module selects modal principles, and a Hilbert system specifies theoremhood.

\begin{definition}[Selected modal module]\label{def:modal-module}
A \emph{selected modal module} $\Amod$ is a finite set of indexed modal schemes chosen from the following families. Selecting one displayed scheme means including all of its formula instances:
\[
\begin{array}{rcl}
\mathrm{D}_i&:&\Box_iA\to\Diamond_iA,\\
\mathrm{T}_i&:&\Box_iA\to A,\\
\mathrm I(i,j)&:&\Box_iA\to\Box_jA,\\
\mathrm B(i,j)&:&A\to\Box_i\Diamond_jA,\\
4(i,j,k)&:&\Box_iA\to\Box_j\Box_kA,\\
5(i,j,k)&:&\Diamond_iA\to\Box_j\Diamond_kA.
\end{array}
\]
\end{definition}

\begin{definition}[Hilbert base]\label{def:hilbert-base}
For a selected modal module $\Amod$, the system $H_{\Amod}$ contains a classical propositional basis, the usual first-order schemes, modus ponens, and theorem generalization. For each $i\in\Idx$ it contains the normal modal axiom
$\mathrm{K}_i\colon \Box_i(A\to B)\to(\Box_iA\to\Box_iB),$
theorem necessitation, modal duality
$\Diamond_iA\leftrightarrow\neg\Box_i\neg A,$
and the constant-domain schemes given by the Barcan formula $\mathrm{BF}_i\colon \forall x\Box_iA\to\Box_i\forall xA$ and the converse Barcan formula $\mathrm{CBF}_i\colon \Box_i\forall xA\to\forall x\Box_iA$. The system also contains every formula instance of each modal scheme selected by $\Amod$. Both theorem generalization and theorem necessitation are applicable only when their premises are already theorems of the system.
\end{definition}

For a term $t$ that is free for $x$ in $A$, the notation $A(t/x)$ denotes capture-avoiding substitution of $t$ for the free occurrences of $x$ in $A$. We fix one concrete classical and first-order basis throughout the paper. Its schemes are
\[
\begin{array}{lll}
\mathrm{H1} & A\to(B\to A), & \\
\mathrm{H2} & (A\to(B\to C))\to((A\to B)\to(A\to C)), & \\
\mathrm{C1} & A\to(B\to A\land B), & \\
\mathrm{C2} & A\land B\to A, & \\
\mathrm{C3} & A\land B\to B, & \\
\mathrm{D1} & A\to A\lor B, & \\
\mathrm{D2} & B\to A\lor B, & \\
\mathrm{D3} & (A\to C)\to((B\to C)\to((A\lor B)\to C)), & \\
\mathrm{N1} & \neg A\to(A\to\bot), & \\
\mathrm{N2} & (A\to\bot)\to\neg A, & \\
\mathrm{CL} & \neg\neg A\to A, & \\
\mathrm{EFQ} & \bot\to A, & \\[4pt]
\mathrm{Q1} & \forall xA\to A(t/x), & \\
\mathrm{Q2} & \forall x(A\to B)\to(A\to\forall xB), & x\notin\FV(A),\\
\mathrm{Q3} & A(t/x)\to\exists xA, & \\
\mathrm{Q4} & \forall x(A\to B)\to(\exists xA\to B), & x\notin\FV(B).
\end{array}
\]
\begin{theorem}[Hilbert normalization]\label{thm:hilbert-nnf}
For every source formula $F$, both $H_{\Amod}\vdash F\leftrightarrow\NNF^{+}(F)$ and $H_{\Amod}\vdash \neg F\leftrightarrow\NNF^{-}(F)$ hold.
\end{theorem}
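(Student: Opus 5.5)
We prove both equivalences simultaneously by induction on the construction of $F$. At each step we use the induction hypotheses for the immediate subformulas, that is, $A\leftrightarrow\NNF^+(A)$ and $\neg A\leftrightarrow\NNF^-(A)$, and likewise for $B$. Before starting the induction we collect a small toolkit of derived Hilbert facts. These are replacement (congruence) lemmas, which say that from $\vdash A\leftrightarrow A'$ one obtains $\vdash \neg A\leftrightarrow\neg A'$, $\vdash A\circ B\leftrightarrow A'\circ B$ for $\circ\in\{\land,\lor,\to\}$, $\vdash \forall xA\leftrightarrow\forall xA'$, $\vdash\exists xA\leftrightarrow\exists xA'$, $\vdash\Box_iA\leftrightarrow\Box_iA'$ and $\vdash\Diamond_iA\leftrightarrow\Diamond_iA'$, together with transitivity of $\leftrightarrow$. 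The propositional parts follow from H1, H2, C1--C3, D1--D3, N1, N2 via the deduction theorem for closed-premise reasoning. We avoid any general deduction theorem with generalization and only use derived theorems. The quantifier parts use theorem generalization with Q1--Q4. For example, from $\vdash A\to A'$ we get $\vdash\forall x(A\to A')$, then Q1 with Q2 gives $\forall xA\to\forall xA'$, and Q3 with Q4 gives $\exists xA\to\exists xA'$. The modal parts use theorem necessitation and $\mathrm K_i$ for $\Box_i$, and modal duality combined with the $\neg$ and $\Box_i$ congruences for $\Diamond_i$.

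Next we derive the classical De Morgan and quantifier dualities as theorems. These are $\neg\neg A\leftrightarrow A$ (from CL in one direction and N1/N2 in the other), $\neg(A\land B)\leftrightarrow\neg A\lor\neg B$, $\neg(A\lor B)\leftrightarrow\neg A\land\neg B$, $\neg(A\to B)\leftrightarrow A\land\neg B$, $(A\to B)\leftrightarrow\neg A\lor B$, $\neg\forall xA\leftrightarrow\exists x\neg A$, $\neg\exists xA\leftrightarrow\forall x\neg A$, and $\neg\bot\leftrightarrow\neg\bot$ (trivially, since $\top$ denotes $\neg\bot$). The modal dualities are $\neg\Box_iA\leftrightarrow\Diamond_i\neg A$ and $\neg\Diamond_iA\leftrightarrow\Box_i\neg A$. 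We obtain the first from modal duality $\Diamond_i\neg A\leftrightarrow\neg\Box_i\neg\neg A$ together with the $\Box_i$-congruence applied to $\neg\neg A\leftrightarrow A$. The second comes from negating modal duality and removing the double negation. Only the normal modal base is used, so the selected module $\Amod$ plays no role and the result holds uniformly.

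With these in hand the induction is routine. For atoms and $\bot$ both claims are reflexivity, recalling that $\overline p(\bar t)$ and $\top$ denote $\neg p(\bar t)$ and $\neg\bot$. For $\neg A$ the positive claim is the negative hypothesis for $A$. The negative claim chains $\neg\neg A\leftrightarrow A\leftrightarrow\NNF^+(A)$. For a binary connective or a modality, the positive claim applies congruence to the hypotheses. The negative claim first applies the matching duality and then congruence with the negative, or for $\to$ mixed, hypotheses. For quantifiers the same pattern works. No substitution occurs, because $\NNF^\pm$ commutes with the binder and leaves the variable unchanged, so no freshness issues arise.

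The main obstacle is bookkeeping rather than ideas. The hardest part is establishing the quantifier congruence and the quantifier dualities cleanly in this specific basis, since generalization applies only to theorems and Q2/Q4 carry side conditions. For $\neg\forall xA\to\exists x\neg A$ we will argue contrapositively. From Q3 we get $\neg A\to\exists x\neg A$, hence $\neg\exists x\neg A\to A$ via CL. Generalizing and using Q2, where $x$ is not free in $\neg\exists x\neg A$, gives $\neg\exists x\neg A\to\forall xA$, and contraposition finishes. The converse direction uses Q4 with $B=\neg\forall xA$, starting from $\neg A\to\neg\forall xA$, which in turn comes from Q1.
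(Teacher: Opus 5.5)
Your proposal is correct and follows the paper's own proof: a simultaneous structural induction on $F$ using double negation, De Morgan, the classical implication equivalences, the quantifier dualities, necessitation with $\mathrm K_i$ for $\Box_i$, and modal duality for $\Diamond_i$; your toolkit just derives explicitly from H1--Q4 the facts the paper invokes without derivation. One wording slip: the positive case for $\to$ needs your listed equivalence $(A\to B)\leftrightarrow(\neg A\lor B)$ before applying congruence, not congruence alone.
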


\begin{proof}
The complete proof is given in Appendix~\ref{app:hilbert-nnf}.
\end{proof}

\subsection{Frame specification}\label{sec:frame-specification}

This subsection fixes the relational structures used by the semantic metatheory. For binary relations $R$ and $S$ on the same set, relational composition is interpreted from left to right throughout the paper. Thus $(u,v)\in R\circ S$ exactly when some $w$ satisfies $uRw$ and $wSv$.

\begin{definition}[Selected frame class]\label{def:frame-class}
A \emph{multimodal frame} over $\Idx$ is a tuple $(W,(R_i)_{i\in\Idx})$, where $W$ is a nonempty set of worlds and each $R_i$ is a binary relation on $W$. A \emph{pointed multimodal frame} is a tuple $(W,\tau,(R_i)_{i\in\Idx})$ whose underlying multimodal frame is $(W,(R_i)_{i\in\Idx})$ and whose distinguished world $\tau$ belongs to $W$. The \emph{selected frame class} $\Frames(\Amod)$ consists of the multimodal frames that satisfy the following clauses whenever the corresponding scheme belongs to $\Amod$:
\[
\begin{array}{rcl}
\mathrm{D}_i&:&\forall u\,\exists v\,R_i(u,v),\\
\mathrm{T}_i&:&R_i(u,u),\\
\mathrm I(i,j)&:&R_j(u,v)\to R_i(u,v),\\
\mathrm B(i,j)&:&R_i(u,v)\to R_j(v,u),\\
4(i,j,k)&:&R_j(u,v)\land R_k(v,w)\to R_i(u,w),\\
5(i,j,k)&:&R_j(u,v)\land R_i(u,w)\to R_k(v,w).
\end{array}
\]
All variables in the displayed relational clauses range over $W$, and the free variables are universally quantified.
\end{definition}

\subsection{Constant-domain models}\label{sec:models}

The selected modal module determines the conditions on the accessibility relations. We now state the first-order semantic choices that remain constant throughout the proof theory, namely a shared domain and rigid interpretations of terms.

\begin{definition}[Multimodal model with rigid terms]\label{def:modal-model}
Let $(W,(R_i)_{i\in\Idx})$ be a frame in $\Frames(\Amod)$. A \emph{multimodal model with rigid terms} based on this frame is a structure
\[
\mathcal M=(W,(R_i)_{i\in\Idx},D,\mathcal J),
\]
where $D$ is a nonempty constant domain. The interpretation $\mathcal J$ assigns an element of $D$ to every constant symbol, a function $D^n\to D$ to every $n$-ary function symbol, and, at each world $w\in W$, a relation on $D^n$ to every $n$-ary predicate symbol. The interpretations of constants and function symbols are independent of the world. A \emph{pointed model} is a pair $(\mathcal M,\tau)$ with $\tau\in W$. An \emph{assignment} is a map $g:\mathsf{Var}\to D$. Satisfaction at a world $w\in W$ under an assignment $g$ uses the classical Boolean clauses and the usual first-order clauses over $D$. The modal clauses are
\[
\mathcal M,w,g\models\Box_iA
\Longleftrightarrow
\mathcal M,v,g\models A\text{ for every }v\in W\text{ with }wR_iv
\]
and
\[
\mathcal M,w,g\models\Diamond_iA
\Longleftrightarrow
\mathcal M,v,g\models A\text{ for some }v\in W\text{ with }wR_iv.
\]
\end{definition}

For a source formula $A$, write $\mathcal M\models A$ when $\mathcal M,w,g\models A$ for every $w\in W$ and every assignment $g$. The formula $A$ is \emph{valid on a frame} when $\mathcal M\models A$ for every model $\mathcal M$ based on that frame. For a class $\mathcal C$ of frames, $\mathcal C\models A$ means that $A$ is valid on every frame in $\mathcal C$.

\begin{proposition}[Modal correspondence]\label{prop:modal-correspondence}
Every formula instance of every modal scheme selected by $\Amod$ is valid on every frame in $\Frames(\Amod)$.
\end{proposition}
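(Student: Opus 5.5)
The plan is a direct semantic verification, scheme by scheme. Fix a frame $(W,(R_i)_{i\in\Idx})\in\Frames(\Amod)$, a model $\mathcal M$ based on it, a world $u\in W$, and an assignment $g$. For each selected scheme and each formula instance we show that the instance holds at $(u,g)$. The formula $A$ is arbitrary, and the modal clauses of Definition~\ref{def:modal-model} quantify only over worlds while keeping $g$ fixed. We therefore only need to reason about the truth set $\{w\in W:\mathcal M,w,g\models A\}$ for that fixed $g$. Free variables of $A$ cause no difficulty, because the domain is constant and terms are rigid, so $g$ means the same thing at every world. This lets us treat the first-order layer as inert: the verification is exactly the propositional correspondence argument applied to an arbitrary set of worlds.

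The cases are handled as follows. For $\mathrm D_i$, assume $u\models\Box_iA$. Seriality gives some $v$ with $uR_iv$. Then $v\models A$, so $u\models\Diamond_iA$. For $\mathrm T_i$, reflexivity $uR_iu$ applied to $u\models\Box_iA$ gives $u\models A$. For $\mathrm I(i,j)$, assume $u\models\Box_iA$ and take $v$ with $uR_jv$. Inclusion gives $uR_iv$, hence $v\models A$. For $\mathrm B(i,j)$, assume $u\models A$ and take $v$ with $uR_iv$. The clause gives $vR_ju$, so $v\models\Diamond_jA$, and therefore $u\models\Box_i\Diamond_jA$. For $4(i,j,k)$, assume $u\models\Box_iA$ and take $v,w$ with $uR_jv$ and $vR_kw$. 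The clause gives $uR_iw$, so $w\models A$, which yields $u\models\Box_j\Box_kA$. For $5(i,j,k)$, assume $u\models\Diamond_iA$, witnessed by some $w$ with $uR_iw$ and $w\models A$. Take any $v$ with $uR_jv$. The clause gives $vR_kw$, so $v\models\Diamond_kA$, and therefore $u\models\Box_j\Diamond_kA$.

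Each implication holds at every world and under every assignment, since the truth clause for $\to$ is classical. Hence every instance is valid on the frame. The main point needing care is not mathematical difficulty but bookkeeping. The argument must check that the orientation of the relational clauses in Definition~\ref{def:frame-class} matches the indices in Definition~\ref{def:modal-module}. The cases most prone to error are $\mathrm I(i,j)$, where the inclusion is $R_j\subseteq R_i$, and $5(i,j,k)$, where the first hypothesis is the $R_j$-edge and the second is the $R_i$-edge. We will verify these index roles explicitly as above.
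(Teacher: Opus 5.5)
Your proposal is correct and follows essentially the same route as the paper. Both fix a model, world, and assignment and verify each selected scheme directly from its frame clause. Your index orientations agree with the paper's, including $R_j\subseteq R_i$ for $\mathrm I(i,j)$ and the roles of the $R_j$ and $R_i$ edges in $5(i,j,k)$.
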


\begin{proof}
For $\mathrm{D}_i$, suppose $\Box_iA$ holds at $u$. By seriality, choose an $R_i$ successor $v$ of $u$. Then $A$ holds at $v$, so $\Diamond_iA$ holds at $u$. For $\mathrm{T}_i$, if $\Box_iA$ holds at $u$, reflexivity gives $uR_iu$, so $A$ holds at $u$. For $\mathrm I(i,j)$, every $R_j$ successor is also an $R_i$ successor. For $\mathrm B(i,j)$, from $uR_iv$ the frame clause gives $vR_ju$, so $u$ witnesses the required $\Diamond_jA$ at $v$. For $4(i,j,k)$, a path $uR_jvR_kw$ gives $uR_iw$. For $5(i,j,k)$, an $R_i$ witness $w$ at $u$ remains an $R_k$ witness at every $R_j$ successor $v$ because the frame clause gives $vR_kw$.
\end{proof}

\begin{definition}[Core semantic clauses]\label{def:core-semantics}
Satisfaction is extended to the two additional core forms by
$\mathcal M,w,g\models\overline p(\bar t)$ exactly when $\mathcal M,w,g\not\models p(\bar t)$, and by declaring $\mathcal M,w,g\models\top$ for every model $\mathcal M$, world $w$, and assignment $g$. The preceding notions of model, frame, and class validity extend to core formulas using these clauses.
\end{definition}

For a first-order term $t$, write $\sem{t}^{\mathcal M}_g$ for its ordinary denotation in $\mathcal M$ under assignment $g$. When the model is fixed, we abbreviate this by $\sem{t}_g$.

All results below use this constant-domain semantics with rigid constants and function symbols.

With the source and core semantics in place, we can state the semantic and substitution properties of normalization.

\begin{theorem}[Normalization adequacy]\label{thm:normalization-adequacy}
For every source formula $A$, every constant-domain model with rigid terms $\mathcal M$, every world $w$, and every assignment $g$,
$\mathcal M,w,g\models \NNF^{+}(A)\Longleftrightarrow \mathcal M,w,g\models A$
and
$\mathcal M,w,g\models \NNF^{-}(A)\Longleftrightarrow \mathcal M,w,g\not\models A$.
The two translations preserve free variables and commute with every finite simultaneous capture-avoiding substitution, up to alpha equivalence.
\end{theorem}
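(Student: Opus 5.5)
We argue by simultaneous structural induction on the source formula $A$, proving the two semantic equivalences together for all worlds $w$ and all assignments $g$ at once. Quantifying over all $w$ and $g$ in the induction hypothesis is what makes the modal and quantifier steps go through.

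\textbf{Base cases.} For an atom $p(\bar t)$, the positive claim is trivial. The negative claim is exactly the clause for $\overline p(\bar t)$ in Definition~\ref{def:core-semantics}. For $\bot$, the positive claim is immediate, and the negative claim follows since $\top$ holds everywhere while $\bot$ holds nowhere.

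\textbf{Inductive steps.} Each case reads off the corresponding line of Definition~\ref{def:nnf-translations}.
\begin{itemize}
\item For $\neg A$, the two translations swap, so both claims follow from the hypothesis for $A$.
\item The conjunction, disjunction and implication cases follow from the classical Boolean clauses and De Morgan reasoning in the metalanguage. For example, $\NNF^-(A\to B)=\NNF^+(A)\land\NNF^-(B)$ holds iff $A$ holds and $B$ fails, iff $A\to B$ fails.
\item For $\forall xA$ and $\exists xA$ we apply the hypothesis at the same world under the variant assignments $g[x\mapsto d]$ for $d\in D$. Here the constant domain matters: the quantifier ranges over the same $D$ on both sides, so the classical dualities $\neg\forall\equiv\exists\neg$ hold at each world.
\item For $\Box_iA$ and $\Diamond_iA$ we apply the hypothesis at each $R_i$-successor $v$ under the same $g$. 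The dualities $\neg\Box_i\equiv\Diamond_i\neg$ follow directly from the clauses in Definition~\ref{def:modal-model}.
\end{itemize}
No frame condition from $\Frames(\Amod)$ is used anywhere, so the equivalences hold in every model, not just those on selected frames.

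\textbf{Free variables.} A second simultaneous induction shows $\FV(\NNF^{\pm}(A))=\FV(A)$. Every clause keeps the same terms at the leaves and the same binders, and the leaf cases are immediate because $\overline p(\bar t)$ and $p(\bar t)$ have the same terms and $\top$, $\bot$ have none.

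\textbf{Substitution.} For a finite simultaneous substitution $\sigma$ that is capture-avoiding for $A$, we show $\NNF^{\pm}(A\sigma)\equiv_\alpha\NNF^{\pm}(A)\sigma$ by induction on $A$. The literal cases hold by definition, and the connective and modal cases hold by the hypothesis. The binder case is the only delicate point and the main, though modest, obstacle. Capture-avoiding substitution may rename the bound variable $x$ to a fresh $y$, yielding $\forall y\,(A(y/x))\sigma'$ with $\sigma'=\sigma$ restricted away from $x$.

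To handle it, we first establish that $\NNF^{\pm}$ commutes with variable renamings exactly. This is a special case of the same induction, since a renaming never triggers further renaming. Combining this with the free-variable result then lets the hypothesis apply to the renamed body. The freshness side conditions transfer across translation because free variables are preserved, and since the choice of fresh names is only determined up to alpha equivalence, the claim holds up to $\equiv_\alpha$.

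We would also note that the induction is well founded despite the $\neg$ clause swapping polarities, because the translations recurse only on proper subformulas.
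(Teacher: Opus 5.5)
Your proposal is correct and follows essentially the same route as the paper. The paper also uses a simultaneous structural induction for the positive and negative semantic clauses, then separate inductions for free-variable preservation and for commutation with finite simultaneous substitution, alpha-converting the bound variable away from the domain and range of the substitution in the quantifier case. Your explicit lemma that $\NNF^{\pm}$ commutes exactly with renaming to a variable absent from the formula only spells out what the paper's ``alpha convert in advance'' step tacitly uses. One quibble: the constant domain is not what makes the quantifier step work, since both sides are evaluated at the same world over the same range in any case.
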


\begin{proof}
The complete proof is given in Appendix~\ref{app:nnf-adequacy}.
\end{proof}

The substitution lemmas for rigid terms are required in the soundness proof of the quantified rules. Their full statements and proofs are given in Appendix~\ref{app:substitution}.

\subsection{Declarative answers}\label{sec:declarative-answers}

Let a program be a finite set of source formulas
$P=\{P_1,\ldots,P_n\}.$
Free variables in program formulas are universally closed before the formulas are added to the program. Henceforth, each $P_i$ denotes its universal closure. Put
$\widehat P=P_1\land\cdots\land P_n$
when $n>0$, and put $\widehat P=\neg\bot$ when $P$ is empty.

Program assumptions occur in the antecedent $\widehat P$, while necessitation applies only to theorems of $H_{\Amod}$. Answer correctness is therefore stated as Hilbert theoremhood of an implication with antecedent $\widehat P$.

For each query $G$, fix a finite set $A_G\subseteq\FV(G)$ of object variables whose values form the user answer.

\begin{convention}[Ordinary substitutions]\label{conv:ordinary-substitutions}
An \emph{ordinary substitution} is a map from $\mathsf{Var}$ to source terms that differs from the identity at only finitely many variables. Its nonidentity domain is written $\dom(\theta)$. It extends simultaneously to terms and formulas, with bound variables renamed as necessary to avoid capture. Unless a later convention explicitly specifies a normalized update, substitutions act in postfix order. For substitutions $\theta$ and $\gamma$, the product $\theta\gamma$ means that $\theta$ is applied first and $\gamma$ second. Thus $t\theta\gamma=(t\theta)\gamma$ and $(\theta\gamma)(X)=(\theta(X))\gamma$. For a set $B$ of object variables, $\theta\upharpoonright B$ denotes the restriction of $\theta$ to $B$, with identity bindings omitted.
\end{convention}

\begin{definition}[User answer substitution]\label{def:user-answer}
A \emph{user answer substitution} for $G$ is an ordinary substitution $\theta$ with $\dom(\theta)\subseteq A_G$. Thus its range contains only object variables, constants, and function symbols. Variables in $A_G$ may occur in the range, so aliases such as $X\mapsto Y$ are permitted.
\end{definition}

In the rest of the paper, a term of the source language is called an \emph{ordinary user term}.

\begin{definition}[Hilbert correct answer]\label{def:hilbert-correct}
Let $\theta$ be a user answer substitution for $G$, and let $Y_1,\ldots,Y_r$ be any enumeration of $\FV(G\theta)$. We say that $\theta$ is \emph{Hilbert correct} for $(P,G)$ when
$H_{\Amod}\vdash \widehat P\to \forall Y_1\cdots\forall Y_r\,G\theta$.
The set of such substitutions is written $\Corr^{\mathrm H}_{\Amod}(P,G)$.
\end{definition}

We also say that the members of $\Corr^{\mathrm H}_{\Amod}(P,G)$ are \emph{declaratively correct}.

\begin{example}[Variable alias]\label{ex:variable-alias}
Let $P=\{\forall z\,p(z,z)\}$, $G=p(X,Y)$, and $A_G=\{X,Y\}$.
The substitution $\theta=\{X\mapsto Y\}$ is a legal user answer. The instantiated query is $p(Y,Y)$, whose universal closure follows from the program.
\end{example}

\begin{remark}[Existential success]\label{rem:existential-success}
A successful proof can establish a query without fixing every answer variable to a ground term. The declarative condition closes the remaining free variables universally after the user substitution. This convention permits general symbolic answers and ensures that symbols local to the proof do not occur in the user answer.
\end{remark}

\section{Certificate kernel}\label{sec:certificates}

This section defines the proof objects and syntactic data used to justify modal propagation for a multimodal module $\Amod$.

\subsection{Nested syntax}\label{sec:nested-syntax}

The recursive structures defined below have nodes called components. Fix a countably infinite effectively enumerable supply of names for these components, disjoint from the first-order and modal syntax. Fresh component names used below are always chosen from this supply.

\begin{definition}[Indexed one-sided nested sequent]\label{def:nested-sequent}
A \emph{nested sequent} is a finite expression generated recursively by
\[
 \Gamma=A_1,\ldots,A_n,[\Delta_1]_{i_1},\ldots,[\Delta_r]_{i_r}.
\]
The formulas $A_1,\ldots,A_n$ are core formulas, and the indices $i_1,\ldots,i_r$ belong to $\Idx$. Its formula interpretation is
\[
 \sem{\Gamma}=
 A_1\lor\cdots\lor A_n
 \lor\Box_{i_1}\sem{\Delta_1}
 \lor\cdots\lor\Box_{i_r}\sem{\Delta_r},
\]
with $\sem{\varnothing}=\bot$. Each component receives one persistent syntactic name from the fixed supply, called its \emph{position name}. A component equipped with this name is also called a \emph{position}. An immediate child $v$ of a component $u$ under an $i$ bracket gives an explicit tree edge $u\xrightarrow{i}v$.
\end{definition}

The proof objects above are related to nested sequent methods in modal proof theory~\cite{brunnler2010nested}.

A nested sequent $\mathcal G$ is \emph{valid over} $\Frames(\Amod)$ when $\Frames(\Amod)\models\sem{\mathcal G}$.

Distinct components have distinct position names. Below we freely refer to a component by its position name and use the words component and position interchangeably when no confusion can arise. A \emph{one-hole nested context} $\mathcal G\{\ \}_u$ is a nested sequent with one distinguished multiset position inside component $u$. Filling the hole with formulas or bracketed child sequents yields notation such as $\mathcal G\{A,B\}_u$ or $\mathcal G\{[A]_i\}_u$. When the position is irrelevant, we write simply $\mathcal G\{\ \}$. Two distinguished holes are written $\mathcal G\{\ \}_u\{\ \}_v$, and filling them gives expressions such as $\mathcal G\{A\}_u\{B,\Delta\}_v$.

\subsection{Signed walks}\label{sec:signed-walks}

Let
$\Idx^{\pm}=\{i,\bar i\mid i\in\Idx\}$
with involution $\overline{i}=\bar i$, $\overline{\bar i}=i$, and
$\overline{l_1\cdots l_n}=\bar l_n\cdots\bar l_1.$
Every explicit edge $u\xrightarrow{i}v$ determines the forward signed step $u\xrightarrow{i}v$ and the reverse signed step $v\xrightarrow{\bar i}u$. A finite sequence of signed steps is a \emph{signed tree walk}. Its label is the concatenation of the signed edge symbols, and we write $\operatorname{lab}(\omega)$ for the label of a walk $\omega$. The empty walk has label $\eps$.

\begin{definition}[Closed semi-Thue grammar]\label{def:semi-thue}
The selected nonserial modal schemes generate the following productions:
\[
\begin{array}{rcl}
\mathrm{T}_i&:&i\to\eps,\\
\mathrm I(i,j)&:&i\to j,\\
\mathrm B(i,j)&:&\bar j\to i,\\
4(i,j,k)&:&i\to jk,\\
5(i,j,k)&:&k\to\bar j i.
\end{array}
\]
For every production $\xi\to\zeta$, the grammar also contains the converse production $\bar\xi\to\bar\zeta$. The resulting finite \emph{closed semi-Thue system} is written $S_{\Amod}$. For signed words $\xi_1,\xi_2$ and a production $l\to\zeta$ in $S_{\Amod}$, write $\xi_1l\xi_2\Rightarrow_{\Amod}\xi_1\zeta\xi_2$ for one contextual rewrite, and let $\Rightarrow_{\Amod}^*$ be its reflexive transitive closure. Here \emph{closed} means closure of the production set under the bar involution on signed words. The rewrite relation itself remains directed. Seriality contributes no production because it requires existence of a successor rather than a relation inclusion.
\end{definition}

The grammar construction above is related to propagation and reachability methods based on closed semi-Thue systems~\cite{tiu2012grammar,lyon2022firstorder,lyon2026horn}.

\begin{definition}[Modal propagation certificate]\label{def:certificate}
Let $\mathcal G$ be a nested sequent and let $u,v$ be positions, possibly equal. A \emph{certificate} for modality $i$ from $u$ to $v$ is a pair $(\omega,\mathfrak d)$ such that $\omega$ is a finite signed walk from $u$ to $v$ and $\mathfrak d$ is a finite derivation
$i\Rightarrow_{\Amod}^*\operatorname{lab}(\omega).$
We write
$u\leadstoA{i}v$
when a certificate exists. Every proof rule that uses this judgment includes an actual pair $(\omega,\mathfrak d)$ in the annotated proof object.
\end{definition}

\begin{convention}[Coincident certificate endpoints]\label{conv:coincident-endpoints}
In the notation for a nested context with two holes introduced above, the annotated positions may coincide. When $u=v$, every displayed formula insertion takes place in the same component and the inserted occurrences remain distinct occurrences. This convention covers the empty walk associated with a $\mathrm{T}_i$ grammar production and any other certificate with equal endpoints.
\end{convention}

A certificate checker verifies the walk against the current nested tree and the grammar derivation against $S_{\Amod}$.

\subsection{Finite Horn closure}\label{sec:horn-closure}

The frame clauses in Definition~\ref{def:frame-class} also define a syntactic Horn closure on a set of positions. We formulate the closure for an arbitrary position set because the canonical countermodel will use an infinite union of finite nested trees. At each stage, the construction is effective.

Let $\mathcal C_{\Amod}$ be the set of relation clauses for the selected $\mathrm{T}$, $\mathrm{I}$, $\mathrm{B}$, $4$, and $5$ schemes. Seriality is excluded because it requires existence of a successor rather than closure under a relation clause. Let $N$ be a set of positions and let $E=(E_i)_{i\in\Idx}$ be a family of explicit edges with $E_i\subseteq N^2$. Put $E^0=E$. Given $E^n=(E_i^n)_{i\in\Idx}$, let $E^{n+1}$ be the family obtained by adjoining the head of every instance of a clause in $\mathcal C_{\Amod}$ whose premises belong to $E^n$. Write $\operatorname{Cl}_{\Amod}(E)=\bigcup_{n\in\mathbb N}E^n$. A \emph{relation fact} over $N$ is an expression $R_i(u,v)$ with $i\in\Idx$ and $u,v\in N$. A \emph{Horn derivation} of a relation fact from $E$ is a finite rooted tree whose nodes are labeled by relation facts. Each node is justified either because its label $R_i(u,v)$ corresponds to an explicit edge $(u,v)\in E_i$, or because its label is the head of an instance of a clause in $\mathcal C_{\Amod}$ and the labels of its children are exactly the premises of that instance. The \emph{height} of a Horn derivation is the maximum number of clause applications on a root-to-leaf path. A derivation justified only by an explicit edge has height zero. A clause instance with no premises gives a leaf of height one.

\begin{lemma}[Derivational characterization of the Horn closure]\label{lem:horn-derivational}
For every $n\in\mathbb N$ and every relation fact $R_i(u,v)$, $(u,v)\in E_i^n$ if and only if $R_i(u,v)$ has a Horn derivation from $E$ of height at most $n$. Consequently, $(u,v)\in\operatorname{Cl}_{\Amod}(E)_i$ if and only if $R_i(u,v)$ has a Horn derivation from $E$.
\end{lemma}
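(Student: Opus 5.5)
We prove the first claim by induction on $n$ and then obtain the second by taking unions over $n$. Throughout, $E^n\subseteq E^{n+1}$ holds, because $E^{n+1}$ is obtained from $E^n$ by adjoining heads. Correspondingly, a Horn derivation of height at most $n$ also has height at most $n+1$. Both monotonicity facts are used implicitly.

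For the base case $n=0$, we have $E^0=E$. A Horn derivation of height $0$ contains no clause application on any root-to-leaf path. Hence its root must be justified by an explicit edge, so $(u,v)\in E_i$. Conversely, an explicit edge gives the one-node derivation of height $0$.

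For the step from $n$ to $n+1$, the two directions are as follows.
\begin{itemize}
\item Suppose $(u,v)\in E^{n+1}_i$. Either $(u,v)\in E^n_i$, and we apply the induction hypothesis and monotonicity. Or $R_i(u,v)$ is the head of a clause instance in $\mathcal C_{\Amod}$ whose premises all lie in $E^n$. By the induction hypothesis, each premise has a derivation of height at most $n$. Placing these derivations as children under a root labeled $R_i(u,v)$ gives a derivation of height at most $n+1$. Premise-free instances, such as $\mathrm{T}_i$ giving $R_i(u,u)$, yield a leaf of height $1\le n+1$. This is consistent with the height convention for premise-free clause instances.
\item Conversely, take a derivation of height at most $n+1$. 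If its root is justified by an explicit edge, then $(u,v)\in E_i=E^0_i\subseteq E^{n+1}_i$. Otherwise the root is the head of a clause instance. Each child subtree is itself a Horn derivation, and its height is at most $n$, since every root-to-leaf path through the root's clause application loses exactly one clause application. The induction hypothesis puts each premise in $E^n$, so the head lies in $E^{n+1}$ by construction.
\end{itemize}

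For the consequence, $(u,v)\in\operatorname{Cl}_{\Amod}(E)_i$ iff $(u,v)\in E^n_i$ for some $n$. Every Horn derivation is a finite tree, so it has some finite height $n$. The equivalence therefore follows from the first claim.

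The main obstacle is bookkeeping rather than mathematics. The height definition counts clause applications along paths, and a node justified by an explicit edge counts as height $0$ even when it appears as a child. We must check that the height of the root is exactly one plus the maximum height of its children when the root is a clause node, including the premise-free case, and that the base case correctly excludes clause-justified leaves. Once this recursive description of height is stated, both inductions are routine.
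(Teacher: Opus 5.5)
Your proof is correct and follows the paper's argument essentially step for step. You use induction on $n$ with the same base case and the same two directions in the induction step, including premise-free clause instances giving height one, and then take the union over stages for the consequence. Your explicit remarks on monotonicity, and on why a height-zero derivation cannot have a clause-justified root, only spell out details the paper leaves implicit.
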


\begin{proof}
We use induction on $n$.

Base case. At stage zero, $E^0=E$, and the facts represented by pairs in $E$ are exactly those with a derivation justified by an explicit edge and having height zero.

Induction step. Assume the equivalence at stage $n$.

\begin{caselist}
\item Case $(u,v)\in E_i^{n+1}$. If $(u,v)\in E_i^n$, the induction hypothesis applies. Otherwise $R_i(u,v)$ is the head of a clause instance whose premises all correspond to pairs in $E^n$. The induction hypothesis gives derivations of those premises of height at most $n$. Placing the clause instance above them gives a derivation of the head of height at most $n+1$. A clause with no premises yields a clause derivation consisting of one node and having height one.
\item Case $R_i(u,v)$ has a Horn derivation of height at most $n+1$. If its root is justified by an explicit edge, then $(u,v)\in E_i^0\subseteq E_i^{n+1}$. Otherwise the root is a clause instance. Every premise subtree has height at most $n$, so the induction hypothesis places every premise pair in the corresponding family at stage $n$. The construction of $E^{n+1}$ therefore adds $(u,v)$ to $E_i^{n+1}$.
\end{caselist}

The consequence for $\operatorname{Cl}_{\Amod}(E)$ follows by taking the union over all finite stages.
\end{proof}

The next two lemmas establish finite stabilization and characterize the closure as the least relation family containing the explicit edges and satisfying the selected Horn clauses. On a finite tree, stabilization makes the closure effectively computable. This characterization also makes the result independent of the order in which those clauses are applied.

\begin{lemma}[Finite stabilization]\label{lem:finite-stabilization}
If $N$ is finite, the closure construction stabilizes after finitely many strict growth stages.
\end{lemma}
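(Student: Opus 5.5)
The argument is a monotonicity and counting argument inside the finite set of possible relation facts. First, the stages are monotone: $E^n\subseteq E^{n+1}$ componentwise for every $n$. This holds because $E^{n+1}$ is obtained from $E^n$ by \emph{adjoining} clause heads, so nothing is removed. It also holds independently by Lemma~\ref{lem:horn-derivational}, since a Horn derivation of height at most $n$ also has height at most $n+1$.

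Second, I bound the ambient space. Every clause in $\mathcal C_{\Amod}$ has a head of the form $R_i(u,v)$ with $u,v$ ranging over $N$. No clause instance introduces a new position: the $\mathrm T_i$ clause is instantiated at a $u\in N$, and the $\mathrm I$, $\mathrm B$, $4$, $5$ clauses have heads whose endpoints already occur in their premises. Hence every $E^n_i\subseteq N^2$. The family $E^n$ is therefore a subset of the set of all relation facts over $N$, which has cardinality $m\cdot|N|^2$ with $m=|\Idx|$.

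Third, I show stabilization propagates. The construction of $E^{n+1}$ depends only on $E^n$, because the clause instances with premises in $E^n$ are determined by $E^n$ and the fixed finite clause set. So if $E^{n+1}=E^n$ for some $n$, induction gives $E^{k}=E^n$ for all $k\ge n$. Combining this with monotonicity, the strict growth stages form an initial segment of $\mathbb N$. Each strict growth stage adds at least one fact, and the whole space has only $m|N|^2$ facts. Hence there are at most $m|N|^2-\sum_i|E_i|\le m|N|^2$ strict growth stages. After that point $E^n=\operatorname{Cl}_{\Amod}(E)$.

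The only point requiring care, which I regard as the main though mild obstacle, is the second step: confirming that the closure never leaves $N^2$. This includes the premise-free $\mathrm T_i$ instances, where the instantiated variable must range over $N$ by the convention that relation facts are over $N$. I would check this by inspecting the frame clauses of Definition~\ref{def:frame-class} one at a time. For effectiveness, I would also note that each stage is computable: for finite $N$ there are finitely many clause instances, because $\mathcal C_{\Amod}$ is finite and each clause has at most three variables. The stabilization point is detected by comparing consecutive stages.
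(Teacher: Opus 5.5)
Your proof is correct and uses the same counting argument as the paper: the stages are monotone inside the set of $|\Idx|\,|N|^2$ indexed triples, and each strict stage adds at least one triple. Your extra observation that $E^{n+1}=E^n$ forces $E^k=E^n$ for all $k\ge n$ is not needed for finiteness, but it is what justifies detecting the fixpoint by comparing consecutive stages, a step the paper leaves implicit.
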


\begin{proof}
There are $|\Idx|\,|N|^2$ possible indexed triples $(i,u,v)$. A strict stage adds at least one new triple, and no stage removes a triple. Hence only finitely many strict stages can occur.
\end{proof}

\begin{lemma}[Least Horn closure]\label{lem:least-horn-closure}
The family $\operatorname{Cl}_{\Amod}(E)$ is the least relation family that contains $E$ and satisfies every selected Horn clause.
\end{lemma}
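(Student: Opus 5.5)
We prove the two halves of the statement separately: $\operatorname{Cl}_{\Amod}(E)$ contains $E$ and is closed under every selected Horn clause, and it is contained in every relation family with these two properties. Throughout, relation families are ordered componentwise, so $F\subseteq F'$ means $F_i\subseteq F'_i$ for every $i\in\Idx$. Leastness is then meant in this order.

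\emph{Containment and closure.} Since $E=E^0$ and $\operatorname{Cl}_{\Amod}(E)=\bigcup_n E^n$, the family $\operatorname{Cl}_{\Amod}(E)$ contains $E$. For closure, take an instance of a clause in $\mathcal C_{\Amod}$ whose premises lie in $\operatorname{Cl}_{\Amod}(E)$. Each clause has at most two premises: none for $\mathrm T_i$, one for $\mathrm I$ and $\mathrm B$, two for $4$ and $5$. Each premise lies in some stage $E^{n_k}$. The stages are increasing, because $E^{n+1}$ is obtained by adjoining heads to $E^n$. Hence all premises lie in $E^n$ with $n=\max_k n_k$, or $n=0$ when there are no premises. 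By construction the head then lies in $E^{n+1}\subseteq\operatorname{Cl}_{\Amod}(E)$.

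\emph{Leastness.} Let $F$ be any family that contains $E$ and satisfies every selected clause. We show $E^n\subseteq F$ by induction on $n$. The base case is the hypothesis $E^0=E\subseteq F$. For the step, every pair in $E^{n+1}$ either lies in $E^n\subseteq F$, or is the head of a clause instance whose premises lie in $E^n\subseteq F$. In the second case closure of $F$ places the head in $F$. Taking the union over $n$ gives $\operatorname{Cl}_{\Amod}(E)\subseteq F$.

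Alternatively, leastness follows from Lemma~\ref{lem:horn-derivational} by induction on Horn derivations: each explicit-edge leaf lies in $E\subseteq F$, and each clause node transfers membership in $F$ from its children to its label. Either route works.

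No step is a genuine obstacle. The only points needing care are the monotonicity of the stages, used to collect finitely many premises into a single stage, and the premise-free $\mathrm T_i$ clauses, whose heads already appear in $E^1$. Order independence, mentioned before the lemma, then follows because the least closed family containing $E$ is unique.
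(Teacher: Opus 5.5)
Your proof is correct and follows essentially the same route as the paper. Both arguments show closure of the union by using monotonicity of the stages to place the finitely many premises in a single stage, and both show leastness by proving $E^n\subseteq F$ by induction on $n$ for any closed family $F$ containing $E$; your alternative via Lemma~\ref{lem:horn-derivational} is also valid but not needed.
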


\begin{proof}
Every stage contains $E$. If the premises of a Horn instance occur in the union, they occur together at some finite stage, so the head occurs at the next stage. The union is therefore closed. Conversely, let $T$ contain $E$ and satisfy every clause. We prove $E^n\subseteq T$ for every $n$ by induction on $n$.

Base case. Since $E^0=E$ and $E\subseteq T$, we have $E^0\subseteq T$.

Induction step. Assume $E^n\subseteq T$. Every pair added at stage $n+1$ is either already in $E^n$ or is the head of a selected Horn clause whose premises belong to $E^n$. In the first case it belongs to $T$ by the induction hypothesis. In the second case all premises belong to $T$ by the induction hypothesis, and closure of $T$ under the selected clause places the head in $T$. Hence $E^{n+1}\subseteq T$.

Taking the union gives $\operatorname{Cl}_{\Amod}(E)\subseteq T$.
\end{proof}

\subsection{Equivalence with Horn closure}\label{sec:certificate-horn}

For a finite nested tree $\mathcal G$, let $E_i(\mathcal G)$ be its explicit $i$ edge family.

\begin{theorem}[Certificate--Horn equivalence]\label{thm:certificate-horn}
For every $i\in\Idx$ and all positions $u,v$,
\[
 u\leadstoA{i}v
 \Longleftrightarrow
 (u,v)\in
 \operatorname{Cl}_{\Amod}
 ((E_j(\mathcal G))_{j\in\Idx})_i.
\]
\end{theorem}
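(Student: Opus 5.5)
The plan is to prove the two directions separately. The left-to-right direction goes through a relational reading of signed words. The right-to-left direction uses the minimality in Lemma~\ref{lem:least-horn-closure}. Throughout, write $R=\operatorname{Cl}_{\Amod}((E_j(\mathcal G))_{j\in\Idx})$.

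First I will extend $R$ to signed letters by $R_{\bar i}=R_i^{-1}$. I then extend it to signed words by left-to-right composition, with $R_\eps$ the identity on positions and $R_{\xi\zeta}=R_\xi\circ R_\zeta$. Two easy facts follow.

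\begin{itemize}
\item For every signed walk $\omega$ from $u$ to $v$ we have $(u,v)\in R_{\operatorname{lab}(\omega)}$. This holds by induction on the length of the walk: a forward step $u\xrightarrow{i}v$ is an explicit edge in $E_i\subseteq R_i$, and a reverse step $v\xrightarrow{\bar i}u$ lies in $R_i^{-1}=R_{\bar i}$.
\item $R_{\bar\xi}=(R_\xi)^{-1}$. This holds by induction on the word, using the definition $\overline{l_1\cdots l_n}=\bar l_n\cdots\bar l_1$.
\end{itemize}

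The key lemma for the left-to-right direction is: if $\xi\Rightarrow_{\Amod}^*\zeta$ then $R_\zeta\subseteq R_\xi$. Since composition is monotone in each argument, it suffices to check $R_\zeta\subseteq R_l$ for each production $l\to\zeta$ of $S_{\Amod}$. The check is one selected Horn clause per production, and each clause holds in $R$ by Lemma~\ref{lem:least-horn-closure}.

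\begin{itemize}
\item $\mathrm T_i$ ($i\to\eps$) gives $\mathrm{id}\subseteq R_i$.
\item $\mathrm I(i,j)$ ($i\to j$) gives $R_j\subseteq R_i$.
\item $\mathrm B(i,j)$ ($\bar j\to i$) gives $R_i\subseteq R_j^{-1}$.
\item $4(i,j,k)$ ($i\to jk$) gives $R_j\circ R_k\subseteq R_i$.
\item $5(i,j,k)$ ($k\to\bar j i$) gives $R_j^{-1}\circ R_i\subseteq R_k$.
\end{itemize}

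A converse production $\bar l\to\bar\zeta$ yields the inverse of the corresponding inclusion, which still holds. Induction on the length of the derivation then gives the lemma. Hence a certificate $(\omega,\mathfrak d)$ for modality $i$ from $u$ to $v$ gives $(u,v)\in R_{\operatorname{lab}(\omega)}\subseteq R_i$.

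For the right-to-left direction I will let $T_i$ be the set of pairs $(u,v)$ admitting a certificate for modality $i$ from $u$ to $v$. I will show that $T$ contains the explicit edges and is closed under every selected clause. Lemma~\ref{lem:least-horn-closure} then gives $R\subseteq T$.

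An explicit edge is certified by the one-step walk together with the empty derivation. The main auxiliary fact is a bar lemma: if $\xi\Rightarrow_{\Amod}^*\zeta$ then $\bar\xi\Rightarrow_{\Amod}^*\bar\zeta$. The proof barring each rewrite step. A step $\xi_1 l\xi_2\Rightarrow\xi_1\zeta\xi_2$ becomes $\bar\xi_2\bar l\bar\xi_1\Rightarrow\bar\xi_2\bar\zeta\bar\xi_1$, which is legitimate precisely because $S_{\Amod}$ is closed under the bar involution. Reversing a walk also bars its label, and contextual derivations can be placed side by side inside a concatenated word.

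With these tools I will handle each clause as follows.

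\begin{itemize}
\item $\mathrm T_i$: use the empty walk from $u$ to itself with the one-step derivation $i\Rightarrow\eps$.
\item $\mathrm I(i,j)$: prefix the step $i\Rightarrow j$ to a given $j$-certificate.
\item $\mathrm B(i,j)$: from an $i$-certificate $i\Rightarrow^* w$ from $u$ to $v$, take the reversed walk from $v$ to $u$, which has label $\bar w$. Use the converse production $j\to\bar i$, followed by the barred derivation $\bar i\Rightarrow^*\bar w$.
\item $4(i,j,k)$: concatenate the two walks and use $i\Rightarrow jk$, then rewrite the two factors independently.
\item $5(i,j,k)$: reverse the $j$-walk from $u$ to $v$ and append the $i$-walk from $u$ to $w$, giving a walk from $v$ to $w$. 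Use $k\Rightarrow\bar j i$, then $\bar j\Rightarrow^*\bar w_1$ by the bar lemma and $i\Rightarrow^* w_2$.
\end{itemize}

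Both directions are constructive: each construction maps Horn derivations to explicit certificates and certificates back to Horn facts.

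I expect the main obstacle to be bookkeeping rather than depth. The issues are the bar lemma, which must correctly reverse word order together with contextual positions, the cases $\mathrm B$ and $5$ where reversed walks and converse productions interact, and the degenerate coincident endpoints governed by Convention~\ref{conv:coincident-endpoints}.
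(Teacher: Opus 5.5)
Your proposal is correct and takes essentially the paper's approach: the certificate-to-Horn direction is exactly the paper's relational reading of signed words, with production-wise inclusions, inverses for converse productions, and monotone composition, and your clause-by-clause certificate constructions are the paper's cases $\mathrm T$, $\mathrm I$, $\mathrm B$, $4$, $5$. The only difference is packaging. The paper uses induction on Horn derivation height via Lemma~\ref{lem:horn-derivational}, whereas you show that the certified family is closed under the clauses and then invoke minimality from Lemma~\ref{lem:least-horn-closure}; your explicit bar lemma spells out the step the paper summarizes as ``closedness gives $\bar i\Rightarrow_{\Amod}^*\bar\xi$''.
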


\begin{proof}
We first pass from Horn closure to certificates. By Lemma~\ref{lem:horn-derivational}, it is enough to start with a Horn derivation of $R_i(u,v)$. We use induction on its height.

Base case. If the derivation has height zero, its final fact is an explicit edge $u\xrightarrow{i}v$. Take the walk consisting of that single edge. Its label is $i$, and the grammar derivation of length zero $i\Rightarrow_{\Amod}^* i$ gives the certificate.

Induction step. Assume the claim for Horn derivations of smaller height and consider the clause used at the root.

\begin{caselist}
\item Case $\mathrm{T}_i$. The conclusion is $R_i(u,u)$. Take the empty walk at $u$ and the production $i\to\eps$.
\item Case $\mathrm I(i,j)$. The rule derives $R_i(u,v)$ from $R_j(u,v)$. By the induction hypothesis, there is a walk $\omega$ from $u$ to $v$ with label $\xi$ together with a derivation $j\Rightarrow_{\Amod}^*\xi$. The production $i\to j$ extends this derivation to $i\Rightarrow_{\Amod}^*\xi$.
\item Case $\mathrm B(i,j)$. The rule derives $R_j(v,u)$ from $R_i(u,v)$. By the induction hypothesis, there is a walk $\omega$ from $u$ to $v$ with label $\xi$ and a derivation $i\Rightarrow_{\Amod}^*\xi$. Reverse the walk. Its label is $\bar\xi$. Closedness of the grammar gives $\bar i\Rightarrow_{\Amod}^*\bar\xi$. The converse of the production $\bar j\to i$ is $j\to\bar i$, which yields $j\Rightarrow_{\Amod}^*\bar\xi$.
\item Case $4(i,j,k)$. The premises are $R_j(u,w)$ and $R_k(w,v)$. By the induction hypotheses, there are walks with labels $\xi_1$ and $\xi_2$ such that $j\Rightarrow_{\Amod}^*\xi_1$ and $k\Rightarrow_{\Amod}^*\xi_2$. Concatenation gives a walk from $u$ to $v$ with label $\xi_1\xi_2$. The production $i\to jk$ and contextual closure of rewriting yield $i\Rightarrow_{\Amod}^*\xi_1\xi_2$.
\item Case $5(i,j,k)$. The premises are $R_j(w,u)$ and $R_i(w,v)$ and the conclusion is $R_k(u,v)$. By the induction hypotheses, choose a walk from $w$ to $u$ with label $\xi_1$ and a walk from $w$ to $v$ with label $\xi_2$ such that $j\Rightarrow_{\Amod}^*\xi_1$ and $i\Rightarrow_{\Amod}^*\xi_2$. Reverse the first walk and concatenate it with the second. The resulting walk goes from $u$ to $v$ and has label $\bar\xi_1\xi_2$. Closedness gives $\bar j\Rightarrow_{\Amod}^*\bar\xi_1$, while the production $k\to\bar j i$ gives $k\Rightarrow_{\Amod}^*\bar\xi_1\xi_2$. These cases exhaust the selected Horn clauses.
\end{caselist}

We now pass from certificates to Horn closure. Put $E=(E_j(\mathcal G))_{j\in\Idx}$ and $C_i=\operatorname{Cl}_{\Amod}(E)_i$. For a binary relation $R$, write $R^{-1}$ for its converse, and let $\operatorname{Id}$ be the identity relation on the position set. Define $C(\bar i)=C_i^{-1}$ and $C(\eps)=\operatorname{Id}$. For a word $l_1\cdots l_n$, put $C(l_1\cdots l_n)=C(l_1)\circ\cdots\circ C(l_n)$, with $C(i)=C_i$ for $i\in\Idx$. By Lemma~\ref{lem:least-horn-closure}, the family $(C_i)_{i\in\Idx}$ satisfies every selected Horn clause. Every grammar production $l\to\zeta$ therefore satisfies $C(\zeta)\subseteq C(l)$. For $\mathrm{T}_i$ this follows from reflexivity. For $\mathrm I(i,j)$ it is the relation inclusion $C_j\subseteq C_i$. For $\mathrm B(i,j)$, generalized symmetry gives $C_i\subseteq C_j^{-1}=C(\bar j)$. For $4(i,j,k)$, closure gives $C_j\circ C_k\subseteq C_i$. For $5(i,j,k)$, closure gives $C_j^{-1}\circ C_i\subseteq C_k$. Taking inverses gives the same property for every converse production.

Relational composition is monotone. Hence a one-step rewrite $\xi l\upsilon\Rightarrow_{\Amod}\xi\zeta\upsilon$ gives $C(\xi\zeta\upsilon)\subseteq C(\xi l\upsilon)$. We prove $i\Rightarrow_{\Amod}^*\xi\Longrightarrow C(\xi)\subseteq C_i$ by induction on the length of the semi-Thue derivation.

Base case. A derivation of length zero has $\xi=i$, and $C(i)=C_i$.

Induction step. Assume the claim for a derivation ending at a word $\chi$, and append one rewrite $\chi\Rightarrow_{\Amod}\xi$. Monotonicity for the final rewrite gives $C(\xi)\subseteq C(\chi)$, while the induction hypothesis gives $C(\chi)\subseteq C_i$. Hence $C(\xi)\subseteq C_i$.

Let a certificate use a signed walk $\omega$ from $u$ to $v$ with label $\xi$. Every forward step belongs to the corresponding $C_i$, and every reverse step belongs to $C_i^{-1}$. Therefore $(u,v)\in C(\xi)$. The preceding inclusion yields $(u,v)\in C_i$.
\end{proof}

An assignment of proof positions to worlds that respects every edge must respect every certified reachability fact.

\begin{corollary}[Semantic soundness of certificates]\label{cor:certificate-semantic}
Let $\mathcal M$ be based on a frame in $\Frames(\Amod)$, and let $\iota$ map positions of $\mathcal G$ to worlds while respecting every explicit tree edge. If $u\leadstoA{i}v$, then $\iota(u)R_i\iota(v)$.
\end{corollary}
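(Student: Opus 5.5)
The plan is to reduce the corollary to Theorem~\ref{thm:certificate-horn} together with the minimality statement in Lemma~\ref{lem:least-horn-closure}. Fix $\mathcal M$ based on a frame $(W,(R_j)_{j\in\Idx})\in\Frames(\Amod)$ and the edge-respecting map $\iota$. Let $N$ be the position set of $\mathcal G$. Define the pulled-back relation family $T=(T_j)_{j\in\Idx}$ on $N$ by $(x,y)\in T_j$ exactly when $\iota(x)R_j\iota(y)$. The goal becomes $\operatorname{Cl}_{\Amod}((E_j(\mathcal G))_{j\in\Idx})\subseteq T$. Once this inclusion holds, Theorem~\ref{thm:certificate-horn} turns the hypothesis $u\leadstoA{i}v$ into $(u,v)\in\operatorname{Cl}_{\Amod}(E)_i\subseteq T_i$, and the latter means $\iota(u)R_i\iota(v)$.

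By Lemma~\ref{lem:least-horn-closure}, it suffices to check two facts. The first is that $T$ contains $E(\mathcal G)$. This is exactly the assumption that $\iota$ respects explicit tree edges: an edge $x\xrightarrow{j}y$ yields $\iota(x)R_j\iota(y)$.

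The second fact is that $T$ satisfies every selected clause in $\mathcal C_{\Amod}$. I will verify this clause by clause, transporting each frame condition of Definition~\ref{def:frame-class} along $\iota$.
\begin{caselist}
\item For $\mathrm{T}_j$, reflexivity of $R_j$ at $\iota(x)$ gives $(x,x)\in T_j$.
\item For $\mathrm I(i,j)$, $\mathrm B(i,j)$ and $4(i,j,k)$, the premises in $T$ translate to premises about the worlds $\iota(\cdot)$, and the frame clause delivers the head directly. For $4$, the middle position $w$ is mapped to the middle world $\iota(w)$.
\item For $5(i,j,k)$, premises $(w,x)\in T_j$ and $(w,y)\in T_i$ give $\iota(w)R_j\iota(x)$ and $\iota(w)R_i\iota(y)$, so the frame clause yields $\iota(x)R_k\iota(y)$.
\end{caselist}
The point to keep track of is that the Horn clauses are quantified over positions while the frame clauses are quantified over worlds. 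Because the frame clauses hold for all worlds, instantiating them at images of positions is legitimate, and no surjectivity or injectivity of $\iota$ is needed.

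Seriality is excluded from $\mathcal C_{\Amod}$, so nothing has to be checked for $\mathrm D$. I do not expect a real obstacle here: the only care needed is matching the argument order in the $\mathrm B$ and $5$ clauses with the frame conditions.

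An alternative route, which avoids the closure, argues directly from the certificate. I would interpret each signed step of the walk as $R_j$ or $R_j^{-1}$ between images, and show by induction on the grammar derivation that every production $l\to\zeta$ satisfies $R(\zeta)\subseteq R(l)$ in the frame. This mirrors the second half of the proof of Theorem~\ref{thm:certificate-horn}. I will use the closure route, since it reuses the earlier results more cleanly.
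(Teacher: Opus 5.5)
Your proof is correct and follows the paper's reduction through Theorem~\ref{thm:certificate-horn}, then transports each selected frame clause along $\iota$ exactly as the paper does. The only difference is packaging: you apply the minimality statement of Lemma~\ref{lem:least-horn-closure} to the pulled-back family $T$, whereas the paper takes a Horn derivation from Lemma~\ref{lem:horn-derivational} and inducts on its height, which is the same argument unfolded.
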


\begin{proof}
By Theorem~\ref{thm:certificate-horn}, the certificate gives a Horn closure fact. By Lemma~\ref{lem:horn-derivational}, that fact has a Horn derivation. We use induction on its height.

Base case. A Horn derivation of height zero is justified by an explicit edge. The hypothesis that the position map respects every edge gives the corresponding accessibility fact.

Induction step. Assume the claim for the premise derivations of the root clause. Apply the induction hypotheses to those premises and then apply the corresponding frame clause from Definition~\ref{def:frame-class} to obtain the root relation fact.
\end{proof}

\subsection{Effective certificate search}\label{sec:certificate-search}

The certificate relation is defined by a finite proof object, but execution also requires an effective decision procedure for certificate existence. On a finite nested tree, the Horn presentation and the grammar presentation give two terminating procedures for this decision problem.

\begin{theorem}[Effective certificate search]\label{thm:certificate-decidability}
For a finite nested tree $\mathcal G$, positions $u,v$, and $i\in\Idx$, the judgment $u\leadstoA{i}v$ is decidable. A positive decision yields a certificate.
\end{theorem}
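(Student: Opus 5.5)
The plan is to reduce the question to the Horn presentation, where a terminating procedure is immediate, and then to extract a witness certificate constructively from the Horn side.

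\textbf{Deciding the Horn closure.} Let $N$ be the finite set of positions of $\mathcal G$ and $E=(E_j(\mathcal G))_{j\in\Idx}$ its explicit edge families. We compute the stages $E^0,E^1,\ldots$ of the closure construction. Each stage is a finite computation: for every clause of $\mathcal C_{\Amod}$ we range over all instances with position variables drawn from $N$, of which there are at most $|N|^3$ per clause, and we test membership of the premises in the current finite family. By Lemma~\ref{lem:finite-stabilization} the sequence reaches some $n$ with $E^{n+1}=E^n$. A stage depends only on the previous stage, so $E^m=E^n$ for all $m\ge n$ and hence $\operatorname{Cl}_{\Amod}(E)=E^n$. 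We then test whether $(u,v)\in E^n_i$. By Theorem~\ref{thm:certificate-horn}, this test decides $u\leadstoA{i}v$.

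\textbf{Extracting a certificate.} During saturation we record, for every newly added triple $(j,x,y)$, the clause instance that first produced it. Its premises lie in an earlier stage, so following these records downward terminates, and the records assemble into a Horn derivation of $R_i(u,v)$; this is in effect the construction in the proof of Lemma~\ref{lem:horn-derivational}. We then apply the first half of the proof of Theorem~\ref{thm:certificate-horn}, which is an explicit recursion on derivation height. Each case builds the walk by concatenating or reversing the subwalks, and builds the grammar derivation by prefixing one production (or its bar converse) to the subderivations. Closedness of $S_{\Amod}$ supplies, effectively, the barred derivation $\bar j\Rightarrow_{\Amod}^*\bar\xi$ from $j\Rightarrow_{\Amod}^*\xi$ by barring every step. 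Contextual closure lets us splice the derivations of $\xi_1$ and $\xi_2$ inside $jk$ or $\bar j i$. The output is a concrete pair $(\omega,\mathfrak d)$. A certificate checker can confirm that $\omega$ follows edges of $\mathcal G$ and that $\mathfrak d$ uses only productions of $S_{\Amod}$.

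\textbf{Main obstacle.} The delicate point is making the translation from derivations to certificates genuinely algorithmic rather than merely existential. Every "choose a walk" in the proof of Theorem~\ref{thm:certificate-horn} must be read as the recursive output on a recorded subderivation. The bar operation and splicing must be given as operations on finite derivation objects. I would handle this by defining a recursive function on recorded derivation trees, which terminates by height.

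A direct grammar-side search would be harder. Walks are unbounded and semi-Thue derivability is undecidable in general, so that route would need the language-intersection techniques of~\cite{tiu2012grammar}. For this reason I will rely on the Horn route and mention the grammar route only as an alternative.
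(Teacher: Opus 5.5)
Your proposal is correct and matches the paper's first procedure. You decide membership in the finite Horn closure via Lemma~\ref{lem:finite-stabilization} and Theorem~\ref{thm:certificate-horn}, then record the first-producing clause instance for each fact and run the constructive first half of Theorem~\ref{thm:certificate-horn} to obtain $(\omega,\mathfrak d)$. The paper also develops the grammar route you set aside, which is less hard than you suggest: every left side of $S_{\Amod}$ is a single signed symbol, so $\{\xi\mid i\Rightarrow_{\Amod}^*\xi\}$ is context-free and one tests its intersection with the regular language of walk labels for emptiness. The paper fixes that deterministic procedure as $\mathsf{CertSearch}_{\Amod}$, but the statement itself does not need it.
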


\begin{proof}
The finite Horn closure gives one terminating decision procedure by Lemma~\ref{lem:finite-stabilization} and Theorem~\ref{thm:certificate-horn}. Storing the Horn instance that first produced each relation fact and then following those justifications yields a certificate through the constructive first half of Theorem~\ref{thm:certificate-horn}.

The runtime procedure uses grammar reachability directly. Since every left side in $S_{\Amod}$ is one signed symbol, construct a context-free grammar with a nonterminal $N_l$ for each signed symbol $l$. For every production $l\to l_1\cdots l_n$, add $N_l\to N_{l_1}\cdots N_{l_n}$, and add the production $N_l\to l$, which terminates the grammar derivation at the terminal symbol $l$. The language generated from $N_i$ is $\{\xi\mid i\Rightarrow_{\Amod}^*\xi\}.$ The labels of signed walks from $u$ to $v$ form a regular language accepted by the finite automaton whose states are the positions of $\mathcal G$ and whose transitions are its signed steps. Emptiness of the intersection of a context-free language with a regular language is decidable~\cite{tiu2012grammar}. From a successful product derivation, one can extract a witness word, the corresponding signed walk, and a grammar derivation. We fix one deterministic implementation of this second procedure and write $\mathsf{CertSearch}_{\Amod}$ for it.
\end{proof}

For a diamond occurrence $e$ of $\Diamond_iA$ at a component $u$ and a target component $v$, call $v$ \emph{eligible for $e$} when $u\leadstoA{i}v$. The pair $(e,v)$ is then called an \emph{eligible propagation pair}.

\begin{corollary}[Finite modal choices]\label{cor:finite-modal-choices}
For every nested sequent, the eligible propagation pairs are finite in number and effectively enumerable.
\end{corollary}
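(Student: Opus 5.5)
The plan is to combine the finiteness of the nested sequent with the decision procedure of Theorem~\ref{thm:certificate-decidability}. A nested sequent $\mathcal G$ is a finite expression, so it has finitely many formula occurrences and finitely many components. Its diamond occurrences can be listed by a syntactic traversal that records, for each occurrence $e$ of some $\Diamond_iA$, both the index $i$ and the component $u$ that contains $e$. The same traversal lists the finite set $N$ of position names. An eligible propagation pair $(e,v)$ therefore lies in the finite set of pairs made of a diamond occurrence of $\mathcal G$ and a position of $\mathcal G$. This already settles finiteness.

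Effective enumeration goes as follows. For each candidate pair $(e,v)$, with $e$ located at $u$ and carrying index $i$, run $\mathsf{CertSearch}_{\Amod}$ on $(\mathcal G,u,v,i)$. Theorem~\ref{thm:certificate-decidability} guarantees that this procedure terminates and correctly decides $u\leadstoA{i}v$. Keep the pair exactly when the answer is positive. Because the candidate set is finite and each test terminates, the filtered list is computed in finitely many steps. The positive answers also return certificates $(\omega,\mathfrak d)$, so each listed pair carries the annotation that the proof rules require. As an alternative, one can compute $\operatorname{Cl}_{\Amod}(E(\mathcal G))$ once. By Lemma~\ref{lem:finite-stabilization} this closure stabilizes, and Theorem~\ref{thm:certificate-horn} shows that it has the same extension, so membership can be read off the closure directly.

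The only point that needs care is a subtle one. The judgment $u\leadstoA{i}v$ quantifies over arbitrarily long signed walks, and these may revisit positions. Finiteness of the set of eligible targets therefore does not come from bounding walks. It comes from the fact that the targets range over the finite position set $N$. Decidability of each individual judgment is supplied by Theorem~\ref{thm:certificate-decidability}.
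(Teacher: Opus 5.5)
Your proposal is correct and follows essentially the paper's argument: there are finitely many diamond occurrences and finitely many target components, and each candidate pair is decided by Theorem~\ref{thm:certificate-decidability}. The paper also notes that all certificates for a fixed source occurrence and target license the same insertion. Your final paragraph makes the same point when it locates finiteness in the position set rather than in any bound on walks.
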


\begin{proof}
There are finitely many diamond occurrences and finitely many target components. For each source occurrence and each target component, all certificates license the same formula insertion. Theorem~\ref{thm:certificate-decidability} decides each candidate pair.
\end{proof}

\subsection{Ordinary certificate calculus}\label{sec:certificate-calculus}

Let $\mathsf{Par}$ be a countably infinite effectively enumerable set of proof parameters disjoint from the set $\mathsf{Var}$ of object variables fixed in Section~\ref{sec:source-language}. The \emph{proof terms} are $t ::= x\mid a\mid c\mid f(t_1,\ldots,t_n)$, where $x\in\mathsf{Var}$ and $a\in\mathsf{Par}$. A \emph{proof core formula} is obtained from the core NNF grammar of Definition~\ref{def:core-nnf} by allowing proof terms in atomic arguments. A \emph{proof nested sequent} is obtained from Definition~\ref{def:nested-sequent} by allowing proof core formulas at its formula occurrences. Alpha equivalence extends to proof core formulas in the usual capture-avoiding way. In the rule system defined below, every nested sequent is a proof nested sequent unless the original core syntax of Definition~\ref{def:core-nnf} is explicitly intended.
A parameter introduced by a universal inference is called an \emph{eigenparameter}. A proof parameter that is not introduced by a universal inference is called a \emph{neutral proof parameter}. The rules below treat all proof parameters as rigid constants.

\begin{convention}[Rigid semantic expansion for proof parameters]\label{conv:parameter-expansion}
Whenever the proof parameters occurring in a nested sequent belong to a finite set $Q\subseteq\mathsf{Par}$, semantic evaluation uses a rigid expansion of the underlying source model in which each member of $Q$ is interpreted as an additional constant in the same domain, with an interpretation fixed across worlds. Validity of such a parameterized nested sequent means validity of its recursive interpretation under every such rigid expansion.
\end{convention}

For the syntactic counterpart, when $Q\subseteq\mathsf{Par}$ is finite, write $H_{\Amod}[Q]$ for the Hilbert system over the temporary signature obtained from $\Sigma$ by adjoining the members of $Q$ as fresh rigid constants while keeping exactly the same axiom schemes and inference rules as $H_{\Amod}$. In particular, $H_{\Amod}[\varnothing]=H_{\Amod}$. Whenever a proof core formula occurs in a derivability judgment for $H_{\Amod}[Q]$, the Hilbert interpretation fixed for core formulas above is used, with the members of $Q$ treated as temporary rigid constants.

\begin{lemma}[Conservativity for fresh constants]\label{lem:fresh-constant-conservativity}
Let $Q\subseteq Q'\subseteq\mathsf{Par}$ be finite, and let $F$ be a formula over the signature $\Sigma\cup Q$. If
$H_{\Amod}[Q']\vdash F,$
then
$H_{\Amod}[Q]\vdash F.$
\end{lemma}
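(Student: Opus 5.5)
The plan is a direct syntactic argument: replace the extra constants by fresh object variables throughout a derivation. Let $Q'\setminus Q=\{a_1,\ldots,a_k\}$ and fix an $H_{\Amod}[Q']$ derivation $F_1,\ldots,F_n=F$. Choose pairwise distinct object variables $z_1,\ldots,z_k\in\mathsf{Var}$ that occur nowhere in the derivation, free or bound. Since $\mathsf{Var}$ is infinite and the derivation is finite, such variables exist. Let $F^\ast$ be the result of replacing each occurrence of $a_j$ by $z_j$. We show by induction on the derivation that each $F_m^\ast$ is a theorem of $H_{\Amod}[Q]$. Because $F$ is over $\Sigma\cup Q$, we have $F^\ast=F$, which gives the lemma.

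\emph{Axioms.} Every scheme of Definition~\ref{def:hilbert-base}, together with the concrete basis H1--Q4, is closed under this replacement. An instance of H1--EFQ, $\mathrm K_i$, duality, $\mathrm{BF}_i$, $\mathrm{CBF}_i$, or any selected scheme of $\Amod$ maps to an instance of the same scheme over $\Sigma\cup Q$, with the replaced subformulas as the new metavariable values. For the quantifier schemes I check the side conditions as follows.
\begin{itemize}
\item In Q1 and Q3, the instance $A(t/x)$ becomes $A^\ast(t^\ast/x)$. The term $t^\ast$ remains free for $x$ in $A^\ast$ because the $z_j$ are never bound in the derivation.
\item In Q2 and Q4, the condition $x\notin\FV(A)$ is preserved, since $x$ is an original variable, not one of the $z_j$. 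Moreover, no new free occurrences of $x$ are created.
\end{itemize}

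\emph{Rules.} Modus ponens commutes with the replacement, since $(A\to B)^\ast=A^\ast\to B^\ast$. Theorem generalization from $A$ to $\forall xA$ and theorem necessitation from $A$ to $\Box_iA$ also commute with it, with no side condition to check. Both rules only require their premise to be a theorem, which is exactly the induction hypothesis.

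The main point needing care is the freshness bookkeeping in Q1--Q4: substitution of a term containing $a_j$ must not become capture of $z_j$. Choosing the $z_j$ disjoint from every bound and free variable of the whole derivation handles this uniformly. Alpha renaming is therefore never needed, and substitution commutes with the replacement: $(A(t/x))^\ast=A^\ast(t^\ast/x)$. I would verify this commutation as a small auxiliary claim by induction on $A$ before running the main induction.
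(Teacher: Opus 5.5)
Your proof is correct and takes essentially the paper's approach: replace the extra constants by fresh object variables throughout a finite $H_{\Amod}[Q']$ derivation, then check that axiom instances, modus ponens, theorem generalization and necessitation are preserved. The differences are cosmetic. You eliminate all of $Q'\setminus Q$ simultaneously rather than one parameter at a time. You also choose the $z_j$ absent from the whole derivation, which avoids the paper's preliminary alpha conversion and makes the Q1--Q4 side-condition checks explicit.
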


\begin{proof}
It is enough to eliminate one parameter $a\in Q'\setminus Q$ at a time. Take a finite $H_{\Amod}[Q\cup\{a\}]$ proof of $F$ and choose an object variable $y$ absent from that proof after alpha conversion of bound variables. Uniformly replace the constant symbol $a$ by the free object variable $y$ throughout the proof. Every propositional, first-order, modal, Barcan, and selected modal axiom instance remains an instance of the same scheme. Modus ponens, theorem generalization, and necessitation are preserved. Since $a$ does not occur in $F$, the final formula remains $F$. The transformed proof uses only the temporary constants in $Q$. Repeating this finite elimination yields an $H_{\Amod}[Q]$ proof of $F$.
\end{proof}

Proof search applies the following rules from conclusion to premises.

\begin{definition}[Certificate rule system]\label{def:certificate-rules}
The \emph{certificate rule system} $\mathcal N_{\Amod}$ has initial sequents $\mathcal G\{\top\}$, labeled $(\top\mathrm{ax})$, and $\mathcal G\{p(\bar t),\overline p(\bar t)\}$, labeled $(\mathrm{ax})$.
The Boolean rules are
\[
\begin{array}{cc}
 \displaystyle\frac{\mathcal G\{A,B\}}{\mathcal G\{A\lor B\}}\;(\lor)
 &
 \displaystyle\frac{\mathcal G\{A\}\;\mathcal G\{B\}}
      {\mathcal G\{A\land B\}}\;(\land)
\end{array}
\]
The quantified rules are
\[
 \frac{\mathcal G\{\exists xA,A(t/x)\}}
      {\mathcal G\{\exists xA\}}\;(\exists)
\]
for any proof term $t$, and
\[
 \frac{\mathcal G\{A(a/x)\}}
      {\mathcal G\{\forall xA\}}\;(\forall)
\]
where $a\in\mathsf{Par}$ is fresh in the conclusion.

For every $i\in\Idx$, the box rule is
\[
 \frac{\mathcal G\{[A]_i\}}
      {\mathcal G\{\Box_iA\}}\;(\Box_i),
\]
where proof search creates a fresh $i$ child. Certificate propagation is
\[
 \frac{\mathcal G\{\Diamond_iA\}_{u}\{A,\Delta\}_{v}}
      {\mathcal G\{\Diamond_iA\}_{u}\{\Delta\}_{v}}\;(\Diamond_i^{\mathrm{cert}}),
\]
with an attached certificate for $u\leadstoA{i}v$. The premise keeps the principal diamond occurrence. The endpoints may coincide under Convention~\ref{conv:coincident-endpoints}. A rule is called \emph{retaining} when its principal formula occurrence also occurs unchanged in its premise, and an occurrence preserved in this way is called \emph{retained}. The existential rule and certificate propagation are retaining. Their principal occurrences may therefore be used again with another proof term or another certified target. For every selected $\mathrm{D}_i\in\Amod$, seriality uses
\[
 \frac{\mathcal G\{[\varnothing]_i\}}
      {\mathcal G\{\varnothing\}}\;(\mathrm d_i),
\]
which creates a fresh empty $i$ child during backward search. The calculus has no cut rule.
\end{definition}

The ordinary certificate calculus $\mathcal N_{\Amod}$ operates on proof core formulas. When $C$ is a proof core formula, the notation $\mathcal N_{\Amod}\vdash C$ abbreviates derivability of the proof nested sequent consisting of a single root component containing only $C$. We call its derivations \emph{ordinary certificate derivations}, or simply ordinary derivations when the calculus is clear.

\subsection{Certificate persistence}\label{sec:certificate-persistence}

The ordinary certificate calculus extends the nested tree only by adding fresh labeled child edges. Existing certificates remain valid under these edge extensions.

\begin{lemma}[Tree monotonicity]\label{lem:tree-monotonicity}
Every nonterminal rule of the ordinary certificate calculus either leaves the explicit nested tree unchanged or adds one fresh labeled child edge. No rule removes, relabels, or moves an existing edge.
\end{lemma}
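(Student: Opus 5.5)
The proof is a direct inspection of the rules in Definition~\ref{def:certificate-rules}, read from conclusion to premises as proof search applies them. For each nonterminal rule I compare the explicit nested tree of the conclusion with that of every premise, that is, the set of positions together with the families $E_i(\mathcal G)$. The key observation is that the one-hole context notation $\mathcal G\{\ \}$ fixes everything outside the hole. Position names are persistent by Definition~\ref{def:nested-sequent}, so any edge outside the displayed hole appears unchanged in the premise.

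I then split the rules into two groups. The first group consists of $(\lor)$, $(\land)$, $(\exists)$, $(\forall)$, and $(\Diamond_i^{\mathrm{cert}})$. These only replace, add, or duplicate formula occurrences inside an existing component. For $(\land)$ this holds in each of the two premises separately, and for $(\Diamond_i^{\mathrm{cert}})$ it holds at both distinguished positions $u$ and $v$, including the coincident case of Convention~\ref{conv:coincident-endpoints}. None of these rules touches a bracket, so the explicit tree is identical in conclusion and premises.

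The second group consists of $(\Box_i)$ and $(\mathrm d_i)$. Each inserts exactly one bracket $[A]_i$ or $[\varnothing]_i$ into the hole at some component $w$. The new child receives a fresh position name from the fixed supply, so the premise tree is the conclusion tree with one new node $w'$ and one new edge $w\xrightarrow{i}w'$ added to $E_i$. Since the name is fresh, $w'$ is distinct from every existing position. The new edge therefore cannot coincide with, relabel, or redirect any existing edge. The initial sequents $(\top\mathrm{ax})$ and $(\mathrm{ax})$ have no premises and are excluded as terminal.

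The only delicate point is bookkeeping: making precise that "the context is unchanged" really means the position names and bracket indices outside the hole are preserved, not merely the shape up to renaming. I would state explicitly that rule instances are read with persistent position names, so an existing edge $u\xrightarrow{j}v$ has the same endpoints and label in the premise. With that in place, the statement follows by this case analysis over the finitely many rule forms.
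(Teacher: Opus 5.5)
Your proposal is correct and follows the paper's own argument: a rule-by-rule inspection showing that the Boolean, quantifier, and certificate propagation rules only change formula occurrences at existing components, while box and seriality each add a single fresh labeled child. Your extra bookkeeping about persistent position names in the one- and two-hole contexts spells out what the paper's short proof leaves implicit.
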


\begin{proof}
Boolean and quantifier rules change formulas. Certificate propagation adds a formula at an existing component. The box rule and the seriality rule each add one fresh child. These are all nonterminal rule families of the kernel.
\end{proof}

\begin{corollary}[Certificate persistence]\label{cor:certificate-persistence}
A certificate valid at one stage of a proof branch remains valid at every descendant stage.
\end{corollary}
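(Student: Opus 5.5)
The argument is direct: a certificate is a pair $(\omega,\mathfrak d)$ consisting of a signed walk in the current nested tree and a grammar derivation $i\Rightarrow_{\Amod}^*\operatorname{lab}(\omega)$. I will show that both components survive unchanged when we pass from a stage to any descendant stage, so the same pair certifies $u\leadstoA{i}v$ in the later sequent.

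The grammar derivation $\mathfrak d$ depends only on $S_{\Amod}$, which is fixed by the module and independent of the sequent, so it remains a valid derivation. For the walk, I induct on the number of rule applications between the given stage and the descendant stage. It suffices to treat one backward rule application, since the general case then follows by composing. By Lemma~\ref{lem:tree-monotonicity}, one step either leaves the explicit tree unchanged or adds a single fresh labeled child edge, and it never removes, relabels, or moves an existing edge.

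The walk $\omega$ is a finite sequence of signed steps. Each forward step $x\xrightarrow{j}y$ comes from an explicit edge, and each reverse step $y\xrightarrow{\bar j}x$ also comes from an explicit edge $x\xrightarrow{j}y$. Since every such edge is still present with the same label and orientation, every step of $\omega$ is still a signed step of the extended tree. Hence $\omega$ is still a walk from $u$ to $v$ with the same label. Position names are persistent and fresh components receive new names, so $u$ and $v$ still denote the same components. In the case $u=v$ with the empty walk, the walk remains valid trivially.

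The only point needing care is that the chosen positions persist, meaning the descendant stage is on the same branch and components are never deleted or renamed. This is exactly what Lemma~\ref{lem:tree-monotonicity} provides, together with the convention that position names are persistent. Alternatively, one could argue via Theorem~\ref{thm:certificate-horn}: the explicit edge family only grows, and monotonicity of $\operatorname{Cl}_{\Amod}$ (Lemma~\ref{lem:least-horn-closure}) then gives the claim. The direct argument has the advantage of preserving the annotated certificate itself.
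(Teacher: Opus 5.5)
Your proposal is correct and follows the paper's proof: every edge used by the walk persists by Lemma~\ref{lem:tree-monotonicity}, and the grammar $S_{\Amod}$ is fixed, so the same pair $(\omega,\mathfrak d)$ remains a certificate. Your induction on the number of rule applications and your remarks on reverse steps and the empty walk only spell out what the paper leaves implicit.
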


\begin{proof}
Every edge used by the signed walk remains present with the same label and endpoints by Lemma~\ref{lem:tree-monotonicity}. The grammar $S_{\Amod}$ does not change along the branch.
\end{proof}

Certificate persistence concerns such edge extensions. The answer theory also changes first-order terms without changing the tree. The next lemma states the independent invariance needed for substitution and abstraction.

\begin{lemma}[Term invariance of certificates]\label{lem:certificate-term-invariance}
Let $(\omega,\mathfrak d)$ certify $u\leadstoA{i}v$ in a nested sequent $\mathcal G$. Suppose $\vartheta$ changes only first-order terms inside formulas and preserves formula constructors, positions, edges between parents and children, modal indices, and edge labels. Then the same pair $(\omega,\mathfrak d)$ certifies the corresponding reachability judgment in $\vartheta(\mathcal G)$.
\end{lemma}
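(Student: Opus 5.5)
The argument is a direct unfolding of Definition~\ref{def:certificate}. A certificate $(\omega,\mathfrak d)$ has two ingredients. The signed walk $\omega$ depends only on positions, explicit edges, and edge labels of $\mathcal G$. The grammar derivation $\mathfrak d$ depends only on $S_{\Amod}$ and the word $\operatorname{lab}(\omega)$. Neither mentions a formula occurrence or a first-order term. So I check that each ingredient survives the passage from $\mathcal G$ to $\vartheta(\mathcal G)$.

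The first step treats the walk. By hypothesis, $\vartheta$ preserves positions, edges between parents and children, and edge labels. Hence every explicit edge $u'\xrightarrow{j}v'$ of $\mathcal G$ is an explicit edge of $\vartheta(\mathcal G)$ with the same endpoints and label. Conversely, no new edge appears, though this direction is not needed. Each forward signed step of $\omega$ is therefore a forward step of $\vartheta(\mathcal G)$. Each reverse step $v'\xrightarrow{\bar j}u'$ is determined by the same edge, so it is also a step of $\vartheta(\mathcal G)$. By induction on the length of $\omega$, the same sequence of signed steps is a signed walk in $\vartheta(\mathcal G)$ from $u$ to $v$, with unchanged label $\operatorname{lab}(\omega)$. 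The empty walk in the case $u=v$ is trivially preserved.

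The second step treats the grammar derivation. The grammar $S_{\Amod}$ is determined by $\Amod$ alone (Definition~\ref{def:semi-thue}), and the modal index $i$ is preserved. Hence $\mathfrak d$ remains a derivation $i\Rightarrow_{\Amod}^*\operatorname{lab}(\omega)$ with exactly the same rewrite steps. Together these give that $(\omega,\mathfrak d)$ certifies $u\leadstoA{i}v$ in $\vartheta(\mathcal G)$. Any annotation in a rule application that refers to the principal occurrence at $u$ refers to the corresponding occurrence, which exists because formula constructors are preserved.

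There is no real obstacle here. The only point needing care is making precise what ``corresponding reachability judgment'' means. Positions are persistent names, so I would state explicitly that the identity on position names is the correspondence between $\mathcal G$ and $\vartheta(\mathcal G)$, and that signed steps are defined purely from the edge family. As a cross-check, I could also route the argument through Theorem~\ref{thm:certificate-horn}: the families $E_j(\mathcal G)$ and $E_j(\vartheta(\mathcal G))$ coincide, so their Horn closures coincide. However, this only gives existence of some certificate, whereas the lemma asserts that the \emph{same} pair works. The direct argument above is therefore the one to give.
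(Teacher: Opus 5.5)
Your proposal is correct and follows the paper's own argument: the signed walk $\omega$ uses only positions and labeled tree edges, which $\vartheta$ preserves, so its label is unchanged; and the grammar derivation $\mathfrak d$ lives entirely in $S_{\Amod}$, which carries no first-order term data. You are also right that routing through Theorem~\ref{thm:certificate-horn} would only give the existence of some certificate, not the reuse of the same pair.
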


\begin{proof}
The signed walk $\omega$ uses only positions and labeled tree edges, all of which are preserved by $\vartheta$. Its label is therefore unchanged. The grammar $S_{\Amod}$ contains no first-order term data, so the same grammar derivation $\mathfrak d$ remains valid.
\end{proof}

For every finite derivation $\mathcal D$ considered below, its \emph{height} $h(\mathcal D)$ is the maximum number of inference steps on a root-to-leaf path. A derivation consisting of an initial sequent has height zero.

The quantified soundness argument uses the substitution lemmas in Appendix~\ref{app:substitution}. Nested sequent validity can be expressed either through the recursive formula interpretation or through an assignment of worlds to positions.

\begin{definition}[Validity by position maps]\label{def:position-map-validity}
Let $\Gamma_u$ be the formula multiset at component $u$. A \emph{position map} is a map $\iota$ from positions to worlds, and such a map is \emph{edge respecting} when every explicit edge $u\xrightarrow{i}v$ satisfies $\iota(u)R_i\iota(v)$. A nested sequent is false under $\mathcal M,g,\iota$ when, for every position $u$, every $A\in\Gamma_u$ is false at $\iota(u)$ under $g$.
\end{definition}

\begin{lemma}[Semantics of position maps]\label{lem:position-map-semantics}
Let $u_0$ be the root position of a nested sequent $\mathcal G$. For every model $\mathcal M$, assignment $g$, and world $w$,
$\mathcal M,w,g\not\models\sem{\mathcal G}$
if and only if there is a position map $\iota$ with $\iota(u_0)=w$ that respects every edge and makes every formula occurrence false at its component.
\end{lemma}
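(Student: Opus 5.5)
The plan is to prove the equivalence by structural induction on the nested sequent $\mathcal G$. I will in fact prove a slightly stronger statement that allows an arbitrary component to play the role of root. Let $u$ be a component and $\mathcal G_u$ the subtree rooted at $u$. Then $\mathcal M,w,g\not\models\sem{\mathcal G_u}$ if and only if there is an edge-respecting position map $\iota$ on the positions of $\mathcal G_u$ with $\iota(u)=w$ that falsifies every formula occurrence at its component. The lemma is the instance $u=u_0$. The induction runs on the height of the subtree, or equivalently on the recursive generation of Definition~\ref{def:nested-sequent}.

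Write $\mathcal G_u=A_1,\ldots,A_n,[\Delta_1]_{i_1},\ldots,[\Delta_r]_{i_r}$, and let $v_1,\ldots,v_r$ be the children heading the $\Delta_k$. By Definition~\ref{def:nested-sequent}, the interpretation $\sem{\mathcal G_u}$ is the disjunction of the $A_l$ and the formulas $\Box_{i_k}\sem{\Delta_k}$. In the base case $n=r=0$ the interpretation is $\bot$, which is false everywhere, and the map $\iota(u)=w$ witnesses the right side vacuously. For core formulas I will use Definition~\ref{def:core-semantics}, and through Convention~\ref{conv:parameter-expansion} the rigid expansion for proof parameters.

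\emph{Left to right.} Suppose the disjunction fails at $w$. Then every $A_l$ is false at $w$. Moreover, for each $k$ there is a world $w_k$ with $wR_{i_k}w_k$ and $\mathcal M,w_k,g\not\models\sem{\Delta_k}$. The induction hypothesis gives maps $\iota_k$ on the subtree at $v_k$ with $\iota_k(v_k)=w_k$. Since distinct components have distinct position names, the subtrees have disjoint position sets. So set $\iota=\{u\mapsto w\}\cup\bigcup_k\iota_k$, which is a well-defined function. It respects the edges inside each subtree by the hypothesis, respects each new edge $u\xrightarrow{i_k}v_k$ because $wR_{i_k}w_k$, and falsifies every occurrence.

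\emph{Right to left.} Given such an $\iota$, each $A_l$ is false at $w=\iota(u)$. Restricting $\iota$ to the subtree at $v_k$ gives an edge-respecting falsifying map with root value $\iota(v_k)$. By the induction hypothesis $\sem{\Delta_k}$ fails at $\iota(v_k)$, and since $wR_{i_k}\iota(v_k)$, the formula $\Box_{i_k}\sem{\Delta_k}$ fails at $w$. Hence the whole disjunction fails.

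There is no deep obstacle here. The only points needing care are the disjointness of positions, which makes the gluing of the maps well defined, and the use of the same fixed assignment $g$ throughout. The latter is legitimate because the formula interpretation introduces no quantifiers over the tree structure and parameters are interpreted rigidly.
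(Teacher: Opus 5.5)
Your proof is correct and follows the paper's own argument. The paper inducts on the number of bracket occurrences while you induct on subtree height with an arbitrary component as root, which is what the paper's hypothesis on child subtrees uses implicitly; in both, the children's maps are glued via disjoint position names in one direction and the map is restricted to each child subtree in the other. Your stated base case $n=r=0$ is narrower than the true base case $r=0$, but your general step already handles $r=0$ with arbitrary $n$ without invoking the hypothesis, so nothing is missing.
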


\begin{proof}
We use induction on the number of bracket occurrences.

Base case. With no brackets, $\sem{\mathcal G}$ is the disjunction of the root formulas. Its falsity is therefore equivalent to assigning the root to $w$ and falsifying every root formula.

Induction step. Write the root as $A_1,\ldots,A_n,[\Delta_1]_{i_1},\ldots,[\Delta_r]_{i_r}$ and assume the result for each child subtree.

\begin{caselist}
\item Case from semantic falsity to a position map. If the interpretation is false at $w$, every root formula is false there, and for each child $[\Delta]_i$ the formula $\Box_i\sem{\Delta}$ is false. Choose an $R_i$ successor $v$ of $w$ at which $\sem{\Delta}$ is false. The induction hypothesis gives a position map that respects every edge on each child subtree. The child position sets are disjoint, so these maps combine with the assignment of $w$ to the root. This combined map falsifies every formula occurrence.
\item Case from a position map to semantic falsity. Let a position map that respects every edge falsify every formula occurrence. Restrict it to each child subtree $[\Delta]_i$. The induction hypothesis makes $\sem{\Delta}$ false at the child world. The explicit parent edge makes that world an $R_i$ successor of $w$, so the box disjunct is false at $w$. Every root formula is also false, and therefore the whole root disjunction is false.
\end{caselist}
\end{proof}

The preceding lemma reduces rule soundness to a local counterinterpretation argument. For each backward rule step, a falsifying map for the conclusion can be extended to at least one premise.

\begin{theorem}[Rule soundness]\label{thm:kernel-rule-soundness}
Every rule of $\mathcal N_{\Amod}$ preserves validity over constant-domain models with rigid terms based on $\Frames(\Amod)$.
\end{theorem}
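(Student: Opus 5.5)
We argue contrapositively and rule by rule. For each rule we assume a countermodel to some conclusion and build a countermodel to at least one premise. Concretely, suppose the conclusion $\mathcal G$ is false at some world $w$ of a model $\mathcal M$ based on a frame in $\Frames(\Amod)$, under an assignment $g$ and a rigid expansion interpreting its proof parameters as in Convention~\ref{conv:parameter-expansion}. By Lemma~\ref{lem:position-map-semantics} we obtain an edge-respecting position map $\iota$ that falsifies every formula occurrence at its component. The task in each case is to modify or extend $(\mathcal M,g,\iota)$ so that it falsifies every occurrence of one premise. Lemma~\ref{lem:position-map-semantics} then converts this back into falsity of $\sem{\text{premise}}$ at $w$. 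Initial sequents need a separate remark. $\top$ cannot be false, and $p(\bar t)$ together with $\overline p(\bar t)$ cannot both be false at one world by Definition~\ref{def:core-semantics}. So no falsifying map exists and these sequents are valid.

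\textbf{Boolean, box and seriality rules.} For $(\lor)$, falsity of $A\lor B$ at $\iota(u)$ gives falsity of both $A$ and $B$, and the same $\iota$ works. For $(\land)$, one conjunct is false, and we choose the corresponding premise. For $(\Box_i)$, falsity of $\Box_iA$ at $\iota(u)$ yields an $R_i$ successor $v'$ where $A$ is false. We send the fresh child to $v'$, which keeps $\iota$ edge respecting. For $(\mathrm d_i)$, we use the seriality clause of Definition~\ref{def:frame-class}, which applies because $\mathrm D_i\in\Amod$, to pick an $R_i$ successor for the fresh empty child. An empty component imposes no falsity condition.

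\textbf{Propagation.} For $(\Diamond_i^{\mathrm{cert}})$, we use Corollary~\ref{cor:certificate-semantic} with the attached certificate to obtain $\iota(u)R_i\iota(v)$. Since $\Diamond_iA$ is false at $\iota(u)$, the formula $A$ is false at $\iota(v)$, so the same $\iota$ falsifies the premise. The principal diamond is retained unchanged, and coincident endpoints cause no difficulty by Convention~\ref{conv:coincident-endpoints}.

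\textbf{Quantifier rules.} For $(\exists)$, falsity of $\exists xA$ at $\iota(u)$ gives falsity of $A$ under $g[x\mapsto\sem{t}_g]$. The substitution lemma for rigid terms in Appendix~\ref{app:substitution} converts this into falsity of $A(t/x)$ under $g$. Rigidity is needed here because $\iota(u)$ may differ from the world at which $t$ is evaluated, and rigid interpretation makes these values agree. For $(\forall)$, falsity of $\forall xA$ yields some $d\in D$ with $A$ false under $g[x\mapsto d]$. We expand the model so that it interprets the fresh parameter $a$ rigidly as $d$. Freshness of $a$ in the conclusion guarantees that this reinterpretation leaves the truth values of all other occurrences unchanged, so the map $\iota$ and assignment $g$ are preserved. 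The substitution lemma then gives falsity of $A(a/x)$.

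I expect the main obstacle to be the $(\forall)$ case. It requires checking that changing the rigid expansion by interpreting $a$ affects no formula of the conclusion anywhere in the tree, not merely at $u$. This is the reason validity is quantified over all rigid expansions, and it relies on the coincidence part of the substitution lemmas.
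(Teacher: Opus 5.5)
Your proposal is correct and takes essentially the same route as the paper. Both argue rule by rule with counterinterpretations via position maps (Lemma~\ref{lem:position-map-semantics}), use Corollary~\ref{cor:certificate-semantic} for propagation and a suitable successor for $(\Box_i)$ and $(\mathrm d_i)$, and treat the quantifier rules with the substitution lemma plus a rigid reinterpretation of the fresh eigenparameter. Your remark that freshness of $a$ leaves the rest of the counterinterpretation unchanged is a coincidence point the paper leaves implicit. The same argument covers any new parameters in an existential witness $t$, which your $(\exists)$ case should also mention.
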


\begin{proof}
It suffices to extend a counterinterpretation of a conclusion to a counterinterpretation of at least one premise.

A component containing $\top$ cannot be false, and opposite atomic literals cannot both be false. If $A\lor B$ is false, both disjuncts are false. If $A\land B$ is false, one conjunct is false, so the appropriate premise remains false.

If $\exists xA$ is false, then $A$ is false under every value of $x$, including the denotation of the chosen proof term $t$. The substitution lemma in Lemma~\ref{lem:formula-substitution} gives falsity of $A(t/x)$. If $\forall xA$ is false, choose a domain element that witnesses the failure, interpret the fresh parameter $a$ rigidly as that element, and apply the same substitution lemma to $A(a/x)$.

If $\Box_iA$ is false at the active component, choose an $R_i$ successor where $A$ is false and map the fresh $i$ child to that world. If a certificate diamond conclusion is false, Corollary~\ref{cor:certificate-semantic} gives $\iota(u)R_i\iota(v)$. Falsity of $\Diamond_iA$ at $\iota(u)$ makes $A$ false at $\iota(v)$, so insertion of $A$ at the target preserves the counterinterpretation. Finally, if $\mathrm{D}_i$ is selected, an $R_i$ successor exists by seriality. Map the fresh empty child to that successor. No formula is added in that child.
\end{proof}

\begin{corollary}[Kernel soundness]\label{cor:kernel-soundness}
If $\mathcal N_{\Amod}\vdash\mathcal G$, then $\mathcal G$ is valid in every constant-domain model with rigid terms based on $\Frames(\Amod)$.
\end{corollary}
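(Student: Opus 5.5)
The plan is to argue by induction on the height $h(\mathcal D)$ of a derivation $\mathcal D$ of $\mathcal G$, in contrapositive form. We suppose that $\mathcal G$ is not valid. Then there are a constant-domain model $\mathcal M$ with rigid terms, based on a frame in $\Frames(\Amod)$, a world $w$, and an assignment $g$ with $\mathcal M,w,g\not\models\sem{\mathcal G}$. Convention~\ref{conv:parameter-expansion} lets us extend $\mathcal M$ rigidly so that it interprets the finitely many proof parameters occurring in $\mathcal G$. Lemma~\ref{lem:position-map-semantics} then turns this failure into an edge-respecting position map $\iota$ with $\iota(u_0)=w$ under which every formula occurrence of $\mathcal G$ is false at its component. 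The whole argument stays in this local counterinterpretation form.

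In the base case, $\mathcal D$ is an initial sequent, either $(\top\mathrm{ax})$ or $(\mathrm{ax})$. No counterinterpretation can exist, because $\top$ is true everywhere by Definition~\ref{def:core-semantics}, and the literals $p(\bar t)$ and $\overline p(\bar t)$ cannot both be false at the same world under the same assignment. So the initial sequents are valid.

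For the induction step, let the last rule of $\mathcal D$ be applied to $\mathcal G$. The proof of Theorem~\ref{thm:kernel-rule-soundness} shows that a falsifying triple $(\mathcal M,g,\iota)$ for the conclusion extends to a falsifying triple $(\mathcal M',g,\iota')$ for at least one premise. Here $\mathcal M'$ may add a rigid interpretation for a fresh eigenparameter, and $\iota'$ may add a world for a fresh child created by $(\Box_i)$ or $(\mathrm d_i)$. That premise has a derivation of smaller height, so by the induction hypothesis it is valid over all rigid expansions of models on frames in $\Frames(\Amod)$. Since $\mathcal M'$ is itself such an expansion, this contradicts the falsity of that premise. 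Hence $\mathcal G$ is valid.

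The only step needing care is the bookkeeping in this step: checking that the extended model is still an admissible rigid expansion over a frame in $\Frames(\Amod)$, and that the extended position map still respects every edge. The frame is never changed. Fresh parameters are interpreted as fixed domain elements, which keeps terms rigid. Fresh children are mapped to genuine $R_i$ successors, so every edge is respected. For propagation steps, Corollary~\ref{cor:certificate-semantic} supplies the required accessibility fact. Once the proof of Theorem~\ref{thm:kernel-rule-soundness} is read as producing a premise counterinterpretation, these checks are routine, and the corollary follows.
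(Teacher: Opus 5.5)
Your proof is correct and follows the paper's route: induction on derivation height, with initial sequents shown valid and each inference handled by the counterinterpretation-extension argument from the proof of Theorem~\ref{thm:kernel-rule-soundness}, which you simply inline in position-map form. The one bookkeeping point you omit is that an $(\exists)$ witness $t$ may contain proof parameters that do not occur in the conclusion. So $\mathcal M'$ must also interpret those parameters, not only fresh eigenparameters; any interpretation works, because falsity of $\exists xA$ makes $A$ false under every value of $x$.
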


\begin{proof}
We use induction on derivation height.

Base case. A derivation of height zero is an initial sequent, which is valid by the truth or atomic clause of Theorem~\ref{thm:kernel-rule-soundness}.

Induction step. Assume every premise of the final inference is valid by the induction hypothesis. The corresponding clause of Theorem~\ref{thm:kernel-rule-soundness} preserves validity from the premises to the conclusion.
\end{proof}

\subsection{Fair saturation}\label{sec:fair-saturation}

Completeness uses a saturation schedule in which bookkeeping information is retained separately on each branch. Fix an effective enumeration $t_0,t_1,\ldots$ of the proof terms, together with effective enumerations of proof parameters, modal indices, component names, and finite tuples of syntactic data. Fix also an effective enumeration of a countably infinite supply of names for persistent formula occurrences, disjoint from the previously fixed syntactic name supplies. A name from this supply is called a \emph{persistent occurrence name}. For the saturation construction only, every formula occurrence is assigned a persistent occurrence name from this supply. The occurrences in the root nested sequent receive distinct names, every occurrence preserved by a rule keeps its name, and every newly created occurrence receives a fresh name. These annotations are part of the saturation bookkeeping only and are ignored by the certificate calculus and its semantics. The completeness argument depends on the scheduler eventually processing every obligation that remains permanently available, on finite branching of the search tree, and on the existence of an infinite fair branch whenever no finite proof exists. The next lemmas establish these properties in that order.

\begin{definition}[Saturation configuration]\label{def:saturation-configuration}
A \emph{saturation configuration} is a triple $(\mathcal G,H,N)$ consisting of a nested sequent $\mathcal G$, a finite set $H\subseteq\mathbb N$, and a scheduler stage $N\in\mathbb N$. It is \emph{open} when $\mathcal G$ is not an initial sequent of the certificate calculus.
\end{definition}

\begin{definition}[Saturation obligation]\label{def:saturation-obligation}
At an open saturation configuration $(\mathcal G,H,N)$, a \emph{saturation obligation} has one of the following forms.
\begin{enumerate}
\item An occurrence of $A\lor B$.
\item An occurrence of $A\land B$.
\item An occurrence of $\forall xA$.
\item A pair consisting of an occurrence of $\exists xA$ and an index $n$ requesting the predetermined proof term $t_n$.
\item An occurrence of $\Box_iA$.
\item A tuple $(e,i,u,v)$ where $e$ is an occurrence of $\Diamond_iA$ at $u$ and $u\leadstoA{i}v$ holds in the underlying nested sequent $\mathcal G$.
\item A pair $(i,u)$ for selected $\mathrm{D}_i$ requesting one explicit $i$ successor of $u$.
\end{enumerate}
Every possible obligation has a distinct natural number code. A code is processed exactly when it belongs to $H$. At scheduler stage $N$, the search processes the least currently available code at most $N$ that does not belong to $H$, or performs a bookkeeping step when no such code is available. Processing an obligation adds its code to $H$, while a bookkeeping step leaves $H$ unchanged. In either case, the next scheduler stage is $N+1$. Whenever processing an obligation requires a fresh proof parameter, component name, or persistent occurrence name, the scheduler chooses the first name in the corresponding fixed effective enumeration that is unused in the current saturation configuration and its occurrence annotations. Conjunction copies the resulting set $H$ to both branches. Certificate obligations use the fixed procedure $\mathsf{CertSearch}_{\Amod}$. For a root nested sequent $\mathcal G_0$, the saturation construction starts from $(\mathcal G_0,\varnothing,0)$, and the saturation tree consists of the configurations generated by these logical and bookkeeping transitions.
\end{definition}

\begin{lemma}[Saturation fairness]\label{lem:saturation-fairness}
Every obligation that becomes permanently available on an infinite search branch is eventually processed on that branch.
\end{lemma}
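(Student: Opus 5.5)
The argument is a standard pigeonhole on scheduler stages, using only the scheduling discipline of Definition~\ref{def:saturation-obligation}. Fix an infinite branch
\[
(\mathcal G_0,H_0,N_0),(\mathcal G_1,H_1,N_1),\ldots
\]
of the saturation tree. Let $o$ be an obligation with code $c$, and suppose there is a stage $s_0$ after which $o$ is available at every configuration of the branch. Every transition, whether logical or bookkeeping, increases the scheduler stage by one. Hence $N_s=s$ along the branch, since the construction starts at stage $0$. In particular $N_s\ge c$ for all $s\ge c$.

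Suppose, for contradiction, that $c$ never enters $H_s$. Put $s_1=\max(s_0,c)$. At every stage $s\ge s_1$, the code $c$ is available, at most $N_s$, and unprocessed. So the scheduler cannot perform a bookkeeping step there. It must process the least available unprocessed code $c'\le N_s$, and $c'\le c$. Since $c$ itself is never processed, we get $c'<c$.

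Each processing step adds its code to $H$. Along a single branch, $H$ only grows: conjunction copies $H$ to both premises, and no transition removes codes. Therefore the code processed at step $s$ is never the least available unprocessed code at a later stage. So the stages $s\ge s_1$ process pairwise distinct codes, all strictly below $c$. There are only $c$ such codes, but infinitely many stages. This is a contradiction, so $c$ is processed at some stage of the branch.

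The main obstacle is identity of obligations across stages. The argument needs the code of a permanently available obligation to stay the same from stage to stage; otherwise "the same obligation" could keep receiving new codes. For occurrence-based obligations (items 1--5), I will use the persistent occurrence names: a preserved occurrence keeps its name, so the code is stable. Item 4 is coded by the pair (occurrence name, $n$). Item 6 is coded by the occurrence name, $i$, and the position names $u,v$, which persist by Lemma~\ref{lem:tree-monotonicity}. Item 7 is coded by $(i,u)$.

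Availability of certificate obligations needs one more check: a pair that is available once stays available, by Corollary~\ref{cor:certificate-persistence}. Strictly, though, the lemma only assumes permanent availability, so persistence is needed only to make that hypothesis meaningful. I will also note that codes are natural numbers fixed in advance, independent of the configuration. This is what makes the set of codes below $c$ finite.
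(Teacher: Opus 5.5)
Your proposal is correct and follows the same route as the paper. Once the scheduler stage reaches the obligation's code, only strictly smaller available codes can preempt it, and each of these is processed at most once because $H$ only grows along a branch (including the copy made at conjunctions), so only finitely many preemptions can occur. Your observation that $N_s=s$ and your remark that codes stay stable through persistent occurrence and position names spell out what the paper compresses into ``choose a stage after which all syntactic data that define it remain present.''
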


\begin{proof}
Let the obligation have code $n$, and choose a stage after which all syntactic data that define it remain present. Certificate obligations have this persistence by Corollary~\ref{cor:certificate-persistence}. Once the scheduler index reaches $n$, postponement can occur only through currently available unprocessed obligations with smaller codes. There are finitely many such codes. Once a processed code is added to $H$, it remains there, and conjunction copies $H$ to both branches. Hence only finitely many lower codes can precede the chosen obligation.
\end{proof}

\begin{lemma}[Finite branching of saturation]\label{lem:saturation-finite-branching}
The saturation tree is finitely branching.
\end{lemma}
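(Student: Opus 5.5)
The plan is to show that every saturation configuration $(\mathcal G,H,N)$ has only finitely many children in the saturation tree. Since the scheduler is deterministic, the argument has two parts. First, at each stage exactly one transition is selected. Second, that transition produces at most two successor configurations.

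First I will fix an open configuration and describe the candidate set. It consists of the currently available unprocessed codes that are at most $N$. This set is finite because only finitely many natural numbers lie below $N$. Each candidate code must also be decidably available, and I will check this by cases on the seven obligation forms of Definition~\ref{def:saturation-obligation}:
\begin{itemize}
\item For occurrences of $\lor$, $\land$, $\forall$, and $\Box_i$, availability means the named occurrence is present in $\mathcal G$, which can be checked directly on the finite sequent.
\item For $(\exists$-occurrence$,n)$, availability means the occurrence is present; the term $t_n$ is computed from the fixed enumeration.
\item For $(e,i,u,v)$, availability requires $u\leadstoA{i}v$ in the finite tree $\mathcal G$. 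This is decided by $\mathsf{CertSearch}_{\Amod}$, using Theorem~\ref{thm:certificate-decidability}.
\item For $(i,u)$ with $\mathrm D_i$ selected, availability means $u$ is a position of $\mathcal G$.
\end{itemize}
Because of this, the least available code, or the bookkeeping step when no code is available, is uniquely determined by $(\mathcal G,H,N)$.

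Next I will go through the rule applied for each kind of obligation and count its premises. The rule is used backward, and its premises are the children. The rules $(\lor)$, $(\forall)$, $(\exists)$, $(\Box_i)$, $(\Diamond_i^{\mathrm{cert}})$, and $(\mathrm d_i)$ each have exactly one premise. The rule $(\land)$ has two premises, and both receive the same updated set $H$ and stage $N+1$. A bookkeeping step produces one child. Fresh parameters, component names, and occurrence names are always the first unused ones in fixed enumerations, so no choice among infinitely many alternatives arises. For the existential rule, the term $t_n$ is likewise fixed by the code. For certificate propagation, the insertion of $A$ at $v$ does not depend on which certificate witnesses $u\leadstoA{i}v$; Corollary~\ref{cor:finite-modal-choices} records this, and $\mathsf{CertSearch}_{\Amod}$ is deterministic in any case. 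Initial sequents are not open configurations, so they have no children. Hence every node has at most two children.

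The main obstacle is the existential rule. A priori the $(\exists)$ rule ranges over infinitely many proof terms, which would give infinite branching. I must make clear that the saturation tree never branches over terms: each code pairs an existential occurrence with a single index $n$, so it commits to the one term $t_n$. The same reasoning applies to certificate targets, where each code fixes one pair $(u,v)$. With both points in place, each configuration has at most two children, and the lemma follows.
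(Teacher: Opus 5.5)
Your proposal is correct and follows essentially the same argument as the paper: the deterministic scheduler selects at most one obligation or a bookkeeping step, every rule has one premise except conjunction with two, fresh names are canonical, each existential code fixes the single term $t_n$, and the choice of certificate does not affect the successor sequent. Your extra check that availability of each code is decidable is harmless but not needed, since finite branching concerns only the number of children, not effectiveness.
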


\begin{proof}
The scheduler selects at most one obligation at each saturation configuration. Every selected rule has one premise except conjunction, which has two. Fresh child names are canonical. A source diamond occurrence and a target component determine one successor sequent, independently of the number of certificates for that pair. Bookkeeping nodes have one successor.
\end{proof}

\begin{lemma}[Infinite fair open branch]\label{lem:infinite-open-branch}
If a nested sequent $\mathcal G_0$ has no finite certificate proof, the saturation construction rooted at $\mathcal G_0$ has an infinite open branch that satisfies Lemma~\ref{lem:saturation-fairness}.
\end{lemma}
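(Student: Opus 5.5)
The plan is a König's lemma argument on the saturation tree, combined with the observation that a finite closed saturation subtree is itself a certificate proof. We view the saturation tree rooted at $(\mathcal G_0,\varnothing,0)$ as a tree whose leaves are exactly the closed configurations, that is, configurations whose nested sequent is an initial sequent. Every open configuration has at least one successor, since the scheduler always either processes an obligation or performs a bookkeeping step.

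\textbf{Step 1: the saturation tree is infinite.} Suppose instead that it is finite. Then every maximal branch ends in a closed configuration. Delete the bookkeeping transitions, which leave the nested sequent unchanged, and erase the bookkeeping data $H$, $N$ and the persistent occurrence names. The remainder is a finite tree of $\mathcal N_{\Amod}$ inferences. Each processed obligation corresponds to an instance of a rule of Definition~\ref{def:certificate-rules}:
\begin{enumerate}
\item disjunction, conjunction, universal and box obligations give the corresponding rules, with fresh names;
\item an existential obligation with index $n$ gives $(\exists)$ with the proof term $t_n$;
\item a diamond obligation gives $(\Diamond_i^{\mathrm{cert}})$ with the certificate returned by $\mathsf{CertSearch}_{\Amod}$, which exists by Theorem~\ref{thm:certificate-decidability};
\item a seriality obligation gives $(\mathrm d_i)$.
\end{enumerate}
The freshness side conditions hold because the scheduler picks names unused in the current configuration. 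All leaves are initial sequents, so we obtain a finite certificate proof of $\mathcal G_0$. This contradicts the hypothesis, so the saturation tree is infinite.

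\textbf{Step 2: an infinite branch.} The saturation tree is finitely branching by Lemma~\ref{lem:saturation-finite-branching}. König's lemma then yields an infinite branch.

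\textbf{Step 3: the branch is open and fair.} Closed configurations are leaves and have no successors, so every configuration on an infinite branch is open. Lemma~\ref{lem:saturation-fairness} applies to any infinite search branch, so this branch satisfies it.

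\textbf{Main obstacle.} The delicate point is Step 1: checking that erasing the bookkeeping really produces a well-formed $\mathcal N_{\Amod}$ derivation. Three things need care.
\begin{enumerate}
\item Bookkeeping steps are identity steps and must be contracted away.
\item Persistent occurrence names must be ignorable by the calculus, which holds because they are annotations only.
\item The certificate attached to each propagation step must be valid in the nested sequent at that node. This holds because obligations are only available when $u\leadstoA{i}v$ holds in the current $\mathcal G$.
\end{enumerate}
We would also remark that an infinite tree cannot consist entirely of closed leaves, which is immediate.
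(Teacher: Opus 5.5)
Your proposal is correct and follows essentially the same route as the paper. A finite saturation tree would, after erasing bookkeeping steps and scheduler data, be a finite certificate proof of $\mathcal G_0$, so the finitely branching tree is infinite. K\"onig's lemma then gives an infinite branch, which is open because closed configurations are leaves and fair by Lemma~\ref{lem:saturation-fairness}; the paper differs only cosmetically, phrasing the contradiction through finite height rather than finiteness of the tree.
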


\begin{proof}
Every open saturation configuration has a logical or bookkeeping successor. If the search tree had finite height, finite branching from Lemma~\ref{lem:saturation-finite-branching} would make it finite. Every maximal branch would then terminate in an initial sequent, and erasing scheduler data would give a finite proof of the root. This contradicts the assumption. The tree therefore has arbitrarily large finite depth. By K\"onig's lemma, it has an infinite branch. Initial sequents cannot occur on such a branch, and the branch is fair by Lemma~\ref{lem:saturation-fairness}.
\end{proof}

\subsection{Canonical countermodel}\label{sec:canonical-countermodel}

Fix an infinite open fair branch $\mathcal B$. For every component name $u$, let $\Gamma_u$ be the union of the formulas that occur at $u$ along the branch. Let $W$ be the set of component names on the branch, and let $E_i(\mathcal B)$ be the union of the explicit $i$ edges. Define
$(R_i)_{i\in\Idx} =\operatorname{Cl}_{\Amod} ((E_i(\mathcal B))_{i\in\Idx}),$
where Lemma~\ref{lem:horn-derivational} shows that every membership fact has a Horn derivation whose branch edges all occur by some stage. Every selected seriality obligation is eventually processed, so each selected $R_i$ is serial. The other selected Horn clauses hold by Lemma~\ref{lem:least-horn-closure}. Hence $(W,(R_i))$ belongs to $\Frames(\Amod)$.

For the canonical construction, use the simultaneous rigid expansion by all proof parameters in $\mathsf{Par}$. Its restriction to any finite $Q\subseteq\mathsf{Par}$ is an expansion in the sense of Convention~\ref{conv:parameter-expansion}. Take the constant domain $D$ to be the set of all proof terms. Define the canonical assignment by $g_0(x)=x$. Interpret every source constant and every proof parameter by itself. Interpret each $n$-ary function symbol $f$ by the function that maps $(t_1,\ldots,t_n)$ to the proof term $f(t_1,\ldots,t_n)$. For every world $u$ and every $n$-ary predicate symbol $p$, define
\[
\mathcal J_u(p)=\{(t_1,\ldots,t_n)\in D^n\mid \overline p(t_1,\ldots,t_n)\in\Gamma_u\}.
\]
Let
\[
\mathcal M=(W,(R_i)_{i\in\Idx},D,\mathcal J)
\]
be the resulting model. Openness ensures that a component never contains opposite literals with the same tuple.

\begin{lemma}[Canonical term lemma]\label{lem:canonical-term}
Every proof term $t$ satisfies $\sem{t}^{\mathcal M}_{g_0}=t$.
\end{lemma}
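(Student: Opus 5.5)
The plan is a structural induction on the proof term $t$, following the grammar $t ::= x\mid a\mid c\mid f(t_1,\ldots,t_n)$ fixed for proof terms in Section~\ref{sec:certificate-calculus}. The point is that the canonical model was built so that every syntactic symbol denotes itself. Before starting, we check that the canonical domain $D$, the set of all proof terms, is nonempty and closed under the term constructors. This holds because $\mathsf{Var}$ is infinite and each $f(t_1,\ldots,t_n)$ with $t_k\in D$ is again a proof term, so the function interpretations are well defined maps $D^n\to D$.

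For the base cases we proceed as follows.

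\begin{caselist}
\item An object variable $x$: the denotation is $g_0(x)=x$ by definition of the canonical assignment.
\item A source constant $c$: the canonical interpretation assigns $c$ itself.
\item A proof parameter $a$: we use the simultaneous rigid expansion by all of $\mathsf{Par}$ fixed for the canonical construction, which interprets $a$ by itself. This interpretation is world-independent and so rigid in the sense of Convention~\ref{conv:parameter-expansion}. Its restriction to any finite $Q\subseteq\mathsf{Par}$ is an admissible expansion, so the lemma also holds relative to those finite expansions.
\end{caselist}

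For the induction step, let $t=f(t_1,\ldots,t_n)$ and assume the claim for each $t_k$. By the ordinary denotation clause, $\sem{t}_{g_0}$ is $\mathcal J(f)$ applied to $(\sem{t_1}_{g_0},\ldots,\sem{t_n}_{g_0})$. The induction hypothesis rewrites this tuple as $(t_1,\ldots,t_n)$. The canonical interpretation of $f$ sends that tuple to the proof term $f(t_1,\ldots,t_n)=t$.

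There is no genuine obstacle here. The only point needing care is bookkeeping: term denotation must be world-independent. Rigidity of constants, function symbols and parameters in Definition~\ref{def:modal-model} guarantees this, so $\sem{t}^{\mathcal M}_{g_0}$ is well defined without reference to a world.
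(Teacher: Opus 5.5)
Your proposal is correct and follows the paper's proof: a structural induction on $t$, with variables handled by $g_0(x)=x$, constants and proof parameters by their self-interpretations in the simultaneous rigid expansion, and the function case by the induction hypotheses and the free-constructor interpretation of $f$. Your remarks on nonemptiness and constructor closure of $D$, and on world-independence of term denotation, are correct bookkeeping that the paper leaves implicit.
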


\begin{proof}
We use structural induction on $t$.

Base case. Variables use $g_0(x)=x$. Constants and proof parameters use their canonical interpretations.

Induction step. Suppose $t=f(t_1,\ldots,t_n)$. Apply the induction hypotheses to the arguments and then the defining interpretation of $f$.
\end{proof}

The remaining task is to show that every formula belonging to some $\Gamma_u$ is false in this canonical model. Fairness first converts the scheduler obligations into the local closure conditions needed for the induction.

\begin{lemma}[Saturation consequences]\label{lem:saturation-consequences}
For every $u\in W$, the following properties hold.
\begin{enumerate}
\item $\top\notin\Gamma_u$.
\item If $A\lor B\in\Gamma_u$, then $A,B\in\Gamma_u$.
\item If $A\land B\in\Gamma_u$, then at least one of $A,B$ belongs to $\Gamma_u$.
\item If $\forall xA\in\Gamma_u$, then $A(a/x)\in\Gamma_u$ for some fresh parameter $a$.
\item If $\exists xA\in\Gamma_u$, then $A(t/x)\in\Gamma_u$ for every proof term $t$.
\item If $\Box_iA\in\Gamma_u$, an explicit $i$ child $v$ satisfies $uR_iv$ and $A\in\Gamma_v$.
\item If $\Diamond_iA\in\Gamma_u$ and $uR_iv$, then $A\in\Gamma_v$.
\item If $\mathrm{D}_i\in\Amod$, then some $v$ satisfies $uR_iv$.
\end{enumerate}
\end{lemma}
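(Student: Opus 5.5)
The plan is to verify the eight clauses one at a time. Each clause combines two facts: the relevant obligation is \emph{permanently available} on $\mathcal B$, and Lemma~\ref{lem:saturation-fairness} then guarantees that it is eventually processed. Processing the obligation executes the corresponding rule of $\mathcal N_{\Amod}$, and the formulas that rule inserts persist along the branch. Persistence holds because no rule deletes a formula occurrence (the premise of every rule contains the context $\mathcal G\{\ \}$ unchanged) and, by Lemma~\ref{lem:tree-monotonicity}, no rule removes a component or an edge. So once a formula occurs at $u$ at some stage, it occurs at $u$ at every later stage. This in turn makes every obligation defined from an occurrence at $u$ permanently available, since persistent occurrence names are kept by preserved occurrences.

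Clause 1 holds because a stage containing $\top$ would be an initial sequent $(\top\mathrm{ax})$, contradicting openness of $\mathcal B$.

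Clauses 2, 3, 4 and 6 follow from processing the $\lor$, $\land$, $\forall$ and $\Box_i$ obligations. For $\land$, the branch follows one premise, so at least one conjunct is added. For $\forall$ and $\Box_i$, the scheduler picks the fresh parameter and the fresh child $v$. The edge $u\xrightarrow{i}v$ lies in $E_i(\mathcal B)\subseteq R_i$ by Lemma~\ref{lem:least-horn-closure}.

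Clause 5 uses the infinitely many obligations (occurrence of $\exists xA$, index $n$), one for each $n$. Each is permanently available because the existential rule is retaining, so processing one leaves the principal occurrence, with its name, in place. Fairness processes every $n$, and since the enumeration $t_0,t_1,\ldots$ covers all proof terms, $A(t/x)\in\Gamma_u$ for every proof term $t$.

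Clause 8 uses the obligation $(i,u)$. This obligation is always available, and processing it creates an explicit $i$ child, which gives an $R_i$ successor.

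Clause 7 is the main obstacle, because $uR_iv$ refers to the closure over the whole infinite branch, whereas certificates live in finite stages. I would proceed as follows.
\begin{enumerate}
\item By Lemma~\ref{lem:horn-derivational}, the fact $R_i(u,v)$ has a finite Horn derivation from $E(\mathcal B)$.
\item That derivation uses only finitely many branch edges, so all of them, together with the occurrence of $\Diamond_iA$ at $u$, are present at some finite stage $\mathcal G_n$.
\item Since $\mathcal G_n$ contains all the edges used, the same derivation is a Horn derivation from $E(\mathcal G_n)$. Theorem~\ref{thm:certificate-horn} then gives $u\leadstoA{i}v$ in $\mathcal G_n$.
\item By Corollary~\ref{cor:certificate-persistence}, this certificate remains valid at every later stage, and the diamond occurrence persists because $(\Diamond_i^{\mathrm{cert}})$ is retaining. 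Hence the obligation $(e,i,u,v)$ is permanently available.
\item Fairness processes it, which inserts $A$ at $v$, so $A\in\Gamma_v$.
\end{enumerate}
One subtlety in step~3 is that the positions appearing in the derivation must all exist in $\mathcal G_n$. This is ensured by choosing $n$ late enough that every edge of the derivation is present.
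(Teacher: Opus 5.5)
Your proposal follows the paper's proof essentially step for step. Clause 1 comes from openness, and clauses 2--4, 6 and 8 from fairness of the corresponding obligations. Clause 5 uses retention of the existential occurrence, so every index $n$ stays available. Clause 7 is handled the same way: take a finite Horn derivation of $R_i(u,v)$, pass to a finite stage containing its edges, obtain a certificate there by Theorem~\ref{thm:certificate-horn}, and keep it alive with Corollary~\ref{cor:certificate-persistence} and retention of the diamond.

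One claim in your opening paragraph is false and should be corrected: it is not true that no rule deletes a formula occurrence. The non-retaining rules $(\lor)$, $(\land)$, $(\forall)$ and $(\Box_i)$ remove their principal occurrence in the premise, so $A\lor B$ need not occur at $u$ at later stages. The obligation for $A\lor B$ is therefore not permanently available in the literal sense. What you actually need, and what is true, is the following. Such an occurrence can disappear only when its own obligation is processed, since every other rule leaves it as passive context. If that obligation were never processed, it would be permanently available, and fairness would force it to be processed, a contradiction. This is also why retention is essential in clauses 5 and 7, where several obligations share one principal occurrence, as you correctly use there.

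A minor point on step 3 of clause 7: it is simplest to choose $n$ so that all finitely many positions of the Horn derivation are present, not only its edges. This is possible because component names persist by Lemma~\ref{lem:tree-monotonicity}, and it covers positions that appear only in premise-free $\mathrm{T}$ instances.
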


\begin{proof}
Clause 1 follows from openness. If $\top$ belonged to some $\Gamma_u$, then it would already occur at a finite stage of the branch, where the underlying nested sequent would be a $(\top\mathrm{ax})$ initial sequent. Clauses 2 through 4 and clause 6 follow from fairness of their corresponding obligations, and clause 8 follows from fairness of seriality obligations. For clause 5, fix a proof term $t=t_n$. Once the existential occurrence appears, it remains present because the existential rule is retaining. Hence the obligation for that occurrence and index $n$ remains permanently available, so fairness eventually inserts $A(t/x)$. For clause 7, $uR_iv$ has a finite Horn derivation by Lemma~\ref{lem:horn-derivational}. Its finitely many branch edges already occur at some finite stage. The constructive direction of Theorem~\ref{thm:certificate-horn} yields a certificate at that stage, and Corollary~\ref{cor:certificate-persistence} preserves it thereafter. Once this diamond occurrence appears on the branch, it remains present because certificate propagation is retaining and all other rules leave it as passive context. The corresponding certificate obligation is therefore permanently available. Fairness eventually processes it and inserts $A$ at $v$.
\end{proof}

\begin{theorem}[Truth lemma]\label{thm:truth-lemma}
For every $u\in W$ and every $A\in\Gamma_u$,
$\mathcal M,u,g_0\not\models A.$
\end{theorem}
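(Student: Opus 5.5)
We argue by induction on the structure of the proof core formula $A$, proving simultaneously for all worlds $u\in W$ that $A\in\Gamma_u$ implies $\mathcal M,u,g_0\not\models A$. Since quantifier instances $A(t/x)$ are not subformulas in the literal sense, the induction measure will be the number of logical constructors (connectives, quantifiers, modalities) in $A$. Substitution of a proof term for a variable does not change this number, so $A(t/x)$ and $A(a/x)$ are strictly smaller than $\exists xA$ and $\forall xA$. Throughout, Lemma~\ref{lem:canonical-term} and the substitution lemma (Lemma~\ref{lem:formula-substitution}) let us move between $\mathcal M,u,g_0[x\mapsto t]\models A$ and $\mathcal M,u,g_0\models A(t/x)$ for every proof term $t$, because $\sem{t}_{g_0}=t$.

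\textbf{Base cases.} $\bot$ is false everywhere. The case $\top$ cannot arise by clause~1 of Lemma~\ref{lem:saturation-consequences}. For a negative literal $\overline p(\bar t)\in\Gamma_u$, the definition of $\mathcal J_u(p)$ together with Lemma~\ref{lem:canonical-term} gives $\bar t\in\mathcal J_u(p)$. Hence $p(\bar t)$ is true and $\overline p(\bar t)$ is false at $u$. For a positive literal $p(\bar t)\in\Gamma_u$, it suffices to show $\overline p(\bar t)\notin\Gamma_u$. If both belonged to $\Gamma_u$, each would appear at some finite stage, and by retention of passive occurrences along the branch (literals are never principal in a nonterminal rule) both would be present at one common stage. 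That stage would be an $(\mathrm{ax})$ initial sequent, contradicting openness of the branch.

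\textbf{Inductive cases.} The Boolean cases follow directly from clauses~2 and~3 of Lemma~\ref{lem:saturation-consequences} together with the induction hypothesis. For $\exists xA$, clause~5 puts $A(t/x)$ in $\Gamma_u$ for every proof term $t$. Since the domain $D$ consists exactly of the proof terms, the induction hypothesis and the substitution lemma make $A$ false under every value of $x$. For $\forall xA$, clause~4 supplies a parameter $a$ with $A(a/x)\in\Gamma_u$. By the induction hypothesis $A$ fails at the value $\sem{a}=a$, so $\forall xA$ is false. For $\Box_iA$, clause~6 gives an explicit child $v$ with $uR_iv$ and $A\in\Gamma_v$, and the induction hypothesis at $v$ finishes the case. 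For $\Diamond_iA$, clause~7 gives $A\in\Gamma_v$ for every $v$ with $uR_iv$, where $R_i$ is the full Horn closure. The induction hypothesis then falsifies $A$ at every $R_i$-successor of $u$.

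The main obstacle is the diamond case, because the canonical $R_i$ is the closure of the branch edges rather than the explicit edges. This is handled entirely by clause~7, which rests on Theorem~\ref{thm:certificate-horn} and certificate persistence, so here it only needs to be invoked. A secondary care point is the positive-literal case: it relies on the fact that occurrences are never deleted along the branch, so that elements of $\Gamma_u$ coexist at a common finite stage. I would justify this from the rule shapes of Definition~\ref{def:certificate-rules}, noting that every rule keeps its context unchanged.
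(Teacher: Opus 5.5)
Your proof is correct and follows the paper's own argument. Both argue by induction on formula complexity. Both handle $\top$ and the literal base cases via openness and Lemma~\ref{lem:canonical-term}, and both discharge each inductive case with the matching clause of Lemma~\ref{lem:saturation-consequences} plus the substitution lemma.

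Your explicit argument that complementary literals in $\Gamma_u$ would coexist at one finite stage simply spells out what the paper compresses into ``openness.'' It relies on literals never being principal in a nonterminal rule, so literal occurrences persist along the branch.
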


\begin{proof}
We use induction on formula complexity.

Base case. The case $\bot$ follows from its semantics, while $\top$ is excluded by Lemma~\ref{lem:saturation-consequences}. If $p(\bar t)\in\Gamma_u$, openness gives $\overline p(\bar t)\notin\Gamma_u$, so the predicate definition and Lemma~\ref{lem:canonical-term} make the positive atom false. If $\overline p(\bar t)\in\Gamma_u$, the predicate definition makes the positive atom true, so the negative literal is false.

Induction step. Assume the claim for formulas of smaller complexity.

\begin{caselist}
\item Case $A=B\lor C$. Both disjuncts occur by Lemma~\ref{lem:saturation-consequences} and are false by the induction hypotheses.
\item Case $A=B\land C$. At least one conjunct occurs by Lemma~\ref{lem:saturation-consequences} and is false by the corresponding induction hypothesis.
\item Case $A=\exists xB$. An arbitrary domain element is a proof term $t$. By Lemma~\ref{lem:saturation-consequences}, $B(t/x)\in\Gamma_u$. The induction hypothesis, Lemma~\ref{lem:canonical-term}, and the substitution lemma make $B$ false under $x\mapsto t$. Since the domain element was arbitrary, the existential is false.
\item Case $A=\forall xB$. By Lemma~\ref{lem:saturation-consequences}, $B(a/x)\in\Gamma_u$ for some parameter $a$. The induction hypothesis and substitution make $a$ a witness to falsity of the universal.
\item Case $A=\Box_iB$. By Lemma~\ref{lem:saturation-consequences}, there is an $R_i$ successor $v$ with $B\in\Gamma_v$. The induction hypothesis makes $B$ false at $v$.
\item Case $A=\Diamond_iB$. Take any $v$ with $uR_iv$. By Lemma~\ref{lem:saturation-consequences}, $B\in\Gamma_v$. The induction hypothesis makes $B$ false there. Every $R_i$ successor therefore falsifies $B$.
\end{caselist}
\end{proof}

By the truth lemma, every infinite fair open branch yields a countermodel of its root. Completeness therefore follows by contraposition from the construction of an infinite branch.

\begin{theorem}[Cut-free semantic completeness]\label{thm:kernel-completeness}
If an indexed nested sequent $\mathcal G$ is valid in every constant-domain model with rigid terms based on $\Frames(\Amod)$, then
$\mathcal N_{\Amod}\vdash\mathcal G.$
\end{theorem}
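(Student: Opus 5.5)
We argue by contraposition. Suppose $\mathcal N_{\Amod}\nvdash\mathcal G$. By Lemma~\ref{lem:infinite-open-branch}, the saturation construction rooted at $(\mathcal G,\varnothing,0)$ has an infinite open branch $\mathcal B$ satisfying the fairness property of Lemma~\ref{lem:saturation-fairness}. From $\mathcal B$ we build the canonical model $\mathcal M$ of Section~\ref{sec:canonical-countermodel}, with worlds the component names on the branch and accessibility $(R_i)=\operatorname{Cl}_{\Amod}((E_i(\mathcal B))_i)$. We already know that its frame lies in $\Frames(\Amod)$: seriality comes from fairness of the $\mathrm d_i$ obligations, and the remaining clauses from Lemma~\ref{lem:least-horn-closure}. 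The domain is the set of proof terms, with rigid interpretations. It remains to show that $\mathcal M$ falsifies $\sem{\mathcal G}$ at the root.

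The plan is to apply Lemma~\ref{lem:position-map-semantics} with the identity position map $\iota(u)=u$ on the positions of the root sequent $\mathcal G$, and with assignment $g_0$.

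\textbf{Edge respect.} Every explicit edge of $\mathcal G$ persists along the branch by Lemma~\ref{lem:tree-monotonicity}. It therefore lies in $E_i(\mathcal B)\subseteq R_i$, so $\iota$ respects every edge.

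\textbf{Formula falsity.} Every formula occurrence of $\mathcal G$ at position $u$ belongs to $\Gamma_u$, since $\Gamma_u$ is a union over the branch that includes stage zero. Hence the occurrence is false at $u$ under $g_0$ by Theorem~\ref{thm:truth-lemma}.

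Lemma~\ref{lem:position-map-semantics} then gives $\mathcal M,u_0,g_0\not\models\sem{\mathcal G}$.

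Two bookkeeping points need care. First, $\mathcal G$ may contain proof parameters from a finite set $Q$. The canonical model interprets all of $\mathsf{Par}$ rigidly as themselves, and its restriction to $Q$ is a rigid expansion in the sense of Convention~\ref{conv:parameter-expansion}. So $\mathcal M$ is a legitimate countermodel to validity of the parameterized sequent: a constant-domain model with rigid terms on a frame in $\Frames(\Amod)$. Second, $\mathcal G$ may contain free object variables. The canonical assignment $g_0(x)=x$ handles these. Since validity requires truth under every assignment, falsity under $g_0$ suffices.

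Thus $\mathcal G$ is not valid, which contradicts the hypothesis and yields $\mathcal N_{\Amod}\vdash\mathcal G$. The only step requiring attention is confirming that $\Gamma_u$ contains the original occurrences and that the root edges survive. Both follow from the fact that saturation begins at $\mathcal G$ itself and that rules never remove edges. The heavy lifting is already done in Theorem~\ref{thm:truth-lemma} and Lemma~\ref{lem:infinite-open-branch}.
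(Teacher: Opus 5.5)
Your proof is correct and follows the paper's argument essentially step for step. Both argue by contraposition via Lemma~\ref{lem:infinite-open-branch}, build the canonical model, use the identity position map on the root components, and combine edge persistence with Theorem~\ref{thm:truth-lemma} and Lemma~\ref{lem:position-map-semantics}; your remarks on the rigid expansion by proof parameters and on the canonical assignment $g_0$ only make explicit the bookkeeping that the paper leaves implicit in its construction of the canonical model.
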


\begin{proof}
Assume contrapositively that $\mathcal G$ has no finite certificate proof. Lemma~\ref{lem:infinite-open-branch} gives an infinite open fair branch. Construct the canonical model above. Its relation family satisfies all selected Horn clauses and seriality requirements. Its domain is constant, and its constants and function symbols are rigid free constructors.

Map every component of the initial sequent to the canonical world with the same position name. Every initial explicit edge belongs to its canonical relation. Every initial formula belongs to the associated $\Gamma_u$, and Theorem~\ref{thm:truth-lemma} makes every such formula false. Thus Lemma~\ref{lem:position-map-semantics} gives a countermodel to $\mathcal G$. Validity therefore entails a finite derivation. Since the calculus contains no cut rule, the resulting proof is cut-free without a separate cut elimination transformation.
\end{proof}

\subsection{Substitution stability}\label{sec:kernel-substitution}

The answer theory transforms terms while preserving the nested tree. The following two stability results clarify the ordinary calculus. The later regularization argument uses renaming of proof parameters, while substitution admissibility establishes closure of ordinary derivability under capture-avoiding instantiation of free object variables.

\begin{lemma}[Uniform renaming of proof parameters]\label{lem:uniform-parameter-renaming}
Let $\mathcal D$ be a finite derivation in $\mathcal N_{\Amod}$. Let $a\in\mathsf{Par}$ occur in $\mathcal D$, and let $b\in\mathsf{Par}$ occur nowhere in $\mathcal D$. Uniform replacement of $a$ by $b$ in all formulas and witness terms of $\mathcal D$ produces another legal derivation of the root obtained by the same renaming.
\end{lemma}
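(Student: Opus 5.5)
We argue by induction on the height $h(\mathcal D)$, showing that each inference instance of $\mathcal D$ is carried to a legal instance of the same rule by the renaming $a\mapsto b$. Write $\rho$ for the map that replaces $a$ by $b$ everywhere in proof terms, proof core formulas, nested sequents, and witness terms. Because $a$ and $b$ are both proof parameters, hence rigid constants, $\rho$ commutes with capture-avoiding substitution: $(A(t/x))\rho=(A\rho)(t\rho/x)$. This holds since $x\in\mathsf{Var}$ is distinct from $a$ and $b$, and $\rho$ never touches binders. In addition, $\rho$ changes only first-order terms and preserves constructors, positions, edges, indices, and labels. Lemma~\ref{lem:certificate-term-invariance} therefore shows that every attached certificate $(\omega,\mathfrak d)$ still certifies the same judgment $u\leadstoA{i}v$ in the renamed sequent.

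Base case. The initial sequent $\mathcal G\{\top\}$ is mapped to $\mathcal G\rho\{\top\}$. The initial sequent $\mathcal G\{p(\bar t),\overline p(\bar t)\}$ is mapped to $\mathcal G\rho\{p(\bar t\rho),\overline p(\bar t\rho)\}$. Both are again initial sequents.

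Induction step. We check the final rule case by case.
\begin{caselist}
\item The rules $(\lor)$, $(\land)$, $(\Box_i)$, and $(\mathrm d_i)$ only rearrange formulas or add a fresh child, so $\rho$ commutes with the rule schema. Freshness of the child name is unaffected, because $\rho$ does not act on component names.
\item For $(\exists)$ with witness term $t$, the renamed premise is $\mathcal G\rho\{\exists xA\rho,(A\rho)(t\rho/x)\}$, which is a legal $(\exists)$ step with witness $t\rho$.
\item For $(\Diamond_i^{\mathrm{cert}})$, legality follows from the certificate invariance noted above.
\item For $(\forall)$ with eigenparameter $c$, the side condition requires $c$ to be fresh in the conclusion. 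If $c\neq a$, then $c\neq b$ as well, since $b$ does not occur in $\mathcal D$. The renamed conclusion contains only parameters of the original conclusion with $a$ replaced by $b$, so $c$ remains fresh. If $c=a$, the new eigenparameter is $b$. It is fresh in the renamed conclusion, because the original conclusion does not contain $a$ (by freshness) and $b$ occurs nowhere in $\mathcal D$.
\end{caselist}
In each case we apply the induction hypothesis to the premise subderivations. The renaming is performed globally, and $b$ does not occur in any subderivation, so the hypothesis of the lemma holds for each of them.

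The main obstacle is the eigenparameter freshness in the $(\forall)$ case, in particular when the same parameter $a$ is used as an eigenparameter in several disjoint subderivations. The point to check carefully is that the renaming is injective on the set of parameters occurring in $\mathcal D$: it is the identity on every parameter other than $a$, and its only new value $b$ is absent from $\mathcal D$. Injectivity gives the implication "$c\notin\mathcal G$ implies $c\rho\notin\mathcal G\rho$", which is exactly what freshness requires. The remaining verifications are routine commutation facts.
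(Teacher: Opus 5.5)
Your proof is correct and follows essentially the same route as the paper. Both argue by induction on height, and the only nontrivial checks are the same: commutation of the renaming with substitution for object variables in the existential case, Lemma~\ref{lem:certificate-term-invariance} for certificate propagation, and the two subcases of the universal rule according to whether the eigenparameter equals $a$. Your injectivity remark is just a more explicit phrasing of the paper's freshness argument. The formality that $a$ need not occur in every premise subderivation is harmless, since the renaming is the identity there.
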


\begin{proof}
We use induction on the height of $\mathcal D$.

Base case. A derivation of height zero is a truth or atomic initial sequent. Uniform replacement preserves truth closure and the matching argument tuples in an atomic identity leaf.

Induction step. Assume the claim for the premise derivations of the final inference.

\begin{caselist}
\item Case Boolean rule. The rule commutes with uniform replacement, so the induction hypotheses reconstruct the renamed premises and the same rule applies.
\item Case existential rule. Replacement commutes with substitution for object variables because proof parameters and object variables are disjoint syntactic classes. The renamed witness is again a proof term, and the induction hypothesis reconstructs the premise.
\item Case universal rule. Let $a_0$ be its eigenparameter. If $a_0\neq a$, the same $a_0$ remains fresh after replacement because $b$ occurred nowhere in the original derivation. If $a_0=a$, the renamed rule uses $b$. The original conclusion contained no $a$ by freshness and contained no $b$ by choice, so $b$ is fresh in the renamed conclusion. The induction hypothesis reconstructs the premise.
\item Case box or seriality rule. These rules inspect only nested shape, which uniform replacement preserves. Apply the induction hypothesis to each premise and reapply the rule.
\item Case certificate propagation. Certificate propagation remains legal by Lemma~\ref{lem:certificate-term-invariance}. Apply the induction hypothesis to the premise and reapply the same propagation rule.
\end{caselist}
\end{proof}

\begin{proposition}[Substitution admissibility]\label{prop:free-substitution-admissibility}
If
$\mathcal N_{\Amod}\vdash\mathcal G$
and $\theta$ is a capture-avoiding substitution on free object variables by proof terms, then
$\mathcal N_{\Amod}\vdash\mathcal G\theta.$
\end{proposition}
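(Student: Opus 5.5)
The plan is an induction on the height of a derivation $\mathcal D$ of $\mathcal G$, showing that $\mathcal G\theta$ has a derivation of the same height. It follows the pattern of Lemma~\ref{lem:uniform-parameter-renaming}. Alternatively, the statement could be obtained semantically from Corollary~\ref{cor:kernel-soundness}, Theorem~\ref{thm:kernel-completeness}, and the substitution lemmas of Appendix~\ref{app:substitution}, since validity under all assignments is preserved by substitution. I prefer the syntactic route because it keeps the nested tree and certificates explicit. The semantic route remains a fallback for any awkward case.

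The first step is preparation. Since $\theta$ maps object variables to proof terms, we may alpha-rename bound variables throughout $\mathcal D$ so that no bound variable occurs in $\dom(\theta)$ or in the range of $\theta$. This makes $\theta$ capture-avoiding at every node of the derivation, not only at the root.

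The cases then go as follows.
\begin{caselist}
\item Case initial sequent. Applying $\theta$ to $\top$ leaves it unchanged, and it sends $p(\bar t),\overline p(\bar t)$ to $p(\bar t\theta),\overline p(\bar t\theta)$, so the leaf remains initial.
\item Case Boolean rules. These commute with $\theta$ directly.
\item Case box and seriality rules. These depend only on nested shape, which $\theta$ preserves.
\item Case certificate propagation. The same certificate remains valid by Lemma~\ref{lem:certificate-term-invariance}.
\item Case existential rule. We need the commutation $(A(t/x))\theta=(A\theta)(t\theta/x)$, which holds because $x\notin\dom(\theta)$ and $x$ does not occur in the range of $\theta$ after the preparation. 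The premise of the transformed inference is then the induction instance with witness $t\theta$, which is again a proof term.
\end{caselist}

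The main obstacle is the universal rule. Its eigenparameter $a$ must be fresh in the conclusion $\mathcal G\theta$. However, $\theta$ may introduce proof parameters through its range, and one of them may coincide with $a$. I will handle this before applying the induction hypothesis. Choose $b\in\mathsf{Par}$ that occurs nowhere in the subderivation and nowhere in the range of $\theta$. Apply Lemma~\ref{lem:uniform-parameter-renaming} to replace $a$ by $b$ in the premise subderivation. Because $a$ was fresh in the conclusion, this renaming leaves the conclusion unchanged, and it preserves height.

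Next, the induction hypothesis applied to the renamed premise gives a derivation of $\mathcal G\{A(b/x)\}\theta=\mathcal G\theta\{(A\theta)(b/x)\}$. Here $b\notin\operatorname{range}(\theta)$ and $b$ does not occur in $\mathcal G$, so $b$ is fresh in $\mathcal G\theta$, and the $(\forall)$ rule applies. Because the height is preserved, the induction hypothesis is legitimately applicable to the renamed subderivation. For this reason I will state the induction over heights rather than over the derivation structure itself.
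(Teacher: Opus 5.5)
Your proposal is correct and follows essentially the same route as the paper: induction on derivation height, with the universal case handled by first renaming the eigenparameter via Lemma~\ref{lem:uniform-parameter-renaming} to a parameter absent from the subderivation and from the range of $\theta$, and with certificate propagation handled by Lemma~\ref{lem:certificate-term-invariance}. The differences are cosmetic. You standardize bound variables once, globally, and always rename the eigenparameter, whereas the paper standardizes apart only in the existential case and renames only when $a$ occurs in the range of $\theta$.
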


\begin{proof}
We use induction on the height of a derivation of $\mathcal G$.

Base case. Initial sequents are preserved by substitution.

Induction step. Assume the claim for the premise derivations of the final inference.

\begin{caselist}
\item Case Boolean rule. Substitution commutes with the Boolean constructor. Apply the induction hypotheses to the premises and reapply the same rule.
\item Case existential rule. Let $t$ be the witness and standardize bound variables apart from $\theta$. Then $(A(t/x))\theta=(A\theta)(t\theta/x)$, and $t\theta$ is again a proof term. Apply the induction hypothesis to the premise and reapply the existential rule.
\item Case universal rule. Let $a$ be the eigenparameter. If $a$ occurs in the range of $\theta$, choose a proof parameter $b$ absent from the finite derivation and the finite substitution range and apply Lemma~\ref{lem:uniform-parameter-renaming} first. We may therefore assume that the chosen eigenparameter is absent from the range of $\theta$. It remains absent from the substituted conclusion and can be reused in the final universal inference.
\item Case box or seriality rule. These rules preserve their nested shape. Apply the induction hypothesis to each premise and reapply the rule.
\item Case certificate propagation. Substitution for free variables changes no position, edge, or modal label by Lemma~\ref{lem:certificate-term-invariance}. Apply the induction hypothesis to the premise and reapply the propagation rule.
\end{caselist}
\end{proof}

\subsection{Hilbert adequacy}\label{sec:hilbert-certificate-adequacy}

The semantic completeness theorem is sufficient for one direction of the Hilbert comparison. The reverse direction requires Hilbert certification of each certificate rule. The longer context lemmas and the propositional canonical argument appear in Appendix~\ref{app:hilbert-certification}.

\begin{theorem}[Hilbert soundness]\label{thm:hilbert-soundness}
If $H_{\Amod}\vdash F$, then $F$ is valid in every constant-domain model with rigid terms based on $\Frames(\Amod)$.
\end{theorem}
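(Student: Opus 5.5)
The plan is a standard induction on the length of a Hilbert derivation of $F$. The claim to prove is that every theorem is valid, meaning true at every world under every assignment in every constant-domain model with rigid terms based on a frame in $\Frames(\Amod)$. Fix such a model $\mathcal M$. It is enough to show that every axiom instance is valid in $\mathcal M$ and that the three rules (modus ponens, theorem generalization, theorem necessitation) preserve validity in $\mathcal M$, i.e.\ truth at all worlds and all assignments.

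\textbf{Classical and modal axioms.} The propositional schemes H1--CL and EFQ are true at every world and assignment by the classical Boolean clauses, since satisfaction at a fixed world is classical. Modal duality holds by the modal clauses together with the Boolean clause for $\neg$. $\mathrm K_i$ is checked directly: if $\Box_i(A\to B)$ and $\Box_iA$ hold at $w$, then both hold at every $R_i$ successor, so $B$ does too. The selected schemes are exactly Proposition~\ref{prop:modal-correspondence}.

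\textbf{Quantifier axioms.} Q1 and Q3 need the substitution lemma of Appendix~\ref{app:substitution} (Lemma~\ref{lem:formula-substitution}). It says that $\mathcal M,w,g\models A(t/x)$ iff $\mathcal M,w,g[x\mapsto\sem{t}_g]\models A$ for $t$ free for $x$. This is exactly where rigidity of constants and function symbols is used, since $\sem{t}_g$ does not depend on the world even when $x$ occurs under modalities. Q2 and Q4 use the fact that satisfaction of $A$ depends only on $g$ restricted to $\FV(A)$, so changing $g$ at $x\notin\FV(A)$ is harmless.

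\textbf{Barcan axioms and rules.} $\mathrm{BF}_i$ and $\mathrm{CBF}_i$ both reduce to exchanging ``for every $d\in D$'' with ``for every $v$ with $wR_iv$''. This exchange is legitimate because the domain $D$ is the same at every world, so the quantifier at $v$ ranges over the same $D$ as at $w$. For the rules:
\begin{itemize}
\item modus ponens preserves validity pointwise;
\item generalization preserves validity because the premise holds under all assignments, in particular all $x$-variants;
\item necessitation preserves validity because the premise holds at every world, in particular every $R_i$ successor.
\end{itemize}
Validity must be taken over all worlds and assignments, not pointwise, for the last two rules. This is why the paper restricts both rules to theorems.

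The only step needing real care is the quantifier axioms under modalities. The substitution lemma relies on rigidity and capture avoidance, and I would invoke Lemma~\ref{lem:formula-substitution} rather than reprove it. Everything else is routine case checking.
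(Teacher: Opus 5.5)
Your proposal is correct and follows the paper's own route. The paper likewise checks that each axiom scheme is valid and that the three rules preserve global validity. It handles $\mathrm{BF}_i$ and $\mathrm{CBF}_i$ by exchanging the two universal quantifications over the common domain, and the selected schemes by Proposition~\ref{prop:modal-correspondence}. Your extra detail on Q1/Q3 via Lemma~\ref{lem:formula-substitution} and on Q2/Q4 via independence from assignments to non-free variables only spells out what the paper summarizes as standard constant-domain first-order soundness.
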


\begin{proof}
The classical propositional and first-order schemes are sound under standard constant-domain semantics. Modus ponens and theorem generalization preserve validity. The normal modal axiom $\mathrm{K}_i$ is valid for every binary relation, and theorem necessitation preserves global validity. Modal duality follows from the semantic clauses.

For the Barcan formula, fix a model $\mathcal M$, a world $w$, and an assignment $g$, and assume $\mathcal M,w,g\models\forall x\Box_iA$. Fix $wR_iv$ and $d\in D$. The antecedent gives $\mathcal M,w,g[x\mapsto d]\models\Box_iA$, so $\mathcal M,v,g[x\mapsto d]\models A$. Since $d$ was arbitrary and the domain is constant, $\mathcal M,v,g\models\forall xA$. Since $v$ was arbitrary, $\mathcal M,w,g\models\Box_i\forall xA$. For the converse Barcan formula, assume $\mathcal M,w,g\models\Box_i\forall xA$, fix $d\in D$, and take any $wR_iv$. Then $\mathcal M,v,g[x\mapsto d]\models A$, so $\mathcal M,w,g[x\mapsto d]\models\Box_iA$. Since $d$ was arbitrary, $\mathcal M,w,g\models\forall x\Box_iA$.

The selected modal schemes are sound by Proposition~\ref{prop:modal-correspondence}. Every Hilbert derivation therefore preserves validity.
\end{proof}

This establishes Hilbert soundness. For the converse direction from certificate proofs to Hilbert theoremhood, it is enough to certify the modal propagation rule and the remaining primitive kernel rules inside arbitrary nested contexts.

\begin{lemma}[Hilbert certification of certificate propagation]\label{lem:certificate-hilbert}
Let $Q\subseteq\mathsf{Par}$ be finite and contain every proof parameter occurring in an instance of $\Diamond_i^{\mathrm{cert}}$. The instance preserves $H_{\Amod}[Q]$ theoremhood of nested interpretations.
\end{lemma}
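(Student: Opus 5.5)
We must show that if $H_{\Amod}[Q]\vdash\sem{\mathcal G\{\Diamond_iA\}_u\{A,\Delta\}_v}$ then $H_{\Amod}[Q]\vdash\sem{\mathcal G\{\Diamond_iA\}_u\{\Delta\}_v}$. The plan is to prove this syntactically, by induction along the certificate, and not to pass through semantics.

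\textbf{Step 1: a generalized statement.} Theorem~\ref{thm:certificate-horn} together with Lemma~\ref{lem:horn-derivational} turns the certificate into a Horn derivation of $R_i(u,v)$ over the explicit edges of $\mathcal G$. So it suffices to prove the following. For every relation fact $R_k(u,v)$ with a Horn derivation, and every formula $A$, the nested formula with $\Diamond_kA$ at $u$ and $A$ added at $v$ implies in $H_{\Amod}[Q]$ the same nested formula without that extra $A$ at $v$. The argument proceeds by induction on Horn height, and inside each case it uses the structure of the tree path between $u$ and $v$.

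\textbf{Step 2: context lemmas for the nested interpretation.} I first need monotonicity and distribution facts for nested contexts. If $H_{\Amod}[Q]\vdash X\to Y$, then $\sem{\mathcal G\{X\}}\to\sem{\mathcal G\{Y\}}$ holds; this follows from K and necessitation applied bracket by bracket. I also need a two-hole relocation lemma. Say the path from $u$ to $v$ passes through their lowest common ancestor, going up by bars and down by plain labels. Then a diamond formula can be transported one edge at a time. Moving down an explicit edge $u\xrightarrow{j}w$ uses the valid principle $\Diamond_jA\lor\Box_j\Delta\to\Box_j(A\lor\Delta)$, which is a K-consequence of duality. Moving up an edge $w\xrightarrow{j}u$ requires a formula $\Diamond_{\bar j}$. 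Since there is no converse modality, I plan to encode the upward step as a $\Diamond_jA$ placed at the parent. This is where the grammar productions enter: the theorem $\Box_j\Diamond_{\cdot}$ coming from $\mathrm B$ turns a diamond demanded at the parent into a diamond available at the child.

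\textbf{Step 3: the Horn-clause cases.} The explicit-edge case is the downward transport above with $\Diamond_iA$ at $u$. For $\mathrm T_i$, apply the axiom contrapositively: $A\to\Diamond_iA$ at the same component, then contract. For $\mathrm I(i,j)$, use $\Diamond_jA\to\Diamond_iA$, the dual of the axiom. The cases $\mathrm B$, $4$ and $5$ each use the dual of the scheme to replace the single diamond by a compound modal prefix. For $4(i,j,k)$ this is $\Diamond_j\Diamond_kA\to\Diamond_iA$. For $5(i,j,k)$ it is $\Diamond_j\Box_k\neg A$-style duals, used to move $\Diamond_iA$ from $w$ to $u$ as $\Diamond_k$. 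For $\mathrm B(i,j)$ it is $\Diamond_i\Box_jB\to B$. Then I invoke the induction hypothesis on each premise fact, propagating the intermediate diamond stepwise with retention.

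\textbf{Main obstacle.} The hardest part is the reverse-edge cases $\mathrm B$ and $5$. The intermediate diamond must be placed at a component that lies above its target, and the nested interpretation is a disjunction of boxes, so the intermediate formula has to be carried inside a context. My first approach is to strengthen the induction: for any Horn fact $R_k(w,v)$, prove the implication $\sem{\mathcal G\{\Diamond_kA\}_w\{A,\Delta\}_v}\to\sem{\mathcal G\{\Diamond_kA\}_w\{\Delta\}_v}$ for arbitrary contexts. This puts the extra diamonds in the antecedent with retention, and each case then needs only a finite chain of such implications. If that proves too awkward, I fall back on the paper's appendix strategy. That route goes through semantic completeness of a propositional canonical argument for the propositional skeleton: the first-order parts are abstracted to fresh atoms, and Lemma~\ref{lem:fresh-constant-conservativity} discharges the temporary constants.
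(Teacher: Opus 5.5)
Your fallback is essentially the paper's proof. Your primary syntactic route, however, has a genuine gap exactly where you flag it, and the remedy you propose does not close it.

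\textbf{The gap.} The statement you induct on (``$\Diamond_kA$ at the source of $R_k(u,v)$ lets you delete $A$ at its target'') only ever deletes formulas at the \emph{target} endpoint of a premise fact. That is enough for the explicit-edge, $\mathrm T$, $\mathrm I$ and $4$ cases. For $4(i,j,k)$: weaken by $\Diamond_j\Diamond_kA$ at $u$ and by $\Diamond_kA$ at $w$, delete $A$ at $v$ via $R_k(w,v)$, delete $\Diamond_kA$ at $w$ via $R_j(u,w)$, then use $\Diamond_j\Diamond_kA\to\Diamond_iA$ and contract.

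It is not enough for the reverse-edge cases.
\begin{itemize}
\item In the $\mathrm B(i,j)$ case you must delete $A$ at $u$ given $\Diamond_jA$ at $v$. The only premise fact, $R_i(u,v)$, lets you delete only at $v$.
\item In the $5(i,j,k)$ case, after weakening by $\Diamond_iA$ at $w$ and deleting $A$ at $v$ via $R_i(w,v)$, you must delete $\Diamond_iA$ at $w$. But $w$ is the \emph{source} of $R_j(w,u)$.
\end{itemize}
In both cases the axioms turn the offending formula into a box ($A\to\Box_i\Diamond_jA$, resp.\ $\Diamond_iA\to\Box_j\Diamond_kA$), and your hypothesis cannot remove boxes. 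Your proposed ``strengthening to arbitrary contexts'' is the same diamond-only statement, so it does not help here.

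Separately, the principle $\Diamond_jA\lor\Box_j\Delta\to\Box_j(A\lor\Delta)$ that you cite for the base case is not valid. Take two successors, with $A$ true at one and neither $A$ nor $\Delta$ at the other. The base case needs the $\mathrm K$-theorem $\Diamond_jA\lor\Box_j(A\lor\Delta)\to\Diamond_jA\lor\Box_j\Delta$ instead.

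\textbf{The missing ingredient.} You need the dual \emph{box} statement for the same fact. If $R_k(u,v)$ is Horn-derivable, then $\sem{\mathcal G\{\Box_kC\}_u\{C,\Delta\}_v}\to\sem{\mathcal G\{\ \}_u\{C,\Delta\}_v}$ is a theorem of $H_{\Amod}[Q]$ for every context and every $C$. Read both statements for arbitrary Hilbert formulas in the holes, as in Lemma~\ref{lem:deep-hilbert-principles}.

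You can derive the box statement from the diamond statement for the same fact:
\begin{enumerate}
\item Put $\Psi(X)=\sem{\mathcal G\{X\}_u\{C,\Delta\}_v}$.
\item Lift the theorem $C\lor\neg C$ to $v$, and apply the diamond statement with matrix $\neg C$. This gives the theorem $\Psi(\Diamond_k\neg C)$.
\item Deep conjunction with $\Psi(\Box_kC)$, together with $\Box_kC\land\Diamond_k\neg C\to\bot$, then yields $\Psi(\Box_kC)\to\Psi(\bot)$.
\end{enumerate}

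With this in hand, the two cases go through.
\begin{itemize}
\item For $\mathrm B(i,j)$: replace $A$ at $u$ by $\Box_i\Diamond_jA$, and delete it using the box statement for $R_i(u,v)$ with $C=\Diamond_jA$.
\item For $5(i,j,k)$: replace $\Diamond_iA$ at $w$ by $\Box_j\Diamond_kA$, and delete it using the box statement for $R_j(w,u)$ with $C=\Diamond_kA$.
\end{itemize}
Alternatively, prove both statements by simultaneous induction. The box statement's own $\mathrm B$ and $5$ cases then use the duals $\Diamond_i\Box_jC\to C$ and $\Diamond_j\Box_kC\to\Box_iC$ that you listed.

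\textbf{Comparison with the paper.} Once repaired, your route gives an explicit Hilbert derivation whose length tracks the Horn derivation. It uses only the deep-context principles of Lemma~\ref{lem:deep-hilbert-principles}, so the canonical model of Theorem~\ref{thm:prop-canonical-completeness} becomes unnecessary.

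The paper instead proceeds as follows.
\begin{itemize}
\item It abstracts the instance propositionally: one variable $p$ for both the matrix of the retained $\Diamond_iA$ and the inserted $A$, and fresh variables for all other persisting occurrences.
\item It shows $\Frames(\Amod)\models F^{p}\to F^{c}$ via Lemma~\ref{lem:position-map-semantics} and Corollary~\ref{cor:certificate-semantic}.
\item It applies Theorem~\ref{thm:prop-canonical-completeness}.
\item It substitutes back uniformly inside $H_{\Amod}[Q]$.
\end{itemize}
This is short and needs no case analysis, but it yields a Hilbert derivation only through the canonical model. If you take the fallback, these are its steps. Lemma~\ref{lem:fresh-constant-conservativity} plays no role here: the claim is about $H_{\Amod}[Q]$ itself, and the formulas still contain the constants in $Q$.
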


\begin{proof}
The complete propositional abstraction argument, uniformly relative to the temporary constants in $Q$, is given in Appendix~\ref{app:certificate-hilbert-proof}.
\end{proof}

\begin{lemma}[Hilbert certification of kernel rules]\label{lem:remaining-rule-hilbert}
Let $Q\subseteq\mathsf{Par}$ be finite and contain every proof parameter occurring in an instance of a primitive kernel rule. Truth, atomic identity, the Boolean rules, the first-order rules, box, and selected seriality preserve $H_{\Amod}[Q]$ theoremhood of nested interpretations.
\end{lemma}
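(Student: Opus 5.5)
The plan is to prove a single \emph{context lemma} and then certify each rule locally inside it. The context lemma says that nested interpretation is monotone under Hilbert implication inside an arbitrary one-hole context. Concretely, if $H_{\Amod}[Q]\vdash \bigvee\Gamma_1\to\bigvee\Gamma_2$, then $H_{\Amod}[Q]\vdash\sem{\mathcal G\{\Gamma_1\}}\to\sem{\mathcal G\{\Gamma_2\}}$. The same holds when the filled material contains bracketed children, read through $\Box_i$ of their interpretations.

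I would prove this by induction on the depth of the hole. At depth zero it is propositional reasoning with H1--D3, since the hole multiset is one disjunct block of the root disjunction. For the induction step, the hole sits inside a child $[\Delta]_i$. The inductive implication for $\Delta$ is a theorem, so theorem necessitation and $\mathrm K_i$ give $\Box_i\sem{\Delta\{\Gamma_1\}}\to\Box_i\sem{\Delta\{\Gamma_2\}}$. Propositional monotonicity of disjunction then finishes the step. Theorem necessitation is legitimate here because no program assumptions are present; the claim is pure theoremhood in $H_{\Amod}[Q]$.

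With the context lemma, each rule reduces to a local implication from the formula interpretation of the premise hole to that of the conclusion hole.

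\begin{itemize}
\item \emph{Truth and atomic identity.} For $\top$ we have $\neg\bot$ via N2 and H1. For an atomic leaf, $\neg p\lor p$ follows from CL and N1/N2. These are then weakened to $\sem{\mathcal G\{\ldots\}}$ by one more application of the context lemma, using the local implication from the leaf formula to the full hole disjunction.
\item \emph{Disjunction.} The $(\lor)$ rule is trivial, since $A,B$ and $A\lor B$ have the same interpretation up to associativity.
\item \emph{Conjunction.} For $(\land)$ I need a two-premise form. From $\sem{\mathcal G\{A\}}$ and $\sem{\mathcal G\{B\}}$ derive $\sem{\mathcal G\{A\land B\}}$. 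I will prove by the same induction on depth that $\sem{\mathcal G\{\Gamma,A\}}\land\sem{\mathcal G\{\Gamma,B\}}\to\sem{\mathcal G\{\Gamma,A\land B\}}$ is a theorem. The inductive step uses the K-derived theorem $\Box_iX\land\Box_iY\to\Box_i(X\land Y)$ and distributivity of disjunction over conjunction.
\item \emph{Existential rule.} The local implication is $A(t/x)\to\exists xA$, which is Q3; idempotence of the duplicated $\exists xA$ disjunct finishes it.
\item \emph{Box rule.} This is the identity, since $[A]_i$ is interpreted as $\Box_iA$.
\item \emph{Seriality.} The premise adds the disjunct $\Box_i\bot$ in the hole. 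Under $\mathrm D_i$ with duality, $\Box_i\bot\to\Diamond_i\bot\to\neg\Box_i\neg\bot$. Since $\Box_i\neg\bot$ is a theorem by necessitation, $\neg\Box_i\bot$ is a theorem. So the local implication from the premise hole $\Gamma\lor\Box_i\bot$ to $\Gamma$ holds propositionally.
\end{itemize}

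I expect the universal rule to be the main obstacle. The eigenparameter $a$ is a temporary constant, and the quantifier must be pushed out of a nested context through boxes.

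The first step is to turn $a$ into a variable. Given an $H_{\Amod}[Q]$ proof of $\sem{\mathcal G\{A(a/x)\}}$, I replace $a$ uniformly by a fresh object variable $y$, exactly as in the proof of Lemma~\ref{lem:fresh-constant-conservativity}. This uses that $a$ is fresh in the conclusion, so it does not occur elsewhere in $\mathcal G$. The result is a proof of $\sem{\mathcal G\{A(y/x)\}}$ in $H_{\Amod}[Q\setminus\{a\}]$, which embeds into $H_{\Amod}[Q]$.

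Theorem generalization then gives $\forall y\,\sem{\mathcal G\{A(y/x)\}}$. It remains to show the theorem $\forall y\,\sem{\mathcal G\{A(y/x)\}}\to\sem{\mathcal G\{\forall xA\}}$, which I prove by induction on the depth of the hole.
\begin{itemize}
\item At the root, use Q2 together with the classical law $\forall y(C\lor D)\to C\lor\forall yD$ for $y\notin\FV(C)$. That law is derivable from Q1, Q2, D3 and CL.
\item Inside $[\Delta]_i$, use the same law and then CBF/BF. Specifically, $\forall y\Box_iX\to\Box_i\forall yX$ is the Barcan formula $\mathrm{BF}_i$. The inner step then follows by necessitation of the inductive theorem together with $\mathrm K_i$.
\end{itemize}
Alpha conversion identifies $\forall y\,A(y/x)$ with $\forall xA$. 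This is exactly where the constant-domain axioms are needed, and where care with variable freshness (choosing $y$ outside all free variables of $\mathcal G$) is essential.
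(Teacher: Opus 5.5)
Your proposal is correct and follows essentially the same route as the paper's proof via Lemma~\ref{lem:deep-hilbert-principles}: an induction on hole depth gives deep monotonicity, deep conjunction, and quantifier movement through boxes via $\mathrm{BF}_i$, the eigenparameter is replaced by a fresh variable and then generalized, and each rule is certified by a local implication. The one imprecision is at the initial sequents, where the implication form of your context lemma cannot by itself make $\sem{\mathcal G\{\top\}}$ a theorem; you need the separate theorem-lifting clause (necessitation at each bracket), which the paper states explicitly and your same depth induction proves at once.
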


\begin{proof}
The complete verification for deep contexts, including elimination of a fresh universal eigenparameter, is given in Appendix~\ref{app:remaining-hilbert-proof}.
\end{proof}

\begin{corollary}[Certificate to Hilbert theoremhood]\label{cor:certificate-to-hilbert}
Let $\mathcal D$ be a finite $\mathcal N_{\Amod}$ derivation of $\mathcal G$, and let $Q_{\mathcal D}$ be the finite set of proof parameters occurring anywhere in $\mathcal D$. Then
$H_{\Amod}[Q_{\mathcal D}]\vdash\sem{\mathcal G}.$
If the root $\mathcal G$ contains no proof parameter, then
$H_{\Amod}\vdash\sem{\mathcal G}.$
\end{corollary}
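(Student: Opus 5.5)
The plan is an induction on the height of $\mathcal D$, proving the stronger claim that for every subderivation $\mathcal D'$ of $\mathcal D$ with root $\mathcal G'$ we have $H_{\Amod}[Q_{\mathcal D}]\vdash\sem{\mathcal G'}$. The parameter set $Q_{\mathcal D}$ is fixed once and for all as the finite set of parameters occurring anywhere in $\mathcal D$. It therefore contains every parameter of every rule instance in $\mathcal D$, so Lemmas~\ref{lem:certificate-hilbert} and~\ref{lem:remaining-rule-hilbert} apply to each inference relative to the same temporary signature.

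\emph{Base case.} A height-zero derivation is a $(\top\mathrm{ax})$ or $(\mathrm{ax})$ initial sequent. These are the zero-premise cases of Lemma~\ref{lem:remaining-rule-hilbert}, which state that the nested interpretation of an initial sequent is an $H_{\Amod}[Q_{\mathcal D}]$ theorem.

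\emph{Induction step.} Take the final inference and apply the induction hypothesis to each immediate premise. This yields $H_{\Amod}[Q_{\mathcal D}]$ proofs of the premise interpretations. If the inference is certificate propagation, Lemma~\ref{lem:certificate-hilbert} transfers theoremhood to the conclusion. In every other case (Boolean, quantifier, box, seriality) Lemma~\ref{lem:remaining-rule-hilbert} does so. The one point needing care is the universal rule. Its eigenparameter $a$ lies in $Q_{\mathcal D}$ but must not occur in the conclusion. I read the preservation statement of Lemma~\ref{lem:remaining-rule-hilbert}, which explicitly covers elimination of a fresh eigenparameter, as applying with $Q$ any finite set containing the instance's parameters, including $a$. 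Then the conclusion's interpretation is derivable in $H_{\Amod}[Q_{\mathcal D}]$, and $a$ is merely an unused constant of that signature. If the lemma instead yields the conclusion only over a smaller signature, I would first embed it into $H_{\Amod}[Q_{\mathcal D}]$; this is immediate, since every proof over a subsignature is already a proof over the larger one. This bookkeeping of signatures is the main obstacle, and it is handled entirely by working in the single fixed system $H_{\Amod}[Q_{\mathcal D}]$.

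For the second claim, suppose the root $\mathcal G$ contains no proof parameter. Then $\sem{\mathcal G}$ is a formula over $\Sigma=\Sigma\cup\varnothing$. Apply Lemma~\ref{lem:fresh-constant-conservativity} with $Q=\varnothing$ and $Q'=Q_{\mathcal D}$ to obtain $H_{\Amod}[\varnothing]\vdash\sem{\mathcal G}$. Since $H_{\Amod}[\varnothing]=H_{\Amod}$, this is the required $H_{\Amod}\vdash\sem{\mathcal G}$.
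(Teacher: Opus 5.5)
Your proposal is correct and follows the paper's proof essentially verbatim. Both use induction on height inside the single system $H_{\Amod}[Q_{\mathcal D}]$, with Lemma~\ref{lem:certificate-hilbert} for propagation and Lemma~\ref{lem:remaining-rule-hilbert} for the initial sequents and every other rule, and then apply Lemma~\ref{lem:fresh-constant-conservativity} with $Q=\varnothing$ when the root has no proof parameter. Your handling of the universal rule also matches the paper's proof of Lemma~\ref{lem:remaining-rule-hilbert}, which obtains the conclusion in $H_{\Amod}[Q\setminus\{a\}]$ and hence also in $H_{\Amod}[Q]$.
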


\begin{proof}
For the first statement, we use induction on derivation height inside the single temporary signature $\Sigma\cup Q_{\mathcal D}$. All proof parameters in every rule instance belong to $Q_{\mathcal D}$, so every intermediate theoremhood judgment is well typed in $H_{\Amod}[Q_{\mathcal D}]$.

Base case. A derivation of height zero is an initial sequent. Its Hilbert theoremhood follows from the corresponding clause for initial sequents in Lemma~\ref{lem:remaining-rule-hilbert}.

Induction step. Assume the interpretations of all premises are theorems of $H_{\Amod}[Q_{\mathcal D}]$. If the final rule is certificate propagation, apply Lemma~\ref{lem:certificate-hilbert}. For every other primitive kernel rule, apply Lemma~\ref{lem:remaining-rule-hilbert}. This yields theoremhood of the conclusion interpretation.

If the root contains no proof parameter, its interpretation is a formula over $\Sigma$, and Lemma~\ref{lem:fresh-constant-conservativity} eliminates the finitely many temporary constants in $Q_{\mathcal D}$.
\end{proof}

The preceding lemmas show that every certificate derivation yields Hilbert theoremhood in a temporary rigid expansion. For a root without proof parameters, Lemma~\ref{lem:fresh-constant-conservativity} then yields theoremhood in the source Hilbert system. Together with semantic completeness, these results establish the equivalence of the independently defined Hilbert and certificate systems.

\begin{theorem}[Full Hilbert--certificate adequacy]\label{thm:full-adequacy}
For every source formula $F$,
\[
 H_{\Amod}\vdash F
 \Longleftrightarrow
 \mathcal N_{\Amod}\vdash\NNF^{+}(F).
\]
\end{theorem}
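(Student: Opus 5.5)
The equivalence follows by chaining results already established, using semantic validity as the bridge in one direction and the Hilbert certification lemmas together with Hilbert normalization in the other.

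\emph{Forward direction.} Assume $H_{\Amod}\vdash F$.
\begin{enumerate}
\item Theorem~\ref{thm:hilbert-soundness} makes $F$ valid in every constant-domain model with rigid terms based on $\Frames(\Amod)$.
\item Theorem~\ref{thm:normalization-adequacy} gives, at every world and under every assignment, $\mathcal M,w,g\models\NNF^{+}(F)$ exactly when $\mathcal M,w,g\models F$. Hence the core formula $\NNF^{+}(F)$ is valid over the same class.
\item The single-component nested sequent containing only $\NNF^{+}(F)$ has interpretation $\NNF^{+}(F)$ (a one-term disjunction). It contains no proof parameters, so Convention~\ref{conv:parameter-expansion} imposes no extra expansions.
\item Theorem~\ref{thm:kernel-completeness} now yields $\mathcal N_{\Amod}\vdash\NNF^{+}(F)$.
\end{enumerate}

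\emph{Backward direction.} Assume $\mathcal N_{\Amod}\vdash\NNF^{+}(F)$.
\begin{enumerate}
\item The root sequent is built from a source formula over $\Sigma$, so it contains no proof parameters.
\item Corollary~\ref{cor:certificate-to-hilbert} gives $H_{\Amod}\vdash\sem{\mathcal G}$. By the convention in Definition~\ref{def:core-nnf}, this is $H_{\Amod}$ theoremhood of the source reading of $\NNF^{+}(F)$, in which each $\overline p$ is read as $\neg p$ and $\top$ as $\neg\bot$.
\item Theorem~\ref{thm:hilbert-nnf} gives $H_{\Amod}\vdash F\leftrightarrow\NNF^{+}(F)$, where the right side is again the source reading.
\item Applying C3 to the biconditional yields $\NNF^{+}(F)\to F$, and modus ponens gives $H_{\Amod}\vdash F$.
\end{enumerate}

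\emph{Main obstacle: the one-formula root.} The chaining itself is routine; the only care needed is at the root sequent, in two places.
\begin{itemize}
\item \emph{Disjunction reading.} Definition~\ref{def:nested-sequent} defines $\sem{\Gamma}$ as a disjunction with $\sem{\varnothing}=\bot$. For one formula this should be read as the formula itself. If it is instead parsed literally as $\NNF^{+}(F)\lor\bot$, a short Hilbert step using D1, D3 and EFQ removes the extra disjunct. In the semantic direction the $\bot$ disjunct is harmless.
\item \emph{Uniform source reading.} The same source reading of core formulas must be used in both Theorem~\ref{thm:hilbert-nnf} and Corollary~\ref{cor:certificate-to-hilbert}. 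Both rely on the fixed convention of Definition~\ref{def:core-nnf}, so they agree.
\end{itemize}
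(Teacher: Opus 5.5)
Your proposal is correct and follows the paper's proof essentially step for step. The forward direction uses Hilbert soundness, normalization adequacy, and cut-free kernel completeness; the converse uses the parameter-free case of Corollary~\ref{cor:certificate-to-hilbert}, then Theorem~\ref{thm:hilbert-nnf} and classical Hilbert reasoning, and your remarks on the one-formula root and on the source reading of core formulas only spell out bookkeeping the paper leaves implicit.
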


\begin{proof}
Assume first $H_{\Amod}\vdash F$. By Theorem~\ref{thm:hilbert-soundness}, $F$ is valid on $\Frames(\Amod)$. Theorem~\ref{thm:normalization-adequacy} gives semantic equivalence between $F$ and $\NNF^+(F)$, and Theorem~\ref{thm:kernel-completeness} gives a finite certificate proof of the latter.

Conversely, assume the certificate derivation. Its root $\NNF^+(F)$ contains no proof parameter, so the second statement of Corollary~\ref{cor:certificate-to-hilbert} gives $H_{\Amod}\vdash\NNF^+(F)$. Theorem~\ref{thm:hilbert-nnf} gives $H_{\Amod}\vdash F\leftrightarrow\NNF^+(F)$, from which $H_{\Amod}\vdash F$ follows by classical Hilbert reasoning.
\end{proof}

\begin{corollary}[Adequacy equivalences]\label{cor:three-way-adequacy}
For every source formula $F$,
\[
 H_{\Amod}\vdash F
 \Longleftrightarrow
 \mathcal N_{\Amod}\vdash\NNF^+(F)
 \Longleftrightarrow
 \Frames(\Amod)\models F.
\]
\end{corollary}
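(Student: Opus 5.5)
The corollary follows by chaining results already established, so the plan is a short bookkeeping argument. The first equivalence, between $H_{\Amod}\vdash F$ and $\mathcal N_{\Amod}\vdash\NNF^+(F)$, is exactly Theorem~\ref{thm:full-adequacy}, and I will cite it without change. What remains is to connect either side to the semantic statement $\Frames(\Amod)\models F$. I will close the cycle by proving two implications: from Hilbert theoremhood to frame validity, and from frame validity to certificate derivability.

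For the implication from $H_{\Amod}\vdash F$ to frame validity, I invoke Theorem~\ref{thm:hilbert-soundness}. It gives validity of $F$ in every constant-domain model with rigid terms based on a frame in $\Frames(\Amod)$. By the definition in Section~\ref{sec:models}, validity on a frame quantifies over all models based on that frame. These are exactly the constant-domain models with rigid terms of Definition~\ref{def:modal-model}, so this is precisely $\Frames(\Amod)\models F$.

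For the implication from $\Frames(\Amod)\models F$ to $\mathcal N_{\Amod}\vdash\NNF^+(F)$, I first apply Theorem~\ref{thm:normalization-adequacy}. It says $F$ and $\NNF^+(F)$ have the same truth value at every world under every assignment, so $\NNF^+(F)$ is valid on every frame in $\Frames(\Amod)$. I then regard $\NNF^+(F)$ as the nested sequent with a single root component containing only that formula. Its interpretation $\sem{\cdot}$ is $\NNF^+(F)\lor\bot$, which is semantically equivalent to $\NNF^+(F)$, so this sequent is valid. Theorem~\ref{thm:kernel-completeness} then yields $\mathcal N_{\Amod}\vdash\NNF^+(F)$.

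The only point needing care is that $\NNF^+(F)$ contains no proof parameters. The rigid-expansion clause of Convention~\ref{conv:parameter-expansion} is therefore vacuous, and the semantic notions used by Theorem~\ref{thm:kernel-completeness} coincide with plain frame validity. With that observation there is no real obstacle. Combining the two implications with Theorem~\ref{thm:full-adequacy} closes the cycle and gives all three equivalences.
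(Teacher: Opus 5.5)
Your proposal is correct and is essentially the paper's proof, which chains Theorems~\ref{thm:full-adequacy}, \ref{thm:hilbert-soundness}, \ref{thm:kernel-completeness}, and~\ref{thm:normalization-adequacy} through the same cycle you describe. One cosmetic point: the interpretation of a root containing only $\NNF^+(F)$ is $\NNF^+(F)$ itself, since $\sem{\varnothing}=\bot$ applies only to an empty component, but this does not affect the argument.
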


\begin{proof}
Combine Theorems~\ref{thm:full-adequacy}, \ref{thm:hilbert-soundness}, \ref{thm:kernel-completeness}, and~\ref{thm:normalization-adequacy}.
\end{proof}

For a user answer substitution $\theta$, Theorem~\ref{thm:full-adequacy} also makes the Hilbert correctness condition in Definition~\ref{def:hilbert-correct} equivalent to
$\mathcal N_{\Amod}\vdash \NNF^+\left(\widehat P\to \forall Y_1\cdots\forall Y_r\,G\theta\right),$
where $Y_1,\ldots,Y_r$ are the free variables of $G\theta$. We retain $\Corr^{\mathrm H}_{\Amod}(P,G)$ as the single notation for correct answers.

\subsection{Raw structural boundary}\label{sec:raw-boundary}

For comparison, we now define a raw structural presentation in which some rules replace bracket labels. Such replacements can destroy the information that the same component is reachable through more than one modal relation. Its local soundness is proved in Appendix~\ref{app:raw-structural}.

\begin{definition}[Raw calculus]\label{def:raw-calculus}
For a finite modal module $\Amod$, the \emph{raw calculus} $\mathcal R_{\Amod}$ has the same initial sequents, Boolean rules, quantified rules, and box rule as $\mathcal N_{\Amod}$. Replace certificate propagation by immediate diamond propagation that retains its principal occurrence
\[
 \frac{\mathcal G\{\Diamond_iA\}_{u}\{A,\Delta\}_{v}}
      {\mathcal G\{\Diamond_iA\}_{u}\{\Delta\}_{v}}\;(\Diamond_i^{\mathrm r}),
\]
where $v$ is an immediate $i$ child of $u$. The calculus also contains the selected structural rules displayed below. It has no cut rule. All rules are applied bottom up during proof search.
\end{definition}

For an arbitrary deep one-hole context $\mathcal G\{\}$, the selected structural rules are the following.
\[
\begin{array}{cc}
 \displaystyle\frac{\mathcal G\{[\Delta]_i\}}{\mathcal G\{\Delta\}}\;[\mathrm{t}_i]
 &
 \displaystyle\frac{\mathcal G\{[\varnothing]_i\}}{\mathcal G\{\varnothing\}}\;[\mathrm{d}_i]
\end{array}
\]
\[
 \frac{\mathcal G\{[\Delta]_i\}}{\mathcal G\{[\Delta]_j\}}\;[\mathrm I(i,j)],
\]
\[
 \frac{\mathcal G\{[\Delta,[\Lambda]_j]_i\}}
      {\mathcal G\{[\Delta]_i,\Lambda\}}\;[\mathrm b(i,j)],
\]
\[
 \frac{\mathcal G\{[\Delta]_i,[\Lambda]_j\}}
      {\mathcal G\{[[\Delta]_k,\Lambda]_j\}}\;[4(i,j,k)],
\]
\[
 \frac{\mathcal G\{[[\Delta]_k,\Lambda]_j\}}
      {\mathcal G\{[\Delta]_i,[\Lambda]_j\}}\;[5(i,j,k)].
\]
Each structural rule is available exactly when its modal scheme belongs to $\Amod$.

\begin{proposition}[Semantic soundness of raw structural rules]\label{prop:raw-semantic-soundness}
Every selected raw structural rule preserves validity over $\Frames(\Amod)$.
\end{proposition}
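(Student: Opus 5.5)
The plan is to use the counterinterpretation method that already proved Theorem~\ref{thm:kernel-rule-soundness}, with Lemma~\ref{lem:position-map-semantics} as the bridge. Each raw structural rule is read bottom up, from conclusion to premise. For each selected rule I will show: if $\mathcal M,g$ together with an edge-respecting position map $\iota$ falsifies every formula occurrence of the conclusion, then some edge-respecting position map falsifies every occurrence of the premise. Contraposition then gives that validity of the premise implies validity of the conclusion. The model and assignment stay fixed, and so does the image of every component outside the rearranged part. The remaining work is to define the new map and check the edge condition for the new or relabelled edges.

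Let $u$ be the component containing the hole. The cases are as follows.
\begin{itemize}
\item $[\mathrm{t}_i]$. The premise moves $\Delta$ into a fresh $i$ child $c$. Put $\iota(c)=\iota(u)$; reflexivity of $R_i$ gives the new edge. The children of $\Delta$ keep their worlds, since $c$ and $u$ have the same image.
\item $[\mathrm{d}_i]$. Seriality gives a world for the fresh empty child, as in the rule soundness proof.
\item $[\mathrm I(i,j)]$. A $j$ edge $\iota(u)R_j\iota(c)$ becomes an $i$ edge, and the frame clause $R_j\subseteq R_i$ supplies it.
\item $[\mathrm b(i,j)]$. The conclusion has a child $c$ under $i$ and, at $u$, the multiset $\Lambda$. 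The premise moves $\Lambda$ into a fresh $j$ child $c'$ of $c$. Put $\iota(c')=\iota(u)$. From $\iota(u)R_i\iota(c)$ the B clause gives $\iota(c)R_j\iota(u)$. The subtrees inside $\Lambda$ keep their worlds.
\item $[4(i,j,k)]$. In the conclusion, $u$ has a $j$ child $c$, and $c$ has a $k$ child $d$ with content $\Delta$. The premise makes $d$ an $i$ child of $u$ and keeps $\Lambda$ at $c$. Keep $\iota(d)$. The 4 clause applied to $\iota(u)R_j\iota(c)R_k\iota(d)$ gives $\iota(u)R_i\iota(d)$.
\item $[5(i,j,k)]$. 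In the conclusion, $u$ has an $i$ child $d$ and a $j$ child $c$. The premise moves $d$ under $c$ with label $k$. The 5 clause applied to $\iota(u)R_j\iota(c)$ and $\iota(u)R_i\iota(d)$ gives $\iota(c)R_k\iota(d)$.
\end{itemize}
In every case the formula occurrences and their worlds are unchanged, so falsity of all occurrences carries over. The edges outside the rearranged region are untouched.

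The main obstacle is bookkeeping rather than logic. Matching the premise and conclusion of each rule to the orientation of the frame clauses in Definition~\ref{def:frame-class} is error prone, particularly for $4$ and $5$, where the indices are easily permuted. I will also have to check that component identities can be preserved or freshly assigned so that $\iota$ stays well defined. For the hole in a deep context, the lemma is applied to the whole sequent, so depth causes no extra difficulty.
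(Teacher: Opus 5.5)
Your proposal is correct and takes the same route as the paper's appendix proof. Both argue contrapositively with edge-respecting position maps (Lemma~\ref{lem:position-map-semantics}), keep every formula occurrence at its original counterexample world, and justify each new or relabelled edge by the matching frame clause, with the same orientations for $[\mathrm b(i,j)]$, $[4(i,j,k)]$, and $[5(i,j,k)]$.
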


\begin{proof}
The detailed counterinterpretation proof is given in Appendix~\ref{app:raw-structural}.
\end{proof}

The following finite modal module yields a concrete separation.

\begin{theorem}[Counterexample with a shared successor]\label{thm:fanout-counterexample}
Let
$\mathcal B=\{\mathrm{D}_3,\mathrm I(1,3),\mathrm I(2,3)\}.$
Then
$H_{\mathcal B}\vdash\Diamond_1p\lor\Diamond_2\neg p,$
while $\mathcal R_{\mathcal B}$ does not derive the one-sided root
$\{\Diamond_1p,\Diamond_2\overline p\}.$
\end{theorem}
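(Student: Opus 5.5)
The statement has two halves, proved by different means: a semantic argument for Hilbert derivability, and an invariant argument for raw underivability.

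For the Hilbert half, I will use Corollary~\ref{cor:three-way-adequacy}, so it suffices to show that $\Diamond_1p\lor\Diamond_2\neg p$ is valid on $\Frames(\mathcal B)$. Fix a frame and a world $u$. By $\mathrm D_3$ there is $v$ with $uR_3v$. By $\mathrm I(1,3)$ and $\mathrm I(2,3)$ we also have $uR_1v$ and $uR_2v$. Either $p$ or $\neg p$ holds at $v$, giving the required disjunct. Alternatively, I can give a direct certificate derivation. Apply $(\mathrm d_3)$ at the root to create a child $v$ under label $3$. The productions $1\to3$ and $2\to3$ give certificates for $u\leadstoA{1}v$ and $u\leadstoA{2}v$. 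Propagating both diamonds then closes the sequent with $(\mathrm{ax})$ on $p,\overline p$.

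For the raw half, I will argue by an invariant on all sequents reachable by backward search from $\{\Diamond_1p,\Diamond_2\overline p\}$. The point is that every bracket ever created carries exactly one label. A $1$-labelled bracket can only receive $p$, and a $2$-labelled bracket can only receive $\overline p$. The only structural rules available are $[\mathrm d_3]$, $[\mathrm I(1,3)]$ and $[\mathrm I(2,3)]$; there is no box, quantifier or Boolean rule to apply. Read bottom-up, $[\mathrm I(i,j)]$ has conclusion $\mathcal G\{[\Delta]_j\}$ and premise $\mathcal G\{[\Delta]_i\}$. So it relabels a $3$-bracket to $1$, or a $3$-bracket to $2$.

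The invariant I will prove by induction along a branch is the following.
\begin{itemize}
\item[(a)] The root contains only the two retained diamonds.
\item[(b)] Every bracket is an immediate child of the root.
\item[(c)] Every bracket has label in $\{1,2,3\}$.
\item[(d)] A $3$-bracket is empty, a $1$-bracket contains only copies of $p$, and a $2$-bracket contains only copies of $\overline p$.
\end{itemize}
For the inductive step, $[\mathrm d_3]$ adds an empty $3$-child. Relabelling turns an empty $3$-bracket into an empty $1$- or $2$-bracket. $(\Diamond^{\mathrm r}_1)$ adds $p$ only to an immediate $1$-child, and $(\Diamond^{\mathrm r}_2)$ adds $\overline p$ only to an immediate $2$-child. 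Diamonds inside children never arise, so nothing else applies. Consequently no component ever contains both $p$ and $\overline p$, and none contains $\top$. No initial sequent is reachable, so $\mathcal R_{\mathcal B}$ has no derivation of the root.

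The main obstacle is checking that the invariant survives every rule instance, including degenerate ones. I must confirm that deep contexts cannot place $[\mathrm d_3]$ inside a child in a way that breaks (b). It actually may: $[\mathrm d_3]$ can fire inside a $1$-child and create a nested empty $3$-bracket. So I will weaken (b) to allow arbitrary nesting of brackets, while keeping (d) for formula content: formulas in a $1$-labelled component are only $p$, those in a $2$-labelled component are only $\overline p$, and $3$-labelled components hold no formulas.

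The key observation is that relabelling preserves the content, and content is placed by propagation only according to the label at insertion time. So I will phrase (d) as follows: each non-root component's formula set is contained in $\{p\}$ or in $\{\overline p\}$. A component once relabelled from $3$ was empty, so relabelling never mixes content.

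With this formulation the induction is routine, and the conclusion follows because $(\mathrm{ax})$ needs both literals in one component.
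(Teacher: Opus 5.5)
Your proof is correct, and the raw half is the paper's own invariant argument. The paper's invariant (a component containing $p$ has incoming label $1$, one containing $\overline p$ has incoming label $2$, so none contains both) is your label-tied clause~(d). Nested $[\mathrm d_3]$ steps are harmless in both versions, because only immediate children of the root ever receive literals.

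Keep the label-tied form of (d) as the inductive statement: $1$-components hold only $p$, $2$-components hold only $\overline p$, $3$-components hold no formulas, and labels $1,2$ never change. Your final rephrasing ``contained in $\{p\}$ or in $\{\overline p\}$'' follows from it but is not inductive by itself, since it would not stop a $\{p\}$ component from later receiving $\overline p$.

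The real difference is in the Hilbert half. The paper derives the disjunction directly in $H_{\mathcal B}$:
\begin{itemize}
\item $\mathrm I(1,3)$ at $\neg p$ together with $\mathrm D_3$ gives $\Box_1\neg p\to\Diamond_3\neg p$;
\item the contrapositive of $\mathrm I(2,3)$, with duality, gives $\Diamond_3\neg p\to\Diamond_2\neg p$;
\item classical reasoning finishes.
\end{itemize}
You instead check frame validity and invoke Corollary~\ref{cor:three-way-adequacy}. This is legitimate, because that corollary is proved earlier and independently of this theorem. However, it routes an elementary axiomatic fact through the full completeness apparatus, whereas the paper's derivation is self-contained and shows the separation at the purely axiomatic level.

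Your alternative certificate derivation is the one the paper gives in the proof of Theorem~\ref{thm:raw-rule-admissibility}. Used here, it needs an initial $(\lor)$ step on $\NNF^+(\Diamond_1p\lor\Diamond_2\neg p)$ and then Theorem~\ref{thm:full-adequacy}.
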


\begin{proof}
For Hilbert derivability, $\mathrm I(1,3)$ instantiated with $\neg p$ gives $\Box_1\neg p\to\Box_3\neg p,$ and $\mathrm{D}_3$ gives $\Box_3\neg p\to\Diamond_3\neg p.$ From $\mathrm I(2,3)\colon\Box_2p\to\Box_3p$, contraposition and modal duality give $\Diamond_3\neg p\to\Diamond_2\neg p.$ Hence $\Box_1\neg p\to\Diamond_2\neg p$. Since $\Box_1\neg p$ is equivalent to $\neg\Diamond_1p$, classical reasoning yields the displayed disjunction.

For raw nonderivability, consider bottom-up proof search from $S=\{\Diamond_1p,\Diamond_2\overline p\}.$ No Boolean, quantifier, or box rule applies. The only applicable rule that can create a component is $\mathrm{d}_3$, which creates an empty $3$ child. The inclusion rules can relabel a displayed $3$ component to $1$ or to $2$. Once a component has label $1$ or $2$, no selected raw rule returns it to $3$ or changes one of those two labels to the other. The immediate diamond rule can insert $p$ only in a $1$ component immediately below the root and can insert $\overline p$ only in a $2$ component immediately below the root.

Consequently every raw sequent reachable from the root satisfies the following invariant. Each component containing $p$ has incoming label $1$, each component containing $\overline p$ has incoming label $2$, and no component contains both literals. The invariant is true at the root. The seriality rule adds only an empty component, inclusion changes one label in an allowed direction, and each diamond rule inserts its literal only in a component with the corresponding label. No truth formula is generated, and the invariant prevents atomic identity. Hence no raw branch closes.
\end{proof}

The raw inclusion rules may fail to preserve a single shared successor because they replace bracket labels. Certificate propagation instead keeps the explicit child fixed and certifies both derived reachability facts for that same child. The next theorem proves that this treatment is sufficient and also establishes admissibility of the raw transformations.

\begin{theorem}[Raw rule admissibility]\label{thm:raw-rule-admissibility}
The certificate calculus proves the root in Theorem~\ref{thm:fanout-counterexample} without cut. Every selected structural rule of $\mathcal R_{\Amod}$ is admissible for ordinary certificate derivability.
\end{theorem}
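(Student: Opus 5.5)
For the first claim I will write an explicit cut-free certificate derivation of $\{\Diamond_1p,\Diamond_2\overline p\}$ in $\mathcal N_{\mathcal B}$. Reading bottom up, apply $(\mathrm d_3)$ at the root $u$ to create a fresh empty $3$ child $v$. The walk consisting of the single edge $u\xrightarrow{3}v$ has label $3$. The productions $1\to 3$ and $2\to 3$ of $S_{\mathcal B}$, coming from $\mathrm I(1,3)$ and $\mathrm I(2,3)$, give the grammar derivations $1\Rightarrow_{\mathcal B}3$ and $2\Rightarrow_{\mathcal B}3$. Hence $u\leadstoA{1}v$ and $u\leadstoA{2}v$ hold, each with an attached pair $(\omega,\mathfrak d)$.

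Two applications of $(\Diamond^{\mathrm{cert}})$ now insert $p$ and then $\overline p$ into the same child $v$. This produces $\Diamond_1p,\Diamond_2\overline p,[p,\overline p]_3$, which is an $(\mathrm{ax})$ leaf. Since the calculus has no cut rule, nothing more is needed. As a cross-check, the derivation agrees with Corollary~\ref{cor:three-way-adequacy} and with the Hilbert derivation in Theorem~\ref{thm:fanout-counterexample}.

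For admissibility of the raw structural rules, the cleanest route is semantic. Take a rule instance from $\mathcal R_{\Amod}$ with premise $\mathcal G'$ and conclusion $\mathcal G$, and suppose $\mathcal N_{\Amod}\vdash\mathcal G'$. Corollary~\ref{cor:kernel-soundness} makes $\mathcal G'$ valid over constant-domain rigid models based on $\Frames(\Amod)$. Proposition~\ref{prop:raw-semantic-soundness} then makes $\mathcal G$ valid. Finally, Theorem~\ref{thm:kernel-completeness} yields $\mathcal N_{\Amod}\vdash\mathcal G$. Proof parameters are handled through Convention~\ref{conv:parameter-expansion}, which is used uniformly in both soundness and completeness.

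One point needs checking: Proposition~\ref{prop:raw-semantic-soundness} must be read as preserving validity from premise to conclusion under rigid expansions. The local counterinterpretation proof in Appendix~\ref{app:raw-structural} should give exactly this, because the structural rules do not touch terms.

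If the paper instead wants a syntactic admissibility proof, I would argue by induction on the height of the premise derivation. The idea is to map positions of the premise tree onto the conclusion tree and show that every certificate used in the premise derivation is transported to a certificate in the conclusion tree. Theorem~\ref{thm:certificate-horn} allows this transport to be checked at the level of Horn closures, using the frame clauses. This step is the main obstacle, particularly for $[\mathrm t_i]$ and $[\mathrm b(i,j)]$, which merge or reroot components. For these rules, positions that are identified need merged formula multisets, and edges must be re-derived as closure facts rather than as explicit edges. Because of this difficulty, I would present the semantic argument as the proof and mention the syntactic route only as a remark.
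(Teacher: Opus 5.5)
Your proposal is correct and follows the paper's proof essentially verbatim. The derivation is the same: $\mathrm d_3$, then two certificate propagations over the single $3$ edge using the productions $1\to3$ and $2\to3$, then atomic identity. The admissibility argument is also the same semantic chain: Corollary~\ref{cor:kernel-soundness}, then Proposition~\ref{prop:raw-semantic-soundness}, then Theorem~\ref{thm:kernel-completeness}. The syntactic route you sketch at the end is not needed, and the paper does not pursue it.
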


\begin{proof}
Apply $\mathrm{d}_3$ once to create an empty $3$ child $v$ of the root $u$. The grammar contains $1\to3$ from $\mathrm I(1,3)$, so the explicit $3$ edge gives a certificate from $u$ to $v$ for modality $1$. Certificate propagation inserts $p$ at $v$. The same edge and $2\to3$ from $\mathrm I(2,3)$ give a certificate for modality $2$, so a second propagation inserts $\overline p$ in the same component. Atomic identity closes the branch.

For admissibility, suppose a selected structural rule of $\mathcal R_{\Amod}$ has premise $\mathcal G_0$ and conclusion $\mathcal G_1$, and suppose the certificate calculus derives $\mathcal G_0$. By kernel soundness, $\mathcal G_0$ is valid. Proposition~\ref{prop:raw-semantic-soundness} then implies validity of $\mathcal G_1$. Kernel completeness yields a certificate derivation of $\mathcal G_1$.
\end{proof}

The calculi developed below for answer computation use only the certificate calculus, whose rules preserve all existing edges of the nested tree.
An implementation that uses a raw transformation as a search heuristic must reconstruct a certificate derivation before accepting the branch as successful.

\section{Scoped unification}\label{sec:unification}

Answer computation depends on substitutions that respect the order in which eigenparameters are introduced. This section gives a first-order unification procedure with explicit permissions. Miller's work on unification under a mixed prefix and earlier implementation work on scoping constructs in logic programming provide the closest background for this dependency problem~\cite{miller1992unification,nadathur1995scoping}. The algorithm below is specialized to the first-order term syntax used by \MMLP{}. Its analysis establishes the factorization property required by the answer theory.

\subsection{Permission discipline}\label{sec:symbolic-variables}

The object language has only the set $\mathsf{Var}$ of object variables fixed in Section~\ref{sec:source-language}. For symbolic execution, fix a countably infinite effectively enumerable set $\mathsf{FVar}$ disjoint from $\mathsf{Var}\cup\mathsf{Par}$ and partition it into three disjoint effectively enumerable reserves $\mathsf{FVar}^{\mathrm A}$, $\mathsf{FVar}^{\mathrm W}$, and $\mathsf{FVar}^{\mathrm U}$. Their members are collectively called \emph{flexible variables}.

\begin{definition}[Permanent permission map]\label{def:permission-map}
Let $\mathcal P_f(S)$ denote the set of finite subsets of a set $S$. A \emph{permanent permission map} is a computable map
$\pi:\mathsf{FVar}\longrightarrow\mathcal P_{f}(\mathsf{Par})$
such that, for every finite $Q\subseteq\mathsf{Par}$, the variables $V$ with $\pi(V)=Q$ form an infinite effectively enumerable subset of each of the three reserves. The permission set $\pi(V)$ is permanently defined for every flexible variable and never changes during proof search or unification. We fix one such map $\pi$ throughout the rest of the paper.
\end{definition}

Because infinitely many variables are available for every permission set in each reserve, finite prescribed renamings that respect permission fibers can be extended to global renamings. The next lemma states this extension property.

\begin{lemma}[Renaming extension for permissions]\label{lem:permission-renaming-extension}
The following assertions hold.
\begin{enumerate}
\item Every finite injective partial renaming $p_0:Q_0\to\mathsf{Par}$, with finite $Q_0\subseteq\mathsf{Par}$, extends to a permutation $\kappa^{p}$ of $\mathsf{Par}$.
\item Fix such a permutation $\kappa^{p}$. For each reserve $\mathsf{FVar}^{R}$, where $R\in\{\mathrm A,\mathrm W,\mathrm U\}$, let $g_R^0$ be a finite injective partial map from $\mathsf{FVar}^{R}$ to itself. Suppose every prescribed pair satisfies
\[
\pi(g_R^0(V))=\kappa^{p}[\pi(V)].
\]
Then the union of the three partial maps extends to a bijection
\[
\kappa^{f}:\mathsf{FVar}\longrightarrow\mathsf{FVar}
\]
that preserves each reserve and satisfies
\[
\pi(\kappa^{f}(V))=\kappa^{p}[\pi(V)]
\]
for every $V\in\mathsf{FVar}$. In particular, any prescribed finite subset of $\mathsf{FVar}^{\mathrm A}$ whose members have empty permission can be fixed by including the pairs $V\mapsto V$ in $g_{\mathrm A}^0$.
\end{enumerate}
\end{lemma}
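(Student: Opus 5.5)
Both assertions reduce to one combinatorial fact: a finite injective partial map between two countably infinite sets, whose domain and range have infinite complements, extends to a bijection. For the first assertion, $Q_0$ and $p_0[Q_0]$ are finite subsets of the countably infinite set $\mathsf{Par}$, so $\mathsf{Par}\setminus Q_0$ and $\mathsf{Par}\setminus p_0[Q_0]$ are both countably infinite. Fix bijective enumerations of these two complements, which exist because $\mathsf{Par}$ is effectively enumerable. Let $\kappa^{p}$ agree with $p_0$ on $Q_0$ and map the $n$th element of the first complement to the $n$th element of the second. This gives a permutation of $\mathsf{Par}$ extending $p_0$. Nothing further is needed here.

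For the second assertion, work separately in each reserve $R$ and, inside it, separately in each permission fiber. For finite $Q\subseteq\mathsf{Par}$, put $F_R(Q)=\{V\in\mathsf{FVar}^{R}\mid \pi(V)=Q\}$. By Definition~\ref{def:permission-map}, each $F_R(Q)$ is infinite, and the fibers $F_R(Q)$, as $Q$ ranges over finite subsets, partition $\mathsf{FVar}^{R}$. Since $\kappa^{p}$ is a permutation, $Q\mapsto\kappa^{p}[Q]$ is a bijection of $\mathcal P_f(\mathsf{Par})$. The required equation therefore says exactly that $\kappa^{f}$ sends $F_R(Q)$ into $F_R(\kappa^{p}[Q])$. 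It suffices to build, for every $Q$, a bijection $F_R(Q)\to F_R(\kappa^{p}[Q])$ and take the union. That union is then a bijection of $\mathsf{FVar}^{R}$, because the source fibers partition the reserve and so do the target fibers $F_R(\kappa^{p}[Q])$, by bijectivity of $Q\mapsto\kappa^p[Q]$. Taking the union over the three reserves gives a reserve-preserving bijection of $\mathsf{FVar}$.

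The prescribed data constrain each fiber only finitely. The hypothesis $\pi(g_R^0(V))=\kappa^{p}[\pi(V)]$ shows that the restriction of $g_R^0$ to $F_R(Q)$ is a finite injective partial map into $F_R(\kappa^{p}[Q])$. Moreover, only finitely many fibers meet $\dom(g_R^0)$ at all. In each fiber pair, both sets are countably infinite and the prescribed map is finite and injective. The argument of the first assertion therefore extends it to a bijection between the fibers. On fibers with no prescribed pairs, the same argument starting from the empty partial map gives some bijection.

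The only delicate point is making sure no global conflicts arise. Injectivity across different fibers holds automatically, because distinct source fibers go to distinct target fibers. Surjectivity holds because every target fiber is $F_R(\kappa^{p}[Q])$ for a unique $Q$. Consistency between the three reserves holds because the prescribed maps are reserve-internal by hypothesis. The final clause follows by prescribing $V\mapsto V$ for $V\in\mathsf{FVar}^{\mathrm A}$ with $\pi(V)=\varnothing$. The compatibility condition holds since $\kappa^{p}[\varnothing]=\varnothing$. If effectiveness of $\kappa^{f}$ is wanted, it follows from the computability of $\pi$ and the effective enumerability of each fiber. However, the statement only asks for existence of the bijection.
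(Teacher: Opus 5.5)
Your proof is correct. For the second assertion it is the paper's argument. You split each reserve into the permission fibers $F_R(Q)$ and note that the prescribed part of $g_R^0$ is a finite injection $F_R(Q)\to F_R(\kappa^{p}[Q])$. You extend it by pairing the two infinite complements through enumerations. Taking the union is legitimate because $Q\mapsto\kappa^{p}[Q]$ is a bijection on finite parameter sets.

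You diverge only on the first assertion. The paper considers the finite graph of $p_0$, whose nontrivial components are cycles or finite directed paths. It closes each path by sending its terminal vertex back to its initial vertex and fixes every other parameter. The result is a permutation of finite support, the identity outside $Q_0\cup p_0[Q_0]$, obtained without any enumeration. You instead pair $\mathsf{Par}\setminus Q_0$ with $\mathsf{Par}\setminus p_0[Q_0]$. Your permission may move infinitely many parameters, but one combinatorial fact then covers both assertions. The cycle-closing construction does not carry over directly to the second assertion, because the source and target fibers $F_R(Q)$ and $F_R(\kappa^{p}[Q])$ are in general disjoint, and the paper itself uses complement pairing there.

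Neither feature, finite support or uniformity, matters downstream. The canonicalization lemma only uses that $\kappa^{p}$ agrees with $p_0$ on the parameters occurring in the derivation, together with bijectivity of $\kappa^{f}$.
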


\begin{proof}
For the first assertion, form the finite directed graph of the partial injection $p_0$. Every vertex has indegree and outdegree at most one, so every nontrivial connected component is a directed cycle or a finite directed path. Keep each existing cycle. For every path, map its terminal vertex back to its initial vertex. Fix every parameter outside these finitely many components. The resulting map is a permutation extending $p_0$.

For the second assertion, for a reserve $R$ and a finite set $Q\subseteq\mathsf{Par}$ put
\[
F^{R}(Q)=\{V\in\mathsf{FVar}^{R}:\pi(V)=Q\}.
\]
The permutation $\kappa^{p}$ induces a bijection on finite parameter sets, $Q\mapsto \kappa^{p}[Q]$. By Definition~\ref{def:permission-map}, both $F^{R}(Q)$ and $F^{R}(\kappa^{p}[Q])$ are countably infinite. The prescribed part of $g_R^0$ with domain in $F^{R}(Q)$ is a finite injection into $F^{R}(\kappa^{p}[Q])$. Remove its finite domain and range and pair the two remaining countably infinite sets by their effective enumerations. This extends the prescribed part to a bijection from $F^{R}(Q)$ onto $F^{R}(\kappa^{p}[Q])$. Perform this construction for every reserve and every finite $Q$. Distinct fibers have disjoint domains and, because $\kappa^{p}$ is a permutation, disjoint target fibers. Their union is therefore the required bijection preserving each reserve. The prescribed variables with empty permission remain fixed because $\kappa^{p}[\varnothing]=\varnothing$.
\end{proof}

For a query $G$, each answer variable $X\in A_G$ receives a fresh $X^\sharp\in\mathsf{FVar}^{\mathrm A}$ with $\pi(X^\sharp)=\varnothing$, called its \emph{symbolic answer representative}. The map $X\mapsto X^\sharp$ is injective. The grammar of symbolic terms extends the grammar of proof terms by adding flexible variables:
$t ::= x\mid a\mid V\mid c\mid f(t_1,\ldots,t_n),$
where $x\in\mathsf{Var}$, $a\in\mathsf{Par}$, and $V\in\mathsf{FVar}$. A \emph{symbolic core formula} is obtained from the proof core grammar by allowing symbolic terms in atomic arguments, and a \emph{symbolic nested sequent} is a nested sequent whose formula occurrences are symbolic core formulas. Alpha equivalence extends to symbolic core formulas in the usual capture-avoiding way. Only flexible variables are instantiated by scoped unification. Object variables and proof parameters are rigid for this algorithm, even though object variables retain their ordinary first-order meaning in the declarative semantics.

For a derivation node $\mathsf n$, its \emph{active eigenparameter set} $Q_{\mathsf n}$ is the finite set of eigenparameters introduced by universal inferences on the path from the root of the derivation tree to $\mathsf n$. A \emph{proof metavariable} is a fresh member $U\in\mathsf{FVar}^{\mathrm W}$ reserved for a witness proof term and chosen at its introduction node $\mathsf n$ with $\pi(U)=Q_{\mathsf n}$. Section~\ref{sec:symbolic-rules} specifies where such variables are introduced. \emph{Auxiliary flexible variables} are chosen from $\mathsf{FVar}^{\mathrm U}$. Their permission sets are fixed at introduction according to the constructions below.

Each proof metavariable has an introduction node. When a mandatory branching rule creates several children, variables introduced above the split are shared by the children, while variables introduced strictly inside one child subtree are fresh for that subtree. We call this condition \emph{branch locality}. It permits distinct existential witnesses in separate branches while forcing one common substitution for symbolic answer representatives across mandatory branches.

For a symbolic term $t$, write $\operatorname{Par}(t)$ for the set of proof parameters in $t$ and $\Flex(t)$ for the set of flexible variables in $t$.

\begin{convention}[Flexible substitutions]\label{conv:flexible-substitutions}
A \emph{finite substitution on flexible variables} is a map from $\mathsf{FVar}$ to symbolic terms that differs from the identity at only finitely many variables. Its nonidentity domain is written $\dom(\theta)$. The elementary substitution $[t/X]$ maps $X$ to $t$ and fixes every other flexible variable, and $\operatorname{id}$ denotes the identity substitution. The dependency graph of $\theta$ has an edge from $X$ to $Y$ when $X\in\dom(\theta)$ and $Y\in\Flex(\theta(X))$. The substitution is \emph{acyclic} when this graph has no directed cycle. For a finite acyclic substitution, $\theta^*(X)$ denotes the term obtained by fully dereferencing $X$ under $\theta$, and $t\theta^*$ denotes its homomorphic extension to a symbolic term $t$. These conventions extend componentwise to symbolic core formulas, symbolic nested sequents, and finite sets of equations.
\end{convention}

\begin{definition}[Permission safety]\label{def:permission-safety}
A term $t$ is \emph{safe for $X$} when
$\operatorname{Par}(t)\subseteq\pi(X)$
and
$\pi(Y)\subseteq\pi(X)$ for every $Y\in\Flex(t)$.
A finite acyclic substitution $\theta$ \emph{preserves scope} when $\theta(X)$ is safe for $X$ for every $X\in\dom(\theta)$.
\end{definition}

A flexible variable that remains in a fully dereferenced term because it lies outside the substitution domain is called \emph{residual}.

The second condition in Definition~\ref{def:permission-safety} prevents an indirect scope violation. A binding $X\mapsto Y$ may satisfy the immediate parameter check even when a later binding $Y\mapsto f(a)$ would make $a$ occur in the final value of $X$. Permission inclusion blocks this possibility whenever $a\notin\pi(X)$.

The first bookkeeping fact concerns permission safety rather than graph acyclicity. Later substitutions cannot introduce a parameter dependency that was already forbidden.

\begin{lemma}[Safety under dereferenced substitution]\label{lem:safety-composition}
Suppose $t$ is safe for $X$ and $\gamma$ preserves scope. Then $t\gamma^*$ is safe for $X$.
\end{lemma}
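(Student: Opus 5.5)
The plan is to prove an auxiliary claim first: for every flexible variable $Y$, the fully dereferenced term $\gamma^*(Y)$ is safe for $Y$. Once that is in place, the statement follows by structural induction on $t$. Since $t\gamma^*$ is the homomorphic extension of $\gamma^*$, we have
\[
\operatorname{Par}(t\gamma^*)=\operatorname{Par}(t)\cup\bigcup_{Y\in\Flex(t)}\operatorname{Par}(\gamma^*(Y))
\quad\text{and}\quad
\Flex(t\gamma^*)=\bigcup_{Y\in\Flex(t)}\Flex(\gamma^*(Y)).
\]
Consider first the parameters. Safety of $t$ for $X$ gives $\operatorname{Par}(t)\subseteq\pi(X)$. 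For each $Y\in\Flex(t)$, the claim gives $\operatorname{Par}(\gamma^*(Y))\subseteq\pi(Y)$, and safety of $t$ gives $\pi(Y)\subseteq\pi(X)$. Now consider a residual flexible variable $Z\in\Flex(\gamma^*(Y))$. The claim gives $\pi(Z)\subseteq\pi(Y)\subseteq\pi(X)$. Together these two inclusions show that $t\gamma^*$ is safe for $X$.

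The claim itself is proved by well-founded induction on the dependency graph of $\gamma$. This is legitimate because $\gamma$ is finite and acyclic, so the graph restricted to $\dom(\gamma)$ has no infinite descending paths. We use induction on the length of the longest dependency path starting at $Y$, which is finite for the same reason.

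The base case is $Y\notin\dom(\gamma)$. Then $\gamma^*(Y)=Y$, which is trivially safe for $Y$: it has no parameters and $\pi(Y)\subseteq\pi(Y)$.

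In the inductive case $Y\in\dom(\gamma)$, we have $\gamma^*(Y)=\gamma(Y)\gamma^*$. Because $\gamma$ preserves scope, $\gamma(Y)$ is safe for $Y$. Every $Z\in\Flex(\gamma(Y))$ is a dependency successor of $Y$, so it has a strictly shorter longest path, and the induction hypothesis makes $\gamma^*(Z)$ safe for $Z$. We then repeat the same union computation as in the first paragraph, with $\gamma(Y)$ in place of $t$. This yields safety of $\gamma^*(Y)$ for $Y$.

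The only delicate point is ordering the argument so that it is not circular. The composite step (safe term, then dereference) is used inside the induction, but only on successors, whose claim is already available. So I would state the composite step as a standalone observation: if $s$ is safe for $X$ and every $Y\in\Flex(s)$ has $\gamma^*(Y)$ safe for $Y$, then $s\gamma^*$ is safe for $X$. This observation is proved once by the union computation, and then used both in the inductive step and for the final conclusion. Well-definedness of $\gamma^*$ needs acyclicity, which is part of the hypothesis that $\gamma$ preserves scope.
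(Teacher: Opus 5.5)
Your proof is correct, and it uses the same two facts as the paper's proof. The first is transitivity of permission inclusion, $\operatorname{Par}(\gamma(Y))\subseteq\pi(Y)\subseteq\pi(X)$ and $\pi(Z)\subseteq\pi(Y)\subseteq\pi(X)$. The second is that acyclicity bounds the induction.

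The organization differs. The paper keeps the single invariant ``the current term is safe for $X$''. It shows that this invariant survives one round of replacing each flexible $Y$ in the current term by $\gamma(Y)$, and iterates until the maximal dereference-chain length of the variables of $t$ is exhausted. You instead first prove the per-variable claim that $\gamma^*(Y)$ is safe for $Y$, by well-founded induction on the dependency graph, and then apply a one-shot composition observation.

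Your version states the recursion $\gamma^*(Y)=\gamma(Y)\gamma^*$ and the well-founded measure explicitly. The paper relies on a somewhat informal ``replace all variables at the current depth'' step. Your auxiliary claim is also exactly the form in which the paper later uses this lemma: $\alpha^*(X)$ safe for $X$ in Proposition~\ref{prop:scope-composition}, $\mu^*(X^\sharp)$ safe for $X^\sharp$ in Lemma~\ref{lem:zero-permission-residual}, and $\rho^*(U)$ safe for $U$ in Theorem~\ref{thm:scoped-instantiation}. The paper's route has the modest advantage of never stating anything about variables other than $X$.
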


\begin{proof}
Because $\gamma$ is finite and acyclic, every dereference chain is finite. We use induction on the maximum length of a dereference chain starting from a flexible variable occurring in $t$.

Base case. At depth zero nothing is replaced, so safety is the hypothesis.

Induction step. Assume safety is preserved through dereference depth $n$. Consider dereference depth $n+1$.

\begin{caselist}
\item Case $Y\in\dom(\gamma)$. For a flexible variable $Y$ occurring in the current term, safety gives $\pi(Y)\subseteq\pi(X)$. Scope preservation makes $\gamma(Y)$ safe for $Y$. Hence every proof parameter in $\gamma(Y)$ belongs to $\pi(Y)\subseteq\pi(X)$, and every flexible variable $Z$ in $\gamma(Y)$ satisfies $\pi(Z)\subseteq\pi(Y)\subseteq\pi(X)$.
\item Case $Y\notin\dom(\gamma)$. Then $\gamma(Y)=Y$, which is safe for $Y$ directly, and the same permission inclusions hold.
\end{caselist}

Replacing all variables at the current dereference depth therefore preserves safety for $X$. The induction terminates because the substitution is acyclic.
\end{proof}

\begin{proposition}[Scope preservation under acyclic composition]\label{prop:scope-composition}
Let $\alpha$ and $\beta$ be substitutions that preserve scope, and let $C$ be a finite set of flexible variables. Suppose there is a finite acyclic substitution $\chi$ with $\dom(\chi)\subseteq C$ such that
$\chi(X)=\alpha^*(X)\beta^*$
for every $X\in\dom(\chi)$. Then $\chi$ preserves scope.
\end{proposition}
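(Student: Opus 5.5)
We must show that $\chi(X)$ is safe for $X$ for every $X\in\dom(\chi)$. The substitution $\chi$ is acyclic by hypothesis, so only the safety condition of Definition~\ref{def:permission-safety} remains. The plan is to apply Lemma~\ref{lem:safety-composition} twice, first with $\alpha$ and then with $\beta$.

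Fix $X\in\dom(\chi)$. The first step shows that $\alpha^*(X)$ is safe for $X$. The term $X$ itself is trivially safe for $X$: it contains no proof parameter, and its only flexible variable is $X$, with $\pi(X)\subseteq\pi(X)$. Since $\alpha$ preserves scope, Lemma~\ref{lem:safety-composition} applied to $t=X$ and $\gamma=\alpha$ gives that $X\alpha^*=\alpha^*(X)$ is safe for $X$. The full dereference of $X$ under $\alpha$ coincides with the homomorphic extension $X\alpha^*$ by Convention~\ref{conv:flexible-substitutions}. If $X\notin\dom(\alpha)$, this term is just $X$, and the same conclusion holds directly.

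The second step applies Lemma~\ref{lem:safety-composition} again, now with $t=\alpha^*(X)$ and $\gamma=\beta$. Because $\beta$ preserves scope, this yields that $\alpha^*(X)\beta^*$ is safe for $X$. By hypothesis this term equals $\chi(X)$, so $\chi(X)$ is safe for $X$. Since $X\in\dom(\chi)$ was arbitrary and $\chi$ is a finite acyclic substitution, $\chi$ preserves scope.

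There is essentially no obstacle here. The only point needing care is that Lemma~\ref{lem:safety-composition} is stated for the dereferenced extension $t\gamma^*$, and this must be exactly the operation appearing in the hypothesis $\chi(X)=\alpha^*(X)\beta^*$. It is, since both are defined through Convention~\ref{conv:flexible-substitutions}. The acyclicity of $\alpha$ and $\beta$ is already part of the definition of scope preservation. We never need to form the composite $\alpha\beta$ as an acyclic substitution, because its acyclicity on $C$ is supplied by the assumed existence of $\chi$.
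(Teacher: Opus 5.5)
Your proof is correct and matches the paper's argument: two applications of Lemma~\ref{lem:safety-composition}, first with $\gamma=\alpha$ and then with $\gamma=\beta$, followed by the identification $\chi(X)=\alpha^*(X)\beta^*$. The only cosmetic difference is that you start from the trivially safe term $X$ rather than from $\alpha(X)$, so you do not need the paper's case split on whether $X\in\dom(\alpha)$.
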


\begin{proof}
Take $X\in\dom(\chi)$. If $X\notin\dom(\alpha)$, then $\alpha^*(X)=X$, which is safe for $X$. If $X\in\dom(\alpha)$, Definition~\ref{def:permission-safety} makes $\alpha(X)$ safe for $X$, and Lemma~\ref{lem:safety-composition}, applied with $\gamma=\alpha$, makes its fully dereferenced form $\alpha^*(X)$ safe for $X$. A second application of Lemma~\ref{lem:safety-composition}, now with $\gamma=\beta$, makes $\alpha^*(X)\beta^*$ safe for $X$. By the defining hypothesis on $\chi$, this term is $\chi(X)$. Since $X$ was arbitrary in the domain of the finite acyclic substitution $\chi$, Definition~\ref{def:permission-safety} shows that $\chi$ preserves scope.
\end{proof}

The acyclicity hypothesis is needed separately because permission safety is compositional, whereas acyclicity of two arbitrary substitution graphs is not. The unification proof below therefore avoids treating a formal product of substitutions as a new substitution unless acyclicity has been established independently.

Permissions are permanent by Definition~\ref{def:permission-map}. A variable may later be removed from the set maintained by the unification algorithm or may serve only as a residual auxiliary symbol, but its value $\pi(V)$ remains available whenever it occurs in a factor substitution.

Scope preservation controls which parameters may occur. A related support principle clarifies when two substitutions have the same action on a term: agreement on the flexible variables supporting that term is sufficient.

\begin{proposition}[Agreement on supported terms]\label{prop:supported-term-agreement}
Let $C$ be a set of flexible variables. Let $\alpha$ and $\beta$ be finite acyclic substitutions such that
$\alpha^*(Y)=\beta^*(Y)$ for every $Y\in C$.
If $\Flex(t)\subseteq C$, then
$t\alpha^*=t\beta^*.$
\end{proposition}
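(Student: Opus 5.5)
We argue by structural induction on the symbolic term $t$, using only the homomorphic definition of $t\gamma^*$ from Convention~\ref{conv:flexible-substitutions}. Full dereferencing under a finite acyclic substitution is well defined, so $t\alpha^*$ and $t\beta^*$ are finite symbolic terms computed compositionally. The hypothesis $\Flex(t)\subseteq C$ passes to every immediate subterm, since $\Flex(f(t_1,\ldots,t_n))=\bigcup_k\Flex(t_k)$. This makes structural induction the natural choice, with no need to track dereference depth.

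\emph{Base cases.} If $t$ is an object variable $x$, a proof parameter $a$, or a constant $c$, then neither substitution acts on it. Flexible substitutions are the identity outside $\mathsf{FVar}$, so $t\alpha^*=t=t\beta^*$. If $t$ is a flexible variable $Y$, then $\Flex(t)=\{Y\}\subseteq C$ gives $Y\in C$. By definition of the homomorphic extension, $Y\alpha^*=\alpha^*(Y)$ and $Y\beta^*=\beta^*(Y)$, and these agree by the hypothesis on $C$. This holds whether or not $Y$ lies in the domains: a residual $Y$ dereferences to itself, and the hypothesis is stated for the fully dereferenced values.

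\emph{Induction step.} For $t=f(t_1,\ldots,t_n)$, each $t_k$ satisfies $\Flex(t_k)\subseteq C$. The induction hypotheses give $t_k\alpha^*=t_k\beta^*$. The homomorphic extension commutes with function symbols, so
\[
t\alpha^*=f(t_1\alpha^*,\ldots,t_n\alpha^*)=f(t_1\beta^*,\ldots,t_n\beta^*)=t\beta^*.
\]

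The only delicate point is that dereferencing of a variable is not applied again to the resulting subterm. We must ensure that $t\gamma^*$ substitutes each flexible occurrence by $\gamma^*(Y)$ exactly once, with no further recursion into the result. Acyclicity guarantees that $\gamma^*(Y)$ is already fully dereferenced, so this holds. We therefore never need to examine the intermediate bindings $\alpha(Y)$ or $\beta(Y)$, which may differ even when $\alpha^*(Y)=\beta^*(Y)$. The same argument extends componentwise to formulas, nested sequents and equation sets, because these substitutions act only on atomic arguments.
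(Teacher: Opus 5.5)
Your proof is correct and follows essentially the same route as the paper's: structural induction on $t$, with rigid symbols fixed by both substitutions, a flexible leaf $Y$ handled by $Y\in C$ and the hypothesis on $C$, and the compound case handled homomorphically through $f$. Your remark that the argument never inspects the intermediate bindings $\alpha(Y)$ and $\beta(Y)$, which may differ even when $\alpha^*(Y)=\beta^*(Y)$, is a correct reading of why the statement is phrased in terms of fully dereferenced values.
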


\begin{proof}
We use structural induction on $t$.

Base case. Rigid constants, object variables, and proof parameters are fixed by both substitutions. If $t=Y$ is flexible, the support hypothesis gives $Y\in C$, so the assumed equality applies.

Induction step. Suppose $t=f(t_1,\ldots,t_n)$. Each subterm has flexible support contained in $C$. Apply the induction hypotheses componentwise and use homomorphic substitution through the rigid function symbol $f$.
\end{proof}

\subsection{Rewrite system}\label{sec:unification-rewrite}

A finite \emph{constraint set} has the form
$E=\{s_1\doteq t_1,\ldots,s_n\doteq t_n\}.$
For such a set $E$, write $\Flex(E)$ for the union of the flexible variables occurring in its equations.
An \emph{algorithm state} is $(E,C,\eta)$, relative to the permanent permission map $\pi$. The finite set $C\subseteq\mathsf{FVar}$ is the \emph{current set of flexible variables}, and every reachable state satisfies the support invariant $\Flex(E)\subseteq C$. A variable removed from $C$ retains its permanent value $\pi(V)$, while a flexible variable that has never belonged to $C$ still has its fixed permission set $\pi(V)$. The map $\eta$ is the \emph{accumulator}. For each original flexible variable, it gives its current term representation over the current variables.

\begin{convention}[Normalized accumulator]\label{conv:normalized-accumulator}
Let $C_0$ be the original set of flexible variables. At a state with current set $C$, the accumulator is an extensional map $\eta:C_0\to\text{symbolic terms}$ satisfying
$\Flex(\eta(Z))\subseteq C$
for every $Z\in C_0$. It extends homomorphically to terms whose flexible variables belong to $C_0$. The initial accumulator is the identity map on $C_0$, denoted $\operatorname{id}$. When a rewrite uses an elementary substitution $\delta$, replace $\eta$ by the pointwise updated accumulator $\eta'$ defined by
$\eta'(Z)=\eta(Z)\delta.$
The support condition keeps every accumulator image normalized over the current variables. Whenever an accumulator is later regarded as a substitution, it is extended by the identity outside $C_0$.
\end{convention}

\begin{definition}[Scoped unification rewrites]\label{def:unification-rules}
The \emph{scoped unification procedure} repeatedly applies the first applicable rule under deterministic orders for equations and fresh variables.

\textbf{Delete.} Remove $t\doteq t$.

\textbf{Decompose.} Replace
$f(s_1,\ldots,s_n)\doteq f(t_1,\ldots,t_n)$
by $s_r\doteq t_r$ for $1\le r\le n$.

\textbf{Clash.} Fail when the rigid head symbols or arities of the two terms differ. Object variables, rigid constants, and proof parameters count as nullary rigid symbols for this algorithm.

\textbf{Orient.} Replace $t\doteq X$ by $X\doteq t$ when $X$ is flexible and $t$ is not a flexible variable.

\textbf{Occurs failure.} Fail on $X\doteq t$ when $X\neq t$ and $X\in\Flex(t)$.

\textbf{Forbidden parameter failure.} Fail on $X\doteq t$ when $\operatorname{Par}(t)\nsubseteq\pi(X)$.

\textbf{Permission restriction.} Suppose $X\doteq t$ has passed the two preceding failure tests and some $Y\in\Flex(t)$ satisfies $\pi(Y)\nsubseteq\pi(X)$. Choose the first such $Y$ and choose a fresh auxiliary flexible variable $Y'\in\mathsf{FVar}^{\mathrm U}$ with
$\pi(Y')=\pi(Y)\cap\pi(X)$ and $\delta=[Y'/Y]$.
Apply $\delta$ to the entire constraint set and update the accumulator with Convention~\ref{conv:normalized-accumulator}. Replace $Y$ by $Y'$ in the current set of flexible variables. The old variable $Y$ is no longer current. It may later occur outside a substitution domain as a residual flexible variable, and its permission remains the permanent value $\pi(Y)$.

\textbf{Bind.} Suppose $X\doteq t$, $X\notin\Flex(t)$, and $t$ is safe for $X$. Put $\delta=[t/X]$, remove the selected equation, apply $\delta$ to all remaining equations, update the accumulator, and remove $X$ from the current set of flexible variables. The eliminated variable $X$ is no longer current. It may later occur outside a substitution domain as a residual flexible variable, and its permission remains the permanent value $\pi(X)$.
\end{definition}

The initial state is $(E_0,C_0,\operatorname{id})$, where $C_0$ is any finite set of original flexible variables with $\Flex(E_0)\subseteq C_0$. A run \emph{fails} when one of the three failure rules applies. A nonfailure run \emph{succeeds} when it reaches a state $(\varnothing,C_f,\mu)$ with empty constraint set, and it returns the normalized accumulator $\mu$.

\begin{lemma}[Current support]\label{lem:current-support}
Every nonfailure state $(E,C,\eta)$ reachable from the initial state satisfies $\Flex(E)\subseteq C$.
\end{lemma}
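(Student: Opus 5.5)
The plan is an induction on the length of the run from the initial state, checking that each non-failing rewrite of Definition~\ref{def:unification-rules} preserves the invariant $\Flex(E)\subseteq C$. For the base case, the initial state $(E_0,C_0,\operatorname{id})$ satisfies $\Flex(E_0)\subseteq C_0$ by the standing hypothesis on $C_0$. For the induction step, assume a nonfailure state $(E,C,\eta)$ satisfies $\Flex(E)\subseteq C$ and that a non-failing rule produces $(E',C',\eta')$. The failure rules produce no successor state, so only Delete, Decompose, Orient, Permission restriction, and Bind need to be examined.

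Delete, Decompose, and Orient leave $C$ unchanged. Each of them either removes an equation or replaces it by equations whose terms are subterms of the original terms, or by the same equation with its sides swapped. Hence $\Flex(E')\subseteq\Flex(E)\subseteq C=C'$.

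For Permission restriction with $\delta=[Y'/Y]$, the new set is $C'=(C\setminus\{Y\})\cup\{Y'\}$. Applying $\delta$ removes every occurrence of $Y$ and may introduce only $Y'$, so $\Flex(E\delta)\subseteq(\Flex(E)\setminus\{Y\})\cup\{Y'\}\subseteq C'$. The one point to check is that $Y'$ is genuinely fresh, so that $Y'\neq Y$ and no stale occurrence of $Y'$ already lies in $E$. This follows from the choice of $Y'$ as a fresh auxiliary variable in $\mathsf{FVar}^{\mathrm U}$.

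For Bind with $\delta=[t/X]$ on the selected equation $X\doteq t$, write $E=\{X\doteq t\}\cup E_1$. Then $E'=E_1\delta$ and $C'=C\setminus\{X\}$. Every flexible variable of $E_1\delta$ is either a variable of $E_1$ other than $X$, or a variable of $t$. Both kinds lie in $\Flex(E)\subseteq C$. Neither kind equals $X$: the first by construction, and the second because Bind requires $X\notin\Flex(t)$. Hence $\Flex(E')\subseteq C\setminus\{X\}$.

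This Bind case is the only delicate step. It is exactly where the side condition $X\notin\Flex(t)$, guaranteed by Bind's own hypothesis and by the earlier occurs test, is essential. Without it, removing $X$ from the current set could break the invariant. The accumulator plays no role in this lemma, so its update can be ignored.
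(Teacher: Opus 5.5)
Your proof is correct and matches the paper's argument step for step: induction over the run, Delete, Decompose and Orient introducing no new flexible variables, Permission restriction replacing $Y$ by $Y'$ in both $E$ and $C$, and Bind using $X\notin\Flex(t)$ together with $\Flex(t)\subseteq C$. One small remark: the freshness of $Y'$ is harmless but not actually needed, since the inclusion $\Flex(E\delta)\subseteq(C\setminus\{Y\})\cup\{Y'\}$ holds for any choice of $Y'$.
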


\begin{proof}
The property holds initially by the choice of $C_0$. Delete, Decompose, and Orient introduce no new flexible variable. In Permission restriction, the uniform replacement $[Y'/Y]$ changes the support of the constraints by replacing the current variable $Y$ with the new current variable $Y'$, exactly as the update of $C$ does. In Bind, the selected equation is removed, every remaining occurrence of $X$ is replaced by $t$, and $X\notin\Flex(t)$. The old support invariant gives $\Flex(t)\subseteq C$, so after removing $X$ from $C$ every flexible variable occurring in the transformed constraints lies in $C\setminus\{X\}$. Thus the invariant is preserved by every nonfailure rewrite.
\end{proof}

\subsection{Termination}\label{sec:unification-termination}

Before assigning a decreasing measure to the rewrite system, we first verify that every nonempty nonfailure state admits a next rewrite or exposes a failure condition. This separates rule coverage from the termination argument itself.

\begin{lemma}[Rewrite exhaustiveness]\label{lem:unification-exhaustiveness}
Let $(E,C,\eta)$ be a reachable nonfailure state with $E\neq\varnothing$. Then either one of the failure rules applies or one of Delete, Decompose, Orient, Permission restriction, or Bind applies. Consequently a maximal nonfailure run can stop only with empty constraint set.
\end{lemma}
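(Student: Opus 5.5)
The plan is a case analysis on the shape of a selected equation $s\doteq t$ in $E$, which is nonempty. Under the deterministic order some equation is selected first. It is enough to show that every equation matches the left side of at least one rule: the procedure applies the first applicable rule, so if some rule applies to some equation, a rewrite or a failure occurs. I would classify the pair $(s,t)$ according to whether each side is a flexible variable or a rigid-headed term. Object variables, constants, and proof parameters count as nullary rigid heads, as the Clash rule stipulates. Function applications also have rigid heads.

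\textbf{Case both sides rigid-headed.} If the head symbols or arities differ, Clash applies. If they agree and the arity is positive, Decompose applies. If the arity is zero, the two sides are the same nullary symbol, so $s=t$ and Delete applies.

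\textbf{Case $s$ rigid-headed and $t=X$ flexible.} The term $s$ is not a flexible variable, so Orient applies.

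\textbf{Case $s=X$ flexible.} If $t=X$, Delete applies. Otherwise $X\neq t$. If $X\in\Flex(t)$, Occurs failure applies. If $\operatorname{Par}(t)\nsubseteq\pi(X)$, Forbidden parameter failure applies. If neither holds and some $Y\in\Flex(t)$ has $\pi(Y)\nsubseteq\pi(X)$, then Permission restriction applies. Its side condition requires a fresh $Y'\in\mathsf{FVar}^{\mathrm U}$ with $\pi(Y')=\pi(Y)\cap\pi(X)$. Such a variable exists by Definition~\ref{def:permission-map}, because each permission fiber in each reserve is infinite while only finitely many variables have been used. If no such $Y$ exists, then $X\notin\Flex(t)$, $\operatorname{Par}(t)\subseteq\pi(X)$, and $\pi(Y)\subseteq\pi(X)$ for every $Y\in\Flex(t)$. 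Hence $t$ is safe for $X$ by Definition~\ref{def:permission-safety}, and Bind applies.

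These cases are exhaustive, since each side of an equation is either a flexible variable or has a rigid head. This exhaustiveness is the step that needs care. Lemma~\ref{lem:current-support} guarantees that the flexible variables occurring in $E$ are current, which keeps the rules well-formed.

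For the consequence, a maximal nonfailure run is one that reaches a state where no rule applies. If its constraint set were nonempty, the first part would give an applicable rewrite, contradicting maximality, or an applicable failure rule, contradicting the assumption that the run does not fail. Therefore the final constraint set is empty.

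I expect the main obstacle to be bookkeeping rather than mathematics. The Orient case must be checked against $t$ possibly being a flexible variable: equations $Y\doteq X$ between two flexible variables fall under the case $s$ flexible, so no gap arises. The other point to check is that the availability of a fresh auxiliary variable with the prescribed permission is justified by the infinite fibers of $\pi$.
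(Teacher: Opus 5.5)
Your proposal is correct and follows essentially the same case analysis as the paper. You treat a flexible left side by Delete, Occurs failure, Forbidden parameter failure, Permission restriction, or Bind; a rigid left side with a flexible right side by Orient; and two rigid sides by Clash or Decompose, with the nullary-agreement subcase reducing to Delete. Your added check that a fresh $Y'$ with permission $\pi(Y)\cap\pi(X)$ exists, via the infinite permission fibers of Definition~\ref{def:permission-map}, is a harmless refinement that the paper leaves implicit.
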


\begin{proof}
Choose the first equation $s\doteq t$ under the equation order. If $s$ and $t$ are syntactically equal, Delete applies. Suppose next that the left side is a flexible variable $X$. If $X\in\Flex(t)$, then $X\neq t$ by the preceding case, so Occurs failure applies. If some proof parameter of $t$ lies outside $\pi(X)$, Forbidden parameter failure applies. Otherwise, if some flexible variable $Y$ in $t$ has $\pi(Y)\nsubseteq\pi(X)$, Permission restriction applies. If none of these cases occurs, then $X\notin\Flex(t)$ and $t$ is safe for $X$, so Bind applies. This also covers the case in which $t$ is a distinct flexible variable.

Suppose instead that the left side is rigid and the right side is a flexible variable. Then Orient applies. It remains that neither side is flexible. Object variables, proof parameters, constants, and compound function terms have rigid outer symbols for unification. If the two rigid outer symbols or arities differ, Clash applies. If they agree and have positive arity, Decompose applies. If they agree and have arity zero, the two terms are syntactically equal, contrary to the first case. These cases exhaust every equation.
\end{proof}

\begin{theorem}[Termination of scoped unification]\label{thm:unification-termination}
The rewrite procedure in Definition~\ref{def:unification-rules} terminates on every finite constraint set.
\end{theorem}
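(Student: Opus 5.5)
We prove termination with a lexicographic measure on reachable nonfailure states. For a state $(E,C,\eta)$ we take the tuple
\[
\mu(E,C)=\bigl(n_1,\ n_2,\ n_3,\ n_4\bigr).
\]
Here $n_1=|C|$ is the number of current flexible variables. The second component is $n_2=\sum_{V\in C}\bigl|\bigcup_{Y\in C}\pi(Y)\setminus\ldots\bigr|$, which we replace by a simpler quantity below. The component $n_3$ is the total symbol size of $E$, and $n_4$ is the number of equations whose right side is a flexible variable while the left side is not. Failure rules end the run at once, so only the five nonfailure rules need to decrease $\mu$. Lemma~\ref{lem:unification-exhaustiveness} shows that a run stops only on success or failure.

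The second component is fixed by inspecting Permission restriction. This rule does not change $|C|$: it replaces $Y$ by $Y'$, where $\pi(Y')=\pi(Y)\cap\pi(X)\subsetneq\pi(Y)$. Every permission set that occurs is a subset of the finite set $P_0=\bigcup_{V\in C_0}\pi(V)$, together with the parameters occurring in $E_0$. This holds because new permissions arise only as intersections of existing ones. We therefore set $n_2=\sum_{V\in C}|\pi(V)|$. Permission restriction lowers $n_2$ by at least one, since the intersection is strict, and it leaves $n_1$ unchanged. Bind removes $X$ from $C$ and adds no variable, so $n_1$ drops strictly; anything that happens to the later components is irrelevant.

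Delete and Decompose leave $C$, and hence $n_1$ and $n_2$, unchanged, and they strictly lower $n_3$. Decompose removes the two occurrences of the head symbol $f$. Orient leaves $n_1,n_2,n_3$ unchanged and lowers $n_4$. The order on $\mathbb N^4$ is lexicographic and therefore well founded, so every run is finite.

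The main obstacle is the interaction between Bind and the later components. Bind may greatly enlarge $E$, because $t$ is substituted everywhere. The lexicographic placement of $n_1$ first is what absorbs this growth. We must also confirm that Permission restriction never enlarges $C$ and never increases the permission sum. This requires choosing $Y'$ fresh and checking that replacing $Y$ by $Y'$ in $C$ does not cause $Y'$ to coincide with some other current variable. Freshness from $\mathsf{FVar}^{\mathrm U}$ guarantees this, and Lemma~\ref{lem:current-support} ensures that $C$ tracks the support of $E$ exactly.
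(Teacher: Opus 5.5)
Your proof is correct and essentially the paper's: the same lexicographic measure ordered as variable count, permission sum, term size, orientation count (the paper's extra component $|E|$ is never the deciding one), except that you count the current set $C$ rather than the flexible variables occurring in $E$. This variant works because Lemma~\ref{lem:current-support} gives $X,Y\in\Flex(E)\subseteq C$ in Bind and Permission restriction, so Bind lowers $|C|$ and the fresh swap $Y\mapsto Y'$ keeps $|C|$ fixed while lowering the permission sum; only this inclusion is needed, not the ``exact'' tracking of the support that you claim.
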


\begin{proof}
Let $A(E)$ be the flexible variables occurring in the current constraint set. Set $M_1(E)=|A(E)|$. Define $M_2(E,\pi)$ as $\sum_{X\in A(E)}|\pi(X)|$. Let $|t|$ be the number of syntactic nodes in $t$. Define $M_3(E)$ as $\sum_{s\doteq t\in E}(|s|+|t|)$. Let $M_4(E)$ count equations of the form $t\doteq X$ that require orientation, and set $M_5(E)=|E|$. Order the tuple $M(E,\pi)=(M_1,M_2,M_3,M_4,M_5)$ lexicographically in $\mathbb N^5$.

Delete either lowers $M_1$ by removing the last occurrences in $E$ of one or more flexible variables or leaves $M_1,M_2$ fixed and strictly lowers $M_3$. Decompose leaves the first two components fixed and removes the two outer function nodes, so it strictly lowers $M_3$. Orient leaves the first three components fixed and lowers $M_4$ by one.

Permission restriction replaces $Y$ by one variable $Y'$ and therefore leaves $M_1$ fixed. Its applicability gives $\pi(Y)\nsubseteq\pi(X)$, so $\pi(Y')=\pi(Y)\cap\pi(X)\subsetneq\pi(Y).$ Thus $M_2$ strictly decreases. Bind eliminates $X$ from all remaining equations because $X\notin\Flex(t)$, so $M_1$ strictly decreases. Term growth under binding affects only later components of the measure.

Every nonfailure step strictly decreases the lexicographic measure. No infinite descending sequence exists in $\mathbb N^5$, so no run contains infinitely many nonfailure rewrites. By Lemma~\ref{lem:unification-exhaustiveness}, a finite maximal run with a nonempty constraint set must end at a failure rule. Otherwise the constraint set is empty and the run succeeds. Hence every run terminates in success or failure.
\end{proof}

\subsection{Solution factorization}\label{sec:unification-solutions}

Most generality requires preservation and factorization of solutions across individual rewrites. Factorization is stated extensionally because residual aliases need not be representable by an acyclic union of substitution graphs.

\begin{definition}[Admissible solution]\label{def:admissible-unifier}
An \emph{admissible solution} of $(E,C)$ is a substitution $\rho$ such that $\rho$ preserves scope, its nonidentity domain is contained in $C$, and
$s\rho^*=t\rho^*$
for every $s\doteq t\in E$. A residual flexible variable in a fully dereferenced range may be any member of $\mathsf{FVar}$ outside the nonidentity substitution domain. It may be current, formerly current, or never current. Its permission is always the permanent value given by $\pi$. We always replace a solution by its equivalent fully dereferenced normal form on the current variables.
\end{definition}

The last normalization convention preserves the solution set. A finite acyclic substitution and the substitution obtained by replacing each of its images by the corresponding fully dereferenced image have the same action on all terms. It is useful because variables in the nonidentity domain then do not occur in the fully dereferenced range.

Most generality is understood extensionally. For substitutions $\mu$ and $\gamma$, the expression
$\mu^*(X)\gamma^*$
means first fully dereference $X$ through $\mu$ and then instantiate the resulting residual flexible variables through $\gamma$. We do not require the union of the two substitution graphs to be acyclic. This distinction is essential for aliases. For example, the unifier $[Y/X]$ is more general than the solution $[X/Y]$ of $X\doteq Y$ because both original variables have the same value after the residual instantiation $[X/Y]$, even though combining the two graphs would create a syntactic cycle.

\begin{definition}[Term valuation]\label{def:term-valuation}
Let $C$ be a finite set of flexible variables. A \emph{term valuation} on $C$ assigns a term to every member of $C$ and extends homomorphically to terms whose flexible support is contained in $C$. A valuation is applied only once and is not recursively dereferenced. It is not a substitution graph, and no acyclicity requirement is imposed on it.
\end{definition}

Every admissible solution $\rho$ determines the term valuation $X\mapsto\rho^*(X)$ on the current variables. The temporary valuations below are used only to prove preservation of equation satisfaction across a rewrite.

\begin{lemma}[Safety of subterms]\label{lem:safe-subterm}
Suppose every proof parameter of $r$ lies in a set $S$ and every flexible variable $Z$ in $r$ satisfies $\pi(Z)\subseteq S$. Every subterm of $r$ has the same two properties.
\end{lemma}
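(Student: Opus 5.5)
The plan is a structural induction on the term $r$, or equivalently an induction on the position of the subterm. The key observation is that both properties are monotone under passing to subterms. Every proof parameter occurring in a subterm $s$ of $r$ also occurs in $r$, so $\operatorname{Par}(s)\subseteq\operatorname{Par}(r)$. Likewise every flexible variable occurring in $s$ occurs in $r$, so $\Flex(s)\subseteq\Flex(r)$. The two hypotheses on $r$ are universally quantified statements over exactly these sets. They therefore transfer to any subset.

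Concretely, I would first prove the auxiliary fact that $s$ being a subterm of $r$ implies $\operatorname{Par}(s)\subseteq\operatorname{Par}(r)$ and $\Flex(s)\subseteq\Flex(r)$. This goes by structural induction on $r$. In the base case, $r$ is an object variable, constant, proof parameter, or flexible variable, and its only subterm is $r$ itself. In the induction step, $r=f(r_1,\ldots,r_n)$ and $s$ is either $r$ or a subterm of some $r_k$. The induction hypothesis then gives inclusion in $\operatorname{Par}(r_k)\cup\Flex(r_k)$, and $\operatorname{Par}(r)=\bigcup_k\operatorname{Par}(r_k)$ and $\Flex(r)=\bigcup_k\Flex(r_k)$ by definition.

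Given the inclusions, the conclusion is immediate. A parameter $a\in\operatorname{Par}(s)$ lies in $\operatorname{Par}(r)\subseteq S$. A flexible $Z\in\Flex(s)$ lies in $\Flex(r)$, so $\pi(Z)\subseteq S$. Permissions are permanent by Definition~\ref{def:permission-map}, so $\pi(Z)$ is the same value regardless of where $Z$ occurs.

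There is no real obstacle. The only point needing care is that the substitution conventions do not alter the notion of subterm. The notion here is purely syntactic on symbolic terms: no dereferencing is applied, and no binders occur in terms.
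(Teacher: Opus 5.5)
Your proposal is correct and matches the paper's proof: both reduce the claim to the inclusions $\operatorname{Par}(s)\subseteq\operatorname{Par}(r)$ and $\Flex(s)\subseteq\Flex(r)$ and then transfer the two hypotheses on $r$ to $s$. The only difference is that you prove these inclusions by structural induction, while the paper treats them as immediate.
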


\begin{proof}
Let $s$ be a subterm of $r$. Then $\operatorname{Par}(s)\subseteq\operatorname{Par}(r)\subseteq S$ and $\Flex(s)\subseteq\Flex(r)$. Hence every proof parameter of $s$ lies in $S$, and every flexible variable $Z$ occurring in $s$ satisfies $\pi(Z)\subseteq S$ by the hypothesis on $r$.
\end{proof}

\begin{lemma}[Accumulator regularity]\label{lem:accumulator-regularity}
At every nonfailure state reachable from the initial state, with original set of flexible variables $C_0$, current set $C$, and accumulator $\eta$, the following assertions hold.
\begin{enumerate}
\item $\Flex(\eta(Z))\subseteq C$ for every $Z\in C_0$.
\item If $Z\in C_0\cap C$, then $\eta(Z)=Z$.
\item The term $\eta(Z)$ is safe for $Z$ for every $Z\in C_0$.
\end{enumerate}
Consequently, after extension by the identity outside $C_0$, the accumulator is a finite acyclic substitution that preserves scope, and its fully dereferenced value at each $Z\in C_0$ is $\eta(Z)$.
\end{lemma}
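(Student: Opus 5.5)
We argue by induction on the length of the run reaching the state, checking the three assertions simultaneously; the consequence is derived at the end. At the initial state $(E_0,C_0,\operatorname{id})$ all three assertions are immediate: $\eta(Z)=Z\in C_0=C$, and $Z$ is safe for $Z$ since $\pi(Z)\subseteq\pi(Z)$. Delete, Decompose, and Orient leave both $C$ and $\eta$ unchanged, so only Permission restriction and Bind need attention.

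\textbf{Permission restriction.} Here $\delta=[Y'/Y]$ with $Y\in C$ current, $Y'$ fresh, $C'=(C\setminus\{Y\})\cup\{Y'\}$, and $\eta'(Z)=\eta(Z)\delta$.

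Assertion 1 holds because $Y$ is replaced by $Y'$ and all other supports are untouched.

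For assertion 2, take $Z\in C_0\cap C'$. Freshness of $Y'$ gives $Y'\notin C_0$, so $Z\in C\setminus\{Y\}$. By induction $\eta(Z)=Z$, and $\delta$ fixes $Z$.

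For assertion 3, $\eta(Z)$ is safe for $Z$ by induction, and $\pi(Y')\subseteq\pi(Y)$. Since $\delta$ is a one-variable substitution with $Y'$ fresh, it is acyclic and preserves scope. Lemma~\ref{lem:safety-composition} then makes $\eta(Z)\delta$ safe for $Z$.

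\textbf{Bind.} Here $\delta=[t/X]$ with $X\notin\Flex(t)$, $t$ safe for $X$, and $C'=C\setminus\{X\}$.

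For assertion 1, Lemma~\ref{lem:current-support} gives $\Flex(t)\subseteq C$. Together with $X\notin\Flex(t)$ this yields $\Flex(\eta(Z)\delta)\subseteq C\setminus\{X\}$.

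For assertion 2, if $Z\in C_0\cap C'$ then $Z\neq X$, so $\eta(Z)=Z$ is fixed by $\delta$.

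For assertion 3, $\delta$ is acyclic because $X\notin\Flex(t)$, and it preserves scope because $t$ is safe for $X$. Lemma~\ref{lem:safety-composition} again gives safety of $\eta(Z)\delta$ for $Z$.

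\textbf{The consequence.} Extend $\eta$ by the identity outside $C_0$. Its nonidentity domain is contained in $C_0\setminus C$, since by assertion 2 the accumulator is the identity on $C_0\cap C$; it is finite because $C_0$ is.

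For acyclicity, let $Z$ be in the domain. Then $Z\notin C$, while by assertion 1 every flexible variable of $\eta(Z)$ lies in $C$. Any variable of $C$ is either outside $C_0$ or in $C_0\cap C$, and in both cases it lies outside the domain. So every dependency edge ends at a non-domain variable, and the graph has paths of length at most one.

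It follows that dereferencing $Z$ stops after a single step, so $\eta^*(Z)=\eta(Z)$. Scope preservation is exactly assertion 3.

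The only delicate point is the bookkeeping showing that $Y'\notin C_0$ and that removed variables never re-enter $C$. Both follow from the freshness requirements in Definition~\ref{def:unification-rules} and from the fact that $C$ grows only by fresh auxiliaries.
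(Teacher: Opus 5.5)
Your proof is correct and follows essentially the same route as the paper: induction on the length of the run with the same case split (only Permission restriction and Bind need work), and the same argument for the consequence, namely that the nonidentity domain lies in $C_0\setminus C$ while every image is supported in $C$. The only differences are minor. You obtain safety after each substitution step by invoking Lemma~\ref{lem:safety-composition} for the one-variable substitution $\delta$, where the paper unfolds the permission inclusions directly. You also state explicitly that $Y'$ must be fresh with respect to $C_0$ and to previously removed variables, which the paper uses without comment.
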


\begin{proof}
We use induction on the length of the rewrite sequence.

Base case. At the initial state all three assertions hold for the identity accumulator.

Induction step. Assume the assertions at the current state and consider the next nonfailure rewrite.

\begin{caselist}
\item Case Delete, Decompose, or Orient. These rules leave the accumulator and current set unchanged, so all three assertions are preserved.
\item Case Permission restriction. Let $\delta=[Y'/Y]$ and $C'=(C\setminus\{Y\})\cup\{Y'\}$. The normalized update replaces each occurrence of $Y$ in an accumulator image by the fresh variable $Y'$ and changes no other flexible variable. This proves the support assertion. If an original variable $Z$ remains current, then $Z\neq Y$, and its old value was $Z$ by the induction hypothesis. The update therefore leaves it equal to $Z$. For safety, if $Y$ occurs in $\eta(Z)$, the induction hypothesis gives $\pi(Y)\subseteq\pi(Z)$. Since $\pi(Y')=\pi(Y)\cap\pi(X)\subseteq\pi(Y)$, replacing $Y$ by $Y'$ preserves safety for $Z$. If $Y$ does not occur, nothing changes.
\item Case Bind. Let $\delta=[t/X]$ and $C'=C\setminus\{X\}$. The occurs condition gives $X\notin\Flex(t)$, and Lemma~\ref{lem:current-support} gives $\Flex(t)\subseteq C'$. Replacing occurrences of $X$ in the accumulator images therefore establishes the new support assertion. Every original variable that remains current differs from $X$ and had identity value, so the second assertion is preserved. If $X$ occurs in $\eta(Z)$, old safety gives $\pi(X)\subseteq\pi(Z)$. The binding side condition makes $t$ safe for $X$, so every proof parameter of $t$ lies in $\pi(X)\subseteq\pi(Z)$ and every flexible variable $V$ of $t$ satisfies $\pi(V)\subseteq\pi(X)\subseteq\pi(Z)$. Thus substitution of $t$ for $X$ preserves safety for $Z$.
\end{caselist}

For the consequence, the nonidentity domain of the normalized accumulator is contained in $C_0\setminus C$ by the second assertion, while every accumulator image has flexible support contained in $C$ by the first. Hence no variable in the nonidentity domain occurs in any image. The accumulator is therefore acyclic, and the third assertion gives scope preservation.
\end{proof}

We next give the factorization direction needed for completeness. It is stated with extensional evaluation and therefore remains valid when an old solution uses a variable as a residual alias even though a later binding eliminates that variable.

\begin{lemma}[Permission restriction factorization]\label{lem:permission-restriction-factorization}
Consider a permission restriction step on $X\doteq t$ using $Y\in\Flex(t)$ with $\pi(Y)\nsubseteq\pi(X)$. Let $Y'$ be fresh with $\pi(Y')=\pi(Y)\cap\pi(X)$ and let $\delta=[Y'/Y]$. For every admissible solution $\rho$ of the old problem there is an admissible solution $\rho'$ of the transformed problem such that
$\rho^*(Z)=(Z\delta)(\rho')^*$
for every old current flexible variable $Z$.
\end{lemma}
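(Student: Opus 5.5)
We construct $\rho'$ explicitly from the term $r=\rho^*(Y)$. Since $\rho$ solves $X\doteq t$, we have $\rho^*(X)=t\rho^*$, so $r$ is a subterm of $\rho^*(X)$. Scope preservation of $\rho$, together with Lemma~\ref{lem:safety-composition}, makes $\rho^*(X)$ safe for $X$. Lemma~\ref{lem:safe-subterm} with $S=\pi(X)$ then gives $\operatorname{Par}(r)\subseteq\pi(X)$ and $\pi(Z)\subseteq\pi(X)$ for every $Z\in\Flex(r)$. Applying the same two lemmas to $\rho^*(Y)$ alone gives $\operatorname{Par}(r)\subseteq\pi(Y)$ and $\pi(Z)\subseteq\pi(Y)$. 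Hence $r$ is safe for $Y'$, because $\pi(Y')=\pi(Y)\cap\pi(X)$.

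Next we define $\rho'$ on the new current set $C'=(C\setminus\{Y\})\cup\{Y'\}$. Put $\rho'(Y')=r$ and $\rho'(Z)=\rho^*(Z)$ for $Z\in C\setminus\{Y\}$, omitting identity bindings. This is already a fully dereferenced normal form, so we must check that it is acyclic. Since $\rho$ is normalized, no variable in $\dom(\rho)$ occurs in any fully dereferenced image. The only new domain variable $Y'$ is fresh, so it does not occur in $r$ or in any $\rho^*(Z)$. Therefore no image mentions a domain variable, and $(\rho')^*$ coincides with $\rho'$ on $C'$. We also must handle the degenerate case $r=Y$, where $Y$ is residual. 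Then $\rho'(Y')=Y$, and $Y$ is no longer current, so it may legitimately appear as a residual variable in the sense of Definition~\ref{def:admissible-unifier}. Safety of $Y$ for $Y'$ would require $\pi(Y)\subseteq\pi(Y')$, which is exactly where the subterm argument above is needed. It yields $\pi(Y)\subseteq\pi(X)$, which is impossible under the step's hypothesis. So this case cannot arise, and I would record that explicitly.

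Scope preservation of $\rho'$ follows from the safety of $r$ for $Y'$ and the inherited safety of $\rho^*(Z)$ for $Z$, the latter again by Lemma~\ref{lem:safety-composition}. The domain of $\rho'$ is contained in $C'$. For the factorization identity, take an old current $Z$. If $Z\ne Y$, then $Z\delta=Z$ and $(\rho')^*(Z)=\rho^*(Z)$. If $Z=Y$, then $(Y\delta)(\rho')^*=r=\rho^*(Y)$. By homomorphic extension, every term $s$ with $\Flex(s)\subseteq C$ satisfies $(s\delta)(\rho')^*=s\rho^*$, in the spirit of Proposition~\ref{prop:supported-term-agreement}. This uses the support invariant of Lemma~\ref{lem:current-support}.

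Finally, every transformed equation has the form $s\delta\doteq u\delta$ for an old equation $s\doteq u$. Its two sides evaluate under $(\rho')^*$ to $s\rho^*=u\rho^*$, so $\rho'$ solves the transformed problem. The main obstacle is the safety of $r$ for the intersected permission $\pi(Y')$. The crucial point is to derive the $\pi(X)$ bound from the fact that $\rho$ solves the selected equation; safety of $\rho$ at $Y$ alone only gives the $\pi(Y)$ bound.
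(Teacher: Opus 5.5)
Your proposal is correct and is essentially the paper's proof. It has the same steps: safety of $\rho^*(Y)$ for $Y'$ follows from its being a subterm of the $X$-safe term $\rho^*(X)=t\rho^*$ together with its own $Y$-safety; then $\rho'(Y')=\rho^*(Y)$ and $\rho'(Z)=\rho^*(Z)$ otherwise; the case $\rho^*(Y)=Y$ is excluded via $\pi(Y)\nsubseteq\pi(X)$; and $(s\delta)(\rho')^*=s\rho^*$ is checked homomorphically. Like the paper, your acyclicity and agreement steps use that $Y'$ does not occur in the range of $\rho$, so \emph{fresh} must be read relative to $\rho$ as well (if $\rho$ already used $Y'$ as a residual, e.g.\ $\rho(Y)=g(Y')$, no such $\rho'$ would exist).
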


\begin{proof}
Because $\rho$ solves $X\doteq t$,
$\rho^*(X)=\rho^*(t).$
Since $Y$ occurs in $t$, the fully dereferenced value $\rho^*(Y)$ occurs as a subterm of $\rho^*(t)$. The value $\rho^*(X)$ is safe for $X$, so Lemma~\ref{lem:safe-subterm} shows that every proof parameter of $\rho^*(Y)$ lies in $\pi(X)$ and every residual flexible variable $V$ in that term satisfies $\pi(V)\subseteq\pi(X)$. Admissibility of $\rho$ gives the same statements with $\pi(Y)$ in place of $\pi(X)$. Hence $\rho^*(Y)$ is safe for the fresh variable $Y'$ with permission $\pi(Y)\cap\pi(X)$.

Define $\rho'$ in normalized form by
$\rho'^*(Y')=\rho^*(Y)$
and, for every other new current variable $Z$,
$\rho'^*(Z)=\rho^*(Z)$.
If $\rho^*(Y)=Y$, then $Y$ would remain as a residual subterm of $\rho^*(X)$, contradicting $\pi(Y)\nsubseteq\pi(X)$. Thus $Y$ does not occur in the displayed ranges. The values are already fully dereferenced under $\rho$, and $Y'$ is fresh, so the resulting substitution is finite and acyclic. Safety of $\rho^*(Y)$ for $Y'$ was established above. For every other variable, scope preservation follows from that of $\rho$.

We prove $(s\delta)(\rho')^*=s\rho^*$ for every old term $s$ by structural induction.

Base case. If $s=Y$, the equality is the definition of $\rho'$. Every other variable case is unchanged, and rigid nullary symbols are fixed.

Induction step. Suppose $s=f(s_1,\ldots,s_n)$. Apply the induction hypotheses to the arguments and use homomorphic evaluation through $f$.

Every transformed equation is therefore satisfied because the corresponding old equation is satisfied by $\rho$.
\end{proof}

\begin{lemma}[Binding factorization]\label{lem:binding-factorization}
Suppose a binding step uses $X\doteq t$ with $X\notin\Flex(t)$ and $t$ safe for $X$. Put $\delta=[t/X]$. For every admissible solution $\rho$ of the old problem there is an admissible solution $\rho'$ of the reduced problem such that
$\rho^*(Z)=(Z\delta)(\rho')^*$
for every old current flexible variable $Z$.
\end{lemma}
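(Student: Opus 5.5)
The plan mirrors Lemma~\ref{lem:permission-restriction-factorization}: take the new solution to be the old one with the eliminated variable dropped. Define $\rho'$ in normalized form on the new current set $C'=C\setminus\{X\}$ by $\rho'^*(Z)=\rho^*(Z)$ for every $Z\in C'$. Since $X\notin C'$, the variable $X$ is not in the nonidentity domain of $\rho'$. The values $\rho^*(Z)$ are already fully dereferenced under $\rho$, so no member of $\dom(\rho)$ occurs in them. Hence $\rho'$ is finite and acyclic, and each of its images is the corresponding image of $\rho$, so scope preservation is inherited directly from admissibility of $\rho$.

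The central identity is $(s\delta)(\rho')^*=s\rho^*$ for every old term $s$ whose flexible support lies in $C$. This is proved by structural induction on $s$.
\begin{caselist}
\item Case $s=Z$ with $Z\in C$ and $Z\neq X$. Then $Z\delta=Z$, and the equality is the definition of $\rho'$.
\item Case $s=X$. Then $X\delta=t$. By Lemma~\ref{lem:current-support} and $X\notin\Flex(t)$, we have $\Flex(t)\subseteq C'$. Proposition~\ref{prop:supported-term-agreement}, applied with $C'$ as support and using agreement of $\rho'$ and $\rho$ on $C'$, gives $t(\rho')^*=t\rho^*$. Since $\rho$ solves the selected equation, $t\rho^*=\rho^*(X)$.
\item Case rigid nullary symbols, or residual variables outside $C$ (fixed by both substitutions). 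The equality is immediate.
\item Case compound term. Use homomorphic evaluation through the function symbol.
\end{caselist}
Specializing the identity to $s=Z$ gives the displayed factorization $\rho^*(Z)=(Z\delta)(\rho')^*$ for every old current variable $Z$.

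Finally, each remaining reduced equation has the form $s\delta\doteq r\delta$ with $s\doteq r$ an old equation other than the selected one. The identity gives $(s\delta)(\rho')^*=s\rho^*=r\rho^*=(r\delta)(\rho')^*$, so $\rho'$ solves the reduced problem. Its domain lies in $C'$, so $\rho'$ is admissible.

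The main obstacle is the case $s=X$. It needs that $\rho'$ and $\rho$ agree on all of $\Flex(t)$, which requires $\Flex(t)\subseteq C'$ (the support invariant combined with the occurs check). It also needs care with residual variables: a residual occurrence of $X$ inside some $\rho^*(Z)$ is allowed. This is harmless only because $\rho'$ does not dereference $X$ further and we compare values extensionally, rather than forming the graph union of $\delta$ and $\rho'$. That is exactly the extensional convention fixed before Definition~\ref{def:term-valuation}.
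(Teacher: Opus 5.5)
Your proof is correct and follows the paper's argument. The paper uses the same definition $\rho'^*(Z)=\rho^*(Z)$ on $C\setminus\{X\}$ and the same key identity $(s\delta)(\rho')^*=s\rho^*$, handles the case $s=X$ through $t(\rho')^*=t\rho^*=\rho^*(X)$, and makes the same remark that a residual $X$ is harmless. The only cosmetic difference is that the paper proves $t(\rho')^*=t\rho^*$ by a direct structural induction over terms not containing $X$, using that $\rho$ and $\rho'$ agree on every flexible variable other than $X$, whereas you derive it from Proposition~\ref{prop:supported-term-agreement} together with $\Flex(t)\subseteq C\setminus\{X\}$.
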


\begin{proof}
Define $\rho'$ on each new current variable $Z\neq X$ by the normalized value
$\rho'^*(Z)=\rho^*(Z)$,
omitting identity bindings. Since these are fully dereferenced values of the finite acyclic substitution $\rho$, no variable in the nonidentity domain of $\rho'$ occurs in its range. The eliminated variable $X$ may remain as a residual symbol, but it is outside the new domain and retains its permanent permission set $\pi(X)$. Thus $\rho'$ is finite and acyclic. Scope preservation for every variable in its domain follows directly from scope preservation of $\rho$.

We first prove $r(\rho')^*=r\rho^*$ for every term $r$ with $X\notin\Flex(r)$ by structural induction.

Base case. Every flexible variable occurring in $r$ differs from $X$, so its fully dereferenced value is the same under $\rho'$ and $\rho$ by construction. Rigid nullary symbols are fixed.

Induction step. Suppose $r=f(r_1,\ldots,r_n)$. Apply the induction hypotheses to the arguments and use homomorphic evaluation through $f$.

In particular, $t(\rho')^*=t\rho^*$.

We next prove $(s\delta)(\rho')^*=s\rho^*$ for every old term $s$ by structural induction.

Base case. The only additional variable case is $s=X$. There,
$(X\delta)(\rho')^*=t(\rho')^*=t\rho^*=\rho^*(X),$
where the last equality is the fact that $\rho$ solves $X\doteq t$. The other variable cases use the definition of $\rho'$, and rigid nullary symbols are fixed.

Induction step. Suppose $s=f(s_1,\ldots,s_n)$. Apply the induction hypotheses componentwise and use homomorphic evaluation through $f$.

Every reduced equation is the $\delta$ image of an old equation distinct from the selected one, so the displayed equality makes $\rho'$ a solution.
\end{proof}

Delete, Decompose, and Orient require no separate construction. An old solution remains a solution after Delete or Orient, and injectivity of free constructors makes it a solution after Decompose. The two factorization lemmas therefore yield the general one-step statement.

\begin{lemma}[One-step solution factorization]\label{lem:one-step-factorization}
Suppose a nonfailure rewrite transforms the old problem into a new problem and uses the elementary substitution $\delta$, with $\delta=\operatorname{id}$ for Delete, Decompose, and Orient. Every admissible old solution $\rho$ has an admissible new solution $\rho'$ such that
$\rho^*(Z)=(Z\delta)(\rho')^*$
for every old current flexible variable $Z$.
\end{lemma}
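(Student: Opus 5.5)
The plan is a case split on the rewrite rule applied, reusing the two factorization lemmas already proved and handling the three rules with $\delta=\operatorname{id}$ directly.

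\emph{Permission restriction.} Lemma~\ref{lem:permission-restriction-factorization} gives exactly the required $\rho'$ with $\delta=[Y'/Y]$. The new current set is $(C\setminus\{Y\})\cup\{Y'\}$, and the lemma's construction puts the domain of $\rho'$ inside that set.

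\emph{Bind.} Lemma~\ref{lem:binding-factorization} supplies $\rho'$ with $\delta=[t/X]$ and domain inside $C\setminus\{X\}$.

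\emph{Delete, Decompose, Orient.} Here the current set does not change, so we take $\rho'=\rho$, already in fully dereferenced normal form as Definition~\ref{def:admissible-unifier} stipulates. The identity $\rho^*(Z)=(Z\operatorname{id})\rho^*$ is then trivial. Scope preservation and the domain condition are inherited unchanged, so it remains only to check that $\rho$ solves the new constraint set.
\begin{itemize}
\item For Delete, the new set is a subset of the old one.
\item For Orient, an equation is only reversed, and equality of dereferenced terms is symmetric.
\item For Decompose, $\rho$ satisfies $f(s_1,\ldots,s_n)\rho^*=f(t_1,\ldots,t_n)\rho^*$. Homomorphic extension rewrites this as $f(s_1\rho^*,\ldots,s_n\rho^*)=f(t_1\rho^*,\ldots,t_n\rho^*)$. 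Since terms are free syntax, the head symbol $f$ is injective, so $s_r\rho^*=t_r\rho^*$ for each $r$. All other equations are untouched.
\end{itemize}

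The only point needing care is bookkeeping: the quantification over ``old current flexible variable $Z$'' must match the current set used in each lemma. This holds because the state transitions in Definition~\ref{def:unification-rules} update $C$ exactly as the two lemmas assume. Failure rules are excluded by hypothesis, so these cases exhaust the nonfailure rewrites.

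I expect no real obstacle. The only thing to verify is that the normal-form convention makes $\rho'=\rho$ legitimate in the trivial cases, which it does since $\rho$ is already fully dereferenced.
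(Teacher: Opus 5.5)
Your proposal is correct and follows the paper's proof. Permission restriction and Bind are delegated to Lemmas~\ref{lem:permission-restriction-factorization} and~\ref{lem:binding-factorization}, and Delete, Orient, and Decompose are handled with $\rho'=\rho$, the last case using injectivity of the free constructor; your bookkeeping on the domain condition and the current set is accurate and only makes explicit what the paper leaves implicit.
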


\begin{proof}
Delete and Orient are immediate. For Decompose, equality of two terms with the same rigid outer constructor implies equality of every corresponding argument under $\rho^*$. Permission restriction is Lemma~\ref{lem:permission-restriction-factorization}, and Bind is Lemma~\ref{lem:binding-factorization}.
\end{proof}

For soundness of the returned accumulator we need a different direction, but only at the level of term equality. This avoids creating a possibly cyclic formal composition.

\begin{lemma}[One-step equation preservation]\label{lem:one-step-equation-transport}
Suppose a nonfailure rewrite uses $\delta$ to transform an old constraint set $E$ into a new constraint set $E'$. Let $\nu'$ be any term valuation on the new current variables that satisfies every equation in $E'$. Define a valuation $\nu$ on the old current variables by
$\nu(Z)=\nu'(Z\delta),$
where $\nu'$ is extended homomorphically. Then $\nu$ satisfies every equation in $E$.
\end{lemma}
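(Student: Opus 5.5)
The plan is a case analysis on the nonfailure rewrite that was applied. First I record a substitution identity that all cases use. For every old term $s$ with $\Flex(s)$ contained in the old current set, the valuation $\nu$ satisfies $s\nu=(s\delta)\nu'$. I prove this by structural induction on $s$:
\begin{itemize}
\item If $s$ is a flexible variable $Z$, the identity is the definition of $\nu$.
\item If $s$ is a rigid nullary symbol, both sides equal $s$.
\item If $s$ is compound, the identity follows componentwise by homomorphic evaluation through the rigid head.
\end{itemize}
Lemma~\ref{lem:current-support} guarantees that the support hypothesis holds for every equation of $E$. It also guarantees that $Z\delta$ has support in the new current set, so $\nu'$ is defined on it. In the Bind case this needs $\Flex(t)\subseteq C\setminus\{X\}$, which holds because $X\notin\Flex(t)$. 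In the Permission restriction case it holds because $Y'$ replaces $Y$ in the current set. With this identity, satisfaction of an old equation $s\doteq r$ reduces to the equality $(s\delta)\nu'=(r\delta)\nu'$.

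For Delete, Decompose, and Orient we have $\delta=\operatorname{id}$, so $\nu$ is $\nu'$ restricted to the old current set, which for these rules coincides with the new one.
\begin{itemize}
\item \emph{Delete.} The removed equation $t\doteq t$ is trivially satisfied, and every other old equation also belongs to $E'$.
\item \emph{Orient.} The equation $t\doteq X$ is satisfied exactly when $X\doteq t$ is, by symmetry of syntactic equality.
\item \emph{Decompose.} If every argument equation $s_r\doteq t_r$ holds under $\nu'$, then $f(s_1,\ldots,s_n)\nu'=f(t_1,\ldots,t_n)\nu'$ by homomorphic evaluation.
\end{itemize}

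\emph{Permission restriction.} Here $E'=E\delta$ with $\delta=[Y'/Y]$. For each old equation $s\doteq r$, the equation $s\delta\doteq r\delta$ lies in $E'$ and is satisfied by $\nu'$. The identity above then gives $s\nu=r\nu$.

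\emph{Bind.} Here $E'=(E\setminus\{X\doteq t\})\delta$ with $\delta=[t/X]$. Every old equation other than the selected one is handled exactly as in the Permission restriction case. For the selected equation, I compute $\nu(X)=\nu'(X\delta)=t\nu'$. Because $X\notin\Flex(t)$, we have $t\delta=t$, and the identity gives $t\nu=(t\delta)\nu'=t\nu'$. Hence $X\nu=t\nu$, so the selected equation is satisfied.

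The only step needing care is this last Bind case. There we must note that $t\delta=t$ by the occurs condition, so that evaluating $t$ under $\nu$ does not recursively re-expand $X$. This is exactly why valuations are used, since they are applied once and involve no acyclicity requirement. All remaining steps are routine.
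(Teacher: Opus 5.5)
Your proof is correct and follows the paper's route. The paper also runs a case analysis over the nonfailure rules. It handles Permission restriction and the unselected Bind equations by uniform transformation under $\delta$, and the selected Bind equation by $t\delta=t$ from $X\notin\Flex(t)$. The only difference is that you state and prove explicitly the identity $s\nu=(s\delta)\nu'$ and the support check that $\nu'$ is defined on each $Z\delta$, both of which the paper leaves implicit.
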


\begin{proof}
Delete adds back only an equation $t\doteq t$. Orient uses symmetry of equality. For Decompose, satisfaction of all equations $s_r\doteq t_r$ gives equality of the two terms with the common rigid outer constructor.

For Permission restriction, every old equation is transformed uniformly by $\delta$, so its two sides have equal $\nu$ values exactly when its transformed sides have equal $\nu'$ values. For Bind, every unselected equation is treated in the same way. The selected equation becomes
$X\doteq t$.
Its left side receives value $\nu'(X\delta)=\nu'(t)$, while its right side receives the same value because $X\notin\Flex(t)$ and $\delta$ therefore leaves $t$ unchanged.
\end{proof}

The accumulator update is compatible with these one-step valuations.

\begin{lemma}[Accumulator update compatibility]\label{lem:accumulator-transport}
Consider one nonfailure step with old current set $C$, new current set $C'$, elementary substitution $\delta$, and the normalized accumulator update from $\eta$ to $\eta'$. Assume
$\Flex(\eta(Z))\subseteq C$
for every original variable $Z$. Let $\nu'$ be any term valuation on $C'$ and define
$\nu(Y)=\nu'(Y\delta)$
for $Y\in C$. Then
$\nu(\eta(Z))=\nu'(\eta'(Z))$
for every original variable $Z$.
\end{lemma}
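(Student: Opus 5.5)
The plan is to reduce the claim to a homomorphism identity on terms. By the normalized update of Convention~\ref{conv:normalized-accumulator}, $\eta'(Z)=\eta(Z)\delta$ for every original variable $Z$. The target equality therefore reads $\nu(\eta(Z))=\nu'(\eta(Z)\delta)$. Put $r=\eta(Z)$. By hypothesis $\Flex(r)\subseteq C$, so $\nu(r)$ is well defined. I will prove the general statement that every symbolic term $r$ with $\Flex(r)\subseteq C$ satisfies $\nu(r)=\nu'(r\delta)$, and then instantiate it at $r=\eta(Z)$.

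Before the induction I check that $\nu'(r\delta)$ is meaningful, i.e.\ that $\Flex(r\delta)\subseteq C'$. The argument splits by rule.
\begin{caselist}
\item For Delete, Decompose, and Orient, $\delta=\operatorname{id}$ and $C'=C$.
\item For Permission restriction, $\delta=[Y'/Y]$ and $C'=(C\setminus\{Y\})\cup\{Y'\}$, so occurrences of $Y$ become $Y'$ and no other flexible variable changes.
\item For Bind, $\delta=[t/X]$ and $C'=C\setminus\{X\}$. Here Lemma~\ref{lem:current-support} gives $\Flex(t)\subseteq C$, and the occurs condition gives $X\notin\Flex(t)$, so $\Flex(t)\subseteq C'$.
\end{caselist}
In every case $\Flex(r\delta)\subseteq C'$. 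The same support fact also shows that $\nu(Y)=\nu'(Y\delta)$ is well defined for each $Y\in C$.

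The identity itself is proved by structural induction on $r$. For a rigid nullary symbol (object variable, constant, proof parameter), both $\nu$ and $\delta$ fix it, and $\nu'$ fixes it as well. For a flexible $Y\in C$, the equality $\nu(Y)=\nu'(Y\delta)$ is exactly the definition of $\nu$; valuations are applied once without dereferencing, so no recursion arises. For $r=f(r_1,\ldots,r_n)$, substitution and both valuations act homomorphically through $f$, so the induction hypotheses on the arguments give the result.

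The main obstacle is bookkeeping rather than mathematics. The essential point is the support check $\Flex(r\delta)\subseteq C'$ in the Bind case, since it needs the occurs condition together with current support. A secondary point is to be explicit that $\nu,\nu'$ are valuations in the sense of Definition~\ref{def:term-valuation}, so no acyclicity issue arises, even when a residual alias links variables.
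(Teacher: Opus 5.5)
Your proposal is correct and follows the paper's proof essentially verbatim: you rewrite $\eta'(Z)$ as $\eta(Z)\delta$ using the normalized update, then prove $\nu(r)=\nu'(r\delta)$ by structural induction on $r$, where the flexible case is just the definition of $\nu$. Your explicit check that $\Flex(r\delta)\subseteq C'$ is correct and is a well-definedness point the paper leaves implicit; in the Bind case it uses current support together with the occurs condition.
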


\begin{proof}
Put $r=\eta(Z)$. The support hypothesis gives $\Flex(r)\subseteq C$. We prove $\nu(r)=\nu'(r\delta)$ by structural induction on $r$.

Base case. Rigid nullary symbols are fixed. For a flexible variable $Y\in C$, the equality is the definition of $\nu(Y)$.

Induction step. Suppose $r=f(r_1,\ldots,r_n)$. Apply the induction hypotheses to the arguments and use homomorphic evaluation through $f$.

By Convention~\ref{conv:normalized-accumulator}, $\eta'(Z)=r\delta$, so the displayed equality is exactly the required conclusion.
\end{proof}

\begin{lemma}[Soundness of failure rules]\label{lem:failure-soundness}
If Clash, Occurs failure, or Forbidden parameter failure applies, the current problem has no admissible solution.
\end{lemma}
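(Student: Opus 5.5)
The plan is to argue by contradiction: assume an admissible solution $\rho$ of $(E,C)$ exists, and show it is incompatible with the selected equation on which the failure rule fires. By Definition~\ref{def:admissible-unifier} we may take $\rho$ in fully dereferenced normal form on the current variables, so $\rho$ preserves scope and satisfies $s\rho^*=t\rho^*$ for every equation. Since a failure rule inspects a single equation of $E$, it suffices to contradict that equation alone.

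\textbf{Clash.} The equation is $s\doteq t$ with neither side flexible, because Orient and the flexible-left cases precede Clash in the rule order. The rigid outer symbols or the arities differ. Object variables, constants and proof parameters count as nullary rigid symbols, and substitution on flexible variables fixes them. Hence $s\rho^*$ and $t\rho^*$ keep the distinct outer symbols or arities of $s$ and $t$. Since the term syntax consists of free constructors, $s\rho^*\neq t\rho^*$.

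\textbf{Occurs failure.} The equation is $X\doteq t$ with $X\in\Flex(t)$ and $X\neq t$. As $X$ is flexible but $t$ is not the variable $X$, and $t$ contains $X$, the term $t$ is compound and $X$ sits strictly inside it. Then $\rho^*(X)$ occurs as a proper subterm of $t\rho^*$. The equation $\rho^*(X)=t\rho^*$ would therefore make a finite term equal to one of its own proper subterms, which is impossible by a size count $|\cdot|$.

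\textbf{Forbidden parameter failure.} The equation is $X\doteq t$ with some $a\in\operatorname{Par}(t)\setminus\pi(X)$. Parameters are rigid, so $a\in\operatorname{Par}(t\rho^*)=\operatorname{Par}(\rho^*(X))$. If $X\notin\dom(\rho)$, then $\rho^*(X)=X$ contains no parameter, which is a contradiction. Otherwise $\rho(X)$ is safe for $X$, and Lemma~\ref{lem:safety-composition}, applied as in Proposition~\ref{prop:scope-composition}, makes the dereferenced value $\rho^*(X)$ safe for $X$. This gives $\operatorname{Par}(\rho^*(X))\subseteq\pi(X)$, contradicting $a\notin\pi(X)$.

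The only delicate point is the third case. Here I must check that safety survives full dereferencing, so that parameters contributed by residual or intermediate variables are still controlled. This is exactly what Lemma~\ref{lem:safety-composition} supplies, via the permission inclusions $\pi(Y)\subseteq\pi(X)$. The other two cases are routine free-algebra arguments.
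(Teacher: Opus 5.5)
Your proof is correct and follows the paper's case-by-case argument: rigid heads survive substitution for Clash, the proper-subterm size count handles Occurs failure, and rigidity of $a$ together with safety of $\rho^*(X)$ handles Forbidden parameter failure, where you make explicit the appeal to Lemma~\ref{lem:safety-composition} that the paper leaves implicit. One small correction is that Clash is listed before Orient in Definition~\ref{def:unification-rules}, so neither side being flexible follows from Clash's own condition, which compares the rigid head symbols of both sides, and not from the rule order.
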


\begin{proof}
For Clash, substitution cannot change the outer rigid function symbol or its arity, so the two sides cannot become the same free term.

For Occurs failure, $X$ occurs at a proper subterm position of $t$. Under any finite acyclic substitution, $t\rho^*$ therefore contains $\rho^*(X)$ as a proper subterm and has strictly larger term size. It cannot equal $\rho^*(X)$.

For Forbidden parameter failure, choose $a\in\operatorname{Par}(t)\setminus\pi(X)$. The rigid proof parameter $a$ cannot disappear under substitution. Any solution of $X\doteq t$ would place $a$ in $\rho^*(X)$, contradicting the permission requirement for $X$.
\end{proof}

Together, termination, accumulator regularity, one-step equation preservation, soundness of the failure rules, and one-step factorization prove the main unification theorem.

\begin{theorem}[Scoped most general unifier in the extensional sense]\label{thm:scoped-mgu}
Apply the algorithm of Definition~\ref{def:unification-rules} to a finite first-order constraint problem $(E_0,C_0)$ with $\Flex(E_0)\subseteq C_0$, relative to the permanent permission map $\pi$.
\begin{enumerate}
\item If the algorithm fails, no admissible solution exists.
\item If the algorithm succeeds with $(\varnothing,C_f,\mu)$, then $\mu$ is an admissible solution of the original problem.
\item For every admissible solution $\sigma$ of the original problem, there is a substitution $\gamma$ that preserves scope such that
$\sigma^*(Z)=\mu^*(Z)\gamma^*$
for every $Z\in C_0$.
\end{enumerate}
Consequently, the algorithm fails if and only if the original problem has no admissible solution.
Thus $\mu$ is a scoped most general unifier in the sense of clause 3. Below we abbreviate this as a \emph{scoped mgu}.
\end{theorem}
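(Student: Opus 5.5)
The plan is to follow the run of the algorithm step by step and combine the one-step lemmas by induction on the length of the run. Theorem~\ref{thm:unification-termination} shows that every run is a finite sequence of states
\[
(E_0,C_0,\operatorname{id})=(E^0,C^0,\eta^0)\to(E^1,C^1,\eta^1)\to\cdots\to(E^n,C^n,\eta^n),
\]
where step $k$ uses an elementary substitution $\delta_k$. Lemma~\ref{lem:unification-exhaustiveness} shows that the final state is either a failure state or has $E^n=\varnothing$.

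\textbf{Clause 1.} Suppose the run fails at stage $n$ and that $\sigma$ is an admissible solution of $(E_0,C_0)$. Lemma~\ref{lem:one-step-factorization} carries $\sigma$ forward one step at a time, so there is an admissible solution $\rho_k$ of $(E^k,C^k)$ for every $k\le n$. In particular $(E^n,C^n)$ has an admissible solution. This contradicts Lemma~\ref{lem:failure-soundness}.

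\textbf{Clause 2.} Lemma~\ref{lem:accumulator-regularity} shows that $\mu=\eta^n$ is finite, acyclic, and scope preserving, and that its nonidentity domain lies in $C_0\setminus C_f\subseteq C_0$. It remains to show that $\mu$ solves $E_0$. Let $\nu^n$ be the identity valuation on $C_f$; it satisfies $E^n=\varnothing$ trivially. Working backwards, put $\nu^{k}(Y)=\nu^{k+1}(Y\delta_{k+1})$. By Lemma~\ref{lem:one-step-equation-transport}, each $\nu^k$ satisfies $E^k$, so $\nu^0$ satisfies $E_0$. Lemma~\ref{lem:accumulator-transport} gives $\nu^k(\eta^k(Z))=\nu^{k+1}(\eta^{k+1}(Z))$ for every $Z\in C_0$, hence $\nu^0(Z)=\nu^0(\eta^0(Z))=\eta^n(Z)=\mu^*(Z)$. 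Since $\Flex(E_0)\subseteq C_0$, the homomorphic extension of $\nu^0$ agrees with $\mu^*$ on every term of $E_0$. Therefore $s\mu^*=t\mu^*$ for each $s\doteq t\in E_0$.

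\textbf{Clause 3.} Carry $\sigma$ forward with Lemma~\ref{lem:one-step-factorization} as in clause 1, obtaining solutions $\rho_0=\sigma,\rho_1,\ldots,\rho_n$. I prove by induction on $k$ the invariant
\[
\sigma^*(Z)=\eta^k(Z)\rho_k^*\quad\text{for every }Z\in C_0.
\]
At $k=0$ this holds because $\eta^0=\operatorname{id}$. For the step from $k$ to $k+1$, the one-step lemma gives $\rho_k^*(Y)=(Y\delta_{k+1})\rho_{k+1}^*$ for every $Y\in C^k$. Because $\Flex(\eta^k(Z))\subseteq C^k$, a structural induction on $\eta^k(Z)$ yields
\[
\eta^k(Z)\rho_k^*=(\eta^k(Z)\delta_{k+1})\rho_{k+1}^*=\eta^{k+1}(Z)\rho_{k+1}^*.
\]
At stage $n$, put $\gamma=\rho_n$. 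It is admissible for $(\varnothing,C_f)$, hence scope preserving, and $\sigma^*(Z)=\mu(Z)\gamma^*=\mu^*(Z)\gamma^*$.

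The main obstacle is the extensional bookkeeping. The composed objects must always be read as term evaluations along the current supports and never as unions of substitution graphs, which could be cyclic. In particular, each structural induction has to be restricted to terms whose flexible support lies in the current set; the support invariants of Lemmas~\ref{lem:current-support} and~\ref{lem:accumulator-regularity} supply exactly this restriction. Residual variables of $\rho_k$ lie outside $\dom(\rho_k)$ and need no further treatment.

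Finally, the closing equivalence follows from clause 1 together with clause 2 and termination. If the algorithm fails, no admissible solution exists. If it succeeds, $\mu$ itself is an admissible solution.
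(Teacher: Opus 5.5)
Your proposal is correct and follows the paper's proof essentially step for step. The paper also carries admissible solutions forward with the one-step factorization lemma, which gives the failure clause and the factorization invariant $\sigma^*(Z)=\eta(Z)\rho^*$. For clause 2 it uses the same two transport lemmas, packaging your backward chain of valuations as a forward invariant quantified over all valuations that satisfy the current constraints.
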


\begin{proof}
Termination is Theorem~\ref{thm:unification-termination}. Let a nonfailure run have current constraint set $E$, current set of flexible variables $C$, and accumulator $\eta$. We maintain two semantic invariants in addition to the syntactic accumulator properties of Lemma~\ref{lem:accumulator-regularity}.

For the first invariant, every term valuation $\nu$ on $C$ that satisfies $E$ also satisfies the original equations after the accumulator. For every $s\doteq t\in E_0$,
$\nu(s\eta)=\nu(t\eta).$
We prove this invariant by induction on the length of the nonfailure rewrite prefix.

Base case. At the initial state, the displayed assertion is exactly the assumption that $\nu$ satisfies $E_0$.

Induction step. Assume the invariant before one rewrite and let $\nu'$ satisfy the new constraint set. Lemma~\ref{lem:one-step-equation-transport} defines an old valuation $\nu$ satisfying the old constraint set. The induction hypothesis applies to $\nu$, and Lemma~\ref{lem:accumulator-transport} relates both sides to evaluation through the updated accumulator. Hence the first invariant is preserved.

For the second invariant, for every admissible initial solution $\sigma$ there is an admissible current solution $\rho$ such that
$\sigma^*(Z)=\eta(Z)\rho^*$
for every $Z\in C_0$. We again use induction on the length of the nonfailure rewrite prefix.

Base case. At the initial state, take $\rho=\sigma$.

Induction step. Assume the invariant at the old state. By Lemma~\ref{lem:one-step-factorization}, there is an admissible new solution $\rho'$ such that
$\rho^*(Y)=(Y\delta)(\rho')^*$
for every old current variable $Y$. Since every $\eta(Z)$ is supported by the old current set, we lift the pointwise factorization to $\eta(Z)$ by structural induction on that term.

Base case. Both evaluations fix rigid symbols. For a flexible leaf $Y$, the required equality is $\rho^*(Y)=(Y\delta)(\rho')^*$.

Induction step. Suppose $\eta(Z)=f(t_1,\ldots,t_n)$. Apply the induction hypotheses to the arguments and use homomorphic evaluation through $f$.

Therefore
$\eta(Z)\rho^*=(\eta(Z)\delta)(\rho')^*.$
By the normalized accumulator convention, $\eta'(Z)=\eta(Z)\delta$. Hence
$\eta(Z)\rho^*=\eta'(Z)(\rho')^*.$
The second invariant is therefore preserved.

If the algorithm reaches a failure rule, the second invariant implies that any initial admissible solution would induce an admissible solution of the failing current problem. This contradicts Lemma~\ref{lem:failure-soundness}.

If the algorithm succeeds with empty constraint set and accumulator $\mu$, extend $\mu$ by the identity outside $C_0$, as in Convention~\ref{conv:normalized-accumulator}. The identity term valuation on the final current variables satisfies the empty problem. The first invariant therefore shows that this substitution satisfies every original equation. Lemma~\ref{lem:accumulator-regularity} shows independently that it is finite and acyclic, preserves scope, and has $\mu^*(Z)=\mu(Z)$ for every $Z\in C_0$. Hence $\mu$ is an admissible solution of the original problem.

Finally, let $\sigma$ be any admissible original solution. By the second invariant at the final state, there is an admissible final substitution $\gamma$ such that
$\sigma^*(Z)=\mu^*(Z)\gamma^*$
for every $Z\in C_0$.
\end{proof}

\section{Symbolic execution}\label{sec:symbolic}

The ordinary certificate calculus represents derivations in which all witness terms are explicit. Logic programming instead requires symbolic execution that can postpone those choices and return substitutions for the variables selected by the user. This section defines the corresponding symbolic proof objects and execution procedure. Symbolic execution never enumerates first-order witness terms at an existential rule. It introduces one fresh proof metavariable and assigns it the eigenparameters that are currently available. Atomic closure contributes equations. Once every branch ends in a symbolic truth or atomic leaf, scoped unification is applied to the accumulated terminal equation set.

Scoped instantiation proves answer soundness by converting a symbolic derivation and an admissible solution of its terminal equations into an ordinary certificate derivation. For completeness, proof abstraction produces a symbolic derivation together with a solution of its equations from an ordinary certificate proof and the original answer. Scoped lifting factors that solution through the computed scoped mgu. These results yield the answer characterization theorem at the end of the section.

\subsection{Symbolic roots}\label{sec:symbolic-roots}

Let $P=\{P_1,\ldots,P_n\}$ be a finite program whose formulas have been universally closed, let $G$ be a query, and let $A_G\subseteq \FV(G)$ be the finite set of variables whose values are requested as output. For every $X\in A_G$, use the symbolic answer representative $X^\sharp$ introduced in Section~\ref{sec:symbolic-variables}, with $\pi(X^\sharp)=\varnothing$. This choice reflects that the representative is already present at the symbolic root before any eigenparameter is introduced.

\begin{definition}[Root answer replacement]\label{def:root-answer-replacement}
The \emph{root answer replacement} map $j_G$ acts on free occurrences of variables in the query as follows:
$j_G(X)=X^\sharp$ for $X\in A_G$, and $j_G(x)=x$ for $x\notin A_G$.
It fixes bound variables after the usual standardization apart, and it acts homomorphically on function symbols, predicate symbols, connectives, quantifiers, modalities, and nested brackets.

For $n>0$, the root component of the symbolic root associated with $(P,G)$ contains the program formula occurrences $\NNF^{-}(P_1),\ldots,\NNF^{-}(P_n)$ followed by the query formula occurrence $j_G(\NNF^{+}(G))$. For the empty program, the root component contains $\bot$ followed by $j_G(\NNF^{+}(G))$.
Each $X^\sharp$ receives permission set $\varnothing$.
\end{definition}

The next lemma states the relation between this symbolic root and ordinary query substitution. It will be used both in answer soundness and in proof abstraction.

\begin{lemma}[Root concretization]\label{lem:root-concretization}
Let $\rho$ be a substitution on the flexible variables of a symbolic root that preserves scope. Let $q$ injectively replace every residual flexible variable occurring in $\rho^*(X^\sharp)$, $X\in A_G$, by a fresh object variable. Define
$\theta(\rho,q)(X)=q(\rho^*(X^\sharp))$ for $X\in A_G$.
If every term on the right is an ordinary user term, then
$q\bigl(\rho^*(j_G(\NNF^{+}(G)))\bigr) \equiv_\alpha \NNF^{+}(G\theta(\rho,q)).$
\end{lemma}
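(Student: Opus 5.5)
The plan is to reduce the statement to a term-level identity and then lift it through the formula structure by induction. The key observation is that both sides are obtained from $\NNF^{+}(G)$ by substitutions acting on free variables. The left side replaces each free $X\in A_G$ first by $X^\sharp$, then by $\rho^*(X^\sharp)$, and finally by $q(\rho^*(X^\sharp))$. The right side applies $\theta(\rho,q)$ to $G$ before normalizing.

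First, by Theorem~\ref{thm:normalization-adequacy}, $\NNF^+$ commutes with finite simultaneous capture-avoiding substitution up to alpha equivalence. Hence $\NNF^{+}(G\theta(\rho,q))\equiv_\alpha\NNF^{+}(G)\theta(\rho,q)$. It therefore suffices to show that
\[
q\bigl(\rho^*(j_G(C))\bigr)\equiv_\alpha C\,\theta(\rho,q)
\]
for the core formula $C=\NNF^+(G)$.

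I would prove this by structural induction on $C$, after standardizing bound variables apart from $A_G$, from the finitely many object variables occurring in the terms $\rho^*(X^\sharp)$, and from the fresh range of $q$. With this standardization, no capture arises on either side. At the term level I prove $q(\rho^*(j_G(t)))=t\theta(\rho,q)$ by induction on $t$. The variable case splits in two.
\begin{caselist}
\item If $X\in A_G$, then $j_G(X)=X^\sharp$, and by definition both sides equal $q(\rho^*(X^\sharp))$.
\item If $x\notin A_G$ is an object variable, then $j_G$ fixes it. Since $\rho$ acts only on flexible variables it also fixes $x$, and so does $q$, because $q$ touches only residual flexible variables. Meanwhile $\theta(\rho,q)$ fixes $x$ because $\dom(\theta)\subseteq A_G$.
\end{caselist}
Constants and function symbols commute homomorphically. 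The source query contains no proof parameters or flexible variables other than those introduced by $j_G$, so no other case arises. The formula cases follow by homomorphic action on connectives, modalities and literals. For quantifiers, the standardization ensures that the bound variable is not in $A_G$ and is not introduced by the substituted terms.

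The hypothesis that each $q(\rho^*(X^\sharp))$ is an ordinary user term is exactly what makes $\theta(\rho,q)$ a legitimate ordinary substitution, so that the right-hand side is a source formula. Injectivity and freshness of $q$ ensure that the new object variables do not collide with bound variables or with free variables of $G$. Scope preservation of $\rho$ gives $\pi(X^\sharp)=\varnothing$, so $\rho^*(X^\sharp)$ contains no proof parameters. This is consistent with the user-term hypothesis, though that hypothesis is assumed anyway.

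The main obstacle is the bookkeeping of bound variables. The renaming performed by capture-avoiding substitution in the two orders (before versus after $\NNF^+$) may produce different bound names. This is why the conclusion is only up to $\equiv_\alpha$, and why I rely on the commutation clause of Theorem~\ref{thm:normalization-adequacy} rather than on syntactic equality.
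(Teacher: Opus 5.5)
Your proposal is correct and follows essentially the same route as the paper: a term-level identity $q(\rho^*(j_G(t)))=t\theta(\rho,q)$ proved by structural induction, lifted through the core formula with bound variables standardized apart, and combined with the substitution-commutation clause of Theorem~\ref{thm:normalization-adequacy}. The only difference is that you invoke that clause at the start, whereas the paper uses it at the end. One small slip: $\pi(X^\sharp)=\varnothing$ holds by the choice of the answer representatives, not by scope preservation. Scope preservation is what then excludes proof parameters from $\rho^*(X^\sharp)$, although, as you note, the user-term hypothesis already covers this.
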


\begin{proof}
We first prove the term statement $q(\rho^*(j_G(t)))=t\theta(\rho,q)$ by structural induction on $t$.

Base case. If $t=X\in A_G$, the equality is the definition of $\theta(\rho,q)$. If $t=x\notin A_G$, all three operations fix $x$. Constants are fixed.

Induction step. Suppose $t=f(t_1,\ldots,t_n)$. Apply the induction hypotheses to the arguments and use homomorphic action of all maps through $f$.

Next proceed by structural induction on the core formula $\NNF^{+}(G)$.

Base case. Positive and negative atoms follow from the term statement. Truth constants are fixed.

Induction step. Assume the claim for the immediate subformulas.

\begin{caselist}
\item Case Boolean connective or indexed modality. Apply the induction hypotheses and homomorphic extension.
\item Case quantifier. Alpha convert the bound variable in advance so that it is fresh for the finite ranges of $\rho$, $q$, and $\theta(\rho,q)$. The induction hypothesis applies to the matrix, and capture-avoiding substitution then commutes with the quantifier.
\end{caselist}

Thus $q\bigl(\rho^*(j_G(\NNF^{+}(G)))\bigr) \equiv_\alpha \NNF^{+}(G)\theta(\rho,q)$. Finite substitution compatibility of NNF from Theorem~\ref{thm:normalization-adequacy} gives $\NNF^{+}(G)\theta(\rho,q) \equiv_\alpha \NNF^{+}(G\theta(\rho,q)).$ \end{proof}

\subsection{Terminal closure}\label{sec:symbolic-closure}

The two terminal forms below are the only source of unification equations.

\begin{definition}[Truth closure]\label{def:symbolic-truth}
A component containing $\top$ closes by \emph{truth closure}. It contributes no equation.
\end{definition}

\begin{definition}[Atomic closure]\label{def:symbolic-atomic}
\emph{Atomic closure} is available when one component contains $p(s_1,\ldots,s_n)$ and $\overline p(t_1,\ldots,t_n)$.
The component then closes symbolically and contributes the equations
$s_1\doteq t_1,\ldots,s_n\doteq t_n.$
Predicate symbol and arity must agree. No atomic closure rule applies when they differ.
\end{definition}

\begin{lemma}[Validity of symbolic closure]\label{lem:symbolic-closure-sound}
Let $\rho$ be a finite acyclic substitution that preserves scope and satisfies every equation contributed by a symbolic terminal leaf, and let $q$ replace residual flexible variables by fresh object variables. After application of $q\rho^*$, the corresponding ordinary leaf is an instance of the truth initial rule or the atomic identity rule of the certificate calculus.
\end{lemma}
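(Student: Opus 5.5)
The proof is a case split on the two terminal forms of Definitions~\ref{def:symbolic-truth} and~\ref{def:symbolic-atomic}. In each case we track how the composite map $q\rho^*$ acts on the closing component. Two facts are used throughout. First, $\rho^*$ is a homomorphic term substitution, since $\rho$ is finite and acyclic. Second, $q$ is an injective renaming of residual flexible variables into fresh object variables. Both maps therefore act componentwise on formulas, fix every formula constructor and every component of the nested tree, and change only first-order terms inside atomic arguments. As in Lemma~\ref{lem:certificate-term-invariance}, positions, edges and labels are untouched, so the instantiated leaf is a nested sequent of the same shape.

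\emph{Truth closure.} The leaf has the form $\mathcal G\{\top\}$ and contributes no equation. Applying $q\rho^*$ leaves the occurrence $\top$ unchanged, because $\top$ contains no terms. The image is therefore $(q\rho^*\mathcal G)\{\top\}$, which is an instance of $(\top\mathrm{ax})$. The solution hypothesis on $\rho$ is not needed in this case.

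\emph{Atomic closure.} The leaf is $\mathcal G\{p(s_1,\ldots,s_n),\overline p(t_1,\ldots,t_n)\}$. Since $\rho$ satisfies the contributed equations, $s_r\rho^*=t_r\rho^*$ for each $r$. Applying the function $q$ to both sides gives $q(s_r\rho^*)=q(t_r\rho^*)$, so the two literals become $p(\bar u)$ and $\overline p(\bar u)$ with the same tuple $\bar u$. The predicate symbol and arity already agree by Definition~\ref{def:symbolic-atomic}. The image is therefore an instance of $(\mathrm{ax})$.

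The step needing care is that the instantiated leaf must really be an \emph{ordinary} proof nested sequent, that is, every term must be a proof term with no flexible variable left. Some flexible variables may lie outside $\dom(\rho)$, so after full dereferencing every flexible variable remaining in $s_r\rho^*$ or $t_r\rho^*$ is residual. I would read the hypothesis on $q$ as replacing \emph{all} residual flexible variables occurring in the leaf after $\rho^*$ (extended beyond those in the answer representatives if necessary). Each such variable then becomes a fresh object variable, and proof parameters and rigid symbols are preserved.

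Freshness and injectivity of $q$ matter in two places. They keep the matching argument tuples equal: $q$ is a single function applied to equal terms, so no identification is lost. They also avoid capture with bound variables in the context $\mathcal G$: standardize bound variables apart from the finite range of $q$ before applying it, as in the quantifier case of Lemma~\ref{lem:root-concretization}. Scope preservation of $\rho$ is not used here; it only matters for eigenparameter conditions elsewhere in the derivation.
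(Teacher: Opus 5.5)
Your proof is correct and matches the paper's argument: the truth case is immediate because $\top$ contains no terms, and in the atomic case the solved equations $s_r\rho^*=t_r\rho^*$ survive the application of $q$, which yields complementary literals with identical argument tuples. Your further remarks on shape preservation, on $q$ covering all residuals, and on capture-avoidance are sound elaborations. One small correction: preserving these equalities only requires $q$ to be a function, so injectivity plays no role there.
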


\begin{proof}
The truth case is immediate because $\top$ is fixed by both substitutions. In the atomic case, the hypothesis that $\rho$ satisfies the contributed equations gives $\rho^*(s_i)=\rho^*(t_i)$ for every argument position. Applying $q$ preserves these equalities. The resulting component therefore contains complementary literals with syntactically equal argument tuples, which is the ordinary atomic identity condition.
\end{proof}

\subsection{Symbolic rules}\label{sec:symbolic-rules}

The symbolic calculus uses the same propositional and modal rules as the ordinary certificate calculus. Only witness selection and terminal atomic closure require additional data.

\begin{definition}[Symbolic derivation]\label{def:symbolic-derivation}
A \emph{symbolic derivation}, written $\mathcal D^\sharp$, is a finite derivation of symbolic nested sequents obtained from the ordinary certificate rules with the following changes. Truth and atomic leaves use Definitions~\ref{def:symbolic-truth} and~\ref{def:symbolic-atomic}. When an existential formula $\exists xA$ occurs at a derivation node $\mathsf n$, the symbolic existential rule chooses a fresh proof metavariable $U\in\mathsf{FVar}^{\mathrm W}$ with $\pi(U)=Q_{\mathsf n}$ and adds $A(U/x)$ while retaining the existential occurrence. No concrete term is enumerated at this step. For a universal formula, the ordinary universal rule is used subject to the global naming discipline that distinct universal inferences in one symbolic derivation use distinct eigenparameters and every such parameter is absent from the symbolic root. Proof parameters arise symbolically only through universal inferences. Conjunction retains its two mandatory branches, with variables introduced strictly below distinct branch children required to be distinct while variables introduced above the split remain shared. Box, seriality, and certificate diamond rules are the ordinary rules with symbolic terms left untouched.

The \emph{terminal equation set} $E(\mathcal D^\sharp)$ is the union of the equations contributed by all atomic terminal leaves, and $C(\mathcal D^\sharp)$ is the set of flexible variables occurring in the derivation. The global universal naming condition implies the ordinary freshness condition at every local conclusion. Permission sets of older flexible variables are never enlarged. The raw structural transformations of Definition~\ref{def:raw-calculus} are not inference rules of symbolic derivations.
\end{definition}

\begin{definition}[Successful symbolic derivation]\label{def:successful-symbolic-derivation}
A symbolic derivation $\mathcal D^\sharp$ is \emph{successful} when every branch ends in a symbolic truth or atomic leaf and scoped unification has a successful run from $(E(\mathcal D^\sharp),C(\mathcal D^\sharp),\operatorname{id})$. If that run returns $\mu$, we call $\mu$ the \emph{scoped mgu of the derivation}. A \emph{symbolic computation for $(P,G)$} is a symbolic derivation whose root is the symbolic root associated with $(P,G)$ in Definition~\ref{def:root-answer-replacement}. A successful symbolic computation is a symbolic computation whose derivation is successful.
\end{definition}

The side condition for certificate propagation depends only on the nested tree and the modal grammar. Therefore first-order substitutions do not alter certificate validity.

\subsection{Scoped instantiation}\label{sec:scoped-instantiation}

The next theorem establishes that an admissible solution of a symbolic derivation yields an ordinary certificate proof. For a symbolic derivation $\mathcal D^\sharp$ and a finite acyclic substitution $\rho$, let $R(\mathcal D^\sharp,\rho)$ denote the set of residual flexible variables occurring in $\rho^*(\mathcal D^\sharp)$.

\begin{theorem}[Scoped instantiation]\label{thm:scoped-instantiation}
Let $\mathcal D^\sharp$ be a symbolic derivation, let $E(\mathcal D^\sharp)$ be the union of the equations contributed by all its atomic leaves, and let $C(\mathcal D^\sharp)$ be the set of flexible variables occurring in the derivation. Let $\rho$ be an admissible solution of $(E(\mathcal D^\sharp),C(\mathcal D^\sharp))$. Choose an injection
$q:R(\mathcal D^\sharp,\rho)\longrightarrow \mathsf{Var}$
whose range consists of fresh object variables and is disjoint from all eigenparameters and names already used in the derivation. Extend $q$ homomorphically to symbolic terms and formulas and componentwise to nested sequents and derivations, fixing all other symbols, positions, rule annotations, and modal certificates. Then $q(\rho^*(\mathcal D^\sharp))$ is an ordinary certificate derivation of the concretized root.
\end{theorem}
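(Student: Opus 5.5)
The plan is to argue by induction on the height of $\mathcal D^\sharp$, showing that the composite map $\kappa=q\circ\rho^*$ sends every symbolic inference to a legal ordinary inference of $\mathcal N_{\Amod}$ with the same shape. The first observation is that $\kappa$ turns symbolic terms into proof terms. Every flexible variable in $C(\mathcal D^\sharp)$ is either bound by $\rho$ or residual. Its fully dereferenced value contains only rigid symbols and residual flexible variables, and $q$ replaces the residual ones by object variables. Hence every $\kappa$-image is built from object variables, constants, function symbols, and proof parameters. Since $\kappa$ changes only first-order terms and fixes positions, edges, and labels, Lemma~\ref{lem:certificate-term-invariance} shows that every attached certificate $(\omega,\mathfrak d)$ still certifies the same reachability judgment in the image. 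So the certificate propagation, box, and seriality steps transfer verbatim. Boolean steps transfer because $\kappa$ is homomorphic on connectives.

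The terminal cases are exactly Lemma~\ref{lem:symbolic-closure-sound}. The hypothesis that $\rho$ solves $E(\mathcal D^\sharp)$ covers every atomic leaf, since $E(\mathcal D^\sharp)$ is the union of their contributions.

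For the existential rule, the symbolic premise adds $A(U/x)$. After standardizing bound variables apart from the finite ranges of $\rho$ and $q$, I will check that $\kappa(A(U/x))=\kappa(A)(\kappa(U)/x)$, where $\kappa(U)$ is a proof term. This is an instance of the ordinary $(\exists)$ rule with witness $\kappa(U)$, and the retained occurrence is preserved.

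The main obstacle is the universal rule, where the eigenparameter $a$ must be fresh in the image conclusion $\kappa(\mathcal G\{\forall xA\})$. Freshness in the symbolic conclusion is guaranteed by the global naming discipline, so the only danger is that $\kappa$ imports $a$ through some flexible variable $V$ occurring in that conclusion. Here I use the permission discipline.

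\begin{itemize}
\item Claim: every flexible variable $V$ occurring in a sequent at node $\mathsf n$ satisfies $\pi(V)\subseteq Q_{\mathsf n}$ or, more usefully, $a\notin\pi(V)$ for any eigenparameter $a$ introduced below $\mathsf n$.
\item Proof of the claim, by tracing origins. Answer representatives have empty permission. A proof metavariable introduced at a node $\mathsf m$ on the path has $\pi(U)=Q_{\mathsf m}$, which consists only of eigenparameters introduced strictly closer to the root. Branch locality ensures that variables from other subtrees do not occur at $\mathsf n$.
\end{itemize}

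Since $\rho$ preserves scope, Lemma~\ref{lem:safety-composition} gives $\operatorname{Par}(\rho^*(V))\subseteq\pi(V)$, which excludes $a$. The range of $q$ consists of object variables, so $q$ adds no parameters. Rigid parameters already present in the conclusion are unchanged and differ from $a$ by the naming discipline. Hence $a$ is fresh in the image conclusion, and the image premise $\kappa(\mathcal G\{A(a/x)\})=\kappa(\mathcal G)\{\kappa(A)(a/x)\}$ follows by the same substitution commutation.

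Assembling these cases, the induction yields the ordinary derivation $q(\rho^*(\mathcal D^\sharp))$ of the concretized root.
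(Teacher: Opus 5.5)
Your proposal is correct and follows essentially the same route as the paper: induction on the height of $\mathcal D^\sharp$, Lemma~\ref{lem:symbolic-closure-sound} at the leaves, and Lemma~\ref{lem:certificate-term-invariance} for the modal rules. It also takes the witness $q(\rho^*(U))$ at existential steps, and uses scope preservation together with the parameter-free range of $q$ to keep the universal eigenparameter fresh. Your origin-tracing argument that no flexible variable in a universal conclusion has the new eigenparameter in its permission set (the paper only asserts this) tacitly assumes the root's flexible variables are answer representatives with empty permission, which holds for symbolic computations, the only setting where the theorem is applied; some hypothesis of this kind on root permissions is in fact necessary, since the statement fails for an arbitrary symbolic root.
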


\begin{proof}
We use induction on the height of $\mathcal D^\sharp$.

Base case. If the derivation is a symbolic truth leaf, Lemma~\ref{lem:symbolic-closure-sound} gives an ordinary truth initial leaf. If it is a symbolic atomic leaf, the same lemma gives an ordinary atomic identity leaf because $\rho$ solves every equation contributed there.

Induction step. Assume the claim for all premise derivations of the final inference.

\begin{caselist}
\item Case propositional inference. Application of $\rho^*$ and $q$ commutes with the connective constructors and with capture-avoiding substitution. The induction hypotheses therefore provide ordinary derivations of all premises, and the corresponding ordinary propositional rule reconstructs the conclusion.
\item Case box, certificate diamond, or seriality. The nested tree is unchanged by term substitution. Certificate validity under changes to first-order terms is Lemma~\ref{lem:certificate-term-invariance}. The induction hypothesis reconstructs the premise proof, after which the same ordinary modal rule applies.
\item Case existential inference. Let the inference be created at derivation node $\mathsf n$ with fresh proof metavariable $U\in\mathsf{FVar}^{\mathrm W}$ satisfying $\pi(U)=Q_{\mathsf n}$. Because $\rho$ preserves scope, Lemma~\ref{lem:safety-composition} makes $\rho^*(U)$ safe for $U$. Hence every eigenparameter in $\rho^*(U)$ belongs to $Q_{\mathsf n}=\pi(U)$. Residual variables become object variables under $q$, so $t=q(\rho^*(U))$ is a legal proof term at $\mathsf n$. The induction hypothesis gives an ordinary derivation of the instantiated premise, and the ordinary existential witness rule applies with $t$.
\item Case universal inference. Let $a$ be the fresh eigenparameter. Any flexible variable already present before the inference has a permission set that does not contain $a$. Scope preservation therefore prevents $a$ from occurring in the substituted conclusion through an older variable. The chosen range of $q$ is fresh and contains no proof parameter. Thus $a$ remains fresh for the concretized conclusion. The induction hypothesis gives the premise derivation, and the ordinary universal eigenparameter rule applies.
\item Case mandatory branching. The same $\rho$ solves $E(\mathcal D^\sharp)$ and therefore solves every equation contributed in each mandatory branch. Apply the induction hypothesis to every branch. Variables introduced below different branch children are distinct, while shared ancestor variables receive the same value under $\rho$. The ordinary mandatory branching rule therefore recombines the premise derivations.
\end{caselist}
\end{proof}

\subsection{Answer projection}\label{sec:answer-projection}

The scoped mgu returned by unification may contain residual flexible variables. A user answer must contain only ordinary user terms. The empty permission sets of the symbolic answer representatives provide the required separation from proof parameters.

\begin{lemma}[Residual variables at answer positions]\label{lem:zero-permission-residual}
Let $\mu$ preserve scope and let $X^\sharp$ be a symbolic answer representative with $\pi(X^\sharp)=\varnothing$. Then $\mu^*(X^\sharp)$ contains no proof parameter. Moreover, every residual flexible variable $V$ occurring in $\mu^*(X^\sharp)$ has $\pi(V)=\varnothing$.
\end{lemma}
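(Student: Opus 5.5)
The plan is to reduce the statement to the safety of the fully dereferenced value $\mu^*(X^\sharp)$ for $X^\sharp$, and then read off both assertions from Definition~\ref{def:permission-safety} with $\pi(X^\sharp)=\varnothing$. The key tool is Lemma~\ref{lem:safety-composition}, which shows that safety for a fixed variable survives full dereferencing under a substitution that preserves scope.

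First I establish that $\mu^*(X^\sharp)$ is safe for $X^\sharp$, splitting into two cases.

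\begin{caselist}
\item Case $X^\sharp\notin\dom(\mu)$. Then $\mu^*(X^\sharp)=X^\sharp$. This term is trivially safe for $X^\sharp$: it contains no proof parameter, and its only flexible variable is $X^\sharp$ itself, whose permission is included in $\pi(X^\sharp)$.
\item Case $X^\sharp\in\dom(\mu)$. Scope preservation makes $\mu(X^\sharp)$ safe for $X^\sharp$. Lemma~\ref{lem:safety-composition}, applied with $\gamma=\mu$, transfers this safety to the fully dereferenced term $\mu(X^\sharp)\mu^*=\mu^*(X^\sharp)$.
\end{caselist}

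This is exactly the argument already used in the proof of Proposition~\ref{prop:scope-composition}. I will check that finiteness and acyclicity of $\mu$, which both $\mu^*$ and Lemma~\ref{lem:safety-composition} require, are part of the standing meaning of ``preserves scope'' in Definition~\ref{def:permission-safety}.

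With safety in hand, both conclusions follow at once. Safety gives $\operatorname{Par}(\mu^*(X^\sharp))\subseteq\pi(X^\sharp)=\varnothing$, so the term contains no proof parameter. Safety also gives $\pi(V)\subseteq\pi(X^\sharp)=\varnothing$ for every $V\in\Flex(\mu^*(X^\sharp))$. This covers in particular every residual flexible variable occurring in the term, so each such $V$ has $\pi(V)=\varnothing$.

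There is no genuine obstacle here. The only point needing care is the meaning of ``residual'': it should denote flexible variables of the fully dereferenced term, which lie outside $\dom(\mu)$. The permission bound on such variables comes from safety of the whole term, not of some intermediate term, and Lemma~\ref{lem:safety-composition} is what supplies that.
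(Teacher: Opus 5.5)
Your proposal is correct and follows essentially the same route as the paper: show that $\mu^*(X^\sharp)$ is safe for $X^\sharp$ via Lemma~\ref{lem:safety-composition}, then read off both conclusions from $\pi(X^\sharp)=\varnothing$. Your explicit case split on $X^\sharp\in\dom(\mu)$ is what the paper compresses into ``repeated use along its dereference chains.'' You are also right that finiteness and acyclicity are built into Definition~\ref{def:permission-safety}.
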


\begin{proof}
Because $\mu$ is finite and acyclic and because it preserves scope, repeated use of Lemma~\ref{lem:safety-composition} along its dereference chains shows that the fully dereferenced term $\mu^*(X^\sharp)$ is safe for $X^\sharp$. Hence
$\operatorname{Par}(\mu^*(X^\sharp))\subseteq\pi(X^\sharp)=\varnothing.$
If $V$ is residual in $\mu^*(X^\sharp)$, safety of the same term gives $\pi(V)\subseteq\pi(X^\sharp)=\varnothing$, and therefore $\pi(V)=\varnothing$.
\end{proof}

The lemma makes projection possible. Every residual flexible variable occurring in the fully dereferenced value of an answer representative can be replaced by an object variable, and no proof parameter appears in the resulting answer term.

\begin{definition}[Answer projection]\label{def:answer-projection}
Let $\mu$ be the scoped mgu of a successful symbolic computation for $(P,G)$. Let $R(\mu)$ be the finite set of residual flexible variables that occur in some $\mu^*(X^\sharp)$ with $X\in A_G$. Its members are called \emph{answer residual variables}.

An \emph{ordinary output variable} is an object variable reserved for answer projection and chosen fresh for $P$, $G$, and the symbolic derivation, with distinct answer residuals assigned distinct output variables. For each $V\in R(\mu)$ choose an output variable $Y_V$ and put $r^{\mu}(V)=Y_V$. Extend $r^{\mu}$ homomorphically to symbolic terms whose flexible variables lie in $R(\mu)$, fixing object variables, proof parameters, and rigid signature symbols. Define $\operatorname{proj}(\mu)$ to be the ordinary substitution satisfying
$\operatorname{proj}(\mu)(X)=r^{\mu}(\mu^*(X^\sharp))$ for every $X\in A_G$ and fixing every object variable outside $A_G$. Identity bindings are omitted as in Convention~\ref{conv:ordinary-substitutions}. In particular, if $\mu^*(X^\sharp)=X^\sharp$, then $X^\sharp$ is itself an answer residual variable and projection maps $X$ to a fresh ordinary output variable, representing an unconstrained answer position.
\end{definition}

By Lemma~\ref{lem:zero-permission-residual}, the projected terms contain no proof parameter. They are ordinary user terms. For example, if $\mu^*(X^\sharp)=f(V)$, then the user receives $X\mapsto f(Y_V)$.

Projection assigns names only to residuals that occur in the user answer. Scoped instantiation must replace every residual flexible variable occurring after full dereferencing of the entire symbolic derivation, including auxiliary variables created during unification. The answer renaming is therefore extended once to that whole finite residual set.

\begin{lemma}[Global residual renaming]\label{lem:global-residual-renaming}
Let $\mathcal D^\sharp$ be a successful symbolic computation for $(P,G)$ with scoped mgu $\mu$. The injection $r^{\mu}$ on answer residual variables extends to an injection
$\widehat r:R(\mathcal D^\sharp,\mu)\longrightarrow\mathsf{Var}$
on every residual flexible variable occurring in $\mu^*(\mathcal D^\sharp)$. Its range can be chosen disjoint from the program, query, proof parameters, and all object variables already present in the derivation.
\end{lemma}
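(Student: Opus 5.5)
The plan is a direct finite construction followed by a check of the three required properties. First I would show that the set $R(\mathcal D^\sharp,\mu)$ is finite and contains $R(\mu)$. The derivation $\mathcal D^\sharp$ is finite, so only finitely many flexible variables occur in it, and $C(\mathcal D^\sharp)$ is finite. By Theorem~\ref{thm:scoped-mgu}, $\mu$ is a finite acyclic substitution, so each $\mu^*(Z)$ is a finite term. Hence the set of residual flexible variables occurring in $\mu^*(\mathcal D^\sharp)$ is finite. The inclusion $R(\mu)\subseteq R(\mathcal D^\sharp,\mu)$ holds because every $X^\sharp$ with $X\in A_G$ occurs in the symbolic root by Definition~\ref{def:root-answer-replacement}. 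Thus every residual variable in $\mu^*(X^\sharp)$ is residual in $\mu^*(\mathcal D^\sharp)$.

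Next I would fix the forbidden set of object variables. It consists of the finitely many object variables occurring in $P$, $G$, and anywhere in $\mathcal D^\sharp$ and $\mu^*(\mathcal D^\sharp)$. By Definition~\ref{def:answer-projection}, the output variables $Y_V$ for $V\in R(\mu)$ were already chosen fresh for $P$, $G$, and the derivation, and they are pairwise distinct. On $R(\mu)$ I put $\widehat r=r^{\mu}$. Since $R(\mathcal D^\sharp,\mu)\setminus R(\mu)$ is finite and $\mathsf{Var}$ is countably infinite and effectively enumerable, I would walk through the fixed enumeration of $\mathsf{Var}$. Each remaining residual variable receives the first object variable that lies outside the forbidden set and is distinct from all $Y_V$ and from every variable already assigned. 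This gives an injection extending $r^{\mu}$.

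Finally I would verify the disjointness claims. Disjointness from proof parameters is automatic, because $\mathsf{Var}$ and $\mathsf{Par}$ are disjoint syntactic classes by the choice of $\mathsf{Par}$ in Section~\ref{sec:certificate-calculus}. Disjointness from the program, the query, and the object variables present in the derivation holds for the new values by construction, and for the values $Y_V$ by Definition~\ref{def:answer-projection}.

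The main obstacle I expect is the bookkeeping of freshness for the $Y_V$. The derivation against which they were chosen must include everything relevant in $\mu^*(\mathcal D^\sharp)$. I would handle this by noting that $\mu^*$ introduces only flexible variables and rigid symbols already present in the equations, and never new object variables. So the object variables of $\mu^*(\mathcal D^\sharp)$ coincide with those of $\mathcal D^\sharp$, and freshness with respect to the derivation suffices. The resulting $\widehat r$ then meets the hypotheses on $q$ in Theorem~\ref{thm:scoped-instantiation}.
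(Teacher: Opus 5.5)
Your proposal is correct and follows essentially the same route as the paper. Both arguments use the finiteness of $R(\mathcal D^\sharp,\mu)$ and the inclusion of the answer residuals, which holds because each $X^\sharp$ occurs at the root; both keep $r^{\mu}$ on the answer residuals and choose distinct fresh object variables from the infinite set $\mathsf{Var}$ for the remaining residuals. Your extra checks (that $\mathsf{Var}$ and $\mathsf{Par}$ are disjoint, and that $\mu^*$ introduces no new object variables, so freshness of the $Y_V$ relative to the derivation suffices) only make explicit bookkeeping that the paper leaves implicit.
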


\begin{proof}
Every answer residual belongs to $R(\mathcal D^\sharp,\mu)$ because each symbolic answer representative occurs in the root of $\mathcal D^\sharp$. The set $R(\mathcal D^\sharp,\mu)$ is finite because both the derivation and the finite acyclic substitution $\mu$ are finite. Keep the prescribed values $r^{\mu}(V)$ for answer residuals and choose pairwise distinct fresh object variables for the remaining members of $R(\mathcal D^\sharp,\mu)$. Since $\mathsf{Var}$ is infinite, the required fresh names can be chosen.
\end{proof}

\subsection{Answer soundness}\label{sec:answer-soundness}

Projection converts answer residuals to ordinary output variables. For a successful symbolic computation $\mathcal D^\sharp$ with scoped mgu $\mu$, global residual renaming extends this assignment to every residual flexible variable occurring in $\mu^*(\mathcal D^\sharp)$. Scoped instantiation then yields an ordinary certificate derivation. We can therefore define the substitution returned to the user and compare it with the independent declarative answer relation.

\begin{definition}[Computed answer]\label{def:computed-answer}
Let $\mathcal D^\sharp$ be a successful symbolic computation for $(P,G)$ in the sense of Definition~\ref{def:successful-symbolic-derivation}, and let $\mu$ be its scoped mgu. The \emph{computed answer} is
$\theta=\operatorname{proj}(\mu).$
We write
$\MMLP_{\Amod}(P,G)\Downarrow\theta$
for this relation and let $\Comp_{\Amod}^{\mathrm M}(P,G)$ be the set of all such $\theta$.
The superscript $\mathrm M$ marks ordinary \MMLP{} execution.
\end{definition}

Since the declarative answer relation was defined independently in Definition~\ref{def:hilbert-correct}, the soundness theorem below proves that every computed answer belongs to that relation.

\begin{theorem}[Answer soundness]\label{thm:answer-soundness}
If
$\MMLP_{\Amod}(P,G)\Downarrow\theta,$
then
$\theta\in\Corr^{\mathrm H}_{\Amod}(P,G).$
\end{theorem}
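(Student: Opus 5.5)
The plan is to turn the successful symbolic computation into an ordinary certificate derivation of a concrete sequent, and then read off Hilbert theoremhood via Corollary~\ref{cor:certificate-to-hilbert} and the normalization results.

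\textbf{Step 1: instantiate.} Let $\mathcal D^\sharp$ be the successful symbolic computation with scoped mgu $\mu$ and $\theta=\operatorname{proj}(\mu)$. Theorem~\ref{thm:scoped-mgu} (clause 2) makes $\mu$ an admissible solution of $(E(\mathcal D^\sharp),C(\mathcal D^\sharp))$. Take the injection $\widehat r$ of Lemma~\ref{lem:global-residual-renaming} as the map $q$ in Theorem~\ref{thm:scoped-instantiation}. This yields an ordinary certificate derivation $\widehat r(\mu^*(\mathcal D^\sharp))$ of the concretized root.

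\textbf{Step 2: identify the root.} The program occurrences $\NNF^-(P_k)$ (or $\bot$) are closed and contain no flexible variables, so they are unchanged. For the query occurrence, Lemma~\ref{lem:zero-permission-residual} shows that each $\mu^*(X^\sharp)$ has no proof parameters. Hence $\widehat r(\mu^*(X^\sharp))=r^\mu(\mu^*(X^\sharp))$ is an ordinary user term equal to $\theta(X)$. Lemma~\ref{lem:root-concretization}, with $\rho=\mu$ and $q=\widehat r$, then gives that the query occurrence is alpha equivalent to $\NNF^+(G\theta)$. Since the derivation is preserved under alpha renaming, the root is a single component
$\NNF^-(P_1),\ldots,\NNF^-(P_n),\NNF^+(G\theta)$
with no proof parameters. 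The global naming discipline keeps eigenparameters out of the root, so Corollary~\ref{cor:certificate-to-hilbert} gives
$H_{\Amod}\vdash \NNF^-(P_1)\lor\cdots\lor\NNF^-(P_n)\lor\NNF^+(G\theta)$.
For the empty program, the disjunct is $\bot$.

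\textbf{Step 3: convert to the declarative form.} Theorem~\ref{thm:hilbert-nnf} replaces each $\NNF^-(P_k)$ by $\neg P_k$ and $\NNF^+(G\theta)$ by $G\theta$. Classical reasoning then gives $H_{\Amod}\vdash\widehat P\to G\theta$. Each $P_k$ is closed, so no $Y_j\in\FV(G\theta)$ occurs free in $\widehat P$. Theorem generalization followed by Q2 therefore yields $H_{\Amod}\vdash\widehat P\to\forall Y_1\cdots\forall Y_r\,G\theta$, which is $\theta\in\Corr^{\mathrm H}_{\Amod}(P,G)$.

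\textbf{Main obstacle.} The delicate point is Step 2. The residual renaming must agree with $\operatorname{proj}$ on the answer positions. The fresh output variables and the object variables introduced by $\widehat r$ for non-answer residuals must not clash with variables of $P$ or $G$. Free object variables of $G$ outside $A_G$ must be treated correctly, and these end up among the $Y_j$.

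I would handle this by checking the hypotheses of Lemma~\ref{lem:root-concretization} against the freshness clauses in Definition~\ref{def:answer-projection} and Lemma~\ref{lem:global-residual-renaming}. Object variables introduced for non-answer residuals occur only inside the derivation, not in the root, so they never reach the final formula.
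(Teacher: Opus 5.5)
Your proof is correct. Its first half matches the paper's proof: admissibility of $\mu$ from Theorem~\ref{thm:scoped-mgu}, the extension $\widehat r$ of Lemma~\ref{lem:global-residual-renaming}, scoped instantiation, and Lemma~\ref{lem:root-concretization} to identify the concretized root.

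The two routes diverge only in how the resulting certificate derivation is converted into Hilbert theoremhood. The paper argues semantically:
\begin{itemize}
\item kernel soundness gives validity of the normalized root;
\item normalization adequacy transfers this validity to $\widehat P\to G\theta$;
\item a reassignment argument, using closedness of $\widehat P$, handles the universal closure;
\item Corollary~\ref{cor:three-way-adequacy} then returns to $H_{\Amod}$.
\end{itemize}
You stay syntactic. You note that the concretized root has no proof parameters, correctly via Lemma~\ref{lem:zero-permission-residual}. You then apply the second clause of Corollary~\ref{cor:certificate-to-hilbert} directly to the derivation you already have. Finally you rewrite the disjuncts with Theorem~\ref{thm:hilbert-nnf} and close with theorem generalization and Q2.

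Your route is more economical. The validity-to-Hilbert direction of Corollary~\ref{cor:three-way-adequacy}, which the paper invokes, itself regenerates a certificate proof by kernel completeness and then applies Corollary~\ref{cor:certificate-to-hilbert}. So you avoid the semantic round trip, and with it the fair-saturation completeness theorem and Hilbert soundness.

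The price is that two steps which are trivial semantically must now be carried out inside $H_{\Amod}$. The first is the universal closure; your generalization-plus-Q2 argument is fine, applied innermost variable first. The second is the passage from the concretized root to its alpha-variant whose query occurrence is $\NNF^+(G\theta)$. Instead of saying that the derivation is preserved under alpha renaming, it is cleaner to apply Corollary~\ref{cor:certificate-to-hilbert} to the actual root. You can then use the standard interderivability of alpha-equivalent formulas in the first-order base.
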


\begin{proof}
Take a successful symbolic computation $\mathcal D^\sharp$ for $(P,G)$ with scoped mgu $\mu$ and projected answer $\theta$. By Theorem~\ref{thm:scoped-mgu}, $\mu$ is an admissible solution of $(E(\mathcal D^\sharp),C(\mathcal D^\sharp))$. Extend the answer renaming $r^{\mu}$ to $\widehat r$ by Lemma~\ref{lem:global-residual-renaming}. Scoped instantiation, Theorem~\ref{thm:scoped-instantiation}, gives an ordinary certificate derivation $\widehat r(\mu^*(\mathcal D^\sharp)).$ 
The closed program formulas contain no symbolic answer representatives, so their root formula occurrences are unchanged. By Lemma~\ref{lem:root-concretization}, the query formula occurrence in the concretized root is alpha-equivalent to $\NNF^{+}(G\theta)$. Hence, by Lemma~\ref{lem:root-normalization}, the ordinary derivation proves the normalized root associated with $\widehat P\to G\theta$. Kernel soundness, Corollary~\ref{cor:kernel-soundness}, gives validity of this normalized root over $\Frames(\Amod)$. By Theorem~\ref{thm:normalization-adequacy}, the source implication $\widehat P\to G\theta$ is therefore valid over $\Frames(\Amod)$.

Let $Y_1,\ldots,Y_r$ enumerate $\FV(G\theta)$. The program conjunction $\widehat P$ is closed. Fix an arbitrary model and world, and let an assignment satisfy $\widehat P$ there. Changing the values assigned to $Y_1,\ldots,Y_r$ does not affect the truth of $\widehat P$. Since $\widehat P\to G\theta$ is valid, $G\theta$ is true under each such reassignment. Hence $\widehat P\to\forall Y_1\cdots\forall Y_r\,G\theta$ is valid at every model and world. By Corollary~\ref{cor:three-way-adequacy}, this validity implies $H_{\Amod}\vdash \widehat P\to\forall Y_1\cdots\forall Y_r\,G\theta.$ This is the condition in Definition~\ref{def:hilbert-correct}.
\end{proof}

\subsection{Proof regularization}\label{sec:proof-regularization}

Completeness needs the converse construction. An arbitrary ordinary certificate proof may use concrete witness terms and free object variables. Before those choices can be abstracted into flexible variables, eigenparameter names and neutral proof parameters require a canonical discipline.

\begin{lemma}[Eigenparameter regularization]\label{lem:eigen-regularization}
Let $\mathcal D$ be a finite ordinary certificate derivation and let $Q$ be a finite set of proof parameters. There is a derivation $\mathcal D'$ obtained from $\mathcal D$ by systematic renaming of eigenparameters, with the same root and height, such that each eigenparameter is introduced by exactly one universal rule, occurs only inside the premise subtree rooted at that rule, and does not belong to $Q$.
\end{lemma}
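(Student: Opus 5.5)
We prove the lemma by induction on the height of $\mathcal D$, renaming eigenparameters bottom-up from the root. The result is stated for an arbitrary finite set $Q$ so that the induction can pass to subderivations a larger avoidance set. Throughout, let $Q^{+}$ be the finite union of $Q$, the proof parameters occurring anywhere in $\mathcal D$, and any parameters already reserved by the construction. Fresh names are always taken outside $Q^{+}$ from the fixed enumeration of $\mathsf{Par}$, which is possible because $\mathsf{Par}$ is infinite.

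Base case. An initial sequent contains no universal inference, so we take $\mathcal D'=\mathcal D$.

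Induction step. Consider the final inference.

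If it is not a universal inference, we apply the induction hypothesis to each premise derivation in turn. For a conjunction with two premises, we regularize the first premise avoiding $Q$ together with the root parameters. We then regularize the second premise avoiding in addition every eigenparameter already used in the regularized first premise. This makes the eigenparameters of the two subtrees disjoint, so each is introduced exactly once in the combined derivation. Because the renaming acts only on eigenparameters introduced inside a premise subtree, and these are absent from that premise's root, the premise roots are unchanged. The same rule therefore reapplies with the same root and height.

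If the final inference is universal, with eigenparameter $a$ fresh in the conclusion, we choose $b\notin Q^{+}$. We apply Lemma~\ref{lem:uniform-parameter-renaming} to the premise subderivation to replace $a$ by $b$; $b$ occurs nowhere there, as that lemma requires. The conclusion contained no $a$, so it is unchanged, and $b$ is fresh for it. We then apply the induction hypothesis to the renamed premise with avoidance set $Q\cup\{b\}\cup\operatorname{Par}(\text{conclusion})$. Every inner eigenparameter then differs from $b$ and lies outside $Q$. Heights are preserved because renaming is inference-for-inference.

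We must also check that each eigenparameter occurs only inside its premise subtree. An eigenparameter $b$ chosen at a node never occurs in that node's conclusion, and it is distinct from every parameter appearing below that node. By downward tree monotonicity, the conclusion of every ancestor is obtained from its premises by rules that cannot create $b$ unless $b$ is an eigenparameter there. Hence $b$ cannot occur in any sequent below its introducing rule. Nor can it occur in sibling subtrees, by the disjointness arranged in the conjunction case.

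The main obstacle is this bookkeeping. Renaming an inner eigenparameter must not change a root that the outer induction relies on. And an eigenparameter from the original $\mathcal D$ may reappear as a neutral witness term in another subtree, since the existential rule allows arbitrary proof terms. To handle the first point, we rename only eigenparameters whose introduction lies strictly inside the subtree being processed. To handle the second, we include all parameters of the entire original $\mathcal D$ in the avoidance set, so each new name is globally fresh and the renamings of different subtrees cannot collide.
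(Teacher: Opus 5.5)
Your proposal is correct and follows essentially the paper's argument: process the universal inferences from the root toward the leaves, apply Lemma~\ref{lem:uniform-parameter-renaming} to each premise subderivation with a globally fresh name, and use the fact that later renamings stay inside their own premise subtrees, so root and height are unchanged. One small tightening is needed: the induction hypothesis only asserts existence, so each recursive call should receive the full global avoidance set $Q^{+}$, as your first and last paragraphs intend, rather than the smaller sets $Q\cup\operatorname{Par}(\text{root})$ or $Q\cup\{b\}\cup\operatorname{Par}(\text{conclusion})$; otherwise an eigenparameter renamed in one subtree could coincide with a parameter that occurs neutrally in a sibling subtree.
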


\begin{proof}
List the finitely many universal inferences in a topological order from the root toward the leaves. At each inference choose a proof parameter outside $Q$, outside the finitely many names already chosen, and outside the finitely many parameters occurring elsewhere in the current derivation. Apply Lemma~\ref{lem:uniform-parameter-renaming} to the premise subderivation of that universal inference, replacing its designated eigenparameter by the fresh parameter, and then reattach the universal inference with the new eigenparameter. This preserves the derivation and its height because the new parameter is absent from the conclusion. Continue through the finite list. Later renamings are confined to their own premise subtrees and therefore do not disturb earlier choices. The root is unchanged at every step.
\end{proof}

Eigenparameter regularization controls names introduced by universal rules. Proof parameters that have no introducing universal rule require a different treatment. They must be eliminated as proof parameters before abstraction.

\begin{lemma}[Neutral parameters]\label{lem:neutral-parameter-normalization}
Let $\mathcal D$ be a finite ordinary certificate derivation. All proof parameters that are not introduced by universal eigenparameter rules can be replaced uniformly by distinct fresh free object variables. The result is an ordinary certificate derivation of the root obtained by the same replacement. If the root contains no such neutral proof parameter, the root is unchanged.
\end{lemma}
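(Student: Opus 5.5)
The plan is to perform the replacement one neutral parameter at a time and to argue by induction on derivation height, in the same way as the proof of Lemma~\ref{lem:uniform-parameter-renaming}. Let $b_1,\ldots,b_k$ be the finitely many proof parameters that occur in $\mathcal D$ and are never the eigenparameter of a universal inference. Choose pairwise distinct object variables $y_1,\ldots,y_k$ that are absent from $\mathcal D$ both free and bound. Absence from bound positions can be arranged by first alpha converting the bound variables of all formulas in $\mathcal D$, which changes nothing up to alpha equivalence. Let $\sigma$ be the simultaneous replacement $b_l\mapsto y_l$ acting on all formulas and witness terms. The goal is to show that $\sigma(\mathcal D)$ is a legal ordinary certificate derivation of $\sigma(\text{root})$. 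If no $b_l$ occurs in the root, the root is unchanged.

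The induction proceeds by cases on the final inference.
\begin{caselist}
\item Initial sequents. Uniform replacement preserves the presence of $\top$, and it preserves syntactic equality of the argument tuples of complementary literals.
\item Boolean, box, seriality, and certificate propagation rules. The replacement commutes with the constructors, and these rules inspect only nested shape. Certificate propagation stays legal by Lemma~\ref{lem:certificate-term-invariance}, since the tree, the labels, and the grammar derivation are untouched.
\item Existential rule. The identity $\sigma(A(t/x))=(\sigma A)(\sigma t/x)$ holds because $y_l$ is fresh and is not captured by binders in $A$. Moreover, $\sigma t$ is again a proof term, since object variables are proof terms.
\item Universal rule with eigenparameter $a$. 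By choice of the $b_l$ we have $a\neq b_l$. The parameter $a$ was fresh in the conclusion, and $\sigma$ introduces only object variables, so $a$ remains fresh in $\sigma$ of the conclusion. We also need $\sigma(A(a/x))=(\sigma A)(a/x)$. This holds because proof parameters and object variables are disjoint classes, and the fresh $y_l$ are distinct from the bound $x$.
\end{caselist}
In each case we apply the induction hypothesis to the premises and reapply the same rule.

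The one delicate point is the interaction with object variable binding. In Lemma~\ref{lem:uniform-parameter-renaming} the replacement went from a parameter to a parameter, so capture could not arise. Here a parameter is replaced by an object variable, and that variable could be captured by a quantifier, or it could clash with the variable $x$ substituted in a quantifier rule. I would handle this by the preliminary alpha conversion together with the choice of the $y_l$ outside every variable occurring anywhere in the finite derivation. Once that is done, the existential and universal cases reduce to the standard substitution-commutation facts, the same ones used in Proposition~\ref{prop:free-substitution-admissibility}.

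A second subtlety concerns an eigenparameter that is introduced by a universal rule in one branch but also occurs elsewhere, as a neutral occurrence outside that rule's subtree. To deal with this, I would first apply Lemma~\ref{lem:eigen-regularization} with $Q$ the set of parameters occurring in $\mathcal D$. After that step every eigenparameter occurs only inside its own premise subtree, so the eigenparameters and the neutral parameters form disjoint sets. The replacement then never touches an eigenparameter, and freshness at every universal inference is preserved.
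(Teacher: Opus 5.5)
Your proposal is correct and is essentially the paper's argument: a single simultaneous replacement of the neutral parameters by fresh object variables, checked rule by rule. The paper covers the same cases: initial sequents, Boolean rules, the existential witness remaining a proof term, the universal rule keeping its untouched eigenparameter fresh, and the modal rules being independent of term data. Your treatment of capture just spells out what ``fresh'' has to mean.

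The preliminary appeal to Lemma~\ref{lem:eigen-regularization} is unnecessary, because neutral parameters are by definition never eigenparameters, which is exactly what your universal case already uses. It is better dropped, since it replaces $\mathcal D$ by a different derivation and can change which parameters count as neutral. The paper instead applies that regularization separately, before this lemma, in the proof of Theorem~\ref{thm:scoped-lifting}.
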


\begin{proof}
Choose distinct fresh object variables for the finitely many neutral proof parameters. Apply the replacements simultaneously. Truth and atomic initial rules are preserved because equality of complementary argument tuples is preserved. Boolean rules commute with the replacement. In an existential inference the witness term is replaced uniformly and remains a legal proof term. A universal inference retains its designated eigenparameter because only neutral parameters are replaced. Box, certificate diamond, and seriality rules depend on the nested tree and modal certificates, which contain no first-order term data. Hence they are unchanged. If the root contains no neutral proof parameter, the simultaneous replacement has no effect on it.
\end{proof}

We call an ordinary certificate derivation \emph{regularized} when it satisfies Lemma~\ref{lem:eigen-regularization} with $Q=\varnothing$ and contains no neutral proof parameter, as ensured by Lemma~\ref{lem:neutral-parameter-normalization}.

Regularization has one consequence that is essential for abstraction. An eigenparameter occurring in a witness can only have been introduced by a universal inference above that witness.

\begin{lemma}[Parameter ancestry]\label{lem:parameter-ancestry}
Assume the regularizations of Lemmas~\ref{lem:eigen-regularization} and~\ref{lem:neutral-parameter-normalization}. If an eigenparameter $a$ occurs in an existential witness term used at a derivation node $\mathsf n$, then the unique universal inference that introduces $a$ is an ancestor of $\mathsf n$. Consequently every eigenparameter in that witness belongs to the active set $Q_{\mathsf n}$.
\end{lemma}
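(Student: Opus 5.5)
The plan is to follow where the parameter $a$ in the witness term at $\mathsf n$ could have come from. In a regularized derivation, Lemma~\ref{lem:neutral-parameter-normalization} ensures that every proof parameter is an eigenparameter. Lemma~\ref{lem:eigen-regularization} (with $Q=\varnothing$) then says that $a$ is introduced by exactly one universal inference $\mathsf u$, and that $a$ occurs only inside the premise subtree rooted at $\mathsf u$. So the argument reduces to a localization claim: every node of $\mathcal D$ whose rule instance mentions $a$, whether in a formula or in a witness term, lies in the subtree strictly above $\mathsf u$, meaning in $\mathsf u$'s premise subderivation.

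First I fix the reading of ``occurs only inside the premise subtree.'' The sequents of the subtree rooted at the premise of $\mathsf u$ may contain $a$, together with the witness terms of the existential inferences whose conclusions lie in that subtree. No other sequent or annotation may contain $a$. In particular, the conclusion of $\mathsf u$ does not contain $a$, by freshness.

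Next, suppose $a$ occurs in the witness $t$ of an existential inference whose conclusion is the node $\mathsf n$. Witness terms are part of the data of the inference at $\mathsf n$, so by the localization property $\mathsf n$ must belong to the premise subtree of $\mathsf u$. The root-to-$\mathsf n$ path therefore passes through $\mathsf u$, which makes $\mathsf u$ a proper ancestor of $\mathsf n$. By the definition of the active eigenparameter set, $Q_{\mathsf n}$ collects exactly the eigenparameters introduced by universal inferences on that path, so $a\in Q_{\mathsf n}$. This argument applies to each eigenparameter in $t$ separately, which gives the ``consequently'' clause.

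The step I expect to need care is justifying the localization property itself rather than just quoting it, since the renaming in Lemma~\ref{lem:eigen-regularization} is described operationally. I would verify it by induction along the topological order used there. When $\mathsf u$ is processed, its parameter is chosen fresh for everything currently present and the renaming is applied only to the premise subderivation. As a result, the new name cannot appear outside that subtree, and later renamings never introduce it because they choose names outside all previously chosen ones. I would also check that a witness term containing $a$ cannot sit at a node below $\mathsf u$. Such a witness would be recorded in the premise of that existential step, and by the upward flow of sequents in backward search $a$ would then occur in the conclusion of $\mathsf u$, contradicting freshness.
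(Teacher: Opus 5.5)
Your argument is correct and is essentially the paper's own proof. Eigenparameter regularization localizes $a$ to the premise subtree of its unique introducing inference $\mathsf u$, so the witness occurrence, and hence $\mathsf n$, lies in that subtree; $\mathsf u$ is then on the root-to-$\mathsf n$ path, which puts $a$ in $Q_{\mathsf n}$.

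One caution: your closing ``upward flow'' check is not valid as stated, because a conjunction premise can drop the conjunct carrying $a$, and a witness for a vacuous $x$ never enters any sequent. That check is also unnecessary, since the localization property you verified along the topological order already excludes every node outside the premise subtree of $\mathsf u$.
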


\begin{proof}
By eigenparameter regularization, $a$ occurs only in the premise subtree of its unique introducing universal rule. Since the witness occurrence lies at $\mathsf n$, the node $\mathsf n$ lies in that subtree. The introducing rule is therefore an ancestor of $\mathsf n$. The definition of $Q_{\mathsf n}$ contains precisely the eigenparameters introduced on the path from the root of the derivation tree to $\mathsf n$.
\end{proof}

\subsection{Proof abstraction}\label{sec:proof-abstraction}

We extend $\equiv_\alpha$ componentwise from proof and symbolic core formulas to nested sequents and derivations. Throughout this subsection, $\mathcal D$ denotes a regularized ordinary certificate derivation. A regularized ordinary proof still contains concrete first-order choices. We replace those choices by flexible variables while retaining a substitution that reconstructs the original proof. Object variables already present in the root remain rigid symbolic terms. Free object variables that occur only internally are represented by residual flexible variables with empty permission sets. This separation preserves the universal role of nonanswer query variables while allowing choices internal to the proof to be abstracted.

\begin{definition}[Residual representation]\label{def:residual-map}
Let $V_{\mathcal D}$ be the finite set of free object variables that occur in a regularized ordinary derivation $\mathcal D$ but do not occur in its root sequent. For each $Z\in V_{\mathcal D}$ choose a fresh auxiliary flexible variable $\widehat Z\in\mathsf{FVar}^{\mathrm U}$ with
$\pi(\widehat Z)=\varnothing.$
For a proof term $t$, let $\widehat t$ be obtained by replacing each free occurrence of $Z\in V_{\mathcal D}$ by $\widehat Z$ and leaving every other symbol unchanged. Extend $B\mapsto\widehat B$ homomorphically to proof core formulas and componentwise to nested sequents. Define the \emph{concretization map} $q[\mathcal D]$ on all symbolic terms by
$q[\mathcal D](\widehat Z)=Z$
for the chosen representatives and $q[\mathcal D](V)=V$ for every other flexible variable $V$. Thus each selected residual representative is mapped to its corresponding ordinary variable, while every other flexible variable is fixed. Extend $q[\mathcal D]$ homomorphically to formulas and componentwise to nested sequents and derivations, while fixing object variables, function symbols, constants, proof parameters, nested positions, rule annotations, and modal certificates.
\end{definition}

A symbolic object is called \emph{residual normal} when every object variable internal to the proof in $V_{\mathcal D}$ occurs only through its chosen residual representative. A substitution is \emph{residual normal relative to $\mathcal D$} when every term in its fully dereferenced range is residual normal. The elementary reflection and injectivity properties needed below are collected in the next lemma.

\begin{lemma}[Properties of residual representation]\label{lem:residual-properties}
For every proof term or proof core formula $B$ occurring in $\mathcal D$,
$q[\mathcal D](\widehat B)=B.$
On residual normal symbolic terms, $q[\mathcal D]$ reflects outer function symbols and is injective. Residual normality is preserved by the abstraction substitutions used below.
\end{lemma}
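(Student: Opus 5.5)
The plan is to prove the three assertions separately, each by structural induction, using only the definitions of $\widehat{\cdot}$ and $q[\mathcal D]$ from Definition~\ref{def:residual-map}.

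\textbf{Reflection.} For $q[\mathcal D](\widehat B)=B$, we induct on the proof term $t$.
\begin{caselist}
\item A free object variable $Z\in V_{\mathcal D}$ is sent to $\widehat Z$, and $q[\mathcal D]$ returns $Z$.
\item Every other object variable, constant, and proof parameter is fixed by both maps.
\item Function applications follow by homomorphic extension.
\end{caselist}
Formulas are handled by induction on the core formula. For quantifiers we first alpha-convert the bound variable. The replacement acts only on free occurrences, and the representatives $\widehat Z$ are flexible variables, disjoint from $\mathsf{Var}$, so no capture arises. Nested sequents follow componentwise, and positions are untouched.

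\textbf{Outer symbols and injectivity.} Let $s$ be a residual normal symbolic term. The map $q[\mathcal D]$ sends a flexible variable either to itself or, for a representative $\widehat Z$, to an object variable $Z\in V_{\mathcal D}$. It fixes every rigid symbol. Hence a compound term keeps its function symbol and arity, and a rigid nullary symbol is unchanged, which proves reflection of outer symbols.

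Injectivity is proved by induction on pairs $s_1,s_2$ with $q[\mathcal D](s_1)=q[\mathcal D](s_2)$. The only possible collision is between $\widehat Z$ and the object variable $Z$. Residual normality excludes this, because a residual normal term contains no bare occurrence of any $Z\in V_{\mathcal D}$. Distinct representatives have distinct images, since $Z\mapsto\widehat Z$ is injective. Flexible variables outside the representatives are fixed and cannot collide with object variables. The compound case decomposes by reflection of outer symbols.

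\textbf{Preservation under abstraction substitutions.} The phrase ``abstraction substitutions used below'' refers to the later abstraction construction. I plan to state the property abstractly: if a substitution's values, and the term it acts on, are residual normal, then the fully dereferenced result is residual normal. This holds by induction on dereference depth, since substitution only plugs residual normal terms into residual normal contexts and never creates a bare $Z\in V_{\mathcal D}$.

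The values arising in abstraction are images $\widehat t$ of witness terms, which are residual normal by construction. Fully dereferenced solution values are built from such images.

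\textbf{Main obstacle.} I expect the hardest part to be the third assertion, since its precise scope depends on the later construction. I would first verify that every witness abstraction has the form $U\mapsto\widehat t$ and then invoke the abstract closure statement.
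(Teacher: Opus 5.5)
Your proposal is correct and follows essentially the same route as the paper: the same three structural inductions, with residual normality excluding the only possible collision between a representative $\widehat Z$ and a bare $Z$, and with preservation of residual normality following because every binding made during abstraction has the form $\widehat t$. Stating the last part as an abstract closure lemma, rather than the paper's induction on the number of substitution extensions, is only a cosmetic difference. The alpha-conversion you add in the quantifier case is unnecessary: $\widehat Z$ occurs only at positions where $Z$ was free, so the paper's syntactic equality holds without it.
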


\begin{proof}
For the first assertion, use structural induction on terms.

Base case. The only nonidentity variable case is $q[\mathcal D](\widehat Z)=Z$. All other atomic term symbols are fixed.

Induction step. Suppose $t=f(t_1,\ldots,t_n)$. Apply the induction hypotheses to the arguments and use homomorphic action through $f$.

Extend the result to formulas by structural induction.

Base case. Atomic formulas follow from the term result. The map $q[\mathcal D]$ fixes the truth constants.

Induction step. Suppose the outermost constructor is Boolean, quantified, or modal. Apply the induction hypotheses to the immediate subformulas and use homomorphic action of $q[\mathcal D]$.

For injectivity on residual normal symbolic terms, use structural induction on the term.

Base case. Distinct rigid nullary symbols and flexible variables remain distinct under $q[\mathcal D]$, and distinct residual representatives map to distinct internal object variables that do not occur directly in a residual normal term.

Induction step. Suppose $t=f(t_1,\ldots,t_n)$. Equality of concretizations forces equality of the outer function symbol and, by the induction hypotheses, equality of all corresponding arguments.

Preservation of residual normality follows by induction on the number of substitution extensions performed during proof abstraction.

Base case. The initial substitution is residual normal by construction.

Induction step. Each new binding assigns a residual normal term to a fresh flexible variable and never replaces a residual representative by a term containing an unrepresented object variable internal to the proof. Hence residual normality is preserved.
\end{proof}

The induction used for abstraction must combine substitutions constructed independently below mandatory branches. A finite substitution $\rho'$ is a \emph{conservative extension} of $\rho$ when $\dom(\rho)\subseteq\dom(\rho')$ and $\rho'(X)=\rho(X)$ for every $X\in\dom(\rho)$. For two such extensions $\rho_1$ and $\rho_2$, put $N_i=\dom(\rho_i)\setminus\dom(\rho)$. We say that they are \emph{independent over $\rho$} when the new domains are disjoint, neither new domain contains a variable occurring in the fully dereferenced range of $\rho$, and no variable from one new domain occurs in the fully dereferenced range of the other extension. Together, these three conditions allow the substitutions to be united without creating a dependency between the two branches.

\begin{lemma}[Branch extension union]\label{lem:branch-extension-union}
Let $\rho$ be a substitution at a derivation node that preserves scope and is residual normal relative to $\mathcal D$. Suppose $\rho_1$ and $\rho_2$ are conservative extensions of $\rho$ constructed in two distinct mandatory branch subtrees. Assume that both preserve scope, are residual normal relative to $\mathcal D$, and are independent over $\rho$. Then $\rho_1\cup\rho_2$ is a well-defined conservative extension of $\rho$ that preserves scope and is residual normal relative to $\mathcal D$.
\end{lemma}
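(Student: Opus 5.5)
We prove the lemma by checking, in turn, that the union is well defined, conservative, acyclic, scope preserving and residual normal.

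\textbf{Well-definedness and conservativity.} Write $N_1=\dom(\rho_1)\setminus\dom(\rho)$ and $N_2=\dom(\rho_2)\setminus\dom(\rho)$. On $\dom(\rho)$ both $\rho_1$ and $\rho_2$ agree with $\rho$, because each is a conservative extension. On $N_1\cup N_2$ the two substitutions cannot give conflicting values, because independence makes $N_1$ and $N_2$ disjoint. So $\rho_1\cup\rho_2$ is a single finite map, and it agrees with $\rho$ on $\dom(\rho)$. It is therefore a conservative extension of $\rho$.

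\textbf{Acyclicity.} Suppose the dependency graph of $\rho_1\cup\rho_2$ has a directed cycle. Each of its edges comes from a binding of $\rho$, $\rho_1$ or $\rho_2$. The cycle cannot consist only of edges inside $\dom(\rho_1)$, since $\rho_1$ is acyclic; the same holds for $\rho_2$. Hence the cycle has an edge from a variable of one domain to a variable of the other new domain, say an edge into $N_2$.

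To rule this out, we reason with fully dereferenced ranges, which is legitimate because all images are assumed normalized. Every variable occurring in a value of $\rho$ lies outside $N_1\cup N_2$, since independence keeps both new domains out of the dereferenced range of $\rho$. Every variable occurring in a value of $\rho_1$ lies outside $N_2$, again by independence. Symmetrically, values of $\rho_2$ contain no variable of $N_1$. So no edge enters $N_2$ from $\dom(\rho_1)$ or from $\dom(\rho)$, and no edge enters $N_1$ from $\dom(\rho_2)$. A cycle that enters $N_2$ must remain in the graph of $\rho_2$ from that point on. That graph is acyclic, so the cycle cannot return, which is a contradiction.

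A cycle lying entirely in $\dom(\rho)\cup N_1$ is a cycle of $\rho_1$, and one lying in $\dom(\rho)\cup N_2$ is a cycle of $\rho_2$. Both are excluded. Moreover, full dereferencing through the union agrees with $\rho_k^*$ on $\dom(\rho_k)$, because a dereference chain starting in $\dom(\rho_k)$ never leaves $\dom(\rho_k)$.

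\textbf{Scope preservation and residual normality.} Scope preservation is a pointwise condition: each value must be safe for its variable under the permanent map $\pi$. Every binding of the union is a binding of $\rho_1$ or of $\rho_2$, and both preserve scope, so the union does too. By the previous step, the fully dereferenced range of the union is the union of the dereferenced ranges of $\rho_1$ and $\rho_2$. Each of these is residual normal, so the union is residual normal relative to $\mathcal D$.

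\textbf{Main obstacle.} The delicate step is acyclicity, together with the agreement of full dereferencing. The argument depends on reading independence for normalized, fully dereferenced images. If the images were not normalized, the same conclusion would follow by inducting along dereference chains, using that chains starting in $\dom(\rho_k)$ stay in $\dom(\rho_k)$.
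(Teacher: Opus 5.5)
Your proposal is correct and takes essentially the same route as the paper. Agreement on $\dom(\rho)$ together with disjointness of the new domains gives a well-defined conservative extension, independence excludes edges across the two branches and so yields acyclicity and agreement of full dereferencing with each $\rho_k^*$, and scope preservation and residual normality then transfer pointwise. Your explicit cycle argument, that a dereference chain which enters $\dom(\rho_k)$ never leaves it, spells out what the paper asserts in a single sentence.
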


\begin{proof}
Both extensions agree with $\rho$ on its domain. Independence over $\rho$ makes the new domains disjoint and prevents a new variable from one branch from occurring in the fully dereferenced range of the other branch or of the inherited substitution. The union is therefore acyclic and agrees with each branch extension on the fully dereferenced terms belonging to that branch. Since each extension preserves scope, the union does as well. Independence also ensures that every fully dereferenced image in the union is an inherited image or an image from one branch extension, so residual normality is preserved. The union remains a conservative extension of $\rho$.
\end{proof}

With branch combination controlled, the main abstraction construction can now proceed by induction on the ordinary proof. The inherited substitution is extended only by choices made at the matching symbolic rule.

\begin{lemma}[Proof abstraction]\label{lem:general-proof-abstraction}
Let $\mathcal D_{\mathsf n}$ be a regularized ordinary certificate subderivation rooted at a sequent $\mathcal G_{\mathsf n}$, and let $Q_{\mathsf n}$ be the active eigenparameter set there. Suppose a residual normal symbolic root $\mathcal G_{\mathsf n}^\sharp$, a substitution $\rho_{\mathsf n}$ that preserves scope and is residual normal relative to $\mathcal D$, and the map $q[\mathcal D]$ satisfy
$q[\mathcal D](\rho_{\mathsf n}^*(\mathcal G_{\mathsf n}^\sharp))\equiv_\alpha\mathcal G_{\mathsf n}.$
Then there exist a symbolic derivation $\mathcal D_{\mathsf n}^\sharp$, a conservative extension $\rho[\mathcal D_{\mathsf n}]$ of $\rho_{\mathsf n}$ that preserves scope and is residual normal relative to $\mathcal D$, and a finite union $E(\mathcal D_{\mathsf n}^\sharp)$ of terminal equations with the following properties.
\begin{enumerate}
\item $\rho[\mathcal D_{\mathsf n}]$ solves every equation in $E(\mathcal D_{\mathsf n}^\sharp)$.
\item every ordinary existential witness $t$ used at a derivation node $\mathsf n'$ is represented by a fresh proof metavariable $U_{\mathsf n'}\in\mathsf{FVar}^{\mathrm W}$ with $\pi(U_{\mathsf n'})=Q_{\mathsf n'}$ and
$q[\mathcal D](\rho[\mathcal D_{\mathsf n}]^*(U_{\mathsf n'}))=t$.
\item flexible variables introduced strictly below distinct mandatory branch children are disjoint.
\item symbolic terminal leaves concretize to the matching ordinary terminal leaves.
\item
$q[\mathcal D](\rho[\mathcal D_{\mathsf n}]^*(\mathcal D_{\mathsf n}^\sharp)) \equiv_\alpha \mathcal D_{\mathsf n}.$
\end{enumerate}
\end{lemma}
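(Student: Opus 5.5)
We prove the lemma by induction on the height of $\mathcal D_{\mathsf n}$, building $\mathcal D_{\mathsf n}^\sharp$ rule by rule so that its last inference mirrors the last inference of $\mathcal D_{\mathsf n}$. The hypothesis $q[\mathcal D](\rho_{\mathsf n}^*(\mathcal G_{\mathsf n}^\sharp))\equiv_\alpha\mathcal G_{\mathsf n}$ is the invariant carried to each premise. Since $q[\mathcal D]$ and $\rho_{\mathsf n}^*$ fix positions, edges and modal labels, the symbolic root has the same nested tree as $\mathcal G_{\mathsf n}$. By injectivity and reflection of outer symbols on residual normal terms (Lemma~\ref{lem:residual-properties}), the symbolic principal occurrence has the same constructor as the ordinary principal occurrence. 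This includes the modal index and, for literals, the predicate and polarity.

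\textbf{Leaves.} For a $(\top\mathrm{ax})$ leaf we use symbolic truth closure. For an atomic identity leaf with $p(\bar s)$ and $\overline p(\bar t)$ we use symbolic atomic closure on the corresponding symbolic literals $p(\bar s^\sharp)$, $\overline p(\bar t^\sharp)$. We take $\rho[\mathcal D_{\mathsf n}]=\rho_{\mathsf n}$. Clause 1 then says $\rho_{\mathsf n}^*(s_r^\sharp)=\rho_{\mathsf n}^*(t_r^\sharp)$. This follows because both sides are residual normal and their $q[\mathcal D]$ images are the equal terms $s_r=t_r$, so injectivity gives equality before concretization. Clause 4 is then immediate.

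\textbf{Propagating rules.} For Boolean, box, seriality and certificate propagation we apply the same symbolic rule. Certificate propagation reuses the identical certificate, which is legal by Lemma~\ref{lem:certificate-term-invariance}. The premise invariant follows from the homomorphic action of $\rho^*$ and $q[\mathcal D]$.

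For the universal rule we reuse the same eigenparameter $a$. This is legal because regularization makes $a$ appear only in the premise subtree. Neither the symbolic root (via scope of older variables) nor $\rho_{\mathsf n}$ contains $a$, since every permission in use is a subset of $Q_{\mathsf n}$ and $a\notin Q_{\mathsf n}$.

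For conjunction we apply the induction hypothesis to both children, taking the fresh variables from disjoint portions of the reserves. The two conservative extensions are then independent over $\rho_{\mathsf n}$. Lemma~\ref{lem:branch-extension-union} combines them, and the terminal equation sets are united. Because each extension agrees with the union on its own branch (Proposition~\ref{prop:supported-term-agreement}), clauses 1, 2 and 5 transfer to the union.

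\textbf{Existential rule.} Suppose the ordinary witness is $t$ at node $\mathsf n$. We choose a fresh $U_{\mathsf n}\in\mathsf{FVar}^{\mathrm W}$ with $\pi(U_{\mathsf n})=Q_{\mathsf n}$ and set $\rho'=\rho_{\mathsf n}\cup[\widehat t/U_{\mathsf n}]$.

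\begin{itemize}
\item Acyclicity holds because $\widehat t$ contains no flexible variable except residual representatives $\widehat Z$, each of which has empty permission and is fresh.
\item Scope is preserved because Lemma~\ref{lem:parameter-ancestry} gives $\operatorname{Par}(t)\subseteq Q_{\mathsf n}$, and every $\widehat Z$ has empty permission.
\item Residual normality holds by construction, and Lemma~\ref{lem:residual-properties} gives $q[\mathcal D](\widehat t)=t$.
\end{itemize}

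The symbolic premise adds $A^\sharp(U_{\mathsf n}/x)$. After alpha renaming of the bound variable, its concretization is $A(t/x)$, so the induction hypothesis applies with $\rho'$.

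\textbf{Main obstacle.} The hardest step is the bookkeeping in the existential and branching cases. There we must check that the extended substitution is acyclic and residual normal, and that the conservative extensions really satisfy the independence conditions. The point to argue is that freshly introduced variables are chosen globally fresh and never appear in the inherited ranges. Once that is in place, clause 5 follows by assembling the concretized premises under the same ordinary rule.
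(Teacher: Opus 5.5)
Your proposal is correct and follows essentially the same route as the paper. Both argue by induction on the height of $\mathcal D_{\mathsf n}$, mirroring each ordinary rule by its symbolic counterpart; both close atomic leaves via injectivity of $q[\mathcal D]$ on residual normal terms, extend by $U_{\mathsf n}\mapsto\widehat t$ at existential steps using Lemma~\ref{lem:parameter-ancestry} for scope, and merge conjunction branches built from disjoint fresh-variable supplies via Lemma~\ref{lem:branch-extension-union}. Your explicit appeal to Proposition~\ref{prop:supported-term-agreement} to transfer clauses 1, 2 and 5 to the merged substitution is a small clarification that the paper leaves implicit.
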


\begin{proof}
We use induction on the height of $\mathcal D_{\mathsf n}$. The construction and all side conditions are treated simultaneously.

Base case.

\begin{caselist}
\item Case truth initial leaf. Use the symbolic truth leaf with no new variable, no equation, and substitution $\rho_{\mathsf n}$. Concretization is immediate.
\item Case atomic identity leaf. Let the complementary atoms be $p(\bar s)$ and $\overline p(\bar s)$. The symbolic root concretizes to that leaf by hypothesis. Close it with the symbolic atomic rule and add equations between the corresponding symbolic argument terms. After full dereferencing by $\rho_{\mathsf n}$, the corresponding symbolic argument terms are residual normal by the inherited invariant. Their images under $q[\mathcal D]$ are the same proof terms, so injectivity from Lemma~\ref{lem:residual-properties} makes the dereferenced argument terms syntactically equal. Hence $\rho_{\mathsf n}$ solves the equations contributed by the symbolic atomic leaf. No branch or witness condition arises.
\end{caselist}

Induction step. Assume the five assertions for all premise subderivations of smaller height.

\begin{caselist}
\item Case unary propositional rule. Apply the same symbolic rule to the matching occurrence in $\mathcal G_{\mathsf n}^\sharp$. Concretization of the premise follows from homomorphicity, and the induction hypothesis applies to the ordinary premise subderivation. The resulting substitution is a conservative extension of $\rho_{\mathsf n}$, and the full derivation concretizes by the induction hypothesis followed by the same ordinary rule.
\item Case conjunction. Apply the symbolic conjunction rule. For each permission fiber in the reserves for flexible variables, partition the names not yet used in the symbolic construction into two disjoint infinite supplies, one for each branch. Choose both supplies outside the finitely many flexible variables already occurring in $\rho_{\mathsf n}$ or in its fully dereferenced range. During each recursive construction, every new flexible variable is drawn from the supply assigned to that branch. Apply the induction hypothesis to both branches with the same inherited substitution $\rho_{\mathsf n}$. The two returned substitutions have disjoint new domains. Neither new domain occurs in the inherited fully dereferenced range, and no variable allocated to one branch occurs in the fully dereferenced range produced in the other branch. Lemma~\ref{lem:branch-extension-union} therefore combines the two substitutions. The combined substitution solves the union of the two finite constraint sets. Branch locality and full concretization follow from the two induction hypotheses.
\item Case universal introduction. Let $a$ be the fresh eigenparameter. Apply the symbolic universal rule with the same $a$. Eigenparameter regularization makes $a$ fresh for the ordinary conclusion. Since the symbolic conclusion concretizes to that conclusion and every flexible variable already present there was created before the universal inference, its permission set excludes the newly introduced parameter $a$. Scope preservation therefore implies that $a$ does not occur in the substituted conclusion, so the symbolic eigen side condition is satisfied. Extend the active set from $Q_{\mathsf n}$ to $Q_{\mathsf n}\cup\{a\}$ and apply the induction hypothesis to the premise. Full concretization then follows by the ordinary universal rule.
\item Case existential witness introduction. Let the rule be applied at the root node $\mathsf n$ with witness proof term $t$. Choose a fresh proof metavariable $U_{\mathsf n}\in\mathsf{FVar}^{\mathrm W}$ with $\pi(U_{\mathsf n})=Q_{\mathsf n}$. Define an extension of $\rho_{\mathsf n}$ by $\rho'(U_{\mathsf n})=\widehat t$. By Lemma~\ref{lem:parameter-ancestry}, every eigenparameter of $t$ lies in $Q_{\mathsf n}$. Every residual representative in $\widehat t$ has empty permission. Hence the new assignment preserves scope. The term $\widehat t$ is residual normal by construction, so the extended substitution remains residual normal relative to $\mathcal D$. Apply the symbolic existential rule with $U_{\mathsf n}$. The concretization equation $q[\mathcal D](\rho'^*(U_{\mathsf n}))=t$ holds by Lemma~\ref{lem:residual-properties}. The induction hypothesis applied to the premise gives the required extension and proof.
\item Case indexed box. Use the same symbolic box rule and apply the induction hypothesis to its premise. No new term or permission data is introduced.
\item Case certificate diamond propagation. Retain the same certificate. The symbolic and ordinary nested trees have the same edge structure, and the certificate contains no first-order term data. Apply the induction hypothesis to the premise and reconstruct the symbolic certificate rule.
\item Case indexed seriality. Create the same fresh child label and apply the induction hypothesis to the premise. The rule contains no term data.
\end{caselist}

These cases cover every primitive rule of the ordinary certificate calculus. Each construction extends the inherited substitution only on fresh variables, preserves scope and residual normality, and preserves every terminal equation already solved. Mandatory branches use disjoint fresh domains, so the five assertions hold simultaneously at every induction step.
\end{proof}

\begin{corollary}[Root abstraction]\label{cor:query-proof-abstraction}
Let $\mathcal D$ be a finite regularized ordinary certificate derivation whose root component contains program formula occurrences $\NNF^{-}(P_1),\ldots,\NNF^{-}(P_n)$ and the query formula occurrence $\NNF^{+}(G)\sigma$, where $\sigma$ is a user answer substitution. Then there exist a symbolic derivation $\mathcal D^\sharp$ of the root from Definition~\ref{def:root-answer-replacement}, an admissible solution $\rho[\mathcal D]$ of its initial unification problem, and a concretization map $q[\mathcal D]$ such that
$q[\mathcal D](\rho[\mathcal D]^*(X^\sharp))=\sigma(X)$
for every $X\in A_G$. The witness, terminal, branch locality, and full concretization clauses of Lemma~\ref{lem:general-proof-abstraction} also hold.
\end{corollary}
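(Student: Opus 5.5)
We apply Lemma~\ref{lem:general-proof-abstraction} at the root node of $\mathcal D$. This means we must supply a residual normal symbolic root $\mathcal G^\sharp$ and an initial substitution $\rho_0$ that preserves scope, such that $q[\mathcal D](\rho_0^*(\mathcal G^\sharp))\equiv_\alpha\mathcal G$.

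For $\mathcal G^\sharp$ we take the symbolic root of Definition~\ref{def:root-answer-replacement}. Its root component contains $\NNF^-(P_1),\ldots,\NNF^-(P_n)$ followed by $j_G(\NNF^+(G))$. Its only flexible variables are the representatives $X^\sharp$ with $X\in A_G$, each with empty permission.

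We define $\rho_0(X^\sharp)=\widehat{\sigma(X)}$ for $X\in A_G$, omitting identity bindings. Here $\widehat{\cdot}$ is the residual representation of Definition~\ref{def:residual-map}. The key checks on $\rho_0$ are as follows.
\begin{enumerate}
\item Since $\sigma$ is a user answer substitution, $\sigma(X)$ is an ordinary user term and contains no proof parameter.
\item Each flexible variable in $\widehat{\sigma(X)}$ is a residual representative $\widehat Z$ with $\pi(\widehat Z)=\varnothing$. Hence $\widehat{\sigma(X)}$ is safe for $X^\sharp$, even though $\pi(X^\sharp)=\varnothing$.
\item The range contains no $X^\sharp$, so $\rho_0$ is acyclic and already fully dereferenced. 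It is residual normal by construction.
\end{enumerate}
In fact $\sigma(X)$ occurs in the root of $\mathcal D$, so typically no internal variable $Z\in V_{\mathcal D}$ occurs in it and $\widehat{\sigma(X)}=\sigma(X)$. The argument does not depend on this.

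Next we verify the concretization hypothesis. The program formulas are closed and contain no flexible variables, so they are fixed by $\rho_0^*$ and by $q[\mathcal D]$. For the query occurrence we invoke the argument of Lemma~\ref{lem:root-concretization} with $\rho=\rho_0$ and $q=q[\mathcal D]$. The term-level induction gives $q[\mathcal D](\rho_0^*(j_G(t)))=t\sigma$, using $q[\mathcal D](\widehat{\sigma(X)})=\sigma(X)$ from Lemma~\ref{lem:residual-properties}. This yields $q[\mathcal D](\rho_0^*(j_G(\NNF^+(G))))\equiv_\alpha\NNF^+(G)\sigma$.

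The main technical point is this use of Lemma~\ref{lem:root-concretization}. That lemma is stated for a $q$ which injectively renames residuals to fresh variables. Here $q[\mathcal D]$ is not injective in that sense, since it sends each $\widehat Z$ to $Z$ and fixes all other flexible variables. We therefore rerun its structural induction directly rather than cite it verbatim. We also state the conclusion as $\NNF^+(G)\sigma$, so the NNF substitution-compatibility step is not needed. Bound variables are standardized apart from the finite ranges of $\sigma$ and $\rho_0$.

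Lemma~\ref{lem:general-proof-abstraction} then returns a symbolic derivation $\mathcal D^\sharp$ rooted at $\mathcal G^\sharp$ and a conservative extension $\rho[\mathcal D]$ of $\rho_0$. This extension preserves scope, is residual normal, and solves $E(\mathcal D^\sharp)$, and all five clauses of that lemma hold. Admissibility in the sense of Definition~\ref{def:admissible-unifier} needs two further observations. First, after passing to the fully dereferenced normal form, the domain of $\rho[\mathcal D]$ can be restricted to $C(\mathcal D^\sharp)$, because every variable it binds is introduced in the construction of $\mathcal D^\sharp$. Second, conservativity gives $\rho[\mathcal D](X^\sharp)=\widehat{\sigma(X)}$. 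The range of $\rho_0$ contains only residual representatives, and these are never bound by later extensions, which bind only fresh witness variables. Hence $\rho[\mathcal D]^*(X^\sharp)=\widehat{\sigma(X)}$, and applying $q[\mathcal D]$ yields $\sigma(X)$. This is the required answer equation.
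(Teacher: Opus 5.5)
Your proposal is correct and follows essentially the same route as the paper: you set $\rho_0(X^\sharp)=\widehat{\sigma(X)}$, check scope preservation, residual normality and root concretization, apply Lemma~\ref{lem:general-proof-abstraction}, and then verify the domain condition of Definition~\ref{def:admissible-unifier}. The hedge ``typically'' can be dropped, since $A_G\subseteq\FV(G)$ puts every variable of $\sigma(X)$ in the root and hence $\widehat{\sigma(X)}=\sigma(X)$ always, as the paper notes; your rerun of the root-concretization induction for $q[\mathcal D]$ and your explicit derivation of the answer equation spell out steps the paper leaves implicit.
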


\begin{proof}
For each symbolic answer representative define the initial substitution $\rho_0(X^\sharp)=\widehat{\sigma(X)}.$ Every symbolic answer representative has empty permission. Every object variable occurring in $\sigma(X)$ also occurs in the root query instance $G\sigma$ and therefore lies outside $V_{\mathcal D}$. Hence $\widehat{\sigma(X)}=\sigma(X)$. Since $\sigma(X)$ is an ordinary user term, $\widehat{\sigma(X)}$ contains neither a proof parameter nor a flexible variable. It is therefore safe for $X^\sharp$, so $\rho_0$ preserves scope. The fully dereferenced range of $\rho_0$ is residual normal relative to $\mathcal D$. The root concretization lemma gives $q[\mathcal D](\rho_0^*(j_G(\NNF^{+}(G)))) \equiv_\alpha \NNF^{+}(G\sigma).$ Closed program formula occurrences are unchanged. Apply Lemma~\ref{lem:general-proof-abstraction} at the root with $\rho_0$.

The returned substitution preserves scope and solves every terminal equation by the first clause of that lemma. It remains to check the domain condition from Definition~\ref{def:admissible-unifier}. Every $X^\sharp$ in the nonidentity domain of $\rho_0$ occurs in the symbolic root because $A_G\subseteq\FV(G)$. Every later nonidentity binding has a fresh flexible variable introduced in the symbolic derivation as its left side. Hence the nonidentity domain of $\rho[\mathcal D]$ is contained in $C(\mathcal D^\sharp)$, the set of flexible variables occurring in the complete symbolic derivation. The returned substitution is therefore an admissible solution of the initial unification problem.
\end{proof}

\subsection{Scoped lifting}\label{sec:scoped-lifting}

Proof abstraction provides an admissible solution of the symbolic constraints. The scoped mgu theorem factors that solution through the unifier computed by execution. It remains to verify the required output type. The factor substitution must assign ordinary user terms to the projected residual variables.

\begin{definition}[Output substitution]\label{def:ordinary-output-substitution}
An \emph{ordinary output substitution} for a query $G$ is a finite capture-avoiding substitution $\gamma$ on object variables whose range consists of ordinary user terms and whose domain satisfies
$\dom(\gamma)\cap(\FV(G)\setminus A_G)=\varnothing.$
Thus $\gamma$ may instantiate answer variables and fresh variables occurring in answer terms, but it does not instantiate a query variable that was excluded from $A_G$. When $A_G=\FV(G)$, the domain condition is vacuous.
\end{definition}

\begin{lemma}[Output instantiation]\label{lem:output-instantiation}
Assume the extensional factorization
$\rho[\mathcal D]^*(Z)=\mu^*(Z)\delta^*$
on the original flexible variables of an abstracted proof. Let $X\in A_G$ and let $V$ be residual in $\mu^*(X^\sharp)$. Assume
$q[\mathcal D](\mu^*(X^\sharp)\delta^*)=\sigma(X)$
for a user answer substitution $\sigma$. Then
$q[\mathcal D](\delta^*(V))$
is an ordinary user term.
\end{lemma}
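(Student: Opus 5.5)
The plan is to locate $\delta^*(V)$ as a subterm of $\mu^*(X^\sharp)\delta^*$ and transport this through $q[\mathcal D]$. Since $V$ is residual in $\mu^*(X^\sharp)$, it occurs at some position $p$ of that term. Applying $\delta^*$ homomorphically replaces this occurrence by $\delta^*(V)$, so $\delta^*(V)$ is the subterm of $\mu^*(X^\sharp)\delta^*$ at position $p$. The map $q[\mathcal D]$ also acts homomorphically and fixes all rigid symbols. Hence $q[\mathcal D](\delta^*(V))$ is the subterm at position $p$ of $q[\mathcal D](\mu^*(X^\sharp)\delta^*)=\sigma(X)$. This is a positional induction on $p$, using only homomorphicity of both maps.

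Since $\sigma$ is a user answer substitution, $\sigma(X)$ is an ordinary user term. It therefore contains no proof parameter and no flexible variable, only object variables, constants and function symbols. Every subterm of an ordinary user term is again an ordinary user term, so $q[\mathcal D](\delta^*(V))$ is an ordinary user term. This is essentially the argument of Lemma~\ref{lem:safe-subterm}, applied to the syntactic class of source terms rather than to safety.

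As a consistency check, and as an alternative route, scope information also confirms the result. By Lemma~\ref{lem:zero-permission-residual}, $\pi(V)=\varnothing$. The factor $\delta$ preserves scope by Theorem~\ref{thm:scoped-mgu}, so Lemma~\ref{lem:safety-composition} makes $\delta^*(V)$ free of proof parameters, and every residual flexible variable in it has empty permission. The subterm argument, however, does not need this.

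The only delicate point is making sure that $q[\mathcal D]$ cannot turn a non-user term into a user term in a way that breaks the positional correspondence. This cannot happen. The map $q[\mathcal D]$ sends variables to variables and flexible variables to object variables or to themselves, and it never collapses structure. So positions of $\mu^*(X^\sharp)\delta^*$ correspond exactly to positions of its image, and the occurrence $p$ of $V$ survives. One more remark: if a flexible variable remained in $q[\mathcal D](\delta^*(V))$, it would appear in $\sigma(X)$, which is impossible. So no extra argument about residual representatives is needed beyond the subterm observation.
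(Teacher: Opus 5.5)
Your proof is correct and follows the same route as the paper: $V$ occurs in $\mu^*(X^\sharp)$, homomorphicity of $\delta^*$ and $q[\mathcal D]$ makes $q[\mathcal D](\delta^*(V))$ a subterm of $q[\mathcal D](\mu^*(X^\sharp)\delta^*)=\sigma(X)$, and every subterm of an ordinary user term is again an ordinary user term. Your scope-based ``alternative route'' rules out only proof parameters, since residual flexible variables with empty permission that are not among the representatives $\widehat Z$ are fixed by $q[\mathcal D]$; it therefore does not confirm the full result and is only a side check, which is why the subterm argument is needed.
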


\begin{proof}
The map $q[\mathcal D]$ is defined on every symbolic term by Definition~\ref{def:residual-map}, so $q[\mathcal D](\delta^*(V))$ is well-formed even when $\delta^*(V)$ contains flexible variables other than the chosen representatives introduced by proof abstraction. The variable $V$ occurs as a syntactic subterm of $\mu^*(X^\sharp)$. Because residual instantiation and concretization are homomorphic, $q[\mathcal D](\delta^*(V))$ is a syntactic subterm of
$q[\mathcal D](\mu^*(X^\sharp)\delta^*)=\sigma(X).$
The right side is an ordinary user term. Every syntactic subterm of an ordinary user term is again an ordinary user term. Hence the displayed image contains neither proof parameters nor flexible variables.
\end{proof}

Lemma~\ref{lem:output-instantiation} establishes the required property of the output terms. The completeness argument next obtains an ordinary witness proof from a declaratively correct answer. Proof abstraction then constructs the corresponding symbolic derivation.

\begin{lemma}[Witness proof]\label{lem:correct-answer-proof}
If $\sigma\in\Corr^{\mathrm H}_{\Amod}(P,G)$, then there is a finite cut-free ordinary certificate derivation of
$\NNF^{-}(P_1),\ldots,\NNF^{-}(P_n),\NNF^{+}(G)\sigma$
with the convention for the empty program from Definition~\ref{def:root-answer-replacement} when $P=\varnothing$.
\end{lemma}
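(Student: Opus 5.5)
The plan is to pass from Hilbert theoremhood to semantic validity, strip the universal closure semantically, and then apply kernel completeness to the resulting flat root. Let $Y_1,\ldots,Y_r$ enumerate $\FV(G\sigma)$. By Definition~\ref{def:hilbert-correct}, $H_{\Amod}\vdash\widehat P\to\forall Y_1\cdots\forall Y_r\,G\sigma$. Theorem~\ref{thm:hilbert-soundness} then makes this implication valid in every constant-domain model with rigid terms based on $\Frames(\Amod)$.

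Next I would drop the universal closure. Fix a model, a world $w$ and an assignment $g$ satisfying $\widehat P$. Since $\widehat P$ is closed, the consequent holds at $w$ under $g$. Instantiating the closure with the values $g(Y_1),\ldots,g(Y_r)$ gives $G\sigma$ at $w$ under $g$. Hence $\widehat P\to G\sigma$ is valid, which is equivalent to validity of $\neg P_1\lor\cdots\lor\neg P_n\lor G\sigma$. For $P=\varnothing$, $\widehat P=\neg\bot$ and the formula becomes $\bot\lor G\sigma$.

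By Theorem~\ref{thm:normalization-adequacy}, $\NNF^{-}(P_k)$ is true exactly when $P_k$ is false, and $\NNF^{+}(G\sigma)$ is true exactly when $G\sigma$ is true. Substitution compatibility up to alpha equivalence gives $\NNF^{+}(G\sigma)\equiv_\alpha\NNF^{+}(G)\sigma$. Thus the single-component nested sequent with occurrences $\NNF^{-}(P_1),\ldots,\NNF^{-}(P_n),\NNF^{+}(G)\sigma$ has an interpretation $\sem{\cdot}$, the disjunction of these formulas, that is valid over $\Frames(\Amod)$. In the empty case the sequent is $\bot,\NNF^{+}(G)\sigma$, whose interpretation is again semantically equivalent to $G\sigma$.

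Theorem~\ref{thm:kernel-completeness} then yields a derivation in $\mathcal N_{\Amod}$. The calculus has no cut rule, so this derivation is cut-free. The main point needing care is that the root contains free object variables, namely those of $G\sigma$. Validity quantifies over all assignments, and completeness is stated for arbitrary valid nested sequents, so this is harmless. The canonical model's assignment $g_0(x)=x$ already handles free variables. A secondary bookkeeping point is that alpha-equivalent roots are interchangeable, since the calculus operates up to $\equiv_\alpha$.
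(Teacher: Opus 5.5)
Your proposal is correct and follows the paper's proof essentially step for step: Hilbert soundness, semantic removal of the universal closure, normalization adequacy with its substitution clause, and cut-free kernel completeness. The only cosmetic difference is how you reach validity of the flat root. You check it componentwise via Theorem~\ref{thm:normalization-adequacy}, whereas the paper first normalizes the whole implication $\widehat P\to G\sigma$ and reads off the root with Lemma~\ref{lem:root-normalization}. Your route also lets you apply completeness directly to the sequent containing $\NNF^{+}(G)\sigma$ instead of alpha-converting afterwards.
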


\begin{proof}
Let $Y_1,\ldots,Y_r$ be the free variables remaining in $G\sigma$. Correctness gives $H_{\Amod}\vdash \widehat P\to\forall Y_1\cdots\forall Y_r\,G\sigma.$ Hilbert soundness gives validity. Universal instantiation in the semantics yields validity of $\widehat P\to G\sigma$. By normalization adequacy, its positive NNF is valid. Cut-free certificate completeness from Theorem~\ref{thm:kernel-completeness} gives a derivation of the corresponding one-sided root. Lemma~\ref{lem:root-normalization} gives the query formula occurrence as $\NNF^+(G\sigma)$. The substitution part of Theorem~\ref{thm:normalization-adequacy} gives $\NNF^+(G\sigma)\equiv_\alpha\NNF^+(G)\sigma$, so after alpha conversion the root is precisely the displayed sequent.
\end{proof}

We can now combine proof existence, regularization, abstraction, and the most general unifier property. This yields the lifting theorem that establishes the completeness direction for computed answers.

\begin{theorem}[Scoped lifting]\label{thm:scoped-lifting}
Let $\sigma\in\Corr^{\mathrm H}_{\Amod}(P,G)$. Then there exist a successful symbolic computation for $(P,G)$ with computed answer $\theta$ and an ordinary output substitution $\gamma$ for $G$ such that
$\sigma(X)=(\theta\gamma)(X)$
for every $X\in A_G$. Moreover, the nonidentity domain of $\gamma$ can be chosen among the ordinary output variables occurring in $\theta$.
\end{theorem}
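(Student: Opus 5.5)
The plan chains the results of this section in order. First, Lemma~\ref{lem:correct-answer-proof} turns $\sigma$ into a finite cut-free ordinary certificate derivation $\mathcal D_0$ of the root $\NNF^{-}(P_1),\ldots,\NNF^{-}(P_n),\NNF^{+}(G)\sigma$. Next, I regularize it. Lemma~\ref{lem:eigen-regularization} with $Q=\varnothing$ fixes the eigenparameters, and Lemma~\ref{lem:neutral-parameter-normalization} removes neutral proof parameters. The root contains no proof parameter, since the program formulas and $G\sigma$ are over $\Sigma$ and $\sigma$ is a user answer substitution. Hence the root is unchanged, and we obtain a regularized derivation $\mathcal D$ of the same root. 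Corollary~\ref{cor:query-proof-abstraction} then gives a symbolic derivation $\mathcal D^\sharp$ of the symbolic root of Definition~\ref{def:root-answer-replacement}, together with an admissible solution $\rho[\mathcal D]$ of $(E(\mathcal D^\sharp),C(\mathcal D^\sharp))$ satisfying $q[\mathcal D](\rho[\mathcal D]^*(X^\sharp))=\sigma(X)$ for $X\in A_G$. Every branch of $\mathcal D^\sharp$ ends in a symbolic truth or atomic leaf by clause 4 of Lemma~\ref{lem:general-proof-abstraction}.

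Since an admissible solution exists, Theorem~\ref{thm:scoped-mgu} shows that scoped unification on $(E(\mathcal D^\sharp),C(\mathcal D^\sharp),\operatorname{id})$ does not fail, and by termination it succeeds with some $\mu$. Hence $\mathcal D^\sharp$ is a successful symbolic computation with computed answer $\theta=\operatorname{proj}(\mu)$. Clause 3 of the same theorem gives a scope-preserving $\delta$ with $\rho[\mathcal D]^*(Z)=\mu^*(Z)\delta^*$ for $Z\in C(\mathcal D^\sharp)$, in particular for each $X^\sharp$. I define $\gamma$ on the ordinary output variables by $\gamma(Y_V)=q[\mathcal D](\delta^*(V))$ for each answer residual $V\in R(\mu)$, omitting identity bindings. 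Lemma~\ref{lem:output-instantiation}, applied with the factorization and $q[\mathcal D](\mu^*(X^\sharp)\delta^*)=\sigma(X)$, shows that each $\gamma(Y_V)$ is an ordinary user term. The output variables are fresh for $P$, $G$ and the derivation, so $\dom(\gamma)$ avoids $\FV(G)\setminus A_G$. Thus $\gamma$ is an ordinary output substitution, and its domain lies among the output variables occurring in $\theta$.

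It remains to verify $\sigma(X)=(\theta\gamma)(X)$. By Lemma~\ref{lem:zero-permission-residual}, $\mu^*(X^\sharp)$ contains no proof parameter, and its flexible variables are exactly answer residuals. I prove by structural induction on a term $s$ with $\Flex(s)\subseteq R(\mu)$ that
\[
 r^{\mu}(s)\gamma = q[\mathcal D](s\delta^*).
\]
At a residual $V$ both sides equal $q[\mathcal D](\delta^*(V))$. At rigid symbols both sides agree, because $q[\mathcal D]$ fixes object variables and constants. The one subtlety is that $\gamma$ must not act on object variables already present in $s$. These are root variables, which are disjoint from the fresh output variables, so $\gamma$ fixes them. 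Taking $s=\mu^*(X^\sharp)$ gives $(\theta\gamma)(X)=q[\mathcal D](\mu^*(X^\sharp)\delta^*)=q[\mathcal D](\rho[\mathcal D]^*(X^\sharp))=\sigma(X)$.

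I expect the main obstacle to be this last bookkeeping step, specifically reconciling the postfix composition $\theta\gamma$ with the extensional factorization. One must check that distinct residuals receive distinct output variables, so that $\gamma$ is well defined. One must also check that no output variable $Y_V$ clashes with an object variable of $\mathcal D$, including the internal variables in $V_{\mathcal D}$ that $q[\mathcal D]$ produces. I would handle this by choosing the output variables after fixing $\mathcal D$, which Definition~\ref{def:answer-projection} already permits.
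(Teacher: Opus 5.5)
Your proposal is correct and follows the paper's proof essentially step for step. The shared chain is: witness proof from Lemma~\ref{lem:correct-answer-proof}, eigenparameter and neutral-parameter regularization, root abstraction via Corollary~\ref{cor:query-proof-abstraction}, non-failure and factorization from Theorem~\ref{thm:scoped-mgu}, and the definition $\gamma(Y_V)=q[\mathcal D](\delta^*(V))$ justified by Lemma~\ref{lem:output-instantiation}. Your explicit induction showing $r^{\mu}(s)\gamma=q[\mathcal D](s\delta^*)$, together with the freshness checks on output variables, simply spells out the paper's one-line appeal to homomorphicity.
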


\begin{proof}
By Lemma~\ref{lem:correct-answer-proof}, choose a cut-free ordinary certificate proof $\mathcal D$ of the query instance determined by $\sigma$. Apply Lemma~\ref{lem:eigen-regularization} with $Q=\varnothing$ and then Lemma~\ref{lem:neutral-parameter-normalization}. Then apply Corollary~\ref{cor:query-proof-abstraction}. We obtain a finite symbolic derivation $\mathcal D^\sharp$ with an admissible solution $\rho[\mathcal D]$ of its constraint set and $q[\mathcal D](\rho[\mathcal D]^*(X^\sharp))=\sigma(X)$ for each answer variable.

Run scoped unification on the finite constraint set. Since $\rho[\mathcal D]$ is an admissible solution, clause 1 of Theorem~\ref{thm:scoped-mgu} rules out failure. Let $\mu$ be the returned scoped mgu. The factorization clause of that theorem gives a substitution $\delta$ that preserves scope such that
$\rho[\mathcal D]^*(Z)=\mu^*(Z)\delta^*$
for every original flexible variable $Z$.

Let $\theta=\operatorname{proj}(\mu)$. For every answer residual variable $V$ with output name $Y_V=r^{\mu}(V)$ define $\gamma(Y_V)=q[\mathcal D](\delta^*(V)).$ This is an ordinary user term by Lemma~\ref{lem:output-instantiation}. Extend $\gamma$ by the identity outside its finite domain. The output variables were chosen fresh for $G$, so Definition~\ref{def:ordinary-output-substitution} makes $\gamma$ an ordinary output substitution for $G$. Homomorphicity gives, for each $X\in A_G$,
$(\theta\gamma)(X)=q[\mathcal D](\mu^*(X^\sharp)\delta^*)=q[\mathcal D](\rho[\mathcal D]^*(X^\sharp))=\sigma(X).$ The symbolic derivation is successful because scoped unification succeeds on its terminal constraints and returns $\mu$.
\end{proof}

\subsection{Fair enumeration}\label{sec:fair-symbolic-search}

The existence result in Theorem~\ref{thm:scoped-lifting} becomes an operational completeness result once the search discipline is specified. The deterministic policy below uses one canonical fresh name at every rule that needs freshness and one canonical certificate for each eligible propagation pair. Eligibility is decidable by Theorem~\ref{thm:certificate-decidability}.

\begin{definition}[Reference derivation]\label{def:reference-symbolic-derivation}
A \emph{reference derivation} is a symbolic derivation generated under a deterministic policy for fresh names and certificates. For a finite partial reference derivation, a proof parameter, proof metavariable, or component name is unused exactly when it does not occur anywhere in that partial derivation. The policy uses the effective enumeration of $\mathsf{Par}$, the fixed effective enumeration of component names, and the effective enumeration of each permission fiber in $\mathsf{FVar}^{\mathrm W}$. A universal rule chooses the first unused proof parameter. An existential rule chooses the first unused proof metavariable in the required permission fiber. A box or seriality rule chooses the first unused component name.

For each eligible propagation pair, certificate selection is deterministic. The canonical certificate returned by $\mathsf{CertSearch}_{\Amod}$ is used. The symbolic rules equipped with this policy form the \emph{reference rule set}. A \emph{reference computation for $(P,G)$} is a symbolic computation for $(P,G)$ whose derivation is a reference derivation. It is successful when that symbolic computation is successful in the sense of Definition~\ref{def:successful-symbolic-derivation}.
\end{definition}

The \emph{primitive inference height} of a search derivation is the maximum number of calculus inferences on a root-to-leaf path. Deterministic bookkeeping required by this policy does not contribute to this height.

\begin{theorem}[Finite branching]\label{thm:symbolic-finite-branching}
For any finite partial reference derivation and any chosen open leaf, there are only finitely many legal reference inferences that can be applied at that leaf.
\end{theorem}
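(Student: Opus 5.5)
The argument is a case analysis over the rule families of the reference rule set. For each family we bound the legal applications at the chosen open leaf in terms of finite data of the leaf sequent. The leaf carries a finite symbolic nested sequent. This sequent has finitely many components, finitely many formula occurrences, and hence finitely many candidate principal occurrences. Under the policy of Definition~\ref{def:reference-symbolic-derivation}, the conclusion sequent, the principal occurrence, and the rule family together determine the premise or premises uniquely, apart from two kinds of choice: the target component of a certificate propagation, and the child to which a seriality rule is attached. So it suffices to show that each occurrence admits finitely many instances and that the remaining choices range over finite sets.

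The cases are as follows. For the Boolean rules, each disjunction or conjunction occurrence yields exactly one instance. For the universal rule, the eigenparameter is the first unused member of the enumeration of $\mathsf{Par}$. This name exists and is unique because the partial derivation is finite while $\mathsf{Par}$ is infinite. For the symbolic existential rule, no term is enumerated, which is the point of Definition~\ref{def:symbolic-derivation}. The permission set $Q_{\mathsf n}$ is fixed by the path from the root to the leaf. The metavariable is the first unused member of the fibre $\{U\in\mathsf{FVar}^{\mathrm W}\mid\pi(U)=Q_{\mathsf n}\}$, and this fibre is infinite by Definition~\ref{def:permission-map}. Each existential occurrence therefore yields one instance. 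For box, each box occurrence yields one instance with the canonical fresh component name. For seriality, the available choices are a selected $\mathrm D_i$ together with a component $u$, and there are finitely many such pairs. For certificate propagation, Corollary~\ref{cor:finite-modal-choices} makes the eligible propagation pairs finite. The canonical certificate $\mathsf{CertSearch}_{\Amod}$ then fixes the annotation, so each pair yields one instance. Terminal closures are counted with the other rules: truth closure gives at most one instance per $\top$ occurrence, and atomic closure gives at most one instance per pair of occurrences $p(\bar s)$ and $\overline p(\bar t)$ with matching predicate and arity.

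Summing these finitely many finite contributions gives the bound. The step needing most care is confirming that freshness and certificate selection leave no residual nondeterminism. Specifically, "first unused" must be well defined relative to the whole finite partial derivation and not just the leaf. Certificate multiplicity must also be collapsed, by the argument of Lemma~\ref{lem:saturation-finite-branching}, so that different walks for the same pair do not count as distinct inferences. One further point concerns the retaining rules. Reapplying an existential or diamond occurrence yields new instances only in later steps, so it does not increase the number of instances at a single leaf.
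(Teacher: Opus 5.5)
Your proposal is correct and follows essentially the same route as the paper's proof. Both run a rule-by-rule count over the finitely many occurrences and positions of the leaf: canonical fresh names (eigenparameter, witness metavariable in the fibre for $Q_{\mathsf n}$, component name) and the canonical certificate from $\mathsf{CertSearch}_{\Amod}$ remove all nondeterminism except finitely many occurrence, target and seriality-site choices, and Corollary~\ref{cor:finite-modal-choices} bounds the propagation pairs. One small wording slip: the seriality choice is the site $(i,u)$ at which a fresh child is created, not "the child to which a seriality rule is attached"; your later case analysis states this correctly.
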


\begin{proof}
At a chosen open leaf, the underlying nested sequent has finitely many formula occurrences and positions. Each propositional rule therefore has finitely many possible principal occurrences. A universal rule uses one canonical fresh eigenparameter for a chosen occurrence. An existential rule uses one canonical fresh proof metavariable. A box rule has one canonical fresh child for a chosen principal occurrence. Certificate propagation contributes finitely many successor choices by Corollary~\ref{cor:finite-modal-choices}. The reference procedure keeps one canonical witness certificate for each eligible propagation pair. Seriality has finitely many applicable indexed obligations and uses one canonical fresh child. Truth and atomic closure have finitely many candidate occurrences. Mandatory branching is determined by a chosen principal formula and has finite arity. Hence the union of all successor choices is finite.
\end{proof}

\begin{lemma}[Finiteness for a fixed height bound]\label{lem:bounded-height-finiteness}
For every symbolic root and every natural number $h$, there are only finitely many reference derivations from that root of primitive inference height at most $h$.
\end{lemma}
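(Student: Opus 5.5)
The plan is to argue by induction on $h$, using a König-style finiteness of finitely branching trees of bounded depth. The key input is Theorem~\ref{thm:symbolic-finite-branching}: at any open leaf of a finite partial reference derivation, only finitely many legal reference inferences apply. Determinism of the naming and certificate policy in Definition~\ref{def:reference-symbolic-derivation} is what makes this count meaningful. Two reference derivations that make the same choices of rule, principal occurrence and eligible propagation pair are syntactically identical, because every fresh parameter, metavariable and component name is the first unused one, and every certificate is the one returned by $\mathsf{CertSearch}_{\Amod}$.

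The statement to prove is slightly more general than the one displayed. I would show that for every symbolic nested sequent $\mathcal G$ and every finite context of already used names, there are finitely many reference derivations of $\mathcal G$ of height at most $h$. Here the used-names context is the finite partial derivation around the leaf, which determines the canonical fresh choices.

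The base case $h=0$ is immediate. A derivation of height zero is a single symbolic truth or atomic leaf. There are only finitely many candidate $\top$ occurrences and complementary literal pairs, and each determines the contributed equations.

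For the induction step, a derivation of height at most $h+1$ that is not a leaf has a last inference chosen from the finitely many possibilities provided by Theorem~\ref{thm:symbolic-finite-branching}. That inference has at most two premises, since only conjunction branches. Each premise subderivation has height at most $h$ and is itself a reference derivation relative to the enlarged used-names context, so the induction hypothesis applies to it. The only subtlety is the branch-locality condition at a conjunction, where fresh choices in the second branch depend on names used in the first. To handle this, I would fix the left-to-right order that the policy uses for unused names. For each of the finitely many left subderivations, the context for the right branch is then determined, and the induction hypothesis yields finitely many right subderivations. A finite union of finite products of finite sets is finite.

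The main obstacle is making precise the claim that ``legal reference inference'' depends only on finite data, and that the used-names context changes deterministically. Without this, renaming variants would not be identified and the count would be infinite. I would handle it by making the dependence on the partial derivation explicit in the induction hypothesis, as described above, and by invoking Corollary~\ref{cor:finite-modal-choices} together with the fixed enumerations for the propagation and fresh-name cases.
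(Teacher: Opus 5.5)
\paragraph{A gap in the induction step.} Your induction on $h$ fails at the point you flag as the only subtlety. The policy in Definition~\ref{def:reference-symbolic-derivation} fixes only how fresh names and certificates are chosen: each fresh name is the first one not occurring anywhere in the current partial derivation. It does not fix the order in which open leaves are expanded, and the paper's proof explicitly lets each construction step choose any open leaf. A reference derivation may therefore interleave steps in the two premises of a conjunction, and then the names chosen in one premise subderivation depend on steps taken in the other. For example, below $\forall x\,p(x)\land\forall y\,q(y)$, the left universal receives the first unused parameter if it is expanded before the right one, and the second one otherwise; both outcomes are reference derivations. By ``fixing the left-to-right order that the policy uses'' you add a restriction the definition does not contain, so your induction counts only the reference derivations built left branch first. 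For the interleaved ones, the step ``each premise subderivation is a reference derivation relative to the enlarged used-names context'' fails, because no single context is determined before that subderivation is built. Your claim that equal choices of rule and occurrence give syntactically identical derivations rests on the same hidden assumption. A smaller point: the enumeration also lists reference derivations with open branches, so your base case should admit the unexpanded root.

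\paragraph{How the paper avoids it, and a repair for your route.} The paper counts construction sequences instead of recursing on the rule tree. Its states are whole partial derivations, so the dependence of names across branches is part of the state. The construction tree is finitely branching by Theorem~\ref{thm:symbolic-finite-branching} together with the finiteness of open leaves. A derivation of height at most $h$ has at most $2^{h+1}-1$ rule nodes, so it is reached in at most that many steps, and a finitely branching tree has only finitely many nodes within bounded depth. If you prefer your structural induction, you can repair it as follows. Let $k$ be the number of names in the root. Whatever the construction order, each name introduced in a derivation of height at most $h$ lies among the first $k+2^{h+1}$ members of its enumeration or permission fiber. All such derivations therefore draw on one fixed finite pool of names. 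Your counting then goes through if you let the choices of fresh names range over that pool, which yields a finite superset of the reference derivations.
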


\begin{proof}
Fix the symbolic root and consider the construction tree whose nodes are finite partial reference derivations from that root. A child is obtained by choosing one open leaf and applying one legal reference inference there, or by applying one legal terminal closure there. Every node of this construction tree has finitely many children. A finite partial derivation has finitely many open leaves. At each chosen open leaf, Theorem~\ref{thm:symbolic-finite-branching} gives only finitely many legal reference inferences, and truth and atomic closure have only finitely many candidate occurrences. The canonical choices of fresh names are evaluated relative to the whole partial derivation, so the statement that the construction tree is finitely branching already includes the bookkeeping shared across distinct branches.

Fix $h\in\mathbb N$. Every primitive inference has at most two premises. Hence there is a bound $B_h$ on the number of nodes in the underlying rule tree of a derivation of primitive inference height at most $h$. For example, $B_h=2^{h+1}-1$ is sufficient when terminal leaves are counted. Such a derivation is obtained from the root partial derivation by at most $B_h$ construction steps. We prove that a finitely branching construction tree has only finitely many nodes reachable within any fixed finite number of steps by induction on the step bound.

Base case. At step bound zero, only the root partial derivation is reachable.

Induction step. Assume finitely many nodes are reachable within $n$ steps. Each of those nodes has finitely many children, so the union of their child sets is finite. Hence only finitely many nodes are reachable within $n+1$ steps.

Therefore only finitely many reference derivations from the fixed root can have primitive inference height at most $h$.
\end{proof}

The finiteness result for a fixed height bound controls the size of the reference search. The next lemma shows that every successful computation has a canonical representative and that canonical renaming preserves its scoped unification problem.

\begin{lemma}[Reference canonicalization]\label{lem:reference-canonicalization}
Every successful symbolic computation $\mathcal D^\sharp$ for $(P,G)$ has a successful reference computation $\mathcal D^{r}$ for $(P,G)$ with the same root, rule applications, branching structure, and primitive inference height. The two derivations differ only by systematic renaming of fresh proof parameters and witness flexible variables, by consistent renaming of each generated component throughout its descendant subtree, and by replacement of a certificate with the canonical certificate for the same source, target, and modality. The renaming determined by the derivation extends to a permutation $\kappa^{p}$ of all proof parameters and a bijection $\kappa^{f}$ of all flexible variables that preserves each reserve and satisfies
\[
\pi(\kappa^{f}(V))=\kappa^{p}[\pi(V)]
\]
for every $V\in\mathsf{FVar}$, while every symbolic answer representative occurring at the root is fixed. Under the induced homomorphic renaming, the atomic constraint problem of $\mathcal D^\sharp$ is transformed into that of $\mathcal D^{r}$. The global maps are also defined on every auxiliary variable that may later be generated by scoped unification. The computed answers of the two derivations represent exactly the same user answer substitutions after ordinary output substitution and restriction to $A_G$, up to renaming of fresh output variables.
\end{lemma}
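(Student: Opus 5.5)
I will build $\mathcal D^{r}$ by a top-down traversal of $\mathcal D^\sharp$ in the same topological order used for Lemma~\ref{lem:eigen-regularization}, choosing canonical names as the reference policy prescribes. Initialize a finite partial renaming $p_0$ on proof parameters, partial maps $g_R^0$ on the three reserves of flexible variables, and a map on component names. Each $g_R^0$ starts with every symbolic answer representative $X^\sharp$ mapped to itself, which is legal because $\pi(X^\sharp)=\varnothing$.

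At each inference of $\mathcal D^\sharp$ I apply the same rule to the renamed occurrence in the partial reference derivation, recording one new pair as follows.
\begin{caselist}
\item For a universal inference with eigenparameter $a$, record $a\mapsto a'$, where $a'$ is the first unused parameter in the partial reference derivation.
\item For an existential inference with metavariable $U$ at node $\mathsf n$, record $U\mapsto U'$, where $U'$ is the first unused member of the $\mathsf{FVar}^{\mathrm W}$ fiber with permission $p_0[Q_{\mathsf n}]$. Since the renamed node has active set $p_0[Q_{\mathsf n}]$, this gives $\pi(U')=p_0[\pi(U)]$.
\item For a box or seriality inference, record the new component against the first unused component name.
\item For certificate propagation, the source, target and modality are preserved by the component renaming, and eligibility is invariant. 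Eligibility is a Horn-closure fact by Theorem~\ref{thm:certificate-horn}, and graph isomorphism preserves it, so I replace the certificate by the $\mathsf{CertSearch}_{\Amod}$ output for the renamed pair.
\end{caselist}
Conjunction keeps both branches. The branch-locality and global universal naming conditions are inherited because the recorded maps are injective. Injectivity holds because each new name is unused in the whole partial derivation. Rule applications, branching and height are visibly unchanged.

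By induction on height, the constructed derivation equals the homomorphic image of $\mathcal D^\sharp$ under the recorded maps. Every node's formulas are renamed and every terminal leaf contributes the renamed equations, so $E(\mathcal D^{r})$ is the image of $E(\mathcal D^\sharp)$. Lemma~\ref{lem:permission-renaming-extension} then extends $p_0$ to a permutation $\kappa^p$. It also extends the $g_R^0$ to a reserve-preserving bijection $\kappa^f$ with $\pi(\kappa^f(V))=\kappa^p[\pi(V)]$ that fixes the answer representatives. This covers every auxiliary variable that unification might create.

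Success transfer is the main obstacle, because the unification procedure is deterministic and chooses fresh auxiliary variables, and equation order may change under renaming. I will not argue that the runs are isomorphic step by step. Instead I use Theorem~\ref{thm:scoped-mgu} extensionally. The renaming $\kappa=(\kappa^p,\kappa^f)$ is a bijection respecting permissions, so it maps admissible solutions of the $\mathcal D^\sharp$ problem bijectively onto admissible solutions of the $\mathcal D^{r}$ problem. The conjugate $\kappa^{-1}\rho\kappa$ preserves scope precisely because of the displayed permission identity. The scoped mgu $\mu$ of $\mathcal D^\sharp$ therefore yields an admissible solution $\kappa\mu\kappa^{-1}$ of the renamed problem. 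By clause 1 of Theorem~\ref{thm:scoped-mgu}, the reference run does not fail, so by termination it succeeds with some $\mu^{r}$.

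Clause 3 applied in both directions gives scope-preserving factors. Write $\mu^{\kappa}=\kappa\mu\kappa^{-1}$. One factor shows that $\mu^{\kappa*}(X^\sharp)$ is an instance of $\mu^{r*}(X^\sharp)$, and the other gives the converse, on the fixed answer representatives. Restricting to $A_G$ after projection, each projected answer is therefore an ordinary output instance of the other. The output terms are user terms by Lemma~\ref{lem:zero-permission-residual}. Mutual instances of this kind agree up to a renaming of output variables. Since $\kappa$ fixes every $X^\sharp$, the two computed answers represent the same user answer substitutions, as claimed.
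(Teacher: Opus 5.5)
Your proposal follows the paper's proof essentially step for step: canonical top-down renaming, extension of the recorded injections by Lemma~\ref{lem:permission-renaming-extension} with the answer representatives fixed, and transfer of admissible solutions by the permission-equivariant renaming of substitution graphs; then clause~1 of Theorem~\ref{thm:scoped-mgu} for success of the reference run, clause~3 in both directions for mutual factorization, and projection through empty-permission residuals. The one detail to repair is your single global map on component names, because component names in $\mathcal D^\sharp$ (unlike eigenparameters and witness metavariables) may coincide across disjoint mandatory branches while the reference policy makes them distinct; each generated child must therefore be renamed within its own premise subtree, as the paper does and as the statement itself says, and this leaves the constraint problem untouched.
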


\begin{proof}
Choose an order of the inference occurrences in which each inference precedes every inference in its premise subderivations, and define the renaming successively in that order. At each universal inference, rename the fresh proof parameter to the canonical choice determined by the partial reference derivation. At each existential inference, rename the fresh witness flexible variable in the same manner. At each box or seriality inference, rename the generated component to the canonical fresh component. Apply each renaming uniformly throughout the corresponding premise subderivation. The global symbolic eigenparameter discipline makes the induced map $p_0$ on proof parameters a finite injection, and freshness of witness flexible variables gives finite injective partial renamings on those variables. Generated component names require separate treatment because each generated child must be renamed together with every occurrence of its name in the corresponding premise subderivation rather than by one global map on name strings. Thus, if the same unused name was chosen independently in disjoint branches, each generated child is renamed within its own premise subderivation together with all occurrences of its name and all certificate vertices that use it. For a witness flexible variable $U$, first apply $p_0$ to its active eigenparameter set and then choose the canonical unused member $U'$ of the corresponding permission fiber. Such a fresh choice is always possible by Definition~\ref{def:permission-map} because the corresponding permission fiber is infinite. By the choice of $U'$,
\[
\pi(U')=p_0[\pi(U)].
\]
Branch locality is preserved because distinct branch subtrees receive distinct canonical fresh names. At certificate propagation, replace the attached witness by the certificate returned by $\mathsf{CertSearch}_{\Amod}$ for the renamed source, target, and modality. All certificates for that triple license the same formula insertion. This recursive construction gives a legal reference derivation with the same rule applications, branching structure, and primitive inference height.

Apply Lemma~\ref{lem:permission-renaming-extension}. Extend $p_0$ to a permutation $\kappa^{p}$ of $\mathsf{Par}$. The extension agrees with $p_0$ on every proof parameter occurring in the derivation, so every prescribed pair for a witness variable $U\mapsto U'$ still satisfies
\[
\pi(U')=\kappa^{p}[\pi(U)].
\]
Add the prescriptions $X^\sharp\mapsto X^\sharp$ for the symbolic answer representatives of the root. Their permission is empty. Lemma~\ref{lem:permission-renaming-extension} now extends these prescriptions to a bijection
\[
\kappa^{f}:\mathsf{FVar}\longrightarrow\mathsf{FVar}
\]
that preserves each reserve and satisfies $\pi(\kappa^{f}(V))=\kappa^{p}[\pi(V)]$ for every flexible variable $V$. This global extension acts in particular on every auxiliary variable that may later be created by Permission restriction, even though such a variable did not occur in the symbolic derivation before unification.

Let $\kappa$ act on symbolic terms by $\kappa^{p}$ on proof parameters and $\kappa^{f}$ on flexible variables, while fixing object variables and rigid signature symbols. Extend it homomorphically to formulas and constraints. Together with the already fixed renaming of component names, this sends the atomic constraint problem $(E,C)$ of $\mathcal D^\sharp$ to the atomic constraint problem $(\kappa(E),\kappa^{f}[C])$ of $\mathcal D^{r}$.

For a finite substitution $\rho$, define the renamed substitution $\kappa_*\rho$ by renaming the entire substitution graph:
\[
\begin{aligned}
\dom(\kappa_*\rho)&=\kappa^{f}[\dom(\rho)], &
(\kappa_*\rho)(\kappa^{f}(V))&=\kappa(\rho(V)).
\end{aligned}
\]
Because $\kappa^{f}$ is a bijection, graph acyclicity is preserved and reflected. We prove
\[
(\kappa_*\rho)^*(\kappa^{f}(V))=\kappa(\rho^*(V))
\]
by induction on dereference depth.

Base case. At depth zero, $V$ is residual and both sides equal $\kappa^{f}(V)$.

Induction step. Assume the equality for dereference depth at most $n$ and let $V$ have depth $n+1$. The first dereference step replaces $\kappa^{f}(V)$ by $\kappa(\rho(V))$ by definition of $\kappa_*\rho$. Apply the induction hypotheses to the flexible variables occurring in $\rho(V)$ and use homomorphic action of $\kappa$ to obtain the displayed equality.
Permission equivariance gives
\[
\begin{aligned}
\operatorname{Par}(\kappa(t))&=\kappa^{p}[\operatorname{Par}(t)], &
\pi(\kappa^{f}(V))&=\kappa^{p}[\pi(V)].
\end{aligned}
\]
Hence a term is safe for $V$ exactly when its image under $\kappa$ is safe for $\kappa^{f}(V)$. Equation satisfaction is likewise preserved and reflected because $\kappa$ is a bijective homomorphism on terms. Consequently $\rho$ is an admissible solution of $(E,C)$ exactly when $\kappa_*\rho$ is an admissible solution of $(\kappa(E),\kappa^{f}[C])$. The same statement in the reverse direction uses the inverse permutations.

Let $\mu$ be the scoped mgu of $\mathcal D^\sharp$. The renamed substitution $\kappa_*\mu$ is an admissible solution of the reference constraint problem. Theorem~\ref{thm:scoped-mgu}, together with termination, therefore forces the deterministic reference unification run to succeed. Let $\mu^{r}$ be its scoped mgu. By the factorization property of Theorem~\ref{thm:scoped-mgu}, $\kappa_*\mu$ factors through $\mu^{r}$. Conversely, $(\kappa^{-1})_*\mu^{r}$ is an admissible solution of the original problem and therefore factors through $\mu$. Thus the two mgus determine the same extensional solution set on the original flexible variables under the global renaming $\kappa^{f}$. This argument includes residual auxiliary variables generated during either unification run because both $\kappa^{f}$ and its inverse are defined on all of $\mathsf{FVar}$.

The renaming fixes every $X^\sharp$. Lemma~\ref{lem:zero-permission-residual} gives empty permission to every residual occurring at an answer position, so projection on either side merely assigns fresh names of ordinary object variables to corresponding residuals. The factor substitutions obtained in both directions preserve scope. Their values on answer residuals with empty permission therefore contain no proof parameter, and every residual flexible variable occurring there also has empty permission. Rename those residuals by fresh ordinary object variables exactly as in answer projection. Each extensional factorization then becomes a legal ordinary output substitution between the corresponding projected answers. Thus the two computed answers yield exactly the same user answer substitutions after ordinary output substitution and restriction to $A_G$, up to a bijective renaming of the fresh output variables.
\end{proof}

Bounding the primitive inference height makes the global reference search effective. Lemma~\ref{lem:bounded-height-finiteness} gives a finite reference tree at each bound, while Lemma~\ref{lem:reference-canonicalization} shows that restricting the search to reference derivations preserves every user answer obtainable by ordinary output instantiation.

\begin{definition}[Reference enumeration]\label{def:reference-enumeration}
For a fixed symbolic root, the \emph{reference enumeration} uses iterative deepening in primitive inference height. At each bound it effectively enumerates all reference derivations from that root whose primitive inference height is at most that bound. If every branch of a reference derivation ends in a symbolic truth or atomic leaf, scoped unification is applied to its terminal equation set. If unification succeeds, answer projection is applied.
\end{definition}

\begin{theorem}[Fair enumeration]\label{thm:symbolic-fairness}
For every successful symbolic computation for $(P,G)$, the reference enumeration reaches a successful reference computation for $(P,G)$ after finitely many stages. The computed answer of that reference derivation represents exactly the same user answer substitutions after ordinary output substitution and restriction to $A_G$, up to renaming of fresh output variables.
\end{theorem}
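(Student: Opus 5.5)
The plan is to combine Lemma~\ref{lem:reference-canonicalization} with Lemma~\ref{lem:bounded-height-finiteness}. Fix a successful symbolic computation $\mathcal D^\sharp$ for $(P,G)$ and let $h$ be its primitive inference height. By Lemma~\ref{lem:reference-canonicalization} there is a successful reference computation $\mathcal D^{r}$ with the same root, the same rule applications and branching structure, and the same primitive inference height $h$. Its computed answer represents the same user answer substitutions after ordinary output substitution and restriction to $A_G$, up to renaming of fresh output variables. This already gives the second sentence of the statement, provided we show that the enumeration actually reaches $\mathcal D^{r}$.

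For reachability, I will argue that the reference enumeration of Definition~\ref{def:reference-enumeration} treats the bounds $0,1,2,\ldots$ in order. By Lemma~\ref{lem:bounded-height-finiteness}, each bound $k$ has only finitely many reference derivations from the fixed symbolic root, so each stage is finite. These derivations must also be effectively enumerable. Effectiveness follows from the construction tree used in that lemma: its children are computable because certificate eligibility is decided by $\mathsf{CertSearch}_{\Amod}$ (Theorem~\ref{thm:certificate-decidability}), fresh names are chosen from effective enumerations, and the legal successor choices at an open leaf are finite and computable (Theorem~\ref{thm:symbolic-finite-branching}). Exploring this tree to depth $B_h$ therefore terminates.

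Stages $0,\ldots,h-1$ are thus finite and complete, and $\mathcal D^{r}$ is listed at stage $h$. Every branch of $\mathcal D^{r}$ ends in a terminal leaf, so the enumeration invokes scoped unification on $E(\mathcal D^{r})$. That run terminates by Theorem~\ref{thm:unification-termination}. It succeeds because $\mathcal D^{r}$ is successful, and the deterministic procedure returns the same scoped mgu. Answer projection then yields the computed answer, which Lemma~\ref{lem:reference-canonicalization} has already compared with that of $\mathcal D^\sharp$.

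The main obstacle is the claim that each stage is computed in finite time. Finiteness alone, from Lemma~\ref{lem:bounded-height-finiteness}, is not quite enough; the enumeration also has to terminate on each stage. I would first isolate the following point: the construction steps are counted by the bound $B_h$ on rule-tree nodes, and each step is a computable finite choice, so bounded-depth exploration of a finitely branching, effectively presented tree halts. I will also check that the enumeration never processes a partial derivation that loops without increasing height. This holds because every construction step adds an inference node, so the number of steps is bounded by $B_h$.
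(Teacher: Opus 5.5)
Your proposal is correct and follows the paper's proof. Lemma~\ref{lem:reference-canonicalization} gives a successful reference computation of the same height $h$ that represents the same answers, and Lemma~\ref{lem:bounded-height-finiteness} ensures it appears in the finite stage at bound $h$. Your added argument that each stage is computed in finite time is a reasonable elaboration of something the paper builds directly into Definition~\ref{def:reference-enumeration}, where the enumeration is stipulated to be effective.
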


\begin{proof}
Apply Lemma~\ref{lem:reference-canonicalization} to obtain a successful reference computation $\mathcal D^{r}$ of primitive inference height $h$. By Lemma~\ref{lem:bounded-height-finiteness}, only finitely many reference derivations have height at most $h$, and Definition~\ref{def:reference-enumeration} enumerates every such derivation at the stage with bound $h$. Hence that stage includes $\mathcal D^{r}$. The statement about represented user answers is the final clause of Lemma~\ref{lem:reference-canonicalization}.
\end{proof}

\begin{theorem}[Answer completeness]\label{thm:answer-completeness}
For every $\sigma\in\Corr^{\mathrm H}_{\Amod}(P,G)$, the reference enumeration eventually returns a computed answer $\theta$ and there is an ordinary output substitution $\gamma$ for $G$ such that
$\sigma=\theta\gamma$
on $A_G$.
\end{theorem}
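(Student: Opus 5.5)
The plan is to compose Theorem~\ref{thm:scoped-lifting} with Theorem~\ref{thm:symbolic-fairness}, and then transport the output substitution across the renaming supplied by Lemma~\ref{lem:reference-canonicalization}.

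First, given $\sigma\in\Corr^{\mathrm H}_{\Amod}(P,G)$, Theorem~\ref{thm:scoped-lifting} yields a successful symbolic computation $\mathcal D^\sharp$ for $(P,G)$. It also yields a computed answer $\theta_0=\operatorname{proj}(\mu)$ and an ordinary output substitution $\gamma_0$ whose nonidentity domain lies among the output variables of $\theta_0$, such that $\sigma(X)=(\theta_0\gamma_0)(X)$ for every $X\in A_G$. This derivation is arbitrary, not necessarily a reference derivation, so it need not be produced by the enumeration.

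Second, Theorem~\ref{thm:symbolic-fairness} states that the reference enumeration reaches, after finitely many stages, a successful reference computation $\mathcal D^{r}$ with scoped mgu $\mu^{r}$. Its computed answer $\theta=\operatorname{proj}(\mu^{r})$ is actually returned, because Definition~\ref{def:reference-enumeration} runs unification and projection on every closed reference derivation within each height bound. By the final clause of Lemma~\ref{lem:reference-canonicalization}, $\theta$ and $\theta_0$ represent the same set of user answer substitutions under ordinary output instantiation and restriction to $A_G$, up to a bijective renaming of fresh output variables. Since $\sigma\restriction A_G=\theta_0\gamma_0\restriction A_G$ belongs to the set represented by $\theta_0$, it also belongs to the set represented by $\theta$. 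Hence some ordinary output substitution $\gamma$ satisfies $\sigma(X)=(\theta\gamma)(X)$ for all $X\in A_G$.

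The main obstacle is to state this transfer precisely. ``Represents the same user answer substitutions up to renaming'' must yield an actual ordinary output substitution $\gamma$ for the given $\sigma$, not just an equality of represented sets modulo renaming. I would unpack the proof of Lemma~\ref{lem:reference-canonicalization}. There, $(\kappa^{-1})_*\mu^{r}$ is an admissible solution of the original problem, and $\kappa_*\mu$ factors through $\mu^{r}$ via a scope-preserving $\beta$. Because $\kappa^{f}$ fixes every $X^\sharp$, this gives $\kappa(\mu^*(X^\sharp))=\mu^{r*}(X^\sharp)\beta^*$.

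By Lemma~\ref{lem:zero-permission-residual}, the residuals at answer positions have empty permission, so the values of $\beta$ on those residuals contain no proof parameters. After renaming residuals to output variables, this defines an ordinary output substitution $\beta'$ with $\theta_0=\theta\beta'$ on $A_G$, modulo a bijection $\lambda$ of output variables. Setting $\gamma=\beta'\lambda\gamma_0$ then gives $\theta\gamma=\theta_0\gamma_0=\sigma$ on $A_G$.

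It remains to check the domain condition of Definition~\ref{def:ordinary-output-substitution}. All output variables are chosen fresh for $G$, so no composed factor instantiates a variable in $\FV(G)\setminus A_G$. I also need to confirm that the composite of ordinary output substitutions is again one; this holds because each range consists of ordinary user terms.
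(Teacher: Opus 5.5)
Your proposal is correct and follows the paper's own proof. The paper likewise composes Theorem~\ref{thm:scoped-lifting} with Theorem~\ref{thm:symbolic-fairness} and folds the renaming of output variables into $\gamma$; your explicit unpacking through the factorization $\kappa(\mu^*(X^\sharp))=\mu^{r*}(X^\sharp)\beta^*$ from Lemma~\ref{lem:reference-canonicalization} spells out in detail what the paper compresses into its final sentence.
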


\begin{proof}
Scoped lifting, Theorem~\ref{thm:scoped-lifting}, provides a successful symbolic computation for $(P,G)$ whose computed answer has $\sigma$ as an ordinary output instance. By Theorem~\ref{thm:symbolic-fairness}, the reference enumeration reaches a successful reference computation for $(P,G)$ representing the same user answer substitutions under ordinary output instantiation, up to renaming of fresh output variables. Hence the answer returned from that reference derivation also has $\sigma$ as an ordinary output instance. Incorporate the renaming of output variables into the ordinary output substitution $\gamma$.
\end{proof}

The theorem establishes completeness of the enumeration. If no proof exists, iterative deepening may continue indefinitely.

\subsection{Instance closure}\label{sec:instance-closure}

Computed answers are most useful when a general answer represents all of its legal ordinary instances. The declarative relation has the same closure property.

\begin{lemma}[Declarative instance closure]\label{lem:correct-instance-closure}
Let $\theta\in\Corr^{\mathrm H}_{\Amod}(P,G)$ and let $\gamma$ be an ordinary output substitution for $G$. Then
$\theta'=(\theta\gamma)\upharpoonright A_G$
belongs to $\Corr^{\mathrm H}_{\Amod}(P,G)$.
\end{lemma}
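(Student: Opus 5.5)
The plan is to work semantically, using Corollary~\ref{cor:three-way-adequacy} to move between Hilbert theoremhood and validity over $\Frames(\Amod)$. The argument first checks that $\theta'$ is a user answer substitution: its domain lies in $A_G$ by the restriction, and its range consists of ordinary user terms because both $\theta$ and $\gamma$ have ranges of ordinary user terms. The central syntactic claim is then
\[
G\theta'=(G\theta)\gamma
\]
up to alpha equivalence. To prove it, compare the action of the two sides on each free variable $x$ of $G$. If $x\in A_G$, both sides yield $(\theta\gamma)(x)$. If $x\notin A_G$, then $\theta'(x)=x$ and $\theta(x)=x$, so the right side yields $\gamma(x)$. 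The domain condition of Definition~\ref{def:ordinary-output-substitution} gives $\gamma(x)=x$, since $x\in\FV(G)\setminus A_G$. Capture avoidance is handled by standardizing bound variables of $G$ apart from the finite ranges of $\theta$ and $\gamma$, after which composition commutes with the quantifiers and modalities homomorphically.

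The semantic step goes as follows. Let $Y_1,\ldots,Y_r$ enumerate $\FV(G\theta)$ and $Z_1,\ldots,Z_s$ enumerate $\FV(G\theta')$. From the hypothesis, Theorem~\ref{thm:hilbert-soundness} makes $\widehat P\to\forall\bar Y\,G\theta$ valid on $\Frames(\Amod)$. Fix a model $\mathcal M$ based on a frame in the class, a world $w$, and an assignment $g$ with $\mathcal M,w,g\models\widehat P$. Since $\widehat P$ is closed, $\widehat P$ remains true under every modification of $g$ on the $Z$'s. Now take an arbitrary such modification $g'$ and define $h(y)=\sem{\gamma(y)}_{g'}$. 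Because $\widehat P$ is closed, $\mathcal M,w,h\models\widehat P$, so $\mathcal M,w,h\models G\theta$, using only that $\forall\bar Y$ closes every free variable of $G\theta$.

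The substitution lemma of Appendix~\ref{app:substitution} for rigid terms then gives $\mathcal M,w,g'\models(G\theta)\gamma$. Rigidity matters here, because $\gamma$ is applied under modalities. By the syntactic claim this is $G\theta'$. Since $g'$ was arbitrary on the $Z$'s, $\forall\bar Z\,G\theta'$ holds at $w$ under $g$. This establishes validity of $\widehat P\to\forall\bar Z\,G\theta'$, and Corollary~\ref{cor:three-way-adequacy} returns $H_{\Amod}$-theoremhood.

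The main obstacle is the bookkeeping in the syntactic identity. Variables of $\FV(G)\setminus A_G$ must be shown to be untouched by $\gamma$. Variables introduced by $\theta$ must have their $\gamma$-instances agree with $\theta\gamma$ on $A_G$, which includes aliases such as $X\mapsto Y$ with $Y\in A_G$. Restricting $\theta\gamma$ to $A_G$ discards only bindings of variables outside $A_G$, and the only such variables that appear in $G\theta$ lie outside $\FV(G)$. The semantic transfer itself is routine, and could alternatively be carried out in Hilbert style via Q1, generalization, and Q2.
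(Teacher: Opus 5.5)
Your proposal is correct and follows essentially the same route as the paper. It uses Hilbert soundness to pass to validity, then semantic instantiation via the rigid formula-substitution lemma to obtain $G\theta\gamma$. It then applies the identity $G\theta\gamma=G\theta'$ from the domain condition on $\gamma$, universally closes using closedness of $\widehat P$, and returns to theoremhood by Corollary~\ref{cor:three-way-adequacy}. The only differences are that your variable-by-variable check of the syntactic identity makes explicit what the paper merely asserts, and that you fold the universal closure into the semantic step instead of performing it afterwards.
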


\begin{proof}
Let $\bar Y$ enumerate $\FV(G\theta)$. Correctness gives $H_{\Amod}\vdash\widehat P\to\forall\bar Y\,G\theta.$ By Hilbert soundness, the displayed formula is valid. Fix an arbitrary model, world, and assignment satisfying $\widehat P$. Universal truth gives $G\theta$ under every reassignment of the variables in $\bar Y$. Evaluate each term in the finite range of $\gamma$ under the current assignment and reassign the corresponding variables to those denotations. Lemma~\ref{lem:formula-substitution} yields $G\theta\gamma$.

Because $\dom(\gamma)$ is disjoint from $\FV(G)\setminus A_G$, $\gamma$ changes no original free occurrence of a query variable excluded from the answer set. Hence applying the composed substitution to $G$ is the same as applying its restriction to the answer variables:
$G\theta\gamma=G\theta'.$
The restriction $\theta'$ is a user answer substitution because its domain is contained in $A_G$ and composition of ordinary user terms again yields ordinary user terms. Thus $\widehat P\to G\theta'$ is valid. Universally close the free variables remaining in $G\theta'$. The program is closed, so the same assignment argument makes the universally closed implication valid. Full Hilbert adequacy from Corollary~\ref{cor:three-way-adequacy} yields the theoremhood condition in Definition~\ref{def:hilbert-correct}.
\end{proof}

This gives the closure property on the declarative side. We now close computed answers under exactly the same class of ordinary output substitutions.

\begin{definition}[Instance closure of computed answers]\label{def:computed-instance-closure}
The \emph{instance closure} $\Inst(\Comp_{\Amod}^{\mathrm M}(P,G))$ is the set of user answer substitutions obtained by applying ordinary output substitutions for $G$ from Definition~\ref{def:ordinary-output-substitution} to computed answers and then restricting the result to $A_G$.
\end{definition}

\begin{corollary}[Answer characterization]\label{cor:computed-answer-characterization}
For every modal module $\Amod$, program $P$, and query $G$, we have
$\Inst(\Comp_{\Amod}^{\mathrm M}(P,G)) = \Corr^{\mathrm H}_{\Amod}(P,G).$
\end{corollary}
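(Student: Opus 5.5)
The plan is to prove the two inclusions separately, each by combining results already established in this section.

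For the inclusion from left to right, take an element of $\Inst(\Comp_{\Amod}^{\mathrm M}(P,G))$. By Definition~\ref{def:computed-instance-closure} it has the form $(\theta\gamma)\upharpoonright A_G$, where $\MMLP_{\Amod}(P,G)\Downarrow\theta$ and $\gamma$ is an ordinary output substitution for $G$. Answer soundness, Theorem~\ref{thm:answer-soundness}, places $\theta$ in $\Corr^{\mathrm H}_{\Amod}(P,G)$. Declarative instance closure, Lemma~\ref{lem:correct-instance-closure}, then places $(\theta\gamma)\upharpoonright A_G$ in the same set. No further work is needed in this direction.

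For the inclusion from right to left, take $\sigma\in\Corr^{\mathrm H}_{\Amod}(P,G)$. Answer completeness, Theorem~\ref{thm:answer-completeness}, or already scoped lifting, Theorem~\ref{thm:scoped-lifting}, yields a computed answer $\theta\in\Comp_{\Amod}^{\mathrm M}(P,G)$ and an ordinary output substitution $\gamma$ with $\sigma(X)=(\theta\gamma)(X)$ for every $X\in A_G$. Since $\dom(\sigma)\subseteq A_G$ by Definition~\ref{def:user-answer}, the ordinary substitution $\sigma$ coincides with the restriction $(\theta\gamma)\upharpoonright A_G$. Here identity bindings are omitted, as in Convention~\ref{conv:ordinary-substitutions}, so agreement on $A_G$ determines the restricted substitution. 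Hence $\sigma$ lies in the instance closure.

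The only point requiring care is this bookkeeping identification of $\sigma$ with the restriction. I would check explicitly that variables $X\in A_G$ with $\sigma(X)=X$ are handled correctly: restriction drops such identity bindings, so they do not prevent the identification. I would also check that the output variables forming the domain of $\gamma$ are fresh, so they do not clash with $\FV(G)\setminus A_G$. Once the domain conventions are aligned, both directions are immediate from the cited theorems, and I expect no substantive obstacle.
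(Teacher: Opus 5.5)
Your proof is correct and follows the paper's argument: Theorem~\ref{thm:answer-soundness} together with Lemma~\ref{lem:correct-instance-closure} gives the forward inclusion, Theorem~\ref{thm:answer-completeness} gives the reverse inclusion, and identifying $\sigma$ with $(\theta\gamma)\upharpoonright A_G$ is the routine bookkeeping you describe. Your remark that Theorem~\ref{thm:scoped-lifting} alone suffices for the reverse inclusion is also right, since it already produces a member of $\Comp^{\mathrm M}_{\Amod}(P,G)$, so the fair-enumeration part of Theorem~\ref{thm:answer-completeness} is not needed for the set equality.
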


\begin{proof}
For the forward inclusion, answer soundness gives correctness of every computed answer and Lemma~\ref{lem:correct-instance-closure} preserves correctness under legal output instantiation. For the reverse inclusion, Theorem~\ref{thm:answer-completeness} gives a computed answer $\theta$ and an ordinary output substitution $\gamma$ for $G$ with $\sigma=\theta\gamma$ for every declaratively correct answer $\sigma$.
\end{proof}

\subsection{Diagnostic examples}\label{sec:diagnostic-examples}

The following examples illustrate three consequences of the symbolic discipline. They are small enough to inspect directly and will also serve as regression tests for an implementation.

\begin{example}[Shared answer across a program disjunction]\label{ex:shared-disjunction}
Let $P=\{p(c_1)\lor p(c_2)\}$ and $G=p(X)$, where $c_1$ and $c_2$ are distinct constants. At the symbolic root, the program occurrence is $\NNF^{-}(p(c_1)\lor p(c_2))=\overline p(c_1)\land\overline p(c_2)$. The conjunction rule therefore creates two mandatory branches. The first branch requires $X^\sharp\doteq c_1$ and the second requires $X^\sharp\doteq c_2$. Since $X^\sharp$ was introduced above the split, it is shared by both branches. The union of constraints is
$\{X^\sharp\doteq c_1,X^\sharp\doteq c_2\}$.
Binding either equation reduces the other to an equation between the distinct rigid constants $c_1$ and $c_2$, which fails by clash. Thus there is no substitution answer for $p(X)$ when $c_1$ and $c_2$ are distinct. By contrast, for the closed query $\exists x\,p(x)$, the existential rule may introduce distinct witness metavariables in the two branches after the split. Atomic closure can constrain them independently to $c_1$ and $c_2$, so scoped unification can succeed.
\end{example}

\begin{example}[General answer from universal information]\label{ex:universal-answer}
Let $P=\{\forall y\,p(f(y))\}$ and $G=p(X)$. At the symbolic root, the program occurrence is $\NNF^{-}(\forall y\,p(f(y)))=\exists y\,\overline p(f(y))$. The symbolic existential rule introduces a proof metavariable $U$ whose permission is the active eigenparameter set at that node. No universal inference precedes this step, so the active set is empty and $\pi(U)=\varnothing$. Atomic closure yields
$X^\sharp\doteq f(U).$
The term $f(U)$ is safe for $X^\sharp$. It contains no proof parameter, and $\pi(U)\subseteq\pi(X^\sharp)=\varnothing$. Hence the binding is permitted. The scoped unification run therefore returns an mgu $\mu$ with
$\mu(X^\sharp)=f(U).$
The variable $U$ is outside the nonidentity domain of $\mu$ and occurs in $\mu^*(X^\sharp)=f(U)$, so it is an answer residual variable. Projection renames it to an ordinary output variable $Y$. The computed answer is
$X=f(Y).$
Its declarative correctness is the theoremhood of the universal closure of $p(f(Y))$ under the program, which follows directly from the program formula.
\end{example}

\begin{example}[No substitution answer from an existential assumption]\label{ex:no-answer-from-existential-assumption}
Let $P=\{\exists y\,p(y)\}$. Take $G=p(X)$. At the symbolic root, the program occurrence is $\NNF^{-}(\exists y\,p(y))=\forall y\,\overline p(y)$. The universal rule therefore introduces a fresh eigenparameter $a$. Symbolic atomic closure then requires
$X^\sharp\doteq a.$
Since $\pi(X^\sharp)=\varnothing$, the forbidden parameter failure rule applies. No substitution answer is returned. This agrees with the declarative semantics because $\exists y\,p(y)$ does not entail $p(t)$ for any particular user term $t$ in general.
\end{example}

\section{Focused execution}\label{sec:focusing}

The symbolic search of Section~\ref{sec:symbolic} is complete, but it allows many interleavings of invertible and noninvertible rules. This section introduces a focused scheduling discipline. The general proof-theoretic background comes from focusing in sequent calculi and its later extension to nested modal systems~\cite{andreoli1992focusing,liang2009focusing,chaudhuri2016focusednested}. The proof here is specific to the certificate calculus and uses a finite syntactic normalization based on proof transformations and certificate persistence. Semantic completeness then follows as a corollary of ordinary completeness and focalization. The final subsection extends the scoped answer results to focused symbolic execution.

\subsection{Operational polarity}\label{sec:operational-polarity}

Throughout this section, a \emph{state} means an indexed nested sequent.

Define the \emph{logical rank} $r(A)$ recursively. Atomic literals, $\top$, and $\bot$ have rank zero. For binary formulas,
$r(A\land B)=r(A\lor B)=1+r(A)+r(B).$
For quantified formulas,
$r(\forall xA)=r(\exists xA)=1+r(A).$
For modal formulas,
$r(\Box_iA)=r(\Diamond_iA)=1+r(A).$

The \emph{asynchronous heads} are $\lor$, $\land$, $\forall$, and $\Box_i$.
The \emph{synchronous heads} are $\exists$ and $\Diamond_i$. The seriality rules $\mathrm d_i$ are synchronous. Atomic literals and $\bot$ have no decomposition rule. A component containing $\top$ may close in any phase.

Certificate persistence under backward proof steps is Corollary~\ref{cor:certificate-persistence}, and invariance under changes to first-order terms is Lemma~\ref{lem:certificate-term-invariance}.

\subsection{Maximal asynchronous phases}\label{sec:asynchronous-phases}

For scheduling purposes, fix an effective linear order on the formula occurrences of every state and use it to assign distinct \emph{creation indices} $0,1,2,\ldots$ when the scheduling discipline is initialized. Existing occurrences retain their indices along the derivation. Whenever an inference creates new formula occurrences in one premise, assign them the next consecutive natural numbers above every index in the conclusion state, in the order in which the new occurrences are displayed by the rule. In different premise branches, inherited indices are maintained independently. Creation indices are auxiliary scheduling annotations preserved by transformations that change only first-order names or terms.

\begin{definition}[Maximal asynchronous phase]\label{def:maximal-asynchronous}
A \emph{maximal asynchronous phase} from a state repeatedly applies only the rules for $\lor$, $\land$, $\forall$, and $\Box_i$ as long as an asynchronous occurrence remains. At each step choose the least creation index among the available asynchronous occurrences. A conjunction creates two mandatory branches, and maximal asynchronous expansion continues independently in both. A terminal branch may close immediately. An open state with no asynchronous occurrence is a \emph{border state}.
\end{definition}

For a state $S$, let $w(S)$ be the sum of $r(A)$ over all formula occurrences $A$ in $S$, counted with multiplicity.

\begin{lemma}[Termination of asynchronous phases]\label{lem:async-termination}
Every asynchronous inference strictly decreases $w$ along each resulting premise branch. Consequently every maximal asynchronous phase is a finite tree.
\end{lemma}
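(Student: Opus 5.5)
The plan is a case analysis on the four asynchronous rules, computing the change in the weight $w$ along each premise. After that, a König-style argument turns strict decrease into finiteness of the phase tree. Every asynchronous inference removes its principal occurrence and adds only occurrences of its immediate subformulas, possibly modulo a first-order substitution. No other occurrence is copied, deleted or altered.

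First record that logical rank is invariant under substitution of terms for variables. This holds because atoms have rank zero regardless of their arguments and every other clause depends only on constructors. Hence $r(A(a/x))=r(A)$, which gives the following local computations.

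\begin{caselist}
\item For $(\lor)$, the premise replaces $A\lor B$ by $A,B$, so $w$ drops by exactly $1$.
\item For $(\land)$, the left premise replaces $A\land B$ by $A$, and $w$ drops by $1+r(B)\ge1$. The right premise is symmetric.
\item For $(\forall)$, $\forall xA$ is replaced by $A(a/x)$, and by the invariance $w$ drops by $1$.
\item For $(\Box_i)$, $\Box_iA$ is replaced by a new bracket $[A]_i$. Brackets carry no formula weight, since $w$ sums over formula occurrences only. Thus $w$ again drops by $1$.
\end{caselist}

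None of these rules is retaining. This is the point that distinguishes them from $(\exists)$ and $(\Diamond_i^{\mathrm{cert}})$, where the principal occurrence survives and $w$ can grow. Because $w(S)\in\mathbb N$, every branch of a maximal asynchronous phase therefore has length at most $w$ of its starting state.

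For the consequence, the phase tree is binary branching: only $(\land)$ has two premises, and all other steps have one. Closure at a terminal leaf or reaching a border state stops a branch. A finitely branching tree whose branches are all bounded in length by $w(S)$ has at most $2^{w(S)+1}-1$ nodes, by the same counting used in Lemma~\ref{lem:bounded-height-finiteness}; alternatively one can cite König's lemma. Hence the phase is finite.

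The only step needing care is the $(\forall)$ case together with the bookkeeping that creation indices and fresh names do not affect $w$. I would handle it by proving the rank-invariance lemma separately, by induction on $A$, before the case analysis.
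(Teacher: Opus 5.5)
Your proposal is correct and follows essentially the same route as the paper. You compute the change in $w$ rule by rule: it drops by exactly one for $(\lor)$, $(\forall)$ and $(\Box_i)$, using rank invariance under term substitution in the $(\forall)$ case, and it drops strictly in each $(\land)$ premise. You then conclude finiteness from bounded depth and branching degree at most two, and your explicit node bound $2^{w(S)+1}-1$ simply makes the paper's final step quantitative.
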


\begin{proof}
A disjunction step replaces rank contribution $1+r(A)+r(B)$ by $r(A)+r(B)$, so $w$ decreases by one. In each conjunction premise the contribution becomes either $r(A)$ or $r(B)$, so $w$ decreases strictly. A universal step replaces $\forall xA$ by $A(a/x)$, and term substitution leaves logical rank unchanged, so $w$ decreases by one. A box step removes the box occurrence and places its matrix in a fresh child, again decreasing $w$ by one. Thus $w$ strictly decreases along every branch of the phase. Branching degree is at most two, and the initial value of $w$ is a natural number. Hence the phase has bounded depth and finite branching, so its tree is finite.
\end{proof}

\subsection{Progressive synchronous focus}\label{sec:synchronous-focus}

The existential and certificate diamond rules retain their principal formulas. Repeated use of the same retained occurrence would allow a synchronous phase to continue indefinitely. The focus discipline therefore follows only the newly introduced matrix occurrence.

\begin{definition}[Progressive synchronous focus]\label{def:progressive-focus}
At an open border state, a \emph{progressive synchronous focus} begins with one of the following choices:
\begin{enumerate}
\item an occurrence of $\exists xA$ together with one proof term $t$.
\item an occurrence of $\Diamond_iA$ at component $u$, a target component $v$, and a certificate $u\leadstoA{i}v$.
\item a seriality trigger $\mathrm d_i$ at a component $u$ when $\mathrm{D}_i\in\Amod$.
\end{enumerate}
An existential focus performs one retaining existential step and makes the newly created occurrence $A(t/x)$ the distinguished occurrence. A diamond focus inserts $A$ at the certified target and makes that new occurrence distinguished. After either kind of step, the focus follows only the successive distinguished occurrences created from this one. It continues when the distinguished formula again has head $\exists$ or $\Diamond_j$ and the corresponding rule is currently applicable. It stops when the distinguished head is asynchronous, atomic, or $\bot$, and it also stops when a distinguished diamond has no eligible target. A seriality focus consists of one seriality inference.
\end{definition}

\begin{lemma}[Termination of synchronous focus]\label{lem:sync-termination}
Every progressive synchronous focus is finite.
\end{lemma}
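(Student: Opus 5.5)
The plan is to measure the logical rank $r$ of the distinguished occurrence and show that it strictly decreases at every step of a progressive synchronous focus. A seriality focus consists of exactly one inference by Definition~\ref{def:progressive-focus}, so it is trivially finite. It remains to treat foci that begin with an existential or a diamond choice.

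For those foci, list the successive distinguished occurrences $D_0,D_1,D_2,\ldots$ in order of creation. Here $D_0$ is the initially chosen occurrence $\exists xA$ or $\Diamond_iA$, and $D_{n+1}$ is the occurrence newly created from $D_n$ by one retaining step. The key step is to check that $r(D_{n+1})=r(D_n)-1$.
\begin{caselist}
\item Existential step. Here $D_n=\exists xA$ and $D_{n+1}=A(t/x)$. Term substitution does not change logical rank, since rank depends only on the connective, quantifier and modal structure and atomic literals have rank zero regardless of their arguments. Hence $r(D_{n+1})=r(A)=r(D_n)-1$.
\item Diamond step. Here $D_n=\Diamond_iA$ at $u$ and $D_{n+1}$ is the inserted copy of $A$ at the certified target $v$, so again $r(D_{n+1})=r(D_n)-1$.
\end{caselist}

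The retained principal occurrences are not distinguished, so they play no role in the measure. This is exactly why the progressive discipline was introduced: the global weight $w(S)$ does not decrease under retaining rules, and the argument must avoid it.

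Since ranks are natural numbers, the sequence $r(D_0)>r(D_1)>\cdots$ has length at most $r(D_0)+1$. A focus therefore performs at most $r(D_0)$ retaining steps before the distinguished head becomes asynchronous, atomic, $\bot$ or $\top$, at which point Definition~\ref{def:progressive-focus} stops it. It may also stop earlier if a distinguished diamond has no eligible target.

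Two bookkeeping points need checking. First, a single focus step never branches: the existential and certificate propagation rules each have one premise, so the focus is a finite path rather than a tree. Second, the stopping test itself is effective, since eligibility of a target is decidable by Theorem~\ref{thm:certificate-decidability}, although decidability is not needed for finiteness as such.

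The only point I expect to need care is the claim that substitution preserves rank. It follows by a routine structural induction on $A$, because the substituted term only enters atomic arguments. With that in hand, the length bound $r(D_0)$ completes the argument.
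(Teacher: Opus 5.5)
Your proposal is correct and follows the paper's own argument: the rank of the successive distinguished occurrences strictly decreases at each existential or diamond step, since term substitution leaves rank unchanged, and a seriality focus is a single step by definition. Your explicit bound of $r(D_0)$ retaining steps is a slightly sharper form of the paper's appeal to the absence of infinite descending sequences in $\mathbb N$.
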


\begin{proof}
Let $F_0,F_1,\ldots$ be the successive distinguished occurrences. If $F_n=\exists xA$, the next distinguished formula is $A(t/x)$, whose rank is $r(A)<1+r(A)=r(F_n)$. If $F_n=\Diamond_iA$, the next distinguished formula is $A$, again of strictly smaller rank. Hence rank strictly decreases whenever focus continues. No infinite descending sequence of natural numbers exists. Seriality focus has one step by definition.
\end{proof}

\begin{definition}[Focused ordinary derivation]\label{def:focused-ordinary}
A finite ordinary derivation is \emph{focused} when every open branch alternates a maximal asynchronous phase, a border state, and one progressive synchronous focus, after which the same pattern repeats. Terminal closure may occur in any phase. We write $\mathcal F_{\Amod}\vdash S$ when $S$ has such a derivation.
\end{definition}

\begin{lemma}[Focused regularization]\label{lem:focused-regularization}
The transformations in Lemmas~\ref{lem:eigen-regularization} and~\ref{lem:neutral-parameter-normalization} preserve a focused ordinary derivation's creation indices, maximal asynchronous phase boundaries, progressive focus entry and exit points, and sequence of distinguished occurrences. Eigenparameter regularization also preserves height. Neutral parameter normalization preserves height and leaves the root unchanged whenever the root contains no neutral proof parameter.
\end{lemma}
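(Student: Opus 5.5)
The key observation is that both transformations act only on first-order term data, while every component of the focusing discipline is determined by data that these transformations leave untouched. The plan is therefore to show that each transformation induces a rule-by-rule correspondence between derivations. This correspondence sends every inference to an inference of the same rule kind on the corresponding occurrence, with the same premise count and the same premise shape. Once that is in place, the focused structure transfers automatically.

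For eigenparameter regularization, I follow the construction in Lemma~\ref{lem:eigen-regularization}: a topological sequence of applications of Lemma~\ref{lem:uniform-parameter-renaming} to premise subderivations. For neutral parameter normalization, I follow Lemma~\ref{lem:neutral-parameter-normalization}: one simultaneous replacement of parameters by fresh object variables. In both cases, induction on height (as in the proofs of those lemmas) shows that the transformed derivation has the same underlying rule tree.

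\begin{itemize}
\item Each node carries the image of the original sequent under a term map $\varphi$.
\item Positions, edges, modal labels and certificates are unchanged, by Lemma~\ref{lem:certificate-term-invariance}.
\item Each inference is reapplied to the image of the same principal occurrence. The existential witness $t$ becomes $\varphi(t)$, and the universal eigenparameter is renamed consistently.
\end{itemize}

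Since the rule tree is identical, height preservation follows immediately. Root invariance for neutral normalization is the last clause of Lemma~\ref{lem:neutral-parameter-normalization}.

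Next I check that the focusing data depend only on information preserved by $\varphi$. Creation indices are assigned according to the order in which rules create occurrences. Occurrences correspond bijectively, so I can transport the original indices; the stated convention already says that they are preserved by transformations changing only names or terms.

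The polarity of an occurrence is determined by its head constructor. Logical rank is unchanged by term substitution, as already used in Lemma~\ref{lem:async-termination}. Hence the following all coincide in the two derivations: which occurrences are asynchronous, the least-index choice in a maximal asynchronous phase, and which states are border states.

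For progressive focus, the distinguished occurrence $A(t/x)$ maps to $A'(\varphi(t)/x)$, and it is again the occurrence newly created by the same step. Its head is unchanged, so the continuation and stopping conditions of Definition~\ref{def:progressive-focus} are evaluated identically. The one condition that needs care is ``no eligible target'' for a diamond. Eligibility depends only on the tree and the grammar, which are unchanged, so this condition also agrees.

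The main obstacle is the interaction between the two transformations and asynchronous maximality. I must ensure that renaming never makes an inference inapplicable, and never turns a non-closing state into a closing one or the reverse; either change would alter a phase boundary.

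\begin{itemize}
\item \emph{Applicability of inferences.} The universal freshness condition is preserved by the argument in Lemma~\ref{lem:uniform-parameter-renaming}.
\item \emph{Closure status.} Terminal closure of the original derivation is preserved because $\varphi$ is applied uniformly. Even if $\varphi$ creates a new complementary pair, the derivation still applies the same rules. Since terminal closure is optional in every phase, no boundary changes.
\end{itemize}

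Because $\varphi$ is injective on terms (distinct fresh names), no new complementary pair actually arises in any case. With this settled, phase boundaries, focus entry and exit points, and the sequence of distinguished occurrences all coincide.
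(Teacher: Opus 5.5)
Your proposal is correct and follows the same route as the paper. Both transformations change only first-order term data, so logical constructors, positions, rule ancestry, occurrence identities, and the new/retained distinction are preserved; hence the rule tree, the height, the creation indices, and all phase and focus annotations carry over, and the root clause comes from Lemma~\ref{lem:neutral-parameter-normalization}. Your additional checks on diamond eligibility and on closure status, using injectivity of the renaming, spell out details that the paper's short proof leaves implicit.
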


\begin{proof}
Both transformations change only first-order names inside terms. They preserve logical constructors, nested positions, modal indices, rule ancestry, occurrence identities, and the distinction between newly introduced and retained formula occurrences. Hence creation indices and all phase and focus annotations remain unchanged. The underlying rule tree is unchanged, so height is preserved. The root clause for neutral parameter normalization is Lemma~\ref{lem:neutral-parameter-normalization}.
\end{proof}

Erasing the focused scheduling annotations gives the immediate direction of focalization.

\begin{lemma}[Erasure from focused proofs]\label{lem:focused-erasure}
If
$\mathcal F_{\Amod}\vdash S,$
then
$\mathcal N_{\Amod}\vdash S.$
\end{lemma}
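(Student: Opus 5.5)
The plan is to argue that a focused ordinary derivation already is an ordinary certificate derivation once its scheduling annotations are erased. Focusing only restricts which $\mathcal N_{\Amod}$ rule may be applied where and in what order; it adds no new inference figure. Concretely, I will take a focused derivation $\mathcal D$ of $S$ as in Definition~\ref{def:focused-ordinary} and delete all creation indices, phase boundaries, border-state markers, focus entry and exit points, and distinguished-occurrence marks. The resulting object $\mathcal D^{\circ}$ is a finite tree of nested sequents with the same root $S$, and I claim it is an $\mathcal N_{\Amod}$ derivation.

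The verification goes by induction on the height of $\mathcal D$, checking each inference of $\mathcal D^{\circ}$ against Definition~\ref{def:certificate-rules}. The cases are as follows.
\begin{caselist}
\item Leaves. Terminal closures, which may occur in any phase, are exactly the initial sequents $(\top\mathrm{ax})$ and $(\mathrm{ax})$.
\item Maximal asynchronous phases. Every step in such a phase is an application of $(\lor)$, $(\land)$, $(\forall)$, or $(\Box_i)$. The choice of least creation index is a side policy on which occurrence is principal and does not change the rule instance. The eigenparameter freshness for $(\forall)$ and the fresh child for $(\Box_i)$ are the same side conditions as in $\mathcal N_{\Amod}$.
\item Progressive synchronous focus. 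An existential focus step is a retaining $(\exists)$ instance with a proof term $t$. A diamond focus step is a $(\Diamond_i^{\mathrm{cert}})$ instance carrying its attached certificate $u\leadstoA{i}v$, which was checked in the current nested tree exactly as the ordinary rule requires. A seriality focus is a single $(\mathrm d_i)$ instance with $\mathrm{D}_i\in\Amod$.
\end{caselist}
Since the annotations never enter any side condition of an ordinary rule, every erased inference is a legal $\mathcal N_{\Amod}$ inference. Since erasure removes no node, the tree stays finite with the same height.

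The only point needing care, and the expected obstacle, is that certificate propagation at a later step of a focus must be checked against the nested tree present at that step, not against the tree at the border state. This is not a problem here, because the focused calculus annotates every propagation step with a certificate valid for its own conclusion, which is the same tree that the erased ordinary inference sees. If needed, Corollary~\ref{cor:certificate-persistence} also guarantees that a certificate chosen earlier on the branch remains valid after intervening box or seriality steps. With that settled, the induction yields $\mathcal N_{\Amod}\vdash S$.
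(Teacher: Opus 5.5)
Your proposal is correct and follows the paper's own argument. You erase the scheduling annotations and observe that every asynchronous step, synchronous focus step, and terminal closure is already a primitive $\mathcal N_{\Amod}$ inference or initial sequent, so the unchanged tree with root $S$ is an ordinary derivation. Your worry about certificates is harmless but unnecessary: by Definition~\ref{def:focused-ordinary} a focused derivation is an ordinary derivation to begin with, and inside a focus the tree is fixed, because only retaining existential and diamond steps occur there and a seriality focus is a single step.
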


\begin{proof}
Erase creation indices, phase markers, border markers, focus entry and exit markers, and distinguished occurrence annotations. Every asynchronous and synchronous inference is already a primitive rule of the ordinary certificate calculus. Terminal leaves are ordinary truth or atomic initial sequents. The underlying derivation tree and its root remain unchanged.
\end{proof}

\subsection{Proof transformations}\label{sec:focus-transformations}

The construction that transforms ordinary derivations into focused derivations uses three structural facts. Their statements are kept separate because their side conditions are needed explicitly in the focalization proof.

\begin{lemma}[Component renaming]\label{lem:component-renaming}
Let $\mathcal D$ be a finite ordinary derivation and let $g$ be an injective renaming of its finitely many component names. Uniformly apply $g$ to every nested position and every vertex in certificate walks. The result is an ordinary derivation of the renamed root with the same height.
\end{lemma}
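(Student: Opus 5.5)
The plan is induction on the height of $\mathcal D$, following the pattern of Lemma~\ref{lem:uniform-parameter-renaming}. Let $g\mathcal D$ denote the derivation obtained by the uniform renaming. First I would record three invariants. Since $g$ is injective, distinct components of every sequent in $\mathcal D$ remain distinct. Parent--child edges are mapped to parent--child edges carrying the same index label. The formula multiset at each component is unchanged, because component names never occur inside formulas.

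The base case is an initial sequent. A component that contains $\top$, or that contains both $p(\bar t)$ and $\overline p(\bar t)$, keeps exactly those formulas after renaming, so the renamed leaf is again a $(\top\mathrm{ax})$ or $(\mathrm{ax})$ leaf of height zero.

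For the induction step I would split by rule family.

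\begin{caselist}
\item Boolean and quantifier rules. These act inside a single component, and their side conditions do not involve component names. Eigenparameter freshness is a condition on proof parameters, which $g$ does not touch. So the same rule applies to the renamed premise or premises, which are derivable by the induction hypothesis.
\item Box and seriality. These create a fresh child $v$. Freshness holds in the original in the sense that $v$ does not occur in the conclusion. Because $g$ is injective on all names occurring in $\mathcal D$, the name $g(v)$ does not occur among the renamed conclusion's names. The renamed inference therefore creates the fresh child $g(v)$ under the same index.
\item Certificate propagation. Let $(\omega,\mathfrak d)$ be the attached certificate for $u\leadstoA{i}v$. Each signed step $x\xrightarrow{l}y$ of $\omega$ uses an explicit edge of the conclusion. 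By the invariants, $g(x)\xrightarrow{l}g(y)$ is an explicit edge of the renamed conclusion with the same sign and label. Hence $g\omega$ is a signed walk from $g(u)$ to $g(v)$ with $\operatorname{lab}(g\omega)=\operatorname{lab}(\omega)$, and $\mathfrak d$ still derives this label from $i$ in $S_{\Amod}$. Coincident endpoints remain coincident exactly when they were, so Convention~\ref{conv:coincident-endpoints} is respected.
\end{caselist}

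The height is preserved because the rule tree is unchanged node by node.

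I expect the main obstacle to be the freshness clause for box and seriality. The subtle point is that a name chosen as fresh at one node may also be chosen independently in a disjoint branch, so "fresh" has to be read locally relative to the conclusion sequent. I would handle this by requiring $g$ to be a single injective map defined on all names of $\mathcal D$. Then a name is absent from the conclusion if and only if its image is absent from the renamed conclusion. This equivalence is exactly what the local freshness condition needs.
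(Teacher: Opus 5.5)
Your proposal is correct and follows the paper's proof essentially step for step: induction on height, invariance of initial sequents and of the Boolean and quantifier rules, injectivity of $g$ preserving freshness of the child created by box and seriality, and reuse of the same grammar derivation because the renamed walk has the same signed label. Your remark that $g$ is one injective map on all names of $\mathcal D$ spells out what the paper compresses into ``injectivity preserves freshness'': a name is absent from a conclusion exactly when its image is absent from the renamed conclusion.
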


\begin{proof}
We use induction on derivation height.

Base case. Truth and atomic initial sequents are invariant under component renaming.

Induction step. Assume the claim for the premise derivations of the final inference.

\begin{caselist}
\item Case Boolean or quantifier rule. These rules are invariant under component renaming. Apply the induction hypotheses and reapply the rule.
\item Case box or seriality. Injectivity preserves freshness of the generated child. Apply the induction hypothesis to the premise and reapply the rule.
\item Case certificate diamond propagation. The renamed walk has the same signed edge word as the original. The grammar derivation depends on that word and the fixed grammar, so the same grammar derivation certifies the renamed walk. Apply the induction hypothesis and reapply the propagation rule.
\end{caselist}
\end{proof}

\begin{lemma}[Deep formula weakening]\label{lem:deep-weakening}
Let $\mathcal D$ be a finite ordinary derivation of $S$. Add finitely many formula occurrences at component positions already present in $S$, without adding or removing nested edges, and call the result $S^+$. Then $S^+$ has an ordinary derivation of the same height as $\mathcal D$.
\end{lemma}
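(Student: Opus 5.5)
We argue by induction on the height of $\mathcal D$, carrying the added occurrences along every branch. Two hazards must be handled: a universal eigenparameter may occur in the added formulas, and a fresh child name may clash with something in the enlarged context. To remove both, we first pre-process $\mathcal D$. Let $Q$ be the finite set of proof parameters occurring in the added formulas. Apply Lemma~\ref{lem:eigen-regularization} with this $Q$, so that no eigenparameter of the derivation belongs to $Q$. This step preserves the root and the height. Component names are not an issue, because the added occurrences sit at existing positions and contribute no new component names. If desired, Lemma~\ref{lem:component-renaming} can still be used to keep generated children distinct from all names in $S^+$.

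The induction statement then reads: for every derivation of height $h$ of $S$ whose eigenparameters avoid $Q$, and every addition of occurrences from a fixed finite multiset over $\Sigma\cup Q$ at existing positions of $S$, the weakened root has a derivation of height at most $h$. Padding with unused steps is unnecessary, since "same height" follows because the rule tree is copied.

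\textbf{Base case.} A $(\top\mathrm{ax})$ or $(\mathrm{ax})$ leaf stays an initial sequent when formulas are added, because the witnessing occurrences are still present in the same component.

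\textbf{Induction step.} Each premise of the final rule contains every position of the conclusion, by Lemma~\ref{lem:tree-monotonicity}. The added occurrences can therefore be placed at the same positions in each premise, and the induction hypothesis applies.
\begin{caselist}
\item Boolean, existential, box and seriality rules: the rule is reapplied to the weakened premises unchanged.
\item Universal rule: the eigenparameter $a$ is fresh for the original conclusion and, by the preprocessing, not in $Q$. It is therefore fresh for the weakened conclusion.
\item Certificate propagation: the walk and grammar derivation refer only to positions and edges. These are unchanged, so the same certificate remains valid.
\end{caselist}
Mandatory branching adds the same occurrences to both premises.

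The main obstacle is the eigenparameter side condition. The remaining cases are routine. The induction hypothesis must be stated for derivations already regularized against $Q$, and one must check that Lemma~\ref{lem:eigen-regularization} preserves height and root. It does, so the whole argument goes through with the height unchanged.
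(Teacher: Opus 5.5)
Your proof is correct and follows the paper's argument: regularize eigenparameters away from the parameter set $Q$ of the added formulas via Lemma~\ref{lem:eigen-regularization}, then copy every inference with the added occurrences as passive context, checking initial leaves, eigenparameter freshness, child freshness, and unchanged certificates. The paper's proof also reassigns creation indices for the added and later occurrences. The statement does not require this, but the focalization proof relies on it.
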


\begin{proof}
Let $Q$ be the finite set of proof parameters occurring in the added formulas. First apply Lemma~\ref{lem:eigen-regularization} with this set $Q$. Keep each added occurrence unchanged as passive context at its fixed component throughout the derivation. When creation indices are present, retain the indices of the original occurrences in $S$. Assign the added root occurrences the next consecutive indices in the fixed auxiliary order. Then process the inference occurrences in increasing depth from the root, independently within distinct premise branches. At each inference, assign the indices prescribed by the rule above to every formula occurrence newly created in its premises. Later occurrences are therefore reindexed relative to the enlarged root, so no collision with an added occurrence can arise.

Initial leaves remain initial because the original $\top$ or complementary atomic pair is still present. Boolean and existential rules reapply to their original principal occurrences with the new formulas passive. At a universal rule, its eigenparameter remains fresh because it was chosen outside $Q$. Box and seriality retain the freshness of their original child since no edge is added by formula weakening. Certificate diamond propagation keeps the same certificate because the indexed tree is unchanged. Thus every inference is copied without an additional proof level, and the height is preserved.
\end{proof}

\begin{theorem}[Height-preserving asynchronous invertibility]\label{thm:async-invertibility}
Let $\mathcal D$ be a finite ordinary derivation of height $h$ and let $\mathcal G\{\}$ be an arbitrary deep nested context.
\begin{enumerate}
\item From a derivation of $\mathcal G\{A\lor B\}$ one can construct a derivation of $\mathcal G\{A,B\}$ of height at most $h$.
\item From a derivation of $\mathcal G\{A\land B\}$ one can construct derivations of both $\mathcal G\{A\}$ and $\mathcal G\{B\}$, each of height at most $h$.
\item From a derivation of $\mathcal G\{\forall xA\}$ and a parameter $a$ fresh for the entire derivation and desired conclusion, one can construct a derivation of $\mathcal G\{A(a/x)\}$ of height at most $h$.
\item From a derivation of $\mathcal G\{\Box_iA\}$ and a component name $v$ absent from the entire derivation, one can construct a derivation of the exact box premise $\mathcal G\{[A]_i\}$ with the newly generated child named $v$, of height at most $h$.
\end{enumerate}
\end{theorem}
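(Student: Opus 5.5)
We prove all four clauses simultaneously by induction on the height $h$ of the given derivation $\mathcal D$. We track the distinguished occurrence of $A\lor B$, $A\land B$, $\forall xA$, or $\Box_iA$ through the derivation. At a node this occurrence is either principal in the final inference or passive context. There is no contraction rule. The only retaining rules are the existential and certificate diamond rules, and their principal formulas have synchronous heads. Hence the distinguished asynchronous occurrence is never duplicated, and it stays a single occurrence at a fixed component throughout.

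\emph{Base case.} At height zero, $\mathcal D$ is a $(\top\mathrm{ax})$ or $(\mathrm{ax})$ leaf. The distinguished occurrence is not $\top$ and is not a literal, so the closing formulas lie in the context. Replacing the distinguished occurrence by $A,B$, by $A$ or $B$, by $A(a/x)$, or by $[A]_i$ leaves those closing formulas in place. For clause 4 the new child contains only $A$, which is harmless. The result is again an initial sequent of height zero.

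\emph{Induction step, principal case.} If the distinguished occurrence is principal in the final inference, its premise (or the relevant premise for $\land$) already has the required form, with height at most $h-1$. For $\forall$ there is one adjustment: the premise contains $A(b/x)$ with the original eigenparameter $b$. We apply Lemma~\ref{lem:uniform-parameter-renaming} to the premise subderivation to replace $b$ by the requested $a$. This is legal because $a$ is fresh for the whole derivation and $b$ is absent from the conclusion. For $\Box_i$ the situation is analogous: the premise has a fresh child with some name $w$. We rename it to $v$ by Lemma~\ref{lem:component-renaming}, using that $v$ is absent from the derivation, and this preserves height.

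\emph{Induction step, passive case.} If the final inference is some other rule $\mathsf r$, we apply the induction hypothesis to each premise, all of which contain the distinguished occurrence in context, and then reapply $\mathsf r$. Three points need care.
\begin{enumerate}
\item Freshness side conditions of $\mathsf r$ must survive. A universal eigenparameter or a generated child name of $\mathsf r$ is distinct from the given $a$ and $v$, because $a$ and $v$ are fresh for the entire derivation. The new material $A(a/x)$ or $[A]_i$ introduces no clash, since its contents already occurred in the conclusion.
\item In clause 4, a certificate diamond step of $\mathsf r$ must remain legal after a new child is added. Its walk uses only edges that still exist. By Lemma~\ref{lem:tree-monotonicity} and the argument of Corollary~\ref{cor:certificate-persistence}, the same certificate $(\omega,\mathfrak d)$ is valid in the extended tree, so we reuse it verbatim. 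The certificate's endpoints never equal the new child $v$, because $v$ did not exist in the original derivation.
\item In clause 3, the new parameter $a$ is passive inside the context. Substitution in existential witnesses is unaffected, because $a$ only replaces the bound variable $x$ of the distinguished formula.
\end{enumerate}
In every case the height does not increase.

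\emph{Main obstacle.} The delicate part is clause 4 in the passive case. Later inferences may create siblings of $v$ or may propagate diamonds into positions. Adding $v$ early enlarges the tree and could in principle make more certificates valid, but no existing certificate is broken. Because no certificate of $\mathcal D$ targets or passes through $v$, each inference copies unchanged. The exact naming of the generated child as $v$ is handled once, at the principal case, by component renaming. That renaming must be applied before reassembling the derivation, so that it cannot collide with names generated elsewhere in the derivation.
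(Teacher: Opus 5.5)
Your proof is correct and takes essentially the same route as the paper. Both argue by simultaneous induction on height. The principal universal and box cases are handled by uniform parameter renaming and component renaming, and the passive cases reapply the final rule using global freshness of $a$ and $v$ together with persistence of certificates under the added edge. The only difference is that the paper first applies eigenparameter regularization with $Q=\{a\}$; your argument shows this step is unnecessary, since the hypothesis already makes $a$ fresh for the whole derivation.
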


\begin{proof}
Choose the globally fresh parameter $a$ needed for the universal clause and apply Lemma~\ref{lem:eigen-regularization} with $Q=\{a\}$. Prove the four clauses simultaneously by induction on $h$.

Base case. At height zero the derivation is an initial sequent. The displayed asynchronous occurrence is passive relative to the original closing $\top$ or complementary atom pair. Performing any of the four backward transformations preserves that closing data, so the transformed state is again initial.

Induction step. Let $R$ be the final inference and assume the four claims for premise derivations of smaller height.

\begin{caselist}
\item Case $R$ is principal on the displayed occurrence. The disjunction and conjunction conclusions have precisely the required premises. For a universal principal inference, uniformly rename its eigenparameter to the prescribed fresh $a$ in the premise subderivation. For a box principal inference, use Lemma~\ref{lem:component-renaming} to rename its fresh child to $v$. These transformations preserve height.
\item Case $R$ is nonprincipal. Apply the appropriate induction hypothesis to every premise subderivation and then reapply $R$. Boolean and existential rules have no relevant freshness condition. If $R$ is universal with eigenparameter $b$, then $b$ was absent from the old conclusion. The transformed passive formula is built from an old conclusion formula and, in the universal inversion clause, the globally fresh parameter $a$, so it contains no $b$. If $R$ is box or seriality, the globally fresh component $v$ differs from its child and does not invalidate child freshness. If $R$ is certificate diamond propagation, the first three inversions leave the tree unchanged, while box inversion adds one fresh edge. Certificate persistence from Corollary~\ref{cor:certificate-persistence} preserves the old certificate in either situation. Reapplying $R$ adds no height beyond the old bound.
\end{caselist}
\end{proof}

\subsection{Syntactic focalization}\label{sec:syntactic-focalization}

\begin{theorem}[Syntactic focalization]\label{thm:syntactic-focalization}
For every indexed nested sequent $S$,
\[
 \mathcal N_{\Amod}\vdash S
 \Longleftrightarrow
 \mathcal F_{\Amod}\vdash S.
\]
The implication from ordinary to focused derivability is given by a syntactic construction.
\end{theorem}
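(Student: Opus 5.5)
The backward direction is Lemma~\ref{lem:focused-erasure}, so the work is the syntactic construction from ordinary to focused derivations. I would prove, by induction on the lexicographic pair $(h,w(S))$, where $h$ is the height of a given ordinary derivation of $S$ and $w$ is the rank sum from Section~\ref{sec:asynchronous-phases}, that every derivable state has a focused derivation. The construction first runs the maximal asynchronous phase of Definition~\ref{def:maximal-asynchronous} from $S$. This phase is finite by Lemma~\ref{lem:async-termination}. For each border state $S'$ produced in it, repeated use of Theorem~\ref{thm:async-invertibility} turns the original derivation of $S$ into an ordinary derivation of $S'$ of height at most $h$. Eigenparameters and child names are taken fresh for the whole derivation, which Lemma~\ref{lem:eigen-regularization} and Lemma~\ref{lem:component-renaming} permit. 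Closed leaves of the phase are already focused.

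At a border state $S'$ with derivation $\mathcal D'$ of height $h'\le h$, no asynchronous formula remains. If $\mathcal D'$ is initial, we are done. Otherwise its last rule is synchronous: an existential step, a certificate propagation, or seriality. Seriality is a complete focus by itself. Its premise has a derivation of height $h'-1$, and the induction hypothesis applies. For an existential or diamond step, I begin a progressive focus with the same witness or certificate. The newly created distinguished occurrence $F$ has lower rank. If $F$ is asynchronous, atomic, or $\bot$, the focus stops and the premise derivation of height $h'-1$ is handled by induction.

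The main obstacle is when $F$ is again synchronous. The focus discipline then forces the next step to act on $F$, whereas $\mathcal D'$ may next act on some other synchronous formula. My first attempt is to avoid forcing and instead to measure: one may always end the focus early by stopping, but Definition~\ref{def:progressive-focus} does not allow this while the rule is applicable. So I would permute. Take the first inference in the premise derivation that acts on $F$ or on its descendants. The inferences below it are synchronous steps on other formulas, which stay present because the rules are retaining, or asynchronous steps on their created matrices. I would move the $F$-inference down to the bottom, using Lemma~\ref{lem:deep-weakening} to add its output early as passive context, and Corollary~\ref{cor:certificate-persistence} together with the freshness of component names to keep certificates valid. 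If $F$ is never used, I would still apply its focus step and absorb the extra formula by weakening. For a diamond with no eligible target the focus stops anyway. If the focus step is not principal in the original proof, this weakening produces a derivation of the focus continuation whose height is not greater. Along the focus chain, rank decreases, as in Lemma~\ref{lem:sync-termination}. Once the focus exits, the remaining derivation has height strictly below $h'$, because at least the original synchronous inference has been consumed. The induction hypothesis then applies.

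The delicate point I expect is height accounting. Extra focus steps that are not principal in the original proof add inferences without consuming one. I would therefore use as measure the height together with the multiset of ranks of distinguished formulas, or, equivalently, bound the focus length by $r(F)$ and argue that each focus consumes at least one original inference. Deep weakening, being height preserving, is what makes this bound hold.
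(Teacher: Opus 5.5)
Your skeleton matches the paper's proof. It uses the same lexicographic measure $(h,w(S))$ and Theorem~\ref{thm:async-invertibility} for the asynchronous phase. Seriality is treated as a one-step focus. The first synchronous step is taken to be the last rule of the border derivation, so its premise has height $h'-1$.

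You diverge at the continuation of a focus, and the permutation you propose there would fail as stated. The first inference acting on $F$ in the premise derivation may use a witness containing an eigenparameter introduced by a universal inference lower down, for instance after another diamond's or existential's matrix has been decomposed asynchronously. It may also target a component created lower down by a box or seriality inference. Such an inference cannot be moved to the bottom:

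\begin{itemize}
\item a target component does not yet exist there;
\item placing the witness there breaks the freshness condition of that universal inference;
\item renaming the eigenparameter apart, as the proof of Lemma~\ref{lem:deep-weakening} does, leaves a moved occurrence that no longer matches the one the original proof uses later.
\end{itemize}

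In addition, if a conjunction lies below, $F$ may be used differently in its two branches, so there is no single inference to move.

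The fix is already in your proposal: the treatment you reserve for an unused $F$ works in every case. The existential and diamond rules are retaining, so each forced continuation step has as premise the current state plus one formula at an existing component, whatever witness or eligible target is chosen. The paper therefore picks an arbitrary witness (a fresh object variable $y$) or any certified target, and applies Lemma~\ref{lem:deep-weakening} to the current derivation. The original proof simply carries the added formulas as passive context, whether or not it ever uses $F$. Height therefore stays at most $h'-1$ throughout the focus, and the focus is finite by Lemma~\ref{lem:sync-termination}. At focus exit the state has a derivation of height strictly below $h$, so the induction on $(h,w)$ applies directly. No permutation and no auxiliary measure on the ranks of distinguished formulas are needed.
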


\begin{proof}
The implication from focused to ordinary derivability follows from Lemma~\ref{lem:focused-erasure}. For the converse, let $\mathcal D$ be an ordinary derivation of $S$. Use the lexicographic measure $M(\mathcal D,S)=(h(\mathcal D),w(S)),$ with derivation height primary and asynchronous weight secondary. We use well-founded induction on this pair and preserve creation indices throughout the construction.

Base case. If $S$ is terminal, close immediately. This is legal in every phase.

Induction step. Assume the construction for every pair strictly smaller than $(h(\mathcal D),w(S))$.

\begin{caselist}
\item Case $S$ contains an asynchronous occurrence. Choose the least such occurrence by creation index. Depending on its head, choose a globally fresh eigenparameter or component name when required. Apply height-preserving asynchronous invertibility, Theorem~\ref{thm:async-invertibility}, to obtain ordinary derivations of the exact premises of the required asynchronous rule. Every resulting derivation has height at most $h(\mathcal D)$, while Lemma~\ref{lem:async-termination} gives strictly smaller asynchronous weight in every premise state. Hence every resulting pair consisting of a derivation and its state is smaller than $(h(\mathcal D),w(S))$. Apply the induction hypothesis to all premises and place the chosen asynchronous inference below them. Since it was the least asynchronous occurrence and new occurrences receive later creation indices, the resulting initial segment is precisely the required maximal asynchronous phase. The conjunction case continues independently in its two mandatory branches.
\item Case $S$ is a nonterminal border state and the final rule is seriality. Then $h(\mathcal D)>0$. The final inference cannot be asynchronous because its asynchronous principal formula would still occur in the conclusion. Use seriality as a one-step focus. Its premise derivation has strictly smaller height, so the induction hypothesis applies directly.
\item Case $S$ is a nonterminal border state and the final rule is existential or certificate diamond propagation. Use that inference as the first synchronous step and mark its newly created matrix occurrence. Its premise subderivation has height strictly below $h(\mathcal D)$. Continue the progressive focus while the distinguished occurrence has a synchronous head. For a distinguished existential formula, choose a fresh object variable $y$ as witness. The new state differs from the current state only by one added matrix occurrence at an old component. By Lemma~\ref{lem:deep-weakening}, that state has an ordinary derivation of height at most the current proof height. Use this derivation for the next stage of the focus. For a distinguished diamond with an eligible target, choose one certified target and perform the certificate rule. Again only one formula is added at an old component. By the same lemma, the resulting state has an ordinary derivation of height at most the current proof height. Focus stops under the exit conditions in Definition~\ref{def:progressive-focus}.
\end{caselist}

Every continuation step strictly lowers the rank of the distinguished formula by Lemma~\ref{lem:sync-termination}, so the continuation is finite. Let $S'$ be the final state and $\mathcal D'$ the ordinary derivation obtained after the focus. Repeated deep weakening preserves the strict height decrease inherited from the first synchronous premise, so $h(\mathcal D')<h(\mathcal D).$ The induction hypothesis applies to $(\mathcal D',S')$. Attach that focused derivation above the finite synchronous focus.

Every recursive call strictly decreases the well-founded measure. Thus the construction terminates and produces a finite focused derivation of $S$.
\end{proof}

Ordinary semantic completeness and the focalization theorem together give semantic completeness of the focused calculus.

\begin{corollary}[Focused semantic completeness]\label{cor:focused-semantic-completeness}
If an indexed nested sequent $S$ is valid in every constant-domain model with rigid terms based on $\Frames(\Amod)$, then
$\mathcal F_{\Amod}\vdash S.$
\end{corollary}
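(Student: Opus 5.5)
The statement follows by composing two results already established. First I apply the cut-free semantic completeness theorem for the ordinary calculus, and then I transport the resulting derivation into the focused calculus using the focalization construction.

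Assume $S$ is valid in every constant-domain model with rigid terms based on $\Frames(\Amod)$. Theorem~\ref{thm:kernel-completeness} applies directly to the indexed nested sequent $S$ and yields $\mathcal N_{\Amod}\vdash S$. Its proof goes through the fair saturation construction and the canonical countermodel, so nothing further needs to be verified at this step. If $S$ contains proof parameters, validity is read through Convention~\ref{conv:parameter-expansion}. This is the reading under which Theorem~\ref{thm:kernel-completeness} was formulated, since its canonical model interprets every proof parameter rigidly by itself.

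Next I invoke the direction of Theorem~\ref{thm:syntactic-focalization} that goes from ordinary to focused derivability. That direction turns the ordinary derivation into a focused derivation of the same root $S$, so $\mathcal F_{\Amod}\vdash S$. The construction uses well-founded induction on the pair consisting of derivation height and asynchronous weight. Within it, height-preserving invertibility (Theorem~\ref{thm:async-invertibility}) and deep weakening (Lemma~\ref{lem:deep-weakening}) supply the focused phases. No further transformation is needed, because the focalization theorem is stated for arbitrary indexed nested sequents.

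The proof has no substantive obstacle. The one point to check is that the two cited results concern the same object: the root $S$ and the same notion of validity. Since both theorems are stated for arbitrary indexed nested sequents without any side condition, the composition is immediate. Combining this argument with Lemma~\ref{lem:focused-erasure} and Corollary~\ref{cor:kernel-soundness} also gives the converse, namely that focused derivability of $S$ coincides with validity over $\Frames(\Amod)$.
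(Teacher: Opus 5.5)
Your proposal is correct and follows the paper's proof: kernel completeness (Theorem~\ref{thm:kernel-completeness}) gives $\mathcal N_{\Amod}\vdash S$, and the ordinary-to-focused direction of syntactic focalization (Theorem~\ref{thm:syntactic-focalization}) then gives $\mathcal F_{\Amod}\vdash S$. Your remark on the converse, via Lemma~\ref{lem:focused-erasure} and Corollary~\ref{cor:kernel-soundness}, is also correct.
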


\begin{proof}
Kernel completeness, Theorem~\ref{thm:kernel-completeness}, gives $\mathcal N_{\Amod}\vdash S$. Syntactic focalization, Theorem~\ref{thm:syntactic-focalization}, then gives $\mathcal F_{\Amod}\vdash S$.
\end{proof}

\subsection{Symbolic focusing}\label{sec:focused-symbolic}

The symbolic focused calculus uses the same phase discipline. An existential focus introduces a fresh proof metavariable $U$ with permission set equal to the active eigenparameter set at focus entry. Certificate diamond and seriality use the ordinary focused nested transformations. Asynchronous rules are the symbolic rules from Definition~\ref{def:symbolic-derivation}. Truth and symbolic atomic closure are permitted in every phase. A \emph{focused symbolic derivation} is a symbolic derivation in the sense of Definition~\ref{def:symbolic-derivation} equipped with creation indices assigned as above and annotations for phases, borders, focuses, and distinguished occurrences, satisfying Definitions~\ref{def:maximal-asynchronous} and~\ref{def:progressive-focus}.

\begin{definition}[Focused answers]\label{def:focused-computed-answers}
A \emph{focused computation for $(P,G)$} is a symbolic computation for $(P,G)$ whose derivation is a focused symbolic derivation. It is successful when the underlying symbolic computation is successful. A \emph{focused reference computation} is a focused computation whose underlying derivation is a reference derivation in the sense of Definition~\ref{def:reference-symbolic-derivation}. Its computed answer is obtained by the same projection $\operatorname{proj}$. Let $\Comp_{\Amod}^{\mathrm F}(P,G)$ be the set of answers produced by successful focused computations for $(P,G)$. Its instance closure uses the ordinary output substitutions for $G$ from Definition~\ref{def:ordinary-output-substitution}.
\end{definition}

The focused phase boundaries must also respect the permission discipline. The next lemma states the only point at which a new eigenparameter can change the active permission set.

\begin{lemma}[Eigenparameter permissions]\label{lem:focus-permissions}
The active eigenparameter set is constant during a synchronous focus. Suppose a focus ends because its distinguished formula has universal head, and the following asynchronous phase introduces a fresh eigenparameter $a$. Every metavariable introduced before that universal inference has a permission set excluding $a$. Every metavariable introduced later in its premise subtree has a permission set containing $a$ whenever $a$ is active at the variable's introduction node.
\end{lemma}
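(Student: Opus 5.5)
The plan is to read all three assertions off the definition of the active eigenparameter set $Q_{\mathsf n}$, together with the rule that a proof metavariable receives $\pi(U)=Q_{\mathsf n}$ at its introduction node and that this value is permanent (Definition~\ref{def:permission-map}). The only extra input is a case inspection of which inferences can occur inside a progressive synchronous focus.

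\textbf{Constancy during a focus.} By definition, $Q_{\mathsf n}$ grows along a root-to-leaf path exactly at universal inferences. By Definition~\ref{def:progressive-focus}, a focus consists only of existential steps, certificate diamond steps, or a single seriality step. The focus stops as soon as the distinguished head is asynchronous, so no universal rule is ever applied inside it. Hence $Q_{\mathsf n}$ is the same at every node between focus entry and focus exit. In particular, an existential step inside the focus receives the permission set of the focus entry, which matches the symbolic focusing convention of Section~\ref{sec:focused-symbolic}.

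\textbf{Metavariables introduced before the universal inference.} Let $\mathsf m$ be the universal inference introducing $a$, and let $U$ be introduced at a node $\mathsf n$ above $\mathsf m$ on the path from the root. Then $\pi(U)=Q_{\mathsf n}$, which consists of eigenparameters introduced strictly above $\mathsf n$ on that path. The universal naming discipline of Definition~\ref{def:symbolic-derivation} makes $a$ distinct from all eigenparameters of earlier universal inferences. It also makes $a$ fresh for the conclusion at $\mathsf m$. So $a\notin Q_{\mathsf n}$, and since permissions never change, $a\notin\pi(U)$.

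I also need to cover metavariables introduced in other branches, that is, not on the path to $\mathsf m$. For these, branch locality together with global distinctness of eigenparameter names still gives that $a$ is not in their active set. The reason is that the active set contains only eigenparameters of ancestors, and $\mathsf m$ is not an ancestor of such a node.

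\textbf{Metavariables introduced later in the premise subtree.} If $U$ is introduced at a node $\mathsf n'$ in the premise subtree of $\mathsf m$, then $\mathsf m$ is an ancestor of $\mathsf n'$. Therefore $a\in Q_{\mathsf n'}=\pi(U)$. The hedge ``whenever $a$ is active'' is then automatic.

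The main point of care is the first claim's case inspection. I must check that the focus exit rule of Definition~\ref{def:progressive-focus} genuinely prevents a universal step from occurring inside the focus, even when an existential matrix has a universal head. In that situation the focus ends first, and the universal rule belongs to the next asynchronous phase. I also need to confirm that seriality introduces no eigenparameter.
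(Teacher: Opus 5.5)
Your proposal is correct and follows essentially the same route as the paper: synchronous existential, diamond, and seriality steps introduce no eigenparameter, permissions are fixed permanently at introduction as the active set $Q_{\mathsf n}$, and membership of $a$ is decided by whether the introducing universal inference is an ancestor of the introduction node. For the ``before'' clause you appeal explicitly to global distinctness of eigenparameter names rather than only to temporal order, and you also cover nodes in other branches. This justifies a little more carefully the step that the paper states simply as ``earlier variables exclude it.''
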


\begin{proof}
Synchronous existential, diamond, and seriality steps introduce no eigenparameter. Therefore the active set does not change inside a focus. Permission sets are fixed when variables are introduced and are never enlarged later. The universal rule introduces $a$ only after the focus ends, so earlier variables exclude it. In the premise subtree, $a$ belongs to the active set and hence to the permissions assigned to subsequently created metavariables.
\end{proof}

Because permission assignment is stable across phase boundaries, erasing the scheduling annotations recovers the unfocused symbolic calculus.

\begin{lemma}[Focused erasure]\label{lem:focused-symbolic-erasure}
Erasing the creation indices and annotations for phases, borders, focuses, and distinguished occurrences from a successful focused computation gives a legal symbolic computation with the same root and terminal equation set.
\end{lemma}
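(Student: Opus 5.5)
The plan is to compare the two objects node by node. Erasing the scheduling annotations removes creation indices, phase and border markers, focus entry and exit markers, and distinguished-occurrence marks. None of these is part of a symbolic nested sequent, a rule instance, or a terminal leaf. So I would check three things in order: that the erased tree is a legal symbolic derivation in the sense of Definition~\ref{def:symbolic-derivation}, that its root is unchanged, and that its terminal equation set and flexible-variable set coincide with those of the focused derivation. Success then transfers, because the scoped unification run depends only on $(E(\mathcal D^\sharp),C(\mathcal D^\sharp),\operatorname{id})$.

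For legality I would go by induction on the height of the focused symbolic derivation, with one case per rule, in the style of Lemma~\ref{lem:focused-erasure}.
\begin{itemize}
\item \emph{Asynchronous inferences.} These are by definition the symbolic rules of Definition~\ref{def:symbolic-derivation}, so they carry over verbatim. Universal steps keep the global naming discipline, and conjunction keeps branch locality, since erasure renames nothing.
\item \emph{Certificate diamond and seriality inferences.} These are the ordinary rules with their attached certificates unchanged. Erasure does not touch the nested tree, so each certificate remains a valid walk with its grammar derivation.
\item \emph{Terminal leaves.} Truth and symbolic atomic closures are permitted in every phase of both calculi. Erasure leaves them as symbolic truth or atomic leaves, contributing exactly the same equations.
\end{itemize}

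The one step needing care is the existential rule inside a focus. In the focused calculus the metavariable $U$ receives the active eigenparameter set at focus entry. The unfocused rule, by contrast, demands $\pi(U)=Q_{\mathsf n}$ at the actual inference node $\mathsf n$. This is where I would invoke Lemma~\ref{lem:focus-permissions}: the active set is constant throughout a synchronous focus, because existential, diamond and seriality steps introduce no eigenparameter. Hence the entry set equals $Q_{\mathsf n}$ for every existential step of the focus. The active set $Q_{\mathsf n}$ itself is defined from the universal inferences on the path to the root, and those are untouched by erasure. So each existential step is a legal symbolic existential inference with the same metavariable and the same permission.

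Finally, the terminal equation set is defined as the union over atomic leaves and $C(\mathcal D^\sharp)$ as the set of flexible variables occurring in the derivation. Since the leaves, their formulas and all variables are unchanged, both sets are literally equal. The same deterministic unification run therefore succeeds and returns the same $\mu$. I expect the main obstacle to be only the permission bookkeeping at focus entry, which Lemma~\ref{lem:focus-permissions} settles.
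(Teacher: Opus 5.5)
Your proposal is correct and matches the paper's argument: a rule-by-rule check that the asynchronous, certificate diamond, seriality, and terminal steps are exactly the unfocused symbolic rules, so erasure changes no formula, leaf, or equation. Your use of Lemma~\ref{lem:focus-permissions} to identify the focus-entry permission set with $Q_{\mathsf n}$ spells out what the paper compresses into the remark that the existential rule uses the same metavariable and permission assignment as the unfocused calculus.
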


\begin{proof}
Every focused asynchronous rule is the corresponding unfocused symbolic rule. The symbolic existential rule uses the same fresh metavariable and permission assignment as the unfocused calculus. Certificate diamond and seriality are unchanged. Truth and atomic closures available in every phase are exactly the symbolic terminal closures of Section~\ref{sec:symbolic-closure}. Branch locality depends only on ancestry. Erasure of the scheduling annotations therefore changes neither formulas nor generated equations.
\end{proof}

\begin{theorem}[Focused soundness]\label{thm:focused-answer-soundness}
Let $P$ be a program and $G$ a query. If $\theta\in\Comp_{\Amod}^{\mathrm F}(P,G)$, then $\theta\in\Corr^{\mathrm H}_{\Amod}(P,G)$.
\end{theorem}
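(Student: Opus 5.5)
The plan is to reduce focused soundness to the unfocused answer soundness theorem via erasure. Take $\theta\in\Comp_{\Amod}^{\mathrm F}(P,G)$. By Definition~\ref{def:focused-computed-answers} there is a successful focused computation $\mathcal D^\sharp_{\mathrm F}$ for $(P,G)$ whose terminal equation set admits a successful run of scoped unification returning a scoped mgu $\mu$, with $\theta=\operatorname{proj}(\mu)$.

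First, apply Lemma~\ref{lem:focused-symbolic-erasure} to $\mathcal D^\sharp_{\mathrm F}$. This yields a legal symbolic derivation $\mathcal D^\sharp$ in the sense of Definition~\ref{def:symbolic-derivation}, with the same root and the same terminal equation set. Its root is the symbolic root of Definition~\ref{def:root-answer-replacement}, so $\mathcal D^\sharp$ is a symbolic computation for $(P,G)$.

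Second, check that $\mathcal D^\sharp$ is successful with the same scoped mgu. Every branch still ends in a truth or atomic leaf, since erasure does not touch leaves. The constraint problem $(E(\mathcal D^\sharp),C(\mathcal D^\sharp))$ coincides with that of the focused derivation, because erasure changes neither formulas nor generated equations, and hence neither the occurring flexible variables. The permission map $\pi$ is fixed globally by Definition~\ref{def:permission-map}. The deterministic unification procedure of Definition~\ref{def:unification-rules} therefore performs the identical run and returns the same $\mu$.

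Third, the answer projection $\operatorname{proj}(\mu)$ depends only on $\mu$ and the answer representatives, so $\MMLP_{\Amod}(P,G)\Downarrow\theta$ holds in the sense of Definition~\ref{def:computed-answer}. Theorem~\ref{thm:answer-soundness} then gives $\theta\in\Corr^{\mathrm H}_{\Amod}(P,G)$.

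The only delicate point is the second step: verifying that the permission sets attached to existential metavariables in the focused derivation agree with those required by the unfocused symbolic rule. In the focused calculus these sets are taken at focus entry, while the unfocused rule requires the active set at the node itself. Lemma~\ref{lem:focus-permissions} gives that the active eigenparameter set is constant throughout a synchronous focus, so the two coincide. This is what makes erasure legal, and it is already packaged into Lemma~\ref{lem:focused-symbolic-erasure}. The remaining equality of unification problems is a bookkeeping observation.
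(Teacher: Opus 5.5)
Your proposal is correct and follows the paper's own proof. Erase the scheduling annotations via Lemma~\ref{lem:focused-symbolic-erasure}, note that the terminal constraint problem, and hence the scoped mgu and projected answer, are unchanged, and apply Theorem~\ref{thm:answer-soundness}; your remark about permission sets at focus entry (Lemma~\ref{lem:focus-permissions}) spells out what the erasure lemma already covers.
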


\begin{proof}
Choose a successful focused computation producing $\theta$. Lemma~\ref{lem:focused-symbolic-erasure} erases its scheduling annotations. The constraint set and hence its scoped mgu and projected answer are unchanged. Apply unfocused answer soundness, Theorem~\ref{thm:answer-soundness}.
\end{proof}

For the converse direction, a focused witness derivation is needed. Proof abstraction preserves the focused schedule because it changes terms while retaining outer logical constructors, nested positions, and rule ancestry.

\begin{lemma}[Focus preservation]\label{lem:focus-preservation}
Apply Corollary~\ref{cor:query-proof-abstraction} to a finite regularized focused ordinary derivation. The resulting symbolic derivation is a legal focused symbolic derivation. It has the same creation indices, maximal asynchronous phase boundaries, progressive focus entry and exit points, and sequence of distinguished occurrences as the original focused ordinary derivation.
\end{lemma}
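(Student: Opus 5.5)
The plan is to follow the inductive construction in the proof of Lemma~\ref{lem:general-proof-abstraction}, starting from Corollary~\ref{cor:query-proof-abstraction}, and check at each step that it mirrors the focused annotations of the ordinary derivation. Let $\mathcal D$ be the regularized focused ordinary derivation. By Lemma~\ref{lem:focused-regularization}, regularization has already left all creation indices, phase boundaries, focus entry and exit points, and distinguished occurrences unchanged. The abstraction builds $\mathcal D^\sharp$ rule by rule, applying the same rule to the matching occurrence of $\mathcal G^\sharp_{\mathsf n}$. This gives a bijection between inference occurrences of $\mathcal D$ and of $\mathcal D^\sharp$ that preserves the rule tree, the principal-occurrence map, and the premise/conclusion occurrence correspondence. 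We transport every annotation of $\mathcal D$ along this bijection and then show that the transported annotations satisfy Definitions~\ref{def:maximal-asynchronous} and~\ref{def:progressive-focus}.

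The core step is an invariant, proved by induction on height alongside the induction in Lemma~\ref{lem:general-proof-abstraction}. At each node, the symbolic state and the ordinary state have the same components, edges, and formula occurrences with the same creation indices. Corresponding formulas have the same outer constructor; indeed $q[\mathcal D](\rho^*(B^\sharp))\equiv_\alpha B$ with outer constructors reflected by Lemma~\ref{lem:residual-properties}, and substitution acts only on terms. From this invariant we conclude three things.
\begin{enumerate}
\item Asynchronous occurrences correspond exactly, so a node is a border state in $\mathcal D^\sharp$ iff it is one in $\mathcal D$.
\item The least asynchronous occurrence by creation index is the same occurrence in both derivations, so maximal phases coincide.
\item A distinguished formula's head, and hence the stopping condition ``asynchronous, atomic, or $\bot$'', agrees in both.
\end{enumerate}
For the diamond stopping condition (no eligible target), eligibility depends only on the nested tree and $S_{\Amod}$, so it agrees by Lemma~\ref{lem:certificate-term-invariance}. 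Certificates are copied verbatim. For existential focus steps, the ordinary witness $t$ becomes the fresh metavariable $U_{\mathsf n'}$ with $\pi(U_{\mathsf n'})=Q_{\mathsf n'}$. The new occurrence $A(U/x)$ corresponds to $A(t/x)$ and gets the same creation index, since indices depend only on displayed order and not on terms. By Lemma~\ref{lem:focus-permissions}, $Q$ is constant during a focus, so using the active set at focus entry matches the symbolic focused existential rule. Terminal closures happen at the same nodes by clause~4 of Lemma~\ref{lem:general-proof-abstraction}.

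I expect the main obstacle to be the conjunction case. There the abstraction draws fresh flexible variables from split supplies and merges substitutions via Lemma~\ref{lem:branch-extension-union}. We must check that creation indices are kept independently per branch exactly as in the ordinary derivation, and that the merged substitution does not change any outer constructor seen in either branch. I would handle this by noting that creation indices and constructors are functions of the rule tree alone, never of the substitution, so the merge is irrelevant to the annotations. Full concretization (clause~5) then gives the matching state in each branch.

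Finally, annotations are part of the derivation's data, and the transport leaves them unchanged. The result is a legal focused symbolic derivation with the same annotations, and its erasure is the symbolic derivation of Corollary~\ref{cor:query-proof-abstraction}.
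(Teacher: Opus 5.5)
Your proposal is correct and follows essentially the same route as the paper, which inspects the rule cases of the abstraction construction, notes that each rule keeps its outer constructor, position, certificate, and source, and concludes that because no rule occurrence is added or removed, the creation indices, phase boundaries, and distinguished occurrences are unchanged. Your explicit check that $\pi(U_{\mathsf n'})=Q_{\mathsf n'}$ agrees with the active set at focus entry, via Lemma~\ref{lem:focus-permissions}, is a point the paper handles separately in the proof of Theorem~\ref{thm:focused-scoped-lifting} rather than inside this lemma.
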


\begin{proof}
Inspect the rule cases of Lemma~\ref{lem:general-proof-abstraction}. Disjunction, conjunction, universal, and box retain their outer constructors and positions and therefore remain asynchronous at the same points. An ordinary existential focus step $\exists xA\mapsto A(t/x)$ becomes the symbolic step $\exists xA\mapsto A(U/x)$ at the same position. Both matrices have the same outer logical constructor because term replacement does not change formula structure. Certificate diamond propagation retains the same source, target, certificate, and matrix head. Seriality retains the same source and modality. Terminal leaves remain terminal leaves available in every phase. No rule occurrence is added or removed, so creation indices, phase boundaries, and the sequence of distinguished occurrences are unchanged.
\end{proof}

\begin{theorem}[Focused lifting]\label{thm:focused-scoped-lifting}
Let $\sigma\in\Corr^{\mathrm H}_{\Amod}(P,G)$. Then there exist a successful focused computation for $(P,G)$ with computed answer $\theta$ and an ordinary output substitution $\gamma$ for $G$ such that
$\sigma(X)=(\theta\gamma)(X)$
for every $X\in A_G$.
\end{theorem}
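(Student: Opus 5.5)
The proof follows the unfocused scoped lifting theorem (Theorem~\ref{thm:scoped-lifting}). The one change is to focalize the witness proof before abstraction.

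First, by Lemma~\ref{lem:correct-answer-proof}, take a cut-free ordinary certificate derivation $\mathcal D_0$ of the root
\[
\NNF^{-}(P_1),\ldots,\NNF^{-}(P_n),\NNF^{+}(G)\sigma .
\]
Syntactic focalization, Theorem~\ref{thm:syntactic-focalization}, turns $\mathcal D_0$ into a focused ordinary derivation $\mathcal D_1$ of the same root. Next apply Lemma~\ref{lem:eigen-regularization} with $Q=\varnothing$, followed by Lemma~\ref{lem:neutral-parameter-normalization}. By Lemma~\ref{lem:focused-regularization}, these two transformations keep creation indices, phase boundaries, focus entry and exit points, and distinguished occurrences unchanged. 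The root contains no proof parameters, so it is also unchanged. The result is a regularized focused derivation $\mathcal D$ of the root.

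Apply Corollary~\ref{cor:query-proof-abstraction} to $\mathcal D$. This gives a symbolic derivation $\mathcal D^\sharp$ of the symbolic root for $(P,G)$, an admissible solution $\rho[\mathcal D]$ of its terminal constraint problem, and a concretization map $q[\mathcal D]$ with $q[\mathcal D](\rho[\mathcal D]^*(X^\sharp))=\sigma(X)$ for every $X\in A_G$. Lemma~\ref{lem:focus-preservation} shows that $\mathcal D^\sharp$ is a legal focused symbolic derivation. One point needs checking here. The abstraction assigns to each existential metavariable $U_{\mathsf n'}$ the permission $Q_{\mathsf n'}$, the active eigenparameter set at its node. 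The focused calculus instead asks for the active set at focus entry. Lemma~\ref{lem:focus-permissions} says the active set is constant throughout a synchronous focus, so the two sets coincide.

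From here the argument is the second half of the proof of Theorem~\ref{thm:scoped-lifting}, unchanged. Clause 1 of Theorem~\ref{thm:scoped-mgu} rules out failure of scoped unification on $E(\mathcal D^\sharp)$, so the run returns a scoped mgu $\mu$, and clause 3 factors the solution as $\rho[\mathcal D]^*(Z)=\mu^*(Z)\delta^*$ with $\delta$ preserving scope. Put $\theta=\operatorname{proj}(\mu)$ and $\gamma(Y_V)=q[\mathcal D](\delta^*(V))$. Lemma~\ref{lem:output-instantiation} makes each $\gamma(Y_V)$ an ordinary user term, and homomorphicity gives $(\theta\gamma)(X)=\sigma(X)$ on $A_G$.

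The computation is successful because unification succeeds on its terminal constraints. It is focused because $\mathcal D^\sharp$ carries the focused annotations. Its answer $\theta$ therefore lies in $\Comp^{\mathrm F}_{\Amod}(P,G)$.

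The main obstacle is the focus-preservation step: regularization and abstraction must leave the focused schedule intact. In particular, the witness terms chosen during focalization (the fresh object variables $y$) must survive as residual representatives without breaking the focus structure. Lemmas~\ref{lem:focused-regularization} and~\ref{lem:focus-preservation} handle this, since every transformation changes only first-order terms and never the constructors, positions, or rule ancestry.
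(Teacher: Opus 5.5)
Your proposal is correct and follows the paper's proof step for step. The chain is: the witness proof from Lemma~\ref{lem:correct-answer-proof}, syntactic focalization, eigenparameter and neutral-parameter regularization (kept compatible with the focused schedule by Lemma~\ref{lem:focused-regularization}), root abstraction, Lemmas~\ref{lem:focus-permissions} and~\ref{lem:focus-preservation} to make the abstracted derivation a legal focused symbolic derivation, and then the unchanged mgu-factorization and output-instantiation argument from Theorem~\ref{thm:scoped-lifting}. Your remark that the permission $Q_{\mathsf n'}$ assigned during abstraction equals the active set at focus entry is exactly the role Lemma~\ref{lem:focus-permissions} plays in the paper's proof.
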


\begin{proof}
Choose the ordinary witness proof from Lemma~\ref{lem:correct-answer-proof}. Syntactic focalization, Theorem~\ref{thm:syntactic-focalization}, gives a focused ordinary proof of the same root. Apply eigenparameter regularization and neutral parameter normalization. Lemma~\ref{lem:focused-regularization} preserves the focused phase boundaries and distinguished occurrence structure under both transformations. Corollary~\ref{cor:query-proof-abstraction} then gives a symbolic derivation with an admissible solution. Lemma~\ref{lem:focus-permissions} verifies that the permission discipline is unchanged across the focused phase boundaries, and Lemma~\ref{lem:focus-preservation} makes the abstracted derivation a legal focused symbolic derivation. The scoped mgu theorem factors the admissible solution through the computed scoped mgu. The output instantiation argument from Theorem~\ref{thm:scoped-lifting} applies without change and yields an ordinary output substitution $\gamma$ for $G$ such that $\sigma(X)=(\theta\gamma)(X)$ for every $X\in A_G$.
\end{proof}

As in the unfocused search, finite branching and iterative deepening combine with focused lifting to establish completeness of enumeration after canonicalization. Here the \emph{primitive focused inference height} counts underlying calculus inferences on a root-to-leaf path and ignores phase markers, focus markers, and deterministic bookkeeping.

\begin{lemma}[Focused canonicalization]\label{lem:focused-reference-canonicalization}
Every successful focused computation for $(P,G)$ has a successful focused reference computation for $(P,G)$ with the same primitive focused inference height. The computed answer of the reference computation has the same instance closure on $A_G$ as the computed answer of the original computation, up to renaming of fresh output variables.
\end{lemma}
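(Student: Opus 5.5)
The plan is to follow the unfocused Lemma~\ref{lem:reference-canonicalization} step by step and check that each renaming it performs also respects the focused annotations. Fix a successful focused computation $\mathcal D^\sharp$ with scoped mgu $\mu$. Choose an order of inference occurrences in which each inference precedes its premise subderivations. Going down this order, rename each universal eigenparameter to the first unused proof parameter, each existential proof metavariable (including those introduced at focus entry or inside a progressive focus) to the first unused member of the permission fiber $p_0[\pi(U)]$, and each generated component to the first unused component name, applied within its own premise subtree. Each certificate is replaced by the one returned by $\mathsf{CertSearch}_{\Amod}$ for the renamed source, target and modality. Lemma~\ref{lem:permission-renaming-extension} then extends these finite prescriptions, together with $X^\sharp\mapsto X^\sharp$, to a permutation $\kappa^p$ and a reserve-preserving, permission-equivariant bijection $\kappa^f$.

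The new ingredient is showing that the result is still a \emph{focused} reference derivation. The renamings change only first-order names, component names and certificate data. They do not touch formula constructors, the nested edge structure, rule ancestry, occurrence identities, or the distinction between created and retained occurrences. So, as in Lemma~\ref{lem:focused-regularization}, they preserve:
\begin{enumerate}
\item the creation indices;
\item the choice of the least asynchronous occurrence in each maximal asynchronous phase (Definition~\ref{def:maximal-asynchronous});
\item border states;
\item focus entry and exit points and the chain of distinguished occurrences (Definition~\ref{def:progressive-focus}).
\end{enumerate}
Component renaming alone is covered by Lemma~\ref{lem:component-renaming}. Swapping certificates keeps eligibility and the inserted formula, so the distinguished occurrence and the stopping test are unchanged. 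Exit conditions depend only on heads and on eligibility, and eligibility is invariant by Lemma~\ref{lem:certificate-term-invariance} and the Horn characterization, Theorem~\ref{thm:certificate-horn}. Since no rule occurrence is added or removed, primitive focused inference height is preserved exactly. Lemma~\ref{lem:focus-permissions} guarantees that permission assignments at focus entry are transported consistently under $p_0$.

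For answers, I will reuse the second half of the proof of Lemma~\ref{lem:reference-canonicalization} verbatim. The homomorphic renaming $\kappa$ maps the terminal constraint problem of $\mathcal D^\sharp$ onto that of the reference derivation. The map $\rho\mapsto\kappa_*\rho$ preserves and reflects admissible solutions. Theorem~\ref{thm:scoped-mgu} then forces the reference unification run to succeed with some $\mu^r$, and mutual factorization of $\kappa_*\mu$ and $\mu^r$ follows. Lemma~\ref{lem:zero-permission-residual} shows that the answer residuals have empty permission, so projection turns the factor substitutions into ordinary output substitutions in both directions. This gives equal instance closures on $A_G$ up to renaming of output variables.

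I expect the main obstacle to be the certificate replacement inside a progressive focus. If the canonical certificate were tied to a different walk or required different tree data at that stage, the continuation condition might be evaluated differently. The first thing I would try is to argue that the continuation and exit conditions depend only on the existence of a certificate for the triple, not on the witness, which by Corollary~\ref{cor:certificate-persistence} is stable at that stage. Hence the focus structure is literally unchanged.
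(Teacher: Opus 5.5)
Your proposal is correct and follows the paper's own proof: rerun the construction of Lemma~\ref{lem:reference-canonicalization} while carrying the creation indices and the phase, border, and focus annotations along. The renamings and certificate swaps change no logical constructor, rule ancestry, or formula insertion, so phase boundaries, focus boundaries, distinguished-occurrence chains, and height are preserved, and the global constraint renaming and two-way factorization then carry over verbatim for the answers. Your extra care about eligibility inside a progressive focus is sound, since the continuation and exit tests depend only on whether some certificate exists for the renamed source, target, and modality on an isomorphic tree, not on which witness is attached; the paper covers this point in one line by noting that canonical certificate replacement changes no formula insertion.
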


\begin{proof}
Perform the construction of Lemma~\ref{lem:reference-canonicalization} while retaining all scheduling annotations, namely the creation indices, phase and border markers, focus entry and exit markers, and annotations for distinguished occurrences. Use the maps $\kappa^{p}$ and $\kappa^{f}$ constructed in Lemma~\ref{lem:reference-canonicalization}, where $\kappa^{p}$ is a permutation of proof parameters and $\kappa^{f}$ is a bijection on flexible variables. The bijection preserves each reserve, and the two maps satisfy $\pi(\kappa^{f}(V))=\kappa^{p}[\pi(V)]$ for every $V\in\mathsf{FVar}$. The domain of $\kappa^{f}$ is all of $\mathsf{FVar}$, including auxiliary variables that may later be generated by focused scoped unification. These renamings change no logical constructor or rule ancestry, and replacing a certificate by the canonical certificate for the same source, target, and modality changes no formula insertion. Therefore maximal asynchronous phase boundaries, progressive focus boundaries, and the sequences of distinguished occurrences are preserved. The global constraint renaming and the two factorization clauses used in Lemma~\ref{lem:reference-canonicalization} apply unchanged, so the instance closure is preserved.
\end{proof}

\begin{theorem}[Focused fairness]\label{thm:focused-symbolic-fairness}
Using the deterministic choices of fresh names and certificates from Definition~\ref{def:reference-symbolic-derivation} together with the assignment of creation indices, iterative deepening over primitive focused inference height reaches a focused reference computation for every successful focused computation after finitely many stages. The computed answer of the resulting representative has the same instance closure on $A_G$ as the computed answer of the original focused computation, up to renaming of fresh output variables.
\end{theorem}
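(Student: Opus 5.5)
The proof will mirror the unfocused argument of Theorem~\ref{thm:symbolic-fairness}, with Lemma~\ref{lem:focused-reference-canonicalization} in place of Lemma~\ref{lem:reference-canonicalization} and a focused analogue of Lemma~\ref{lem:bounded-height-finiteness}. Fix a successful focused computation $\mathcal D^\sharp$ for $(P,G)$. Lemma~\ref{lem:focused-reference-canonicalization} gives a successful focused reference computation $\mathcal D^{r}$ with the same primitive focused inference height $h$. It also shows that the computed answer of $\mathcal D^{r}$ has the same instance closure on $A_G$ as that of $\mathcal D^\sharp$, up to renaming of fresh output variables. The second sentence of the theorem is therefore exactly this clause of the lemma. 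What remains is to show that iterative deepening at bound $h$ actually enumerates $\mathcal D^{r}$.

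For this I will prove focused finiteness: from a fixed symbolic root, only finitely many focused reference derivations have primitive focused inference height at most $h$. I will reuse the construction-tree argument of Lemma~\ref{lem:bounded-height-finiteness}, now with nodes that are finite partial focused reference derivations carrying their annotations. A child applies one legal inference at one open leaf, together with the deterministic updates of creation indices and phase and focus markers.

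Finite branching follows from Theorem~\ref{thm:symbolic-finite-branching}. The focused discipline only restricts which reference inferences are legal. It selects the least asynchronous occurrence inside a phase, follows the distinguished occurrence inside a focus, and at a border state chooses one synchronous trigger among finitely many. The annotation updates are uniquely determined by the chosen inference, because creation indices are assigned consecutively by Definition~\ref{def:maximal-asynchronous}.

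One point needs care: annotations must not add depth to the count. Phase boundaries and markers are not inferences, so a derivation of primitive focused height at most $h$ still has at most $2^{h+1}-1$ rule nodes. It is therefore reached in boundedly many construction steps, and the finitely-many-nodes-within-$n$-steps induction applies unchanged.

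Enumeration at bound $h$ then lists every such derivation, including $\mathcal D^{r}$, after finitely many stages. It runs scoped unification on the terminal equations, which succeeds by Lemma~\ref{lem:focused-reference-canonicalization}, and projects the answer.

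I expect the main obstacle to be the exact match between the fresh-name and certificate choices of the reference policy and the focused search. In particular, the canonical name chosen in a focused search must be the same as the one produced by the canonicalization of Lemma~\ref{lem:focused-reference-canonicalization}. My first attempt is to define "unused" relative to the whole partial derivation, exactly as in Definition~\ref{def:reference-symbolic-derivation}. With that definition the canonicalization renames names in the same root-to-leaf order in which the search creates them, and both derivations are built by the same deterministic policy.
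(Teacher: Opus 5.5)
Your proposal is correct and has the same outer structure as the paper's proof. You canonicalize with Lemma~\ref{lem:focused-reference-canonicalization}, take the resulting height $h$, show that the stage at bound $h$ is finite and reaches $\mathcal D^{r}$, and read off the answer clause from that lemma.

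The two routes differ in how finiteness at bound $h$ is obtained.
\begin{itemize}
\item The paper does not build a focused construction tree. It erases the scheduling annotations and applies Lemma~\ref{lem:bounded-height-finiteness} to the underlying unfocused reference derivations. It then observes that each such derivation carries only finitely many annotations: creation indices are fixed by the assignment, and phase boundaries, focus boundaries and distinguished occurrences range over finitely many rule and formula occurrences. Effectiveness is argued separately, from finite border choices (Corollary~\ref{cor:finite-modal-choices}), deterministic asynchronous and naming choices, and termination of foci.
\item You rerun the construction-tree argument on annotated partial derivations. This needs one extra observation: the focused discipline must be checkable step by step on partial derivations. It is, because maximality of phases and continuation of a focus only constrain the next step given the branch history.
\item Your route gives finiteness and effective enumerability in a single argument. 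It only needs finitely many annotation options per step, not your stronger claim that annotations are uniquely determined, although that claim does hold here.
\end{itemize}

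The obstacle you anticipate does not actually arise. A focused reference computation is by definition one whose underlying derivation is a reference derivation. The construction tree also branches over the choice of open leaf, so whatever generation order produced $\mathcal D^{r}$ is among those explored.
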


\begin{proof}
Apply Lemma~\ref{lem:focused-reference-canonicalization} and let $h$ be the primitive focused inference height of the resulting focused reference computation. Erasing the scheduling annotations gives an underlying reference derivation of primitive inference height $h$. By Lemma~\ref{lem:bounded-height-finiteness}, only finitely many underlying reference derivations from the fixed root have height at most $h$. For each such derivation, the creation indices are uniquely determined by the fixed assignment above. Since the derivation has finitely many rule and formula occurrences, there are only finitely many possible choices of phase boundaries, focus boundaries, and distinguished occurrences. Hence there are only finitely many focused reference derivations of primitive focused inference height at most $h$. The focused reference derivations at each fixed height bound can be enumerated effectively. At a border state there are finitely many existential occurrences and seriality sites, while Corollary~\ref{cor:finite-modal-choices} gives finitely many eligible propagation pairs. Asynchronous choice and the selection of fresh names are deterministic, and every progressive focus terminates by Lemma~\ref{lem:sync-termination}. Iterative deepening therefore exhausts the finite stage at bound $h$ and reaches the canonicalized computation.
\end{proof}

\begin{theorem}[Focused answer characterization]\label{thm:focused-answer-characterization}
For every program $P$ and query $G$,
\[
 \Inst(\Comp_{\Amod}^{\mathrm F}(P,G))
 =\Corr^{\mathrm H}_{\Amod}(P,G)
 =\Inst(\Comp_{\Amod}^{\mathrm M}(P,G)).
\]
\end{theorem}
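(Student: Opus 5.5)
The second equality is exactly Corollary~\ref{cor:computed-answer-characterization}, so only the first equality needs an argument. We prove it by two inclusions, mirroring the proof of that corollary but using the focused results in place of the unfocused ones.

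\emph{The inclusion $\subseteq$.} Take $\theta\in\Comp_{\Amod}^{\mathrm F}(P,G)$ and an ordinary output substitution $\gamma$ for $G$.
\begin{enumerate}
\item Theorem~\ref{thm:focused-answer-soundness} gives $\theta\in\Corr^{\mathrm H}_{\Amod}(P,G)$.
\item Lemma~\ref{lem:correct-instance-closure} then places $(\theta\gamma)\upharpoonright A_G$ in $\Corr^{\mathrm H}_{\Amod}(P,G)$.
\end{enumerate}
Hence $\Inst(\Comp_{\Amod}^{\mathrm F}(P,G))\subseteq\Corr^{\mathrm H}_{\Amod}(P,G)$.

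\emph{The inclusion $\supseteq$.} Take $\sigma\in\Corr^{\mathrm H}_{\Amod}(P,G)$.
\begin{enumerate}
\item Theorem~\ref{thm:focused-scoped-lifting} gives a successful focused computation with computed answer $\theta\in\Comp_{\Amod}^{\mathrm F}(P,G)$.
\item The same theorem gives an ordinary output substitution $\gamma$ for $G$ with $\sigma(X)=(\theta\gamma)(X)$ for every $X\in A_G$.
\item Since $\dom(\sigma)\subseteq A_G$, this says $\sigma=(\theta\gamma)\upharpoonright A_G$, so $\sigma\in\Inst(\Comp_{\Amod}^{\mathrm F}(P,G))$ by Definition~\ref{def:computed-instance-closure}.
\end{enumerate}

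\emph{Remaining bookkeeping.} Two points need checking; both are routine.
\begin{itemize}
\item Restricting to $A_G$ must agree with the user answer convention that identity bindings are omitted. When $\sigma(X)=X$, the restriction to $A_G$ simply drops the binding $X\mapsto X$, which matches the convention.
\item $\Comp_{\Amod}^{\mathrm F}$ is defined over all successful focused computations, not only reference ones. Theorem~\ref{thm:focused-symbolic-fairness} is therefore not needed for the set identity. It matters only for the operational claim that iterative deepening actually finds these answers.
\end{itemize}
Combining the two inclusions with Corollary~\ref{cor:computed-answer-characterization} gives the displayed chain of equalities.
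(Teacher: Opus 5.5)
Your proof is correct and matches the paper's argument. You use focused soundness together with Lemma~\ref{lem:correct-instance-closure} for $\subseteq$, focused lifting for $\supseteq$, and Corollary~\ref{cor:computed-answer-characterization} for the second equality. The one difference is that the paper also routes the reverse inclusion through Theorem~\ref{thm:focused-symbolic-fairness}. You are right that this step is not needed for the set identity: Definition~\ref{def:focused-computed-answers} lets $\Comp_{\Amod}^{\mathrm F}(P,G)$ range over all successful focused computations, so fairness only adds the operational fact that a reference representative is actually enumerated.
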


\begin{proof}
Theorem~\ref{thm:focused-answer-soundness} and Lemma~\ref{lem:correct-instance-closure} give $\Inst(\Comp_{\Amod}^{\mathrm F}(P,G)) \subseteq\Corr^{\mathrm H}_{\Amod}(P,G).$ For the reverse inclusion, Theorem~\ref{thm:focused-scoped-lifting} provides a successful focused computation whose computed answer is more general than any chosen declaratively correct answer. Theorem~\ref{thm:focused-symbolic-fairness} reaches a focused reference computation with the same instance closure, so the chosen declaratively correct answer remains an ordinary output instance of an answer represented by that focused reference computation. This gives the first equality. The second equality is Corollary~\ref{cor:computed-answer-characterization}.
\end{proof}

The following corollary states the relation between individual focused and unfocused answers.

\begin{corollary}[Focused coverage of unfocused answers]\label{cor:focused-covers-unfocused}
For every program $P$, query $G$, and $\eta\in\Comp_{\Amod}^{\mathrm M}(P,G)$, there are $\theta\in\Comp_{\Amod}^{\mathrm F}(P,G)$ and an ordinary output substitution $\gamma$ for $G$ such that
$\eta=\theta\gamma$
on $A_G$.
\end{corollary}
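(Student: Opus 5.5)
The plan is to go through the declarative relation rather than compare derivations directly. Fix $\eta\in\Comp_{\Amod}^{\mathrm M}(P,G)$. By answer soundness, Theorem~\ref{thm:answer-soundness}, we have $\eta\in\Corr^{\mathrm H}_{\Amod}(P,G)$. Focused lifting, Theorem~\ref{thm:focused-scoped-lifting}, applied with $\sigma=\eta$, then gives a successful focused computation for $(P,G)$ with computed answer $\theta\in\Comp_{\Amod}^{\mathrm F}(P,G)$. It also gives an ordinary output substitution $\gamma$ for $G$ such that $\eta(X)=(\theta\gamma)(X)$ for every $X\in A_G$. This is exactly the required equation on $A_G$.

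The only point needing care is a typing check. Theorem~\ref{thm:focused-scoped-lifting} requires $\sigma$ to be a user answer substitution. This holds because computed answers have domain contained in $A_G$ by Definition~\ref{def:answer-projection}, and their range consists of ordinary user terms by Lemma~\ref{lem:zero-permission-residual}.

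We should also make sure the output variables of $\eta$ cause no clash. The ordinary output variables occurring in $\eta$ are fresh object variables, so they may appear in the range of $\sigma=\eta$. The lifting theorem treats such variables like any other object variables occurring in $G\sigma$: in the abstraction of Corollary~\ref{cor:query-proof-abstraction}, they occur in the root and therefore remain rigid. Hence $\gamma$ maps the residual names of $\theta$ onto terms that may contain those variables. This is allowed, because Definition~\ref{def:ordinary-output-substitution} restricts only the domain of $\gamma$ to avoid $\FV(G)\setminus A_G$. The fresh output names of $\theta$ are chosen apart from $G$, so this domain condition is met.

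I expect no real obstacle. The step most worth double-checking is that ``$\eta=\theta\gamma$ on $A_G$'' means the restricted composite agrees with $\eta$ pointwise on $A_G$. That is exactly the form of the conclusion of Theorem~\ref{thm:focused-scoped-lifting}, so no instance-closure argument such as Lemma~\ref{lem:correct-instance-closure} is needed here.
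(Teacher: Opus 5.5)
Your proposal is correct and follows the paper's own proof: answer soundness (Theorem~\ref{thm:answer-soundness}) places $\eta$ in $\Corr^{\mathrm H}_{\Amod}(P,G)$, and focused lifting (Theorem~\ref{thm:focused-scoped-lifting}) with $\sigma=\eta$ then produces $\theta$ and $\gamma$. Your extra checks on the typing of $\eta$ and on output-variable clashes are sound but not needed, since membership in $\Corr^{\mathrm H}_{\Amod}(P,G)$ already requires $\eta$ to be a user answer substitution.
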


\begin{proof}
By Definition~\ref{def:computed-answer}, $\eta\in\Comp_{\Amod}^{\mathrm M}(P,G)$ implies $\MMLP_{\Amod}(P,G)\Downarrow\eta$. Theorem~\ref{thm:answer-soundness} therefore gives $\eta\in\Corr^{\mathrm H}_{\Amod}(P,G)$. Apply Theorem~\ref{thm:focused-scoped-lifting} with $\sigma=\eta$.
\end{proof}

\section{Answer conservativity for \texorpdfstring{$\KSys$-\MProlog}{KDI4s5-MProlog}}\label{sec:conservativity}

The preceding sections establish the declarative and operational theory of \MMLP{}. We now compare the system with Nguyen's pure logical $\KSys$-\MProlog{} calculus~\cite{nguyen2006multimodal}. The comparison first establishes a semantic equivalence on the common fragment and then uses the computed answer completeness theorems for the two systems.

\subsection{Common fragment}\label{sec:mprolog-common-fragment}

Fix $\Idx=\{1,\ldots,m\}$. The \emph{common fragment} is the overlap determined by the following restrictions. It excludes Prolog built-ins, cut, negation as failure, side effects, and other operations outside the logic. Terms are interpreted over one constant domain with rigid constants and rigid function symbols. The comparison signature also contains a countably infinite reserve of fresh rigid constants, as required by Nguyen's metatheory~\cite{nguyen2006multimodal}. On the \MProlog{} side, we use Nguyen's SLD calculus for the $\KSys$ schema together with the restrictions on program clauses and goals in Definition~3.4 of his paper~\cite[Definition~3.4]{nguyen2006multimodal}.

Let an \MProlog{} goal be
$\mathfrak G=\leftarrow\alpha_1,\ldots,\alpha_n.$
When $n>0$, translate it to the \MMLP{} query
$\Phi_{\mathfrak G}=\alpha_1\land\cdots\land\alpha_n$.
For the empty goal, put $\Phi_{\mathfrak G}=\neg\bot$. Its NNF translation is $\top$. We use the source formula $\neg\bot$ because $\top$ belongs only to the core NNF language and is not a source formula.

Each \MProlog{} program clause is interpreted as the universally closed formula in the object language denoted by Nguyen's clause notation~\cite{nguyen2006multimodal}. The program $P$ is therefore the same set of closed formulas on both sides. Every free variable of $\Phi_{\mathfrak G}$ is designated as an answer variable, so the two systems return substitutions on the same original goal variables.

The comparison uses results stated and proved by Nguyen~\cite{nguyen2006multimodal}. Section~7.1 characterizes connected $\KSys$ frames by nonempty sets
$W_1\subseteq\cdots\subseteq W_m$
such that
$W=\{\tau\}\cup W_m$ and $R_i=W\times W_i$~\cite[Section~7.1]{nguyen2006multimodal}. Definition~3.9 defines correct answers by local semantic consequence at the actual world~\cite[Definition~3.9]{nguyen2006multimodal}. Theorem~7.1 proves that the concrete $\KSys$ schema satisfies the conditions required by the general framework~\cite[Theorem~7.1]{nguyen2006multimodal}. Consequently, Theorem~5.16 gives soundness of computed answers and Theorem~5.23 gives completeness in the form required below~\cite[Theorems~5.16 and~5.23]{nguyen2006multimodal}.

\subsection{Comparison module}\label{sec:comparison-module}

Write $\Kmod$ for the \MMLP{} modal module used in the comparison. It is defined by
\[
 \Kmod
 =\{\mathrm{D}_i:i\in\Idx\}
 \cup\{\mathrm I(i,j):i>j\}
 \cup\{4(i,j,i):i,j\in\Idx\}
 \cup\{5(i,i,i):i\in\Idx\}.
\]
The name $\KSys$ follows Nguyen's notation. In the present comparison, $\mathrm K$ denotes the normal modal base already built into $H_{\Kmod}$. The family $\mathrm D$ corresponds to seriality. The family $\mathrm I$ corresponds to relation inclusion. The $4_s$ component corresponds to the mixed instances $4(i,j,i)$. The $5$ component corresponds to the instances $5(i,i,i)$. The relational content of $\Kmod$ is seriality, nested relation inclusion, the mixed transitivity conditions $R_j\circ R_i\subseteq R_i$, and directed Euclideanness of each $R_i$.

\begin{lemma}[Relational correspondences for the comparison module]\label{lem:comparison-correspondences}
In this lemma, validity of the displayed modal schemes is understood under arbitrary propositional valuations on the frame. For every multimodal frame:
\begin{enumerate}
\item $\mathrm{D}_i$ is valid precisely on serial $R_i$.
\item $\mathrm I(i,j)$ is valid precisely when $R_j\subseteq R_i$.
\item $4(i,j,i)$ is valid precisely when $R_j\circ R_i\subseteq R_i$.
\item $5(i,i,i)$ is valid precisely when
$uR_iv\land uR_iw\Longrightarrow vR_iw.$
\end{enumerate}
\end{lemma}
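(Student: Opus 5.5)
Each of the four clauses is a standard Sahlqvist-type correspondence. I will prove each one in two directions: a frame condition implies validity, and validity implies the frame condition.

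\textbf{Condition to validity.} This direction is already contained in Proposition~\ref{prop:modal-correspondence}. That proposition verifies validity of $\mathrm D_i$, $\mathrm I(i,j)$, $4(i,j,k)$ and $5(i,j,k)$ on any frame satisfying the matching clause of Definition~\ref{def:frame-class}. Instantiating $k=i$ in the $4$ clause gives $R_j\circ R_i\subseteq R_i$, under the left-to-right composition fixed in Section~\ref{sec:frame-specification}. Instantiating $i=j=k$ in the $5$ clause gives the displayed directed Euclidean condition. The argument there uses only the semantic clauses, so it applies verbatim to an arbitrary single frame rather than to the class $\Frames(\Amod)$.

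\textbf{Validity to condition.} For this direction I argue contrapositively in each case. I take a proposition letter $p$, realised as a nullary predicate, or as a unary atom evaluated at a fixed domain element, so that propositional valuations are available in the sense stated in the lemma. I then choose a valuation refuting the scheme at a world that witnesses the failure of the frame condition.
\begin{enumerate}
\item Seriality: if $u$ has no $R_i$ successor, then $\Box_i\bot$ holds at $u$ and $\Diamond_i\bot$ fails, so $\mathrm D_i$ (instance $A=\bot$) fails at $u$.
\item Inclusion: if $uR_jv$ but not $uR_iv$, set $p$ true exactly at the $R_i$ successors of $u$. Then $\Box_ip$ holds at $u$ while $\Box_jp$ fails at $u$, witnessed by $v$.
\item Mixed transitivity: if $uR_jv$, $vR_iw$ and not $uR_iw$, let $p$ hold exactly on the $R_i$ successors of $u$. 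Then $\Box_ip$ holds at $u$, but $\Box_j\Box_ip$ fails at $u$, since $p$ fails at $w$.
\item Directed Euclideanness: if $uR_iv$, $uR_iw$ and not $vR_iw$, let $p$ hold only at $w$. Then $\Diamond_ip$ holds at $u$, while $\Diamond_ip$ fails at $v$, so $\Box_i\Diamond_ip$ fails at $u$.
\end{enumerate}

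The only delicate point is the convention that validity ranges over arbitrary valuations, which the lemma fixes explicitly. With that convention, each minimal valuation above is a legitimate model on the given frame. The constant domain and rigid terms play no role, since $p$ carries no first-order arguments. No step is a real obstacle. The one thing to check carefully is the orientation of composition and the argument order in the $4$ and $5$ clauses, so that the instantiations match the displayed conditions.
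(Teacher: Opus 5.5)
Your proof is correct and matches the paper's argument essentially step for step. The paper also delegates the condition-to-validity direction to the semantic arguments of Proposition~\ref{prop:modal-correspondence}, and its converses use the same refuting valuations: $p$ true exactly on the $R_i$ successors of $u$ for inclusion and mixed transitivity, and $p$ true only at $w$ for directed Euclideanness. The one cosmetic difference is that you refute $\mathrm D_i$ with the instance $A=\bot$, where the paper uses $\Box_ip\to\Diamond_ip$. Both work, since at a world with no $R_i$ successor every box is vacuously true and every diamond is false.
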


\begin{proof}
The implications from the displayed frame conditions to validity follow by the same modal semantic arguments as in Proposition~\ref{prop:modal-correspondence}. For the converses, use a propositional variable $p$ with the valuations specified below.

If seriality fails at $u$, then $\Box_ip$ is vacuously true and $\Diamond_ip$ is false at $u$, so $\mathrm{D}_i$ fails. If $uR_jv$ and $u\not R_iv$, make $p$ true at every $R_i$ successor of $u$ and false at $v$. Then $\mathrm I(i,j)$ fails at $u$.

If $uR_jv$, $vR_iw$, and $u\not R_iw$, make $p$ true at every $R_i$ successor of $u$ and false at $w$. Then $\Box_ip$ is true at $u$, whereas $\Box_j\Box_ip$ is false. Hence $4(i,j,i)$ fails. Finally, if $uR_iv$, $uR_iw$, and $v\not R_iw$, make $p$ true only at $w$. Then $\Diamond_ip$ is true at $u$ and false at $v$, so $\Box_i\Diamond_ip$ is false at $u$ and $5(i,i,i)$ fails.
\end{proof}

Lemma~\ref{lem:comparison-correspondences} establishes the frame correspondences for the individual modal principles. The next theorem shows that, once the frame is generated from the distinguished point, their combined relational content is exactly Nguyen's connected normal form.

\begin{theorem}[Connected frame characterization]\label{thm:connected-frame-characterization}
Let $(W,\tau,(R_i)_{1\le i\le m})$ be a pointed multimodal frame in which every world is reachable from $\tau$ through a path in the union of the relations. The following conditions are equivalent.
\begin{enumerate}
\item The relation family satisfies the frame conditions generated by $\Kmod$.
\item There are nonempty sets
$W_1\subseteq\cdots\subseteq W_m$
such that
$W=\{\tau\}\cup W_m$ and $R_i=W\times W_i$
for every $i$.
\end{enumerate}
Thus the connected \MMLP{} frame class for $\Kmod$ coincides with Nguyen's connected $\KSys$ frame presentation.
\end{theorem}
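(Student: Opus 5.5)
The plan is to prove the two implications separately: (2)$\Rightarrow$(1) is a direct check, and (1)$\Rightarrow$(2) defines $W_i$ as the common $R_i$-successor set. Throughout, write $R_i(x)=\{w\mid xR_iw\}$. The frame conditions generated by $\Kmod$ (Definition~\ref{def:frame-class}) are:
\begin{itemize}
\item seriality of every $R_i$;
\item $R_j\subseteq R_i$ for $i>j$;
\item $xR_jy\land yR_iw\to xR_iw$ for all $i,j$, which includes the case $j=i$, ordinary transitivity;
\item $uR_iv\land uR_iw\to vR_iw$.
\end{itemize}

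For (2)$\Rightarrow$(1), assume $R_i=W\times W_i$ with nonempty $W_1\subseteq\cdots\subseteq W_m$.
\begin{itemize}
\item Seriality holds because $W_i\neq\varnothing$.
\item For $j<i$, the inclusion $W_j\subseteq W_i$ gives $R_j\subseteq R_i$.
\item In the clause for $4(i,j,i)$, the premise $yR_iw$ forces $w\in W_i$, hence $xR_iw$ for every $x$.
\item In the clause for $5(i,i,i)$, the premise $uR_iw$ forces $w\in W_i$, hence $vR_iw$.
\end{itemize}
Connectivity is not needed in this direction.

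For (1)$\Rightarrow$(2), the key step is an edge-invariance claim: if $xR_jy$ for some $j$, then $R_i(x)=R_i(y)$ for every $i$.
\begin{itemize}
\item The inclusion $R_i(y)\subseteq R_i(x)$ is the $4(i,j,i)$ clause applied to $xR_jy$.
\item For the converse, take $w\in R_i(x)$. Seriality gives some $z$ with $yR_iz$. The $4(i,j,i)$ clause applied to $xR_jy$ and $yR_iz$ gives $xR_iz$. Directed Euclideanness applied to $xR_iz$ and $xR_iw$ gives $zR_iw$. Finally the $4(i,i,i)$ clause applied to $yR_iz$ and $zR_iw$ gives $yR_iw$.
\end{itemize}
This reverse inclusion is where I expect the only real subtlety. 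It needs seriality to produce the intermediate world $z$, and it combines Euclideanness with the mixed transitivity instances in the right order.

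Because equality is symmetric, the claim makes $R_i(\cdot)$ constant along undirected paths in $\bigcup_jR_j$. Every world is reachable from $\tau$, so $R_i(u)=R_i(\tau)$ for every $u\in W$. Set $W_i=R_i(\tau)$, which gives $R_i=W\times W_i$ immediately.

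The remaining conditions on the $W_i$ follow quickly.
\begin{itemize}
\item Each $W_i$ is nonempty by seriality at $\tau$.
\item For $j<i$, the inclusion $\mathrm I(i,j)$ gives $R_j(\tau)\subseteq R_i(\tau)$, so the sets form the chain $W_1\subseteq\cdots\subseteq W_m$.
\item Clearly $\{\tau\}\cup W_m\subseteq W$. Conversely, every $u\neq\tau$ is the endpoint of a nonempty path from $\tau$. Its last edge $xR_ju$ gives $u\in R_j(x)=W_j\subseteq W_m$.
\end{itemize}
Hence $W=\{\tau\}\cup W_m$. Together with Lemma~\ref{lem:comparison-correspondences}, this identifies the connected frame class of $\Kmod$ with Nguyen's connected presentation.
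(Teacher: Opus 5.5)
Your proof is correct and is essentially the paper's argument: $W_i=R_i(\tau)$, the inclusion $R_i(y)\subseteq R_i(x)$ comes from $4(i,j,i)$, and the reverse inclusion comes from seriality, the Euclidean condition $5(i,i,i)$ and $4(i,i,i)$, exactly as in the paper. The difference is only organizational. The paper first collapses any path from $\tau$ into a single edge $\tau R_m u$, using the inclusions and $4(m,m,m)$, and then runs the two-inclusion argument once. You instead prove invariance for an arbitrary edge and propagate it along the path, which also lets you obtain $W=\{\tau\}\cup W_m$ from the path's last edge.
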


\begin{proof}
Assume the first condition and set $W_i=R_i(\tau)=\{v:\tau R_i v\}.$ Seriality makes every $W_i$ nonempty. If $i>j$, relation inclusion gives $W_j\subseteq W_i$, so the sets form the required chain.

We next prove $W=\{\tau\}\cup W_m$. Let $u\ne\tau$. By connectedness there is a path from $\tau$ to $u$ whose edges belong to relations $R_{k_1},\ldots,R_{k_n}$. Every $R_k$ is contained in $R_m$ when $k<m$, while $R_m$ is contained in itself. Hence all edges of the path are $R_m$ edges. The selected instance $4(m,m,m)$ gives $R_m\circ R_m\subseteq R_m$, so repeated composition along the finite path yields $\tau R_mu$. Therefore $u\in W_m$.

It remains to prove $R_i(u)=W_i$ for every $u$ and $i$. The equality holds for $u=\tau$ by definition. Let $u\ne\tau$. We already have $\tau R_mu$. If $uR_iv$, the condition $R_m\circ R_i\subseteq R_i$ gives $\tau R_iv$, so $v\in W_i$. Hence $R_i(u)\subseteq W_i$.

For the reverse inclusion, seriality gives some $v$ with $uR_iv$. The preceding inclusion makes $v\in W_i$, so $\tau R_iv$. Let $w\in W_i$ be arbitrary, so $\tau R_iw$. Euclideanness gives $vR_iw$. The selected instance $4(i,i,i)$ then combines $uR_iv$ and $vR_iw$ into $uR_iw$. Thus $W_i\subseteq R_i(u)$, and $R_i=W\times W_i$.

Conversely, assume the second condition. Nonemptiness of $W_i$ makes $R_i=W\times W_i$ serial. If $i>j$, the set inclusion $W_j\subseteq W_i$ gives $R_j\subseteq R_i$. If $uR_jv$ and $vR_iw$, then $w\in W_i$, hence $uR_iw$. Thus $R_j\circ R_i\subseteq R_i$. Finally, if $uR_iv$ and $uR_iw$, then $w\in W_i$ and therefore $vR_iw$. These checks complete the converse implication.
\end{proof}

\begin{corollary}[First-order model correspondence]\label{cor:first-order-model-correspondence}
Under the domain discipline with rigid terms of the common fragment, pointed connected Nguyen models and pointed connected \MMLP{} models over $\Kmod$ share relational frames and the constant-domain requirement. They also agree on rigidity of constants and function symbols, world-dependent predicate interpretations, and assignments that are independent of the world.
\end{corollary}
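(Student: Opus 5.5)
This corollary is a bookkeeping statement that unpacks Theorem~\ref{thm:connected-frame-characterization} at the level of full first-order models. The plan is to compare the two model definitions component by component: relational frame, domain, interpretation of terms and predicates, and assignments. The frame comparison carries the real content. The first-order components are checked directly against Definition~\ref{def:modal-model} and the common-fragment restrictions fixed in Section~\ref{sec:mprolog-common-fragment}.

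\emph{Frames.} A pointed connected Nguyen model has underlying frame $(W,\tau,(R_i))$ in the normal form of \cite[Section~7.1]{nguyen2006multimodal}. That is, $W=\{\tau\}\cup W_m$ and $R_i=W\times W_i$ for a nonempty chain $W_1\subseteq\cdots\subseteq W_m$. Such a frame is connected from $\tau$, since every $u\in W_m$ satisfies $\tau R_mu$. By the converse direction of Theorem~\ref{thm:connected-frame-characterization}, it satisfies every clause generated by $\Kmod$ and so lies in $\Frames(\Kmod)$. Conversely, let a pointed \MMLP{} model over $\Kmod$ be connected, meaning every world is reachable from $\tau$. The forward direction of the same theorem then puts its frame into Nguyen's normal form. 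Hence the two classes of pointed connected frames coincide.

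\emph{First-order components.} Definition~\ref{def:modal-model} imposes a single nonempty domain $D$ shared by all worlds. It makes constants and function symbols world-independent and lets predicate interpretations $\mathcal J_w(p)$ vary with $w$. Assignments $g\colon\mathsf{Var}\to D$ carry no world parameter. The common fragment stipulates exactly these conditions on Nguyen's side: one constant domain, rigid constants and function symbols, and the reserve of fresh rigid constants, which are included in the shared signature. Nguyen's predicate interpretation is world-relative and his variable assignments are global. So each component corresponds identically, and a model on one side is literally a model on the other once the frames are identified.

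The only step I expect to need care is that both sides use the same notion of ``connected''. Nguyen's connected frames are generated from the actual world, while Theorem~\ref{thm:connected-frame-characterization} assumes reachability through paths in $\bigcup_iR_i$. I would check that these coincide by observing that the normal form makes every non-root world an $R_m$-successor of $\tau$. Apart from that, the argument is a direct matching of definitions.
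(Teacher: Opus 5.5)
Your proposal is correct and takes the paper's route: the frame identification is Theorem~\ref{thm:connected-frame-characterization}, and the domain, rigidity, predicate and assignment components are matched directly against the common-fragment assumptions. Your extra check that Nguyen's normal-form frames are connected from $\tau$, since $\tau R_m u$ for every $u\in W_m$, makes explicit that the theorem's reachability hypothesis applies; the paper leaves this implicit.
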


\begin{proof}
The frame statement is Theorem~\ref{thm:connected-frame-characterization}. The clauses for first-order semantics add no further frame condition, and the two presentations use the same domain and rigidity assumptions in the common fragment.
\end{proof}

\subsection{Generated submodels}\label{sec:generated-submodels}

Nguyen evaluates local consequence in a pointed connected model. \MMLP{} validity quantifies globally over all worlds of all frames in the selected class. The following lemma on generated submodels relates these two notions of validity.

\begin{lemma}[Generated connected submodel]\label{lem:generated-submodel}
Let
$\mathcal M=(W,(R_i),D,\mathcal J)$
be a constant-domain model with rigid terms based on a frame in $\Frames(\Kmod)$ and let $w\in W$. Let $W_w$ contain $w$ and every world reachable from $w$ through a path in the union of the $R_i$. Restrict all relations and world-dependent predicate interpretations to $W_w$, while keeping the domain and rigid term interpretations unchanged. Call the resulting model $\mathcal M\upharpoonright w$. Then:
\begin{enumerate}
\item the restricted frame is connected from $w$.
\item it still belongs to $\Frames(\Kmod)$.
\item for every source formula $A$, every $u\in W_w$, and every assignment $g$,
$\mathcal M,u,g\models A\Longleftrightarrow \mathcal M\upharpoonright w,u,g\models A$.
\end{enumerate}
\end{lemma}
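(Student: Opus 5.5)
The plan is to prove the three clauses in order, each by a direct argument. The third clause is the only one that needs an induction.

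\textbf{Connectedness.} By definition, every member of $W_w$ other than $w$ is reached from $w$ by a finite path in the union of the relations. Every intermediate world on such a path is itself reachable from $w$, so it also lies in $W_w$. Hence each edge of the path survives the restriction, and the restricted frame is connected from $w$.

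\textbf{Frame class.} First record that $W_w$ is closed under every $R_i$: if $u\in W_w$ and $uR_iv$, then $v$ is reachable from $w$, so $v\in W_w$. Then check the clauses of $\Kmod$ in turn.
\begin{caselist}
\item Seriality. The $R_i$ successor supplied by $\mathrm{D}_i$ in $\mathcal M$ lies in $W_w$ by closure, so the restricted $R_i$ is still serial.
\item Inclusion, transitivity and Euclideanness. These are universal Horn clauses of the form premises imply conclusion. If their premises hold in the restriction, they hold in $\mathcal M$. The conclusion then holds in $\mathcal M$, and its endpoints are already in $W_w$, so it holds in the restriction.
\end{caselist}
Hence the restricted frame belongs to $\Frames(\Kmod)$. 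Only seriality genuinely uses the closure of $W_w$.

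\textbf{Preservation of truth.} Argue by structural induction on $A$, simultaneously for all $u\in W_w$ and all assignments $g$.
\begin{caselist}
\item Atoms. The predicate interpretation at $u$ and the rigid term denotations are unchanged, so atomic truth agrees. The case $\bot$ is trivial.
\item Boolean connectives. These follow directly from the induction hypothesis.
\item Quantifiers. The domain $D$ is unchanged, so both models range over the same values. Apply the induction hypothesis to each variant assignment $g[x\mapsto d]$; this is why the induction is stated for all $g$.
\item Modalities. For $\Box_iB$ and $\Diamond_iB$ at $u\in W_w$, the $R_i$ successors of $u$ in $\mathcal M$ are exactly its successors in the restriction, by the closure of $W_w$. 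The induction hypothesis at those successors then transfers the modal clause in both directions.
\end{caselist}

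I expect no serious obstacle. The one point that needs explicit mention is that the successor sets coincide in the modal step. This is exactly where the restriction relies on the generated set rather than an arbitrary subset of worlds.
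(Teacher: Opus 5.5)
Your proposal is correct and follows the paper's proof essentially step for step. Like the paper, you obtain connectedness from the definition of $W_w$ and use closure of $W_w$ under each $R_i$ for both seriality and the modal case. You also note that the inclusion, mixed transitivity, and Euclidean clauses survive restriction because they are universal implications between relation atoms, and you prove truth preservation by structural induction over all assignments with the domain unchanged. The only difference is that you isolate the closure property up front instead of invoking it inline.
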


\begin{proof}
Connectedness is immediate from the definition of $W_w$. For seriality, let $u\in W_w$ and choose an $R_i$ successor $v$ in the original model. Such a successor exists by seriality, and $v$ also belongs to $W_w$ because its edge can be appended to a path from $w$ to $u$. The inclusion, mixed transitivity, and Euclidean conditions are universal implications between relation atoms, so restriction preserves them. Thus the restricted frame remains in the selected class.

For truth preservation, use structural induction on $A$.

Base case. Rigid terms keep the same denotations, and predicate interpretations agree at retained worlds, so atomic formulas and truth constants have the same truth values.

Induction step. Assume truth preservation for the immediate subformulas.

\begin{caselist}
\item Case Boolean constructor. Apply the induction hypotheses and the corresponding Boolean semantic clause.
\item Case quantifier. The domain is unchanged, so the quantifier ranges over the same elements in both models. Apply the induction hypothesis under each updated assignment.
\item Case box or diamond. At a retained world $u$, every original successor of $u$ is also retained by successor closure, so the successor set is the same before and after restriction. Apply the induction hypothesis at each successor.
\end{caselist}
\end{proof}

\subsection{Local consequence}\label{sec:local-consequence-equivalence}

Write $P\models^{\mathrm N}F$ for Nguyen's local semantic consequence at the actual world.

\begin{theorem}[Local consequence equivalence]\label{thm:local-consequence-equivalence}
For every finite set $P$ of closed program formulas in the common fragment and every closed formula $F$ in the common fragment,
$\Frames(\Kmod)\models(\widehat P\to F)\Longleftrightarrow P\models^{\mathrm N}F$.
\end{theorem}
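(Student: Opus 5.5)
We prove the two directions separately. Both are driven by Theorem~\ref{thm:connected-frame-characterization}, Corollary~\ref{cor:first-order-model-correspondence}, and Lemma~\ref{lem:generated-submodel}. The first step is to fix precisely what $P\models^{\mathrm N}F$ means, following Nguyen's Definition~3.9. For every pointed connected $\KSys$ model $(\mathcal M,\tau)$ and every assignment, if every member of $P$ is true at $\tau$, then $F$ is true at $\tau$. Since $P$ and $F$ are closed, the assignment is irrelevant, and we will say so once at the start. By Corollary~\ref{cor:first-order-model-correspondence}, pointed connected Nguyen models are exactly the pointed models over $\Frames(\Kmod)$ that are connected from the distinguished world, with the same constant domain and rigid terms. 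We will also check that the fresh rigid constant reserve is harmless. Those constants do not occur in $P$ or $F$, so expanding or reducing a model by them does not change the truth of $\widehat P$ or $F$.

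\emph{Left to right.} Assume $\Frames(\Kmod)\models(\widehat P\to F)$, and take a pointed connected Nguyen model $(\mathcal M,\tau)$ satisfying every $P_i$ at $\tau$. By Theorem~\ref{thm:connected-frame-characterization}, its frame lies in $\Frames(\Kmod)$, so $\mathcal M$ is an \MMLP{} model over $\Kmod$. Then $\widehat P$ holds at $\tau$, since it is the conjunction of the $P_i$, or $\neg\bot$ when $P$ is empty. Global validity of the implication then gives $F$ at $\tau$.

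\emph{Right to left.} Take any model $\mathcal M$ based on a frame in $\Frames(\Kmod)$, a world $w$, and an assignment $g$ with $\mathcal M,w,g\models\widehat P$. Pass to the generated submodel $\mathcal M\upharpoonright w$ of Lemma~\ref{lem:generated-submodel}. It is connected from $w$, stays in $\Frames(\Kmod)$, and preserves the truth of every source formula at $w$. By Theorem~\ref{thm:connected-frame-characterization} applied with $\tau=w$, the model $(\mathcal M\upharpoonright w,w)$ is a pointed connected Nguyen model. It satisfies all $P_i$ at $w$, so $P\models^{\mathrm N}F$ yields $F$ at $w$ in the submodel. Lemma~\ref{lem:generated-submodel} transfers this back to $\mathcal M,w,g$. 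Since $w$ and $g$ were arbitrary, the implication is valid on $\Frames(\Kmod)$.

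The main obstacle is not the modal reasoning but the bookkeeping that makes the two semantic notions genuinely comparable. Three points need care: the match between Nguyen's first-order model conditions and ours, the role of the reserve of fresh constants, and the handling of the empty program via $\neg\bot$. Corollary~\ref{cor:first-order-model-correspondence} is meant to absorb the first point. For the second, we will argue explicitly that truth of formulas not mentioning a constant is invariant under changing that constant's interpretation. The third is immediate, since $\neg\bot$ is true everywhere.
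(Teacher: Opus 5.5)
Your proposal is correct and follows essentially the same route as the paper: left to right by regarding a pointed connected Nguyen model as a model over $\Frames(\Kmod)$, and right to left by passing to the generated connected submodel at $w$ and transferring truth back. The only differences are that the paper argues the second direction by contradiction rather than directly, and that it does not spell out your extra bookkeeping on the fresh-constant reserve, the empty program, and closedness.
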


\begin{proof}
Assume first that the global \MMLP{} implication is valid. Take an arbitrary Nguyen model with actual world $\tau$ in which every formula of $P$ is true at $\tau$. By Corollary~\ref{cor:first-order-model-correspondence}, forgetting only the distinguished marker for the actual world yields a model over $\Frames(\Kmod)$. Hence $\widehat P\to F$ is true at $\tau$, and $F$ follows there. The Nguyen model was arbitrary.

Conversely, assume $P\models^{\mathrm N}F$ and suppose the global implication is invalid. Then some \MMLP{} model and world $w$ satisfy $\widehat P$ and falsify $F$. Apply Lemma~\ref{lem:generated-submodel}. The generated restriction, pointed at $w$, is connected and remains in the selected frame class, so Corollary~\ref{cor:first-order-model-correspondence} makes it a Nguyen model. By truth preservation, every program formula remains true at $w$, and $F$ remains false there, contradicting the assumed local consequence.
\end{proof}

The preceding semantic equivalence extends directly to answer correctness by universally closing each instantiated goal.

Let $\Corr^{\mathrm N}(P,\mathfrak G)$ denote Nguyen's correct answer set for the pair on the common fragment. The superscript $\mathrm N$ marks Nguyen's answer notation in the common fragment.

\begin{theorem}[Correct answer equivalence]\label{thm:correct-answer-equivalence}
For every $(P,\mathfrak G)$ in the common fragment,
$\Corr^{\mathrm H}_{\Kmod}(P,\Phi_{\mathfrak G}) =\Corr^{\mathrm N}(P,\mathfrak G).$
\end{theorem}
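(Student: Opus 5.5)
The plan is to prove the two inclusions at once by a chain of equivalences for a fixed user answer substitution $\theta$ with $\dom(\theta)\subseteq A_{\mathfrak G}=\FV(\Phi_{\mathfrak G})$.

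First, I will confirm that the two sides range over the same candidates. In Nguyen's framework an answer is a substitution on the goal variables whose range consists of terms over the comparison signature. Our user answer substitutions have domain in $A_G$ and range in ordinary user terms. So they agree on the underlying signature. The one point to check is the reserve of fresh rigid constants. Since \MMLP{} user terms contain no proof parameters, I would identify Nguyen's reserve with additional signature constants. I would then note that Definition~\ref{def:hilbert-correct} is insensitive to constants absent from $P$ and $\Phi_{\mathfrak G}\theta$, which follows from Lemma~\ref{lem:fresh-constant-conservativity} applied to the signature expansion.

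Second, I will unfold Nguyen's Definition~3.9. By that definition, $\theta$ is correct exactly when $P\models^{\mathrm N}\forall(\Phi_{\mathfrak G}\theta)$, the universal closure being taken at the actual world. Take $Y_1,\ldots,Y_r$ enumerating $\FV(\Phi_{\mathfrak G}\theta)$, and put $F=\forall Y_1\cdots\forall Y_r\,\Phi_{\mathfrak G}\theta$. This $F$ is a closed formula. I will check that $F$ lies in the common fragment in the sense needed by Theorem~\ref{thm:local-consequence-equivalence}. That theorem is semantic, and its proof only uses model correspondence and generated submodels, which are valid for arbitrary source formulas. So it is enough to observe that $F$ is a source formula over the common signature.

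Third, I will combine the previous steps into the chain
\[
\theta\in\Corr^{\mathrm H}_{\Kmod}(P,\Phi_{\mathfrak G})
\iff H_{\Kmod}\vdash\widehat P\to F
\iff \Frames(\Kmod)\models\widehat P\to F
\iff P\models^{\mathrm N}F
\iff \theta\in\Corr^{\mathrm N}(P,\mathfrak G).
\]
The second equivalence is Corollary~\ref{cor:three-way-adequacy}. The third is Theorem~\ref{thm:local-consequence-equivalence}. The empty goal needs separate attention. There $\Phi_{\mathfrak G}=\neg\bot$, there are no answer variables, and only the identity is a candidate. Both sides then contain it, because $\widehat P\to\neg\bot$ is a theorem and the empty goal is trivially a local consequence.

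I expect the main obstacle to be the first step, namely matching Nguyen's exact notion of answer against ours. Nguyen may define correctness through closing free variables universally, or may let answers mention his reserve of fresh constants. If his answers may contain reserve constants, I would first handle those by renaming them to fresh object variables, using Lemma~\ref{lem:formula-substitution}, and only then apply the chain above.
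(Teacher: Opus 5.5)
Your proposal is correct and follows the paper's proof: it uses the same chain, Hilbert correctness $\iff$ $H_{\Kmod}\vdash\widehat P\to F$ $\iff$ $\Frames(\Kmod)\models\widehat P\to F$ (Corollary~\ref{cor:three-way-adequacy}) $\iff$ $P\models^{\mathrm N}F$ (Theorem~\ref{thm:local-consequence-equivalence}) $\iff$ Nguyen's Definition~3.9, applied to each legal answer substitution. Your extra checks on the empty goal and on whether $F$ lies in the fragment are harmless. The reserve-constant fallback is unnecessary, because the common fragment already puts that reserve into the comparison signature, so the chain applies to such $\theta$ directly without renaming.
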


\begin{proof}
Take a legal user answer substitution $\theta$ and let $F_\theta$ be the universal closure of $\Phi_{\mathfrak G}\theta$. By Definition~\ref{def:hilbert-correct}, $\theta$ is \MMLP{} correct precisely when $H_{\Kmod}\vdash\widehat P\to F_\theta.$ Corollary~\ref{cor:three-way-adequacy} converts this condition to global validity over $\Frames(\Kmod)$. Theorem~\ref{thm:local-consequence-equivalence} then converts it to $P\models^{\mathrm N}F_\theta.$ This is Nguyen's correct answer condition for $\theta$ from Definition~3.9~\cite[Definition~3.9]{nguyen2006multimodal}. The equivalence holds for every legal answer substitution.
\end{proof}

\subsection{Computed answers in \MProlog}\label{sec:mprolog-computed-answers}

Let $\Comp^{\mathrm N}(P,\mathfrak G)$ be the computed answer set of Nguyen's concrete calculus. Given a computed answer $\eta$, let $\gamma$ be a capture-avoiding ordinary substitution whose domain is contained in the variables still free in $\Phi_{\mathfrak G}\eta$ and whose range consists of ordinary user terms. Every free variable of $\Phi_{\mathfrak G}$ is an answer variable, so such a $\gamma$ satisfies the output condition of Definition~\ref{def:ordinary-output-substitution}.

The \emph{instance closure} $\Inst(\Comp^{\mathrm N}(P,\mathfrak G))$ consists of all restrictions $(\eta\gamma)\upharpoonright\FV(\Phi_{\mathfrak G})$ obtained in this way from computed answers $\eta$.

\begin{lemma}[Instance closure of Nguyen answers]\label{lem:nguyen-instance-closure}
If $\eta\in\Corr^{\mathrm N}(P,\mathfrak G)$ and $\gamma$ satisfies the preceding conditions, then
$(\eta\gamma)\upharpoonright\FV(\Phi_{\mathfrak G})\in\Corr^{\mathrm N}(P,\mathfrak G).$
\end{lemma}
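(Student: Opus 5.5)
The plan is to go through the \MMLP{} side, where instance closure is already proved, instead of reopening Nguyen's semantics. By Theorem~\ref{thm:correct-answer-equivalence}, $\Corr^{\mathrm N}(P,\mathfrak G)=\Corr^{\mathrm H}_{\Kmod}(P,\Phi_{\mathfrak G})$. So from $\eta\in\Corr^{\mathrm N}(P,\mathfrak G)$ we get $\eta\in\Corr^{\mathrm H}_{\Kmod}(P,\Phi_{\mathfrak G})$. Here $\eta$ is a legal user answer substitution with $\dom(\eta)\subseteq A_{\Phi_{\mathfrak G}}=\FV(\Phi_{\mathfrak G})$, because every free goal variable is designated an answer variable.

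Next I check that $\gamma$ is an ordinary output substitution for $\Phi_{\mathfrak G}$ in the sense of Definition~\ref{def:ordinary-output-substitution}. It is finite and capture-avoiding, and its range consists of ordinary user terms by hypothesis. The domain condition $\dom(\gamma)\cap(\FV(\Phi_{\mathfrak G})\setminus A_{\Phi_{\mathfrak G}})=\varnothing$ holds vacuously, since $A_{\Phi_{\mathfrak G}}=\FV(\Phi_{\mathfrak G})$. This is exactly the remark made just before the definition of the instance closure. The side condition that $\dom(\gamma)$ lies among the variables still free in $\Phi_{\mathfrak G}\eta$ is not needed for this check; it only guarantees that $\gamma$ has a genuine effect.

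Lemma~\ref{lem:correct-instance-closure}, applied with module $\Kmod$ and query $\Phi_{\mathfrak G}$, then gives $(\eta\gamma)\upharpoonright A_{\Phi_{\mathfrak G}}\in\Corr^{\mathrm H}_{\Kmod}(P,\Phi_{\mathfrak G})$. Since $A_{\Phi_{\mathfrak G}}=\FV(\Phi_{\mathfrak G})$, this is the displayed restriction. Applying Theorem~\ref{thm:correct-answer-equivalence} in the reverse direction puts it back in $\Corr^{\mathrm N}(P,\mathfrak G)$.

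The only delicate point is the empty goal. There $\Phi_{\mathfrak G}=\neg\bot$ has no free variables, so both restrictions are the identity and the claim is trivial. I also need to confirm that the restricted substitution is again a legal answer substitution on the common fragment, meaning its range stays within the comparison signature. That holds because composing ordinary user terms yields ordinary user terms. I expect no real obstacle, since the statement is essentially a transfer of Lemma~\ref{lem:correct-instance-closure} along Theorem~\ref{thm:correct-answer-equivalence}.
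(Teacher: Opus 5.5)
Your proposal is correct and follows the paper's own proof. You move $\eta$ into $\Corr^{\mathrm H}_{\Kmod}(P,\Phi_{\mathfrak G})$ by Theorem~\ref{thm:correct-answer-equivalence}, apply Lemma~\ref{lem:correct-instance-closure} using the fact that every free goal variable is an answer variable (so the output-domain condition is vacuous), and transfer back with the same theorem. Your extra remarks on the empty goal and on the legality of the restricted substitution are harmless details that the paper leaves implicit.
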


\begin{proof}
By Theorem~\ref{thm:correct-answer-equivalence}, $\eta$ is an \MMLP{} declaratively correct answer. Since every free variable of $\Phi_{\mathfrak G}$ is designated as an answer variable, Lemma~\ref{lem:correct-instance-closure} preserves correctness under substitutions satisfying the same domain and range conditions after restriction to the original goal variables. A second application of Theorem~\ref{thm:correct-answer-equivalence} gives the claim.
\end{proof}

\begin{lemma}[Goal substitution reflection]\label{lem:goal-substitution-reflection}
Let $\sigma$ and $\sigma'$ be ordinary substitutions on the original variables of $\mathfrak G$. If
$\mathfrak G\sigma=\mathfrak G\sigma'$
as syntactic goal lists, then $\sigma(X)=\sigma'(X)$ for every $X\in\FV(\Phi_{\mathfrak G})$. Consequently, if
$\mathfrak G\theta=\mathfrak G\eta\delta,$
then
$\theta=\eta\delta_0$
on the original answer variables, where
$\delta_0=\delta\upharpoonright\FV(\Phi_{\mathfrak G}\eta).$
The substitution $\delta_0$ satisfies the conditions used in the instance closure.
\end{lemma}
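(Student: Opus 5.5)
The first assertion I would prove by a purely syntactic argument about how a substitution acts on a goal list. The goal $\mathfrak G=\leftarrow\alpha_1,\ldots,\alpha_n$ is a finite list of atoms, possibly with modal prefixes in Nguyen's clause notation. Applying a substitution acts homomorphically on that list. Equality $\mathfrak G\sigma=\mathfrak G\sigma'$ of syntactic goal lists therefore gives $\alpha_r\sigma=\alpha_r\sigma'$ for every $r$, with the same modal prefix and the same predicate symbol. The term tuples of the two atoms then agree argumentwise.

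I would then use structural induction on a term $t$ to show that $t\sigma=t\sigma'$ implies $\sigma(X)=\sigma'(X)$ for every $X\in\FV(t)$. In the variable case the claim is immediate. Constants are irrelevant because they contain no variables. For $t=f(t_1,\ldots,t_n)$, injectivity of the free constructor $f$ gives $t_r\sigma=t_r\sigma'$ for each $r$, and the induction hypothesis applies to each argument. Every $X\in\FV(\Phi_{\mathfrak G})$ occurs in some argument of some $\alpha_r$, because $\Phi_{\mathfrak G}$ is the conjunction of the $\alpha_r$. The empty goal case is vacuous, since then $\Phi_{\mathfrak G}=\neg\bot$ has no free variables.

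For the consequence, I would apply the first assertion with $\sigma=\theta$ and $\sigma'=\eta\delta$. This gives $\theta(X)=(\eta(X))\delta$ for every original answer variable $X$. The variables of $\eta(X)$ all lie in $\FV(\Phi_{\mathfrak G}\eta)$, because $X$ occurs in $\Phi_{\mathfrak G}$ and $\eta(X)$ appears as a subterm of $\Phi_{\mathfrak G}\eta$. So $\delta$ and its restriction $\delta_0$ agree on $\eta(X)$, by the ordinary-term analogue of Proposition~\ref{prop:supported-term-agreement} (the same structural induction). Hence $\theta=\eta\delta_0$ on the answer variables.

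It remains to check that $\delta_0$ satisfies the conditions used in the instance closure. By construction its domain lies in the variables still free in $\Phi_{\mathfrak G}\eta$. Capture avoidance is automatic, because goal lists contain no binders. The range condition is where I expect the main obstacle. The paper asks for ordinary user terms, but Nguyen's $\delta$ could in principle involve fresh rigid constants from the comparison reserve. For this step I would use that $\theta$ is a legal user answer substitution: for every $Y\in\dom(\delta_0)$ there is an answer variable $X$ with $Y$ occurring in $\eta(X)$. Then $\delta_0(Y)$ is a subterm of $\theta(X)$, as in the argument of Lemma~\ref{lem:output-instantiation}, and is therefore an ordinary user term.
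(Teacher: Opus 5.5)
Your proposal is correct and follows the paper's own proof. The paper proves the first assertion by following the syntax-tree path to one occurrence of $X$, which is equivalent to your structural induction using injectivity of constructors. It then applies that assertion to $\theta$ and $\eta\delta$, noting that restricting $\delta$ to $\FV(\Phi_{\mathfrak G}\eta)$ does not change its action on any $\eta(X)$. Your subterm argument for the range condition, which uses that $\theta$ is a legal user answer substitution, spells out a step the paper only asserts follows ``from the definition of the restriction.''
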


\begin{proof}
Fix $X\in\FV(\Phi_{\mathfrak G})$ and choose one free occurrence of $X$ in the goal. Follow the fixed path in the syntax tree from the root of that goal atom to the occurrence. Simultaneous substitution preserves all constructors on this path and replaces its leaf by $\sigma(X)$ or $\sigma'(X)$. Equality of the substituted goal lists therefore gives $\sigma(X)=\sigma'(X)$.

For the consequence, apply the first assertion to $\theta$ and the ordinary composition $\eta\delta$. Every variable on which $\delta$ can affect $\eta(X)$ occurs in $\Phi_{\mathfrak G}\eta$. Restricting $\delta$ to those remaining free variables therefore leaves its action on every $\eta(X)$ unchanged. The domain and range conditions for legality follow from the definition of the restriction.
\end{proof}

The two lemmas establish the closure and reflection steps for substitutions needed to translate Nguyen's soundness and completeness theorems stated at the level of goals into an equality between the two instance closures.

\begin{theorem}[\MProlog{} answer characterization]\label{thm:mprolog-answer-characterization}
For every $(P,\mathfrak G)$ in the common fragment,
$\Inst(\Comp^{\mathrm N}(P,\mathfrak G)) =\Corr^{\mathrm N}(P,\mathfrak G).$
\end{theorem}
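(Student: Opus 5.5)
The plan is to prove the two inclusions separately, using Nguyen's soundness and completeness theorems for computed answers (Theorems~5.16 and~5.23 of~\cite{nguyen2006multimodal}, applicable to the concrete $\KSys$ schema by his Theorem~7.1) together with Lemmas~\ref{lem:nguyen-instance-closure} and~\ref{lem:goal-substitution-reflection}.

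For the forward inclusion $\Inst(\Comp^{\mathrm N}(P,\mathfrak G))\subseteq\Corr^{\mathrm N}(P,\mathfrak G)$, take a computed answer $\eta$ and a legal $\gamma$. First restrict $\eta$ to the goal variables. Nguyen's Theorem~5.16 gives correctness of $\eta$ in the sense of his Definition~3.9. Some care is needed here, since his soundness is stated for the instantiated goal $\mathfrak G\eta$ rather than for a restricted substitution. The restriction $\eta\upharpoonright\FV(\Phi_{\mathfrak G})$ yields the same goal instance, so it is the same correct answer in the sense of Definition~3.9. Then Lemma~\ref{lem:nguyen-instance-closure} shows that $(\eta\gamma)\upharpoonright\FV(\Phi_{\mathfrak G})$ is correct.

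For the reverse inclusion, take $\theta\in\Corr^{\mathrm N}(P,\mathfrak G)$. Nguyen's completeness theorem~5.23 supplies a computed answer $\eta$ and a substitution $\delta$ with $\mathfrak G\theta=\mathfrak G\eta\delta$. This goal-level form is the one I expect it to have. Lemma~\ref{lem:goal-substitution-reflection} converts this equality of instantiated goals into $\theta=\eta\delta_0$ on the original answer variables, with $\delta_0=\delta\upharpoonright\FV(\Phi_{\mathfrak G}\eta)$. The same lemma says that $\delta_0$ is a legal instance-closure substitution. Hence $\theta=(\eta\delta_0)\upharpoonright\FV(\Phi_{\mathfrak G})\in\Inst(\Comp^{\mathrm N}(P,\mathfrak G))$.

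The main obstacle is matching Nguyen's formulations to ours. His completeness may be phrased with fresh rigid constants from the reserve, or with $\delta$ possibly having range outside ordinary user terms. I would first check that, for $\theta$ with ordinary user-term range, the witnessing $\delta$ can be taken with ordinary user-term range on the variables of $\Phi_{\mathfrak G}\eta$. This holds because $\eta(X)\delta$ equals $\theta(X)$, so every subterm $\delta(Y)$ for $Y$ occurring in $\eta(X)$ is a subterm of an ordinary user term. This is the same subterm argument as in Lemma~\ref{lem:output-instantiation}. Variables not occurring in any $\eta(X)$ are discarded by the restriction defining $\delta_0$.
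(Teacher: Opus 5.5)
Your proposal is correct and follows the paper's proof essentially step for step. The forward inclusion uses Nguyen's Theorem~5.16 (via Theorem~7.1) together with Lemma~\ref{lem:nguyen-instance-closure}, and the reverse inclusion uses Nguyen's Theorem~5.23 together with Lemma~\ref{lem:goal-substitution-reflection}; your extra checks (restricting $\eta$ to the goal variables, and the subterm argument showing that $\delta_0$ has ordinary user-term range) are sound refinements of points the paper absorbs into those two lemmas.
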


\begin{proof}
For the forward inclusion, let $\theta=(\eta\gamma)\upharpoonright\FV(\Phi_{\mathfrak G})$ be an instance of an \MProlog{} computed answer. Nguyen's Theorem~5.16, applied to the concrete schema verified by Theorem~7.1, shows that $\eta$ is correct~\cite[Theorems~5.16 and~7.1]{nguyen2006multimodal}. Lemma~\ref{lem:nguyen-instance-closure} then shows that $\theta$ is correct.

For the reverse inclusion, let $\theta\in\Corr^{\mathrm N}(P,\mathfrak G)$. By Nguyen's Theorem~5.23, applied to the concrete schema verified by Theorem~7.1, there exist a computed answer $\eta$ and an ordinary substitution $\delta$ with $\mathfrak G\theta=\mathfrak G\eta\delta$~\cite[Theorems~5.23 and~7.1]{nguyen2006multimodal}. Lemma~\ref{lem:goal-substitution-reflection} yields a restriction $\delta_0$ satisfying those conditions, with $\theta=\eta\delta_0$ on the original goal variables. Thus $\theta$ belongs to the instance closure.
\end{proof}

\subsection{Answer conservativity}\label{sec:answer-conservativity}

\begin{theorem}[$\KSys$ answer conservativity]\label{thm:ksys-answer-conservativity}
For every $(P,\mathfrak G)$ in the common fragment,
\[
\begin{aligned}
&\Inst(\Comp_{\Kmod}^{\mathrm F}(P,\Phi_{\mathfrak G}))\\
={}&\Inst(\Comp_{\Kmod}^{\mathrm M}(P,\Phi_{\mathfrak G}))\\
={}&\Corr^{\mathrm H}_{\Kmod}(P,\Phi_{\mathfrak G})\\
={}&\Corr^{\mathrm N}(P,\mathfrak G)\\
={}&\Inst(\Comp^{\mathrm N}(P,\mathfrak G)).
\end{aligned}
\]
Both \MMLP{} answer procedures use the same certificate kernel instantiated with $\Kmod$. The focused procedure adds only the scheduling discipline of Section~\ref{sec:focusing}.
\end{theorem}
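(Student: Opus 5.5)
The plan is to obtain the chain of equalities by linking results already established for the two \MMLP{} procedures with the Nguyen-side characterization. No new proof-theoretic construction should be needed. The only step requiring care is checking that the \MMLP{} query $\Phi_{\mathfrak G}$ and its designated answer variables fit the earlier hypotheses.

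First I fix the answer-variable set. Set $A_{\Phi_{\mathfrak G}}=\FV(\Phi_{\mathfrak G})$, as in Section~\ref{sec:mprolog-common-fragment}. With this choice, the domain condition in Definition~\ref{def:ordinary-output-substitution} is vacuous. The instance closures on the \MMLP{} side (Definition~\ref{def:computed-instance-closure}) and on the Nguyen side (Section~\ref{sec:mprolog-computed-answers}) then range over the same class of output substitutions, namely capture-avoiding substitutions with ordinary user terms in their range, followed by restriction to the original goal variables. I also note that for the empty goal $\Phi_{\mathfrak G}=\neg\bot$ is a legitimate source formula, so every earlier theorem applies verbatim with $G=\Phi_{\mathfrak G}$ and $\Amod=\Kmod$.

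Next come the equalities themselves. The first equality, between the focused and unfocused instance closures, and the second, between the unfocused closure and $\Corr^{\mathrm H}_{\Kmod}(P,\Phi_{\mathfrak G})$, are both contained in Theorem~\ref{thm:focused-answer-characterization}, instantiated at $\Amod=\Kmod$. That theorem in turn rests on Corollary~\ref{cor:computed-answer-characterization}. The third equality, with $\Corr^{\mathrm N}(P,\mathfrak G)$, is Theorem~\ref{thm:correct-answer-equivalence}. The fourth, with $\Inst(\Comp^{\mathrm N}(P,\mathfrak G))$, is Theorem~\ref{thm:mprolog-answer-characterization}. Composing the four equalities gives the displayed chain.

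For the final sentences of the statement, two observations suffice. Both $\Comp^{\mathrm M}_{\Kmod}$ and $\Comp^{\mathrm F}_{\Kmod}$ are defined from symbolic derivations over $\mathcal N_{\Kmod}$, with certificates supplied by $S_{\Kmod}$ and $\mathsf{CertSearch}_{\Kmod}$. Focused computations differ from unfocused ones only by the annotations that Lemma~\ref{lem:focused-symbolic-erasure} erases.

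The main obstacle I expect is bookkeeping about identity of the answer sets, since the two sides are defined independently. On the Nguyen side, answers are substitutions on goal variables up to Nguyen's convention. On the \MMLP{} side, they are user answer substitutions with identity bindings omitted. I would confirm that the restriction to $\FV(\Phi_{\mathfrak G})$, together with Lemma~\ref{lem:goal-substitution-reflection}, makes the two notions coincide extensionally. Any renaming of fresh output variables is absorbed into the instance closure, because renamings are themselves ordinary output substitutions.
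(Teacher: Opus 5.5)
Your proposal is correct and follows the paper's proof exactly: Theorem~\ref{thm:focused-answer-characterization} at $\Amod=\Kmod$ gives the first three expressions, Theorem~\ref{thm:correct-answer-equivalence} links $\Corr^{\mathrm H}_{\Kmod}$ to $\Corr^{\mathrm N}$, and Theorem~\ref{thm:mprolog-answer-characterization} gives the last equality. The extra bookkeeping you plan on answer variables and Lemma~\ref{lem:goal-substitution-reflection} is harmless but unnecessary, since those cited theorems already handle it.
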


\begin{proof}
Theorem~\ref{thm:focused-answer-characterization} gives equality of the first three expressions. Theorem~\ref{thm:correct-answer-equivalence} gives the equality with Nguyen's correct answers. Theorem~\ref{thm:mprolog-answer-characterization} gives the final equality.
\end{proof}

The following definition states the extension claim.

\begin{definition}[Language extension with answer conservativity]\label{def:language-extension-conservativity}
Suppose a logic programming system $S_2$ translates every program and query pair of a system $S_1$ on a common fragment. We say that $S_2$ is a \emph{language extension of $S_1$ with answer conservativity} when two conditions hold.
\begin{enumerate}
\item The combined source language for programs and queries in $S_2$ strictly contains the translated common fragment.
\item For every program and query pair in the common fragment, the two systems have the same instance closure of computed answers on the user answer variables.
\end{enumerate}
\end{definition}

\begin{corollary}[Language extension of $\KSys$-\MProlog]\label{cor:language-extension}
\MMLP{} is a language extension of Nguyen's pure logical $\KSys$-\MProlog{} calculus with answer conservativity on the common fragment fixed in Section~\ref{sec:mprolog-common-fragment}.
\end{corollary}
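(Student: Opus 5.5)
The corollary asks for the two clauses of Definition~\ref{def:language-extension-conservativity}, with $S_1$ Nguyen's pure logical $\KSys$-\MProlog{} and $S_2$ \MMLP{} over $\Kmod$. The translation is the one fixed in Section~\ref{sec:mprolog-common-fragment}. Program clauses are read as the same closed source formulas, a goal $\mathfrak G$ is sent to $\Phi_{\mathfrak G}$, and every free variable of $\Phi_{\mathfrak G}$ is designated as an answer variable. I will check clause 2 first, because it is already contained in Theorem~\ref{thm:ksys-answer-conservativity}. The equality of the second and fifth lines there, $\Inst(\Comp_{\Kmod}^{\mathrm M}(P,\Phi_{\mathfrak G}))=\Inst(\Comp^{\mathrm N}(P,\mathfrak G))$, is exactly the required equality of instance closures on the user answer variables.

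The point to verify is that the two instance-closure notions range over the same class of substitutions. On the \MMLP{} side, $A_G=\FV(\Phi_{\mathfrak G})$, so the domain restriction in Definition~\ref{def:ordinary-output-substitution} is vacuous. The closure is then taken over ordinary substitutions whose range consists of ordinary user terms, followed by restriction to $\FV(\Phi_{\mathfrak G})$. This matches the Nguyen-side closure defined before Lemma~\ref{lem:nguyen-instance-closure}.

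For clause 1, I will give an explicit witness of strict containment. By Definition~\ref{def:source-formulas}, every source formula is admissible as a program formula or query. Take as query a formula outside Nguyen's goal grammar of \cite[Definition~3.4]{nguyen2006multimodal}, for example $p(X)\lor q(X)$, or a program formula with implication inside a box, such as $\Box_1(p\to q)\lor r$. Such a formula is not in the image of the translation: translated goals are conjunctions of atoms (or $\neg\bot$), and translated program clauses have clause shape. Neither image contains an outermost disjunction of atoms.

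The only delicate step is stating precisely which syntactic shapes Nguyen's Definition~3.4 excludes. I would settle this with the simplest witness, a disjunctive query $p(X)\lor q(X)$. Goals in SLD-style \MProlog{} are lists of atoms, whose translation is a conjunction, so an outer disjunction can never arise from the translation. Combining the two clauses then gives the corollary.
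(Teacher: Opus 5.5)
Your proposal is correct and follows the paper's own proof. Clause~2 is read off from Theorem~\ref{thm:ksys-answer-conservativity}. Clause~1 is witnessed by an outermost disjunction such as $p\lor q$, which is a legal \MMLP{} query but cannot arise from translating a goal list, since goal lists become conjunctions of admissible goal atoms or $\neg\bot$. Your extra check that the two instance-closure notions agree is harmless, but it is already built into the statement of that theorem.
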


\begin{proof}
The instance closure equality is Theorem~\ref{thm:ksys-answer-conservativity}. Strict language containment follows from Definition~\ref{def:source-formulas}. A nonempty common \MProlog{} goal is a list translated to a conjunction of admissible goal atoms, with the empty goal translated as $\neg\bot$, while \MMLP{} permits arbitrary source formulas as queries. For example, an outermost disjunction $p\lor q$ is a legal \MMLP{} query and lies outside that common \MProlog{} goal syntax.
\end{proof}

\section{Conclusion}\label{sec:conclusion}

\MMLP{} gives a first-order multimodal logic programming system in which both programs and queries may be arbitrary formulas of the source language. The Hilbert system $H_{\Amod}$ specifies the declarative semantics independently. The calculus $\mathcal N_{\Amod}$ is a certificate proof kernel, and scoped unification is used for symbolic answer computation. The main metatheory proves cut-free completeness and scoped lifting, and it characterizes declaratively correct answers as the instance closure of computed answers.

The focused calculus $\mathcal F_{\Amod}$ preserves ordinary derivability by a syntactic focalization theorem and yields the same instance closure of computed answers. On the common fragment with Nguyen's pure logical $\KSys$-\MProlog{}, that closure also agrees with the instance closure of the computed answers produced by \MProlog{}. Thus the larger program and query language of \MMLP{} preserves the established answer semantics on the comparison fragment while extending the formulas available to users.

\section*{Code availability}\label{sec:code-availability}

The source code for the implementation of \MMLP{} described in this paper is publicly available at \url{https://github.com/kenjitokuo/mmlp}. The version used for this paper is release \texttt{v0.1.0}, archived on Zenodo at \url{https://doi.org/10.5281/zenodo.21917365}.

\appendix

\section{Normalization results}\label{app:normalization}

The core NNF language and the recursive translations are defined in Definitions~\ref{def:core-nnf} and~\ref{def:nnf-translations}. This appendix contains the normalization and substitution proofs used by the main calculus. Their explicit forms matter because answer projection and proof abstraction use finite simultaneous substitutions rather than informal replacement of one variable.

\subsection{Semantic adequacy}\label{app:nnf-adequacy}

We begin by separating two tasks that were used together in the main text: closure of the translations inside the core grammar and semantic equivalence with the source formulas. The first is purely syntactic, while the second requires the simultaneous semantic induction.

\begin{proposition}[Core well-formedness]\label{prop:nnf-wellformed}
For every source formula $A$, both $\NNF^+(A)$ and $\NNF^-(A)$ are core formulas.
\end{proposition}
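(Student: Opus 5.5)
We argue by structural induction on the source formula $A$, proving both claims together. The induction hypothesis for a formula is that both $\NNF^+$ and $\NNF^-$ of it are core formulas. The two translations are defined by simultaneous recursion in Definition~\ref{def:nnf-translations}, and some clauses cross between them: $\NNF^+(\neg A)=\NNF^-(A)$, and $\NNF^-(A\to B)$ uses $\NNF^+(A)$. A separate induction for each polarity would therefore fail, while the simultaneous hypothesis covers every recursive call.

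\emph{Base cases.} For an atom, $\NNF^+(p(\bar t))=p(\bar t)$ and $\NNF^-(p(\bar t))=\overline p(\bar t)$. Both are generated by the first two clauses of the core grammar in Definition~\ref{def:core-nnf}. For $\bot$, the images $\bot$ and $\top$ are core constants.

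\emph{Induction step.} We inspect each row of the translation table and check two things:
\begin{enumerate}
\item the right-hand side applies a single core constructor, namely $\land$, $\lor$, $\forall x$, $\exists x$, $\Box_i$, or $\Diamond_i$;
\item that constructor is applied to images of immediate subformulas under $\NNF^+$ or $\NNF^-$, which are core by the induction hypothesis.
\end{enumerate}
For negation there is no outer constructor: $\NNF^{\pm}(\neg A)$ is literally $\NNF^{\mp}(A)$, which is core by the hypothesis for $A$. Implication needs both polarities of its antecedent. The positive clause uses $\NNF^-(A)\lor\NNF^+(B)$ and the negative clause uses $\NNF^+(A)\land\NNF^-(B)$.

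Quantifier clauses keep the bound variable and use $\forall$ or $\exists$, which the core grammar allows over any variable. Modal clauses use $\Box_i$ or $\Diamond_i$ with the same index $i\in\Idx$. The terms inside atoms are never altered, so atomic arguments remain ordinary terms.

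The only real point of care is choosing the simultaneous induction over both polarities, so that the crossing clauses for $\neg$ and $\to$ are covered. Once that is fixed, every case is immediate.
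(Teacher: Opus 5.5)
Your proof is correct and follows the paper's argument: a simultaneous structural induction on $A$ covering both translations, with atoms and $\bot$ as base cases and negation handled by exchanging the two hypotheses. The binary, quantifier, and modal clauses are then closed by applying the corresponding core constructor.
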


\begin{proof}
Use simultaneous structural induction on $A$.

Base case. Atomic formulas and $\bot$ follow from the first two clauses of Definition~\ref{def:nnf-translations}.

Induction step. Assume both translations of every immediate subformula are core formulas.

\begin{caselist}
\item Case negation. The two translations exchange the induction hypotheses.
\item Case binary connective. Each translation combines the corresponding induction hypotheses with $\land$ or $\lor$.
\item Case quantifier or modality. The induction hypothesis makes each translation of the immediate subformula a core formula. Applying the corresponding quantifier or modal constructor therefore yields a core formula.
\end{caselist}
\end{proof}

Well-formedness ensures that both translations stay inside the core language. Theorem~\ref{thm:normalization-adequacy} now follows by the same simultaneous induction, which proves the truth and falsity clauses in parallel.

\begin{proof}[Proof of Theorem~\ref{thm:normalization-adequacy}]
We prove the positive and negative semantic equivalences simultaneously by structural induction on $A$.

Base case. If $A=p(\bar t)$, the positive translation is the same atom and the negative translation is true precisely when that atom is false by the semantics of primitive negative literals. If $A=\bot$, the positive clause is immediate and the negative translation $\top$ is true precisely when $\bot$ is false.

Induction step. Assume the positive and negative equivalences for the immediate subformulas.

\begin{caselist}
\item Case $A=\neg B$. The positive translation is $\NNF^-(B)$, which is true precisely when $B$ is false by the negative induction hypothesis. This is the truth condition for $\neg B$. The negative translation is $\NNF^+(B)$, which is true precisely when $B$ is true, equivalently when $\neg B$ is false.
\item Case conjunction or disjunction. For conjunction, the positive translation is true precisely when both positive translations of the conjuncts are true, hence precisely when both conjuncts are true. The negative translation is a disjunction of the negative translations, so it is true precisely when at least one conjunct is false. The disjunction case is dual.
\item Case implication. The positive translation $\NNF^-(B)\lor\NNF^+(C)$ is true precisely when $B$ is false or $C$ is true, which is the classical truth condition for $B\to C$. The negative translation $\NNF^+(B)\land\NNF^-(C)$ is true precisely when $B$ is true and $C$ is false.
\item Case universal or existential quantifier. For $A=\forall xB$, the positive translation is true at $(\mathcal M,w,g)$ precisely when $\NNF^+(B)$ is true under every assignment $g[x\mapsto d]$, $d\in D$. By the induction hypothesis, this is equivalent to truth of $B$ under every such assignment. The negative translation is existential and is true precisely when $B$ is false for some $d$. The existential source case is dual.
\item Case box or diamond. For $A=\Box_iB$, the positive translation is true precisely when the positive translation of $B$ is true at every $R_i$ successor, and the induction hypothesis gives the ordinary box condition. The negative translation is $\Diamond_i\NNF^-(B)$ and is true precisely when some $R_i$ successor falsifies $B$, which is the condition for the box to be false. The diamond source case is dual.
\end{caselist}

For the assertion about free variables, use a second simultaneous structural induction.

Base case. The atomic translations leave the terms unchanged and therefore preserve their free variables. The formula $\bot$ has no free variables.

Induction step.

\begin{caselist}
\item Case Boolean or modal constructor. The induction hypotheses preserve the free variables of the immediate subformulas. Boolean constructors take the union of these sets, while modal constructors leave the set unchanged.
\item Case quantifier. Both translations bind the same variable as the source formula, so the induction hypothesis for the matrix gives the result.
\end{caselist}

For finite simultaneous substitution, prove $\NNF^\pm(A\theta)\equiv_\alpha \NNF^\pm(A)\theta$ by simultaneous structural induction on $A$.

Base case. The atomic case follows from simultaneous term substitution, and $\bot$ is immediate.

Induction step.

\begin{caselist}
\item Case Boolean or modal constructor. Apply the induction hypotheses componentwise.
\item Case quantifier. Alpha convert the bound variable so that it lies outside the finite domain of $\theta$ and does not occur freely in any term in the range of $\theta$. Capture-avoiding substitution then commutes with the quantifier, and the induction hypothesis applies to the matrix.
\end{caselist}
\end{proof}

\subsection{Hilbert normalization}\label{app:hilbert-nnf}

We now prove the Hilbert normalization theorem stated in Section~\ref{sec:hilbert-specification}. The clauses for negation, quantifiers, and modalities involve both translations, so the proof below uses simultaneous induction.

\begin{proof}[Proof of Theorem~\ref{thm:hilbert-nnf}]
We establish two statements by structural induction on $A$. The positive statement is $H_{\Amod}\vdash A\leftrightarrow\NNF^+(A)$, and the negative statement is $H_{\Amod}\vdash\neg A\leftrightarrow\NNF^-(A)$.

Base case. Atomic formulas use the definitional Hilbert interpretation of $\overline p(\bar t)$ as $\neg p(\bar t)$. Falsity uses $\neg\bot\leftrightarrow\top$.

Induction step. Assume both equivalences for the immediate subformulas.

\begin{caselist}
\item Case negation. Use the two induction hypotheses and classical double negation.
\item Case conjunction or disjunction. Use propositional replacement together with the two De Morgan laws.
\item Case implication. Use the classical equivalences $A\to B\leftrightarrow(\neg A\lor B)$ and $\neg(A\to B)\leftrightarrow(A\land\neg B)$.
\item Case universal or existential quantifier. Apply the induction equivalences under the quantifier and use the classical quantifier dualities $\neg\forall xA\leftrightarrow\exists x\neg A$ and $\neg\exists xA\leftrightarrow\forall x\neg A$. These equivalences are derivable from the first-order Hilbert base fixed in Definition~\ref{def:hilbert-base}.
\item Case box or diamond. For a box, lift the positive induction equivalence through $\Box_i$ by necessitation and $\mathrm{K}_i$. For the negative side use $\neg\Box_iA\leftrightarrow\Diamond_i\neg A$ and lift the negative induction equivalence through $\Diamond_i$, using the definitional duality and normality of $\Box_i$. The diamond case is symmetric.
\end{caselist}
\end{proof}

\subsection{Substitution lemmas}\label{app:substitution}

Fix a finite set $Q\subseteq\mathsf{Par}$ and use the temporary rigid signature $\Sigma\cup Q$ from Convention~\ref{conv:parameter-expansion}. The following substitution lemmas therefore apply both to source terms and formulas and to the parameterized proof terms and proof core formulas used inside certificate derivations.

\begin{lemma}[Term substitution]\label{lem:term-substitution}
Let $r$ and $t$ be terms over $\Sigma\cup Q$, with $t$ free for $x$ in $r$, and fix a model with rigid interpretations for $\Sigma\cup Q$ and an assignment $g$. Put $g'=g[x\mapsto\sem{t}_g]$. Then
$\sem{r(t/x)}_{g}=\sem{r}_{g'}$.
The same statement holds for a finite simultaneous capture-avoiding term substitution.
\end{lemma}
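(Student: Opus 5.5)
We prove the single-variable statement by structural induction on the term $r$, and then treat the simultaneous version by the same induction with the assignment update replaced by a finite simultaneous update.

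In the base cases, $r$ is a variable, a constant of $\Sigma$, or a proof parameter in $Q$. If $r=x$, then $r(t/x)=t$, and $\sem{x}_{g'}=g'(x)=\sem{t}_g$ by the definition of $g'$. If $r=y$ with $y\neq x$, the substitution leaves $y$ unchanged. Then $\sem{y}_g=g(y)=g'(y)$, because $g'$ differs from $g$ only at $x$. If $r$ is a constant or a member of $Q$, the substitution fixes it. Its denotation is given by the rigid interpretation and does not depend on the assignment, so both sides agree. Rigidity matters here only because the term denotation $\sem{\cdot}_g$ carries no world argument, so the lemma holds uniformly at every world.

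In the induction step, $r=f(r_1,\ldots,r_n)$. By definition of substitution, $r(t/x)=f(r_1(t/x),\ldots,r_n(t/x))$. The denotation of $f$ is the rigid function $\mathcal J(f)$, applied to the denotations of the arguments. The induction hypotheses give $\sem{r_k(t/x)}_g=\sem{r_k}_{g'}$ for each argument, so applying $\mathcal J(f)$ to both tuples yields the claim. The hypothesis that $t$ is free for $x$ in $r$ is vacuous for terms, since terms contain no binders; it is stated only for uniformity with the formula lemma that follows.

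For a finite simultaneous substitution $[t_1/x_1,\ldots,t_k/x_k]$, we put $g'=g[x_1\mapsto\sem{t_1}_g,\ldots,x_k\mapsto\sem{t_k}_g]$. Every $\sem{t_j}$ is evaluated under the original $g$, and this is exactly where simultaneity, as opposed to sequential substitution, enters. The same induction applies: the variable case splits on whether the variable is some $x_j$, and the other cases are unchanged.

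There is no real obstacle. The only points needing care are the constant and parameter case, which uses rigidity, and the choice to evaluate all $t_j$ under $g$ in the simultaneous version.
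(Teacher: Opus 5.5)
Your proof is correct and takes essentially the same route as the paper. Both argue by structural induction on $r$, covering the variable cases, rigid constants and parameters, and function application. Both handle the simultaneous version by the same induction, using a simultaneous assignment update in which every replacement term is evaluated under the original assignment $g$.
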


\begin{proof}
For the statement concerning one variable, use structural induction on $r$.

Base case. Variables are the defining cases of assignment update. Rigid constants, including the temporary proof parameters in $Q$, are unchanged.

Induction step. Suppose $r=f(r_1,\ldots,r_n)$. Apply the induction hypotheses to the arguments and then the fixed interpretation of $f$.

For the finite simultaneous version, use structural induction on $r$ with the simultaneous assignment that maps every substituted variable to the denotation of its replacement term under the original assignment.

Base case. Variables satisfy the claim by the definition of the simultaneous assignment, and rigid constants satisfy it because their interpretations are unchanged.

Induction step. Suppose $r=f(r_1,\ldots,r_n)$. Apply the induction hypotheses to the arguments and then the interpretation of $f$.
\end{proof}

The term lemma proves the atomic case for substitution in formulas. The remaining constructors follow from the semantic clauses.

\begin{lemma}[Formula substitution]\label{lem:formula-substitution}
Let $A$ be a source formula or proof core formula over the temporary signature $\Sigma\cup Q$, and let the term $t$ over that signature be free for $x$ in $A$. For every model $\mathcal M$ over that temporary signature with rigid interpretations of the members of $Q$, every world $w$, and every assignment $g$,
$\mathcal M,w,g\models A(t/x)\Longleftrightarrow \mathcal M,w,g[x\mapsto\sem{t}_g]\models A$.
The analogous equivalence holds for every finite simultaneous capture-avoiding substitution.
\end{lemma}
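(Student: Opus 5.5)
We prove the one-variable statement by structural induction on $A$, simultaneously for all worlds $w$ and all assignments $g$, and then handle the simultaneous version in the same way. The atomic case is the only place where terms enter. For $A=p(r_1,\ldots,r_n)$ we have $A(t/x)=p(r_1(t/x),\ldots,r_n(t/x))$. Lemma~\ref{lem:term-substitution} gives $\sem{r_k(t/x)}_g=\sem{r_k}_{g'}$ with $g'=g[x\mapsto\sem{t}_g]$. Since the predicate interpretation at $w$ is the same relation on $D^n$, the two atoms have the same truth value. The negative literal $\overline p(\bar r)$ follows by complementing this case via Definition~\ref{def:core-semantics}. The constants $\bot$ and $\top$ are unaffected by substitution and have world-independent truth values.

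The propositional constructors $\neg,\land,\lor,\to$ follow directly from the induction hypotheses at the same $w$ and $g$. For $\Box_i B$ and $\Diamond_i B$ we apply the induction hypothesis at every $R_i$ successor $v$ with the same assignment $g$. The key point is that $\sem{t}_g$ is independent of the world, because constants, function symbols and the members of $Q$ are rigid and the domain is constant. Hence the updated assignment $g[x\mapsto\sem{t}_g]$ is the same at $w$ and at $v$, and the successor-wise equivalences combine into the modal clause.

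The quantifier case $A=\forall yB$ (or $\exists yB$) is the main obstacle. If $y=x$, then $x$ is not free in $A$, so $A(t/x)=A$. Moreover $g$ and $g[x\mapsto\sem{t}_g]$ agree on $\FV(A)$, and the standard coincidence lemma gives the result; this lemma is proved by a routine induction, which I would include. If $y\neq x$ and $x$ does not occur free in $B$, the same argument applies. Otherwise $y\neq x$, and because $t$ is free for $x$ in $A$, the variable $y$ does not occur in $t$. Then $A(t/x)=\forall y\,B(t/x)$. For each $d\in D$, we have $\sem{t}_{g[y\mapsto d]}=\sem{t}_g$, again by coincidence, and $g[y\mapsto d][x\mapsto\sem{t}_g]=g[x\mapsto\sem{t}_g][y\mapsto d]$ since $x\neq y$. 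The induction hypothesis at $g[y\mapsto d]$ then yields the equivalence for each $d$, and quantifying over the constant domain $D$ gives the clause. When the substitution is only capture-avoiding up to renaming, we first alpha-convert the bound variable. This step needs the routine fact that alpha-equivalent formulas have the same truth value, which is itself a coincidence-style induction.

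For the finite simultaneous substitution $[t_1/x_1,\ldots,t_k/x_k]$, we repeat the induction with the simultaneous update $g[x_1\mapsto\sem{t_1}_g,\ldots,x_k\mapsto\sem{t_k}_g]$. The atomic case uses the simultaneous part of Lemma~\ref{lem:term-substitution}. In the quantifier case we alpha-convert the bound variable outside the domain of the substitution and outside all $\FV(t_j)$, so that the substitution and the update commute with $[y\mapsto d]$ exactly as in the one-variable argument.
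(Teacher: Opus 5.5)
Your proof is correct and follows the paper's own argument: structural induction on $A$, atoms via the term substitution lemma, Boolean and modal cases directly from the induction hypothesis (modalities leave the assignment unchanged), the quantifier case handled by separating the bound variable from $x$ and from the free variables of $t$, and the simultaneous version handled in the same way with the simultaneous assignment update. Your explicit case split in the quantifier case, together with the coincidence and alpha-invariance facts, spells out what the paper compresses into a single alpha-conversion step.
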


\begin{proof}
Use structural induction on $A$ for the statement concerning a single variable.

Base case. Positive and negative atoms follow from Lemma~\ref{lem:term-substitution}. Truth constants are immediate.

Induction step. Assume the equivalence for the immediate subformulas.

\begin{caselist}
\item Case Boolean constructor. Apply the induction hypotheses and the corresponding Boolean semantic clause.
\item Case quantifier. Alpha convert the bound variable away from $x$ and the free variables of $t$. The quantifier then ranges over the same constant domain on both sides, and the induction hypothesis applies to every updated assignment.
\item Case modality. Modal operators do not alter assignments, so the induction hypothesis applies at every accessible world.
\end{caselist}

For the finite simultaneous statement, use the same structural induction with the corresponding simultaneous assignment update.

Base case. Atomic formulas follow from the simultaneous term substitution statement, and truth constants are immediate.

Induction step.

\begin{caselist}
\item Case Boolean or modal constructor. Apply the induction hypotheses componentwise.
\item Case quantifier. Alpha convert the bound variable so that it lies outside the finite substitution domain and does not occur freely in any term in the substitution range, then apply the induction hypothesis to every updated assignment.
\end{caselist}
\end{proof}

\subsection{Root normalization}\label{app:root-normalization}

The final normalization fact relates the source implication used for declarative correctness to the one-sided root used by certificate and symbolic execution. It also fixes the convention for an empty program at the level of the proof calculus.

\begin{lemma}[Root normalization]\label{lem:root-normalization}
For a nonempty program $P=\{P_1,\ldots,P_n\}$,
$\NNF^+(\widehat P\to G)$
is, up to association of disjunction,
$\NNF^-(P_1)\lor\cdots\lor\NNF^-(P_n)\lor\NNF^+(G).$
Its one-sided root presentation is therefore
$\NNF^-(P_1),\ldots,\NNF^-(P_n),\NNF^+(G).$
For the empty program convention $\widehat P=\neg\bot$, the root is $\bot,\NNF^+(G)$.
\end{lemma}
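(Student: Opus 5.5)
This is a direct unfolding of Definition~\ref{def:nnf-translations}, using only that $\widehat P$ is built from $\land$ and that the translations act homomorphically on binary connectives. We treat the nonempty and empty cases separately.

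For the nonempty case, the implication clause gives
\[
\NNF^+(\widehat P\to G)=\NNF^-(\widehat P)\lor\NNF^+(G).
\]
We then compute $\NNF^-(\widehat P)$ by induction on $n$, where $\widehat P=P_1\land\cdots\land P_n$ is taken with some fixed association. For $n=1$ there is nothing to do. For $n>1$, write $\widehat P=B\land C$, where $B$ and $C$ are conjunctions of complementary nonempty blocks of the $P_k$. The conjunction clause for $\NNF^-$ gives $\NNF^-(B)\lor\NNF^-(C)$, and the induction hypothesis applied to each block turns this into a disjunction of the $\NNF^-(P_k)$ in the original order. It follows that, up to reassociation, $\NNF^+(\widehat P\to G)$ is $\NNF^-(P_1)\lor\cdots\lor\NNF^-(P_n)\lor\NNF^+(G)$.

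For the one-sided root presentation, a single root formula $D_1\lor\cdots\lor D_{n+1}$ is related to the multiset $D_1,\ldots,D_{n+1}$ by backward applications of the $(\lor)$ rule of $\mathcal N_{\Amod}$, one per disjunction node. In the other direction, the formula interpretation $\sem{\cdot}$ of Definition~\ref{def:nested-sequent} sends the multiset back to the same disjunction, up to association. So the two presentations have the same interpretation, and the multiset is derivable exactly when the disjunction is. Going from the disjunction to the multiset is immediate, since we only add $(\lor)$ inferences below. Going from the multiset to the disjunction is the invertibility of $(\lor)$, which is clause~1 of Theorem~\ref{thm:async-invertibility}, applied repeatedly. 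Alternatively, it follows from soundness and completeness, since the interpretations coincide.

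In the empty case $\widehat P=\neg\bot$. Here $\NNF^-(\neg\bot)=\NNF^+(\bot)=\bot$, so the positive translation is $\bot\lor\NNF^+(G)$ and its one-sided root is $\bot,\NNF^+(G)$.

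The only real subtlety is bookkeeping for association. The translation follows whatever bracketing $\widehat P$ has, while the root is a flat multiset, which is why the statement is phrased ``up to association''. Nothing deeper is involved.
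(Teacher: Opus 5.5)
Your argument is correct and follows the paper's proof. You use the implication clause, apply the negative conjunction clause repeatedly along the association tree of $\widehat P$, and compute $\NNF^-(\neg\bot)=\NNF^+(\bot)=\bot$ for the empty program.

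Your extra argument that the flat root and the single disjunction are interderivable is sound and introduces no circularity. One direction adds $(\lor)$ inferences; the other uses invertibility, or soundness plus completeness. The paper does not need this and simply notes that formulas in a component are interpreted disjunctively.
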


\begin{proof}
The positive NNF clause for implication gives $\NNF^+(\widehat P\to G)=\NNF^-(\widehat P)\lor\NNF^+(G).$ Repeated application of the negative conjunction clause gives the disjunction of the $\NNF^-(P_i)$. In the one-sided sequent semantics, formulas in a component are combined disjunctively. For the empty program, $\NNF^+((\neg\bot)\to G) =\NNF^-(\neg\bot)\lor\NNF^+(G) =\bot\lor\NNF^+(G).$ \end{proof}

\section{Hilbert certification of the certificate calculus}\label{app:hilbert-certification}

The completeness proof in the main text first establishes the relation between certificate derivability and Kripke semantics. The converse direction, from certificate derivability to Hilbert theoremhood, requires a separate syntactic argument. This appendix contains the propositional canonical theorem and the Hilbert lemmas for deep contexts used by that argument.

\subsection{Canonical completeness}\label{app:propositional-canonical}

Fix a countably infinite effectively enumerable reserve of propositional variables, disjoint from the first-order syntax, for the propositional abstractions in this appendix. Let $H_{\Amod}^{\mathrm{prop}}$ be the propositional fragment of $H_{\Amod}$ over this reserve. A propositional valuation assigns to each propositional variable a set of worlds. Propositional satisfaction uses the classical Boolean clauses and the modal clauses of Definition~\ref{def:modal-model}. A propositional formula is valid on a frame when it is true at every world under every propositional valuation. The notions of class validity and the semantics based on position maps extend to propositional formulas and to nested structures whose formula occurrences are propositional formulas. In particular, the proof of Lemma~\ref{lem:position-map-semantics} applies without change to such propositional nested structures.

\begin{theorem}[Propositional canonical completeness]\label{thm:prop-canonical-completeness}
If a propositional multimodal formula $F$ is valid on every frame in $\Frames(\Amod)$, then
$H_{\Amod}^{\mathrm{prop}}\vdash F.$
\end{theorem}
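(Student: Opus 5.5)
We will prove the statement by the standard Lindenbaum-style canonical model construction for normal multimodal logics, in contrapositive form. Assume $H_{\Amod}^{\mathrm{prop}}\nvdash F$. Let $W^c$ be the set of maximal $H_{\Amod}^{\mathrm{prop}}$-consistent sets of propositional multimodal formulas over the fixed reserve. For each $i\in\Idx$ put $\Gamma R^c_i\Delta$ exactly when $\{A\mid\Box_iA\in\Gamma\}\subseteq\Delta$. The canonical valuation sends each propositional variable $p$ to $\{\Gamma\mid p\in\Gamma\}$. Since $\{\neg F\}$ is consistent, Lindenbaum's lemma, proved by the usual enumeration using the countability of the formula language, yields some $\Gamma_0\in W^c$ with $\neg F\in\Gamma_0$.

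Next we will establish the truth lemma: $A\in\Gamma$ if and only if $A$ is true at $\Gamma$. The proof is by induction on $A$. The Boolean cases use maximality together with the fixed propositional basis H1--CL and EFQ. For the $\Box_i$ case we need the existence lemma: if $\Box_iA\notin\Gamma$, then $\{B\mid\Box_iB\in\Gamma\}\cup\{\neg A\}$ is consistent. This follows from $\mathrm{K}_i$ and theorem necessitation by the usual argument that $\Box_i$ distributes over finite conjunctions. $\Diamond_i$ is handled through modal duality. The truth lemma then gives that $F$ fails at $\Gamma_0$. Note also that $W^c$ is nonempty.

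It remains to show that $(W^c,(R^c_i))$ belongs to $\Frames(\Amod)$. Every selected scheme in Definition~\ref{def:modal-module} is a Sahlqvist formula whose first-order correspondent is exactly the clause in Definition~\ref{def:frame-class}, so we verify canonicity directly, one scheme at a time.

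\textbf{$\mathrm{D}_i$.} The scheme makes $\{B\mid\Box_iB\in\Gamma\}$ consistent: otherwise $\Box_i\bot\in\Gamma$, hence $\Diamond_i\bot\in\Gamma$, which is absurd. Lindenbaum's lemma then gives an $R^c_i$-successor.

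\textbf{$\mathrm{T}_i$.} From $\Box_iA\in\Gamma$ we get $A\in\Gamma$, so $\Gamma R^c_i\Gamma$.

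\textbf{$\mathrm I(i,j)$.} If $\Gamma R^c_j\Delta$ and $\Box_iA\in\Gamma$, then $\Box_jA\in\Gamma$, so $A\in\Delta$.

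\textbf{$\mathrm B(i,j)$.} Assume $\Gamma R^c_i\Delta$ and $\Box_jA\in\Delta$, and suppose $A\notin\Gamma$. Then $\neg A\in\Gamma$, so $\Box_i\Diamond_j\neg A\in\Gamma$. Hence $\Diamond_j\neg A\in\Delta$, contradicting $\Box_jA\in\Delta$ by duality.

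\textbf{$4(i,j,k)$.} If $\Gamma R^c_j\Delta R^c_k\Theta$ and $\Box_iA\in\Gamma$, then $\Box_j\Box_kA\in\Gamma$. This gives $\Box_kA\in\Delta$ and then $A\in\Theta$.

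\textbf{$5(i,j,k)$.} Assume $\Gamma R^c_j\Delta$, $\Gamma R^c_i\Theta$, and $\Box_kA\in\Delta$, and suppose $A\notin\Theta$. Then $\Diamond_i\neg A\in\Gamma$, because otherwise $\Box_iA\in\Gamma$ would put $A$ in $\Theta$. The scheme then gives $\Box_j\Diamond_k\neg A\in\Gamma$, hence $\Diamond_k\neg A\in\Delta$, a contradiction.

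The main obstacle is the $\mathrm{B}$ and $5$ cases. They require the contraposed, duality-based reasoning above, together with the propositional lemma $H_{\Amod}^{\mathrm{prop}}\vdash\Diamond_iA\leftrightarrow\neg\Box_i\neg A$ and replacement of provable equivalents under $\Box_i$. We will take care to derive these from the specific axioms of Definition~\ref{def:hilbert-base}, using necessitation and $\mathrm{K}_i$ for replacement.

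Finally, the selected frame conditions hold on the canonical frame while $F$ fails at $\Gamma_0$. Hence $F$ is not valid on $\Frames(\Amod)$, which is the required contrapositive.
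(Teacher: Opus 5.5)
Your proposal is correct and follows essentially the same route as the paper: a Lindenbaum extension of $\{\neg F\}$, the canonical relation given by $\{A\mid\Box_iA\in w\}\subseteq v$, an existence lemma from $\mathrm{K}_i$ and necessitation feeding the truth lemma, and scheme-by-scheme verification of the $\mathrm{D}$, $\mathrm{T}$, $\mathrm{I}$, $\mathrm{B}$, $4$, $5$ frame conditions by the same contrapositive duality arguments. The only cosmetic difference is that you state the existence lemma for $\Box_iA\notin\Gamma$ and reduce $\Diamond_i$ via duality, whereas the paper states it for $\Diamond_iA\in w$ and treats the diamond case of the truth lemma directly.
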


\begin{proof}
Call a set $\Gamma$ \emph{consistent} when no finite subset $\Gamma_0\subseteq\Gamma$ satisfies $H_{\Amod}^{\mathrm{prop}}\vdash(\bigwedge\Gamma_0)\to\bot.$ If $F$ is not derivable, $\{\neg F\}$ is consistent by classical propositional reasoning.

Enumerate the propositional multimodal formulas as $E_0,E_1,\ldots$. Starting from any consistent $\Gamma_0$, extend recursively by adding $E_n$ when consistency is preserved and by adding $\neg E_n$ otherwise. At least one extension is consistent. If both were inconsistent, finite inconsistency witnesses for the two alternatives and propositional reasoning by cases would make $\Gamma_n$ inconsistent. The union is consistent because every finite subset occurs at some finite stage. It is maximal because it contains exactly one of $E_n$ and $\neg E_n$ for every $n$.

Let $W$ be the set of maximal consistent sets. We first verify the propositional closure properties that will be used below. If $H_{\Amod}^{\mathrm{prop}}\vdash A$ and $A\notin w$, then the construction of $w$ gives $\neg A\in w$. The finite set $\{\neg A\}$ is then inconsistent because $A$ is a theorem, contradicting consistency of $w$. Hence every theorem belongs to every $w\in W$. If $A\in w$ and $A\to B\in w$ but $B\notin w$, then $\neg B\in w$, and the finite set $\{A,A\to B,\neg B\}$ is inconsistent by propositional reasoning. Thus each $w$ is closed under modus ponens. The construction places at least one of $A$ and $\neg A$ in $w$, while consistency forbids both, so exactly one belongs to $w$.

For conjunction, if $A\land B\in w$ and $A\notin w$, then $\neg A\in w$, making $\{A\land B,\neg A\}$ inconsistent; the same argument gives $B\in w$. Conversely, if $A,B\in w$ and $A\land B\notin w$, then $\neg(A\land B)\in w$, and $\{A,B,\neg(A\land B)\}$ is inconsistent. Hence $A\land B\in w$ exactly when both $A$ and $B$ belong to $w$. For disjunction, if $A\lor B\in w$ while neither disjunct belongs to $w$, then $\neg A,\neg B\in w$, and $\{A\lor B,\neg A,\neg B\}$ is inconsistent. Conversely, if $A\in w$ and $A\lor B\notin w$, then $\neg(A\lor B)\in w$, and $\{A,\neg(A\lor B)\}$ is inconsistent; the case $B\in w$ is symmetric. Thus $A\lor B\in w$ exactly when at least one disjunct belongs to $w$.

For implication, suppose $A\to B\in w$. If $A\in w$, closure under modus ponens gives $B\in w$. Hence $A\notin w$ or $B\in w$. Conversely, suppose $A\notin w$ or $B\in w$. If $A\notin w$, then $\neg A\in w$. Classical propositional reasoning gives the theorem $\neg A\to(A\to B)$, so modus ponens yields $A\to B\in w$. If $B\in w$, the instance $B\to(A\to B)$ of $\mathrm{H1}$ and modus ponens again yield $A\to B\in w$. Thus $A\to B\in w$ exactly when $A\notin w$ or $B\in w$. No maximal consistent set contains $\bot$, since $\{\bot\}$ is inconsistent.

Define $\Delta_i(w)=\{A:\Box_iA\in w\}$. Put $wR_iv$ exactly when $\Delta_i(w)\subseteq v$, and let a propositional variable $p$ be true at $w$ precisely when $p\in w$. Let $\mathcal M^c$ denote this frame equipped with the displayed propositional valuation, and write $\mathcal M^c,w\models A$ for the propositional satisfaction relation just specified.

We first prove the modal existence property. Suppose $\Diamond_iA\in w$. If $\Delta_i(w)\cup\{A\}$ were inconsistent, choose a finite inconsistent subset. If it contains no member of $\Delta_i(w)$, propositional reasoning gives $H_{\Amod}^{\mathrm{prop}}\vdash\neg A$. Otherwise, let $B_1,\ldots,B_n$, with $n\geq1$, be its members from $\Delta_i(w)$. Whether or not the subset contains $A$, propositional reasoning gives $H_{\Amod}^{\mathrm{prop}}\vdash (B_1\land\cdots\land B_n)\to\neg A$. In the first case, necessitation gives $\Box_i\neg A$, while modal duality and $\Diamond_iA\in w$ give $\neg\Box_i\neg A\in w$, contradicting consistency.

In the second case, we use only normal modal reasoning to combine the boxed assumptions. From the propositional theorem $C\to(D\to C\land D)$, necessitation and two applications of $\mathrm{K}_i$ give $H_{\Amod}^{\mathrm{prop}}\vdash (\Box_iC\land\Box_iD)\to\Box_i(C\land D)$. Iterating this theorem yields
$H_{\Amod}^{\mathrm{prop}}\vdash (\Box_iB_1\land\cdots\land\Box_iB_n)\to\Box_i(B_1\land\cdots\land B_n).$
Each $B_j$ belongs to $\Delta_i(w)$, so each $\Box_iB_j$ belongs to $w$. The conjunction property and modus ponens therefore give $\Box_i(B_1\land\cdots\land B_n)\in w$. Necessitation and $\mathrm{K}_i$ applied to $(B_1\land\cdots\land B_n)\to\neg A$ give the theorem $\Box_i(B_1\land\cdots\land B_n)\to\Box_i\neg A$, so $\Box_i\neg A\in w$. Modal duality and $\Diamond_iA\in w$ give $\neg\Box_i\neg A\in w$, again contradicting consistency. Thus $\Delta_i(w)\cup\{A\}$ is consistent and extends to some maximal consistent $v$. Then $wR_iv$ and $A\in v$.

We next prove the truth lemma $\mathcal M^c,w\models A\Longleftrightarrow A\in w$ by structural induction on $A$.

Base case. For propositional variables, the assertion is the valuation definition. For $\bot$, it follows from the fact that no maximal consistent set contains $\bot$.

Induction step. Assume the truth lemma for the immediate subformulas.

\begin{caselist}
\item Case Boolean constructor. The result follows from the corresponding property of maximal consistent sets and the induction hypotheses.
\item Case $A=\Box_iB$. If $\Box_iB\in w$, every $R_i$ successor contains $B$, so the induction hypothesis makes $B$ true there. If $\Box_iB\notin w$, then $\neg\Box_iB\in w$, hence $\Diamond_i\neg B\in w$. The modal existence property gives an $R_i$ successor containing $\neg B$, where $B$ is false by the induction hypothesis.
\item Case $A=\Diamond_iB$. If $\Diamond_iB\in w$, the modal existence property gives some $v$ with $wR_iv$ and $B\in v$. By the induction hypothesis, $\mathcal M^c,v\models B$, so $\mathcal M^c,w\models\Diamond_iB$. Conversely, suppose $\mathcal M^c,w\models\Diamond_iB$. Choose $v$ with $wR_iv$ and $\mathcal M^c,v\models B$. The induction hypothesis gives $B\in v$. If $\Diamond_iB\notin w$, then $\neg\Diamond_iB\in w$. Modal duality gives $\Box_i\neg B\in w$, and $wR_iv$ therefore gives $\neg B\in v$, contradicting consistency of $v$. Hence $\Diamond_iB\in w$.
\end{caselist}

It remains to verify every selected frame condition in the canonical relation family.

If $\mathrm{T}_i\in\Amod$ and $\Box_iA\in w$, axiom $\mathrm{T}_i$ gives $A\in w$. Hence $\Delta_i(w)\subseteq w$ and $wR_iw$.

If $\mathrm I(i,j)\in\Amod$ and $wR_jv$, take $\Box_iA\in w$. The selected axiom gives $\Box_jA\in w$, and $wR_jv$ gives $A\in v$. Therefore $wR_iv$, so $R_j\subseteq R_i$.

If $\mathrm B(i,j)\in\Amod$ and $wR_iv$, take $\Box_jA\in v$. If $A\notin w$, maximality gives $\neg A\in w$. The selected $\mathrm B(i,j)$ instance gives $\Box_i\Diamond_j\neg A\in w$, hence $\Diamond_j\neg A\in v$. This contradicts $\Box_jA\in v$ by duality. Thus $A\in w$ and $vR_jw$.

If $4(i,j,k)\in\Amod$, $wR_jv$, and $vR_ku$, take $\Box_iA\in w$. The selected axiom gives $\Box_j\Box_kA\in w$, hence $\Box_kA\in v$, and then $A\in u$. Therefore $wR_iu$.

If $5(i,j,k)\in\Amod$, $wR_iu$, and $wR_jv$, take $\Box_kA\in v$ and suppose $A\notin u$. Then $\neg A\in u$. If $\Diamond_i\neg A\notin w$, duality and maximality give $\Box_iA\in w$, contradicting $wR_iu$. Hence $\Diamond_i\neg A\in w$. The selected axiom gives $\Box_j\Diamond_k\neg A\in w$, so $\Diamond_k\neg A\in v$, contradicting $\Box_kA\in v$. Thus $A\in u$ and $vR_ku$.

Suppose $\mathrm{D}_i\in\Amod$. We prove that $\Delta_i(w)$ is consistent. Otherwise finitely many $A_1,\ldots,A_n\in\Delta_i(w)$ would make $(A_1\land\cdots\land A_n)\to\bot$ a theorem. By the boxed conjunction argument above, $\Box_iA_1,\ldots,\Box_iA_n\in w$ give $\Box_i(A_1\land\cdots\land A_n)\in w$. Necessitation and $\mathrm{K}_i$ applied to $(A_1\land\cdots\land A_n)\to\bot$ then give $\Box_i\bot\in w$. The selected seriality axiom gives $\Diamond_i\bot\in w$. Since $\neg\bot$ is a theorem, necessitation gives the theorem $\Box_i\neg\bot$, which belongs to $w$. Modal duality then gives $\neg\Diamond_i\bot\in w$, a contradiction. Extend $\Delta_i(w)$ to a maximal consistent $v$. Then $wR_iv$, proving seriality.

The canonical frame therefore belongs to $\Frames(\Amod)$. If $F$ were valid on every selected frame while underivable, extend $\{\neg F\}$ to a maximal consistent world. The truth lemma would falsify $F$ there in the canonical model, a contradiction.
\end{proof}

\subsection{Certificate propagation}\label{app:certificate-hilbert-proof}

This subsection contains the detailed Hilbert certification of certificate propagation deferred from the main text. The proof abstracts first-order formulas propositionally while preserving the correspondence between formula occurrences that persist from the conclusion to the premise.

\begin{proof}[Detailed proof of Lemma~\ref{lem:certificate-hilbert}]
Fix a finite $Q\subseteq\mathsf{Par}$ containing all proof parameters in the rule instance, and work throughout in $H_{\Amod}[Q]$. Fix an instance of $(\Diamond_i^{\mathrm{cert}})$ with premise $\mathcal G\{\Diamond_iA\}_{u}\{A,\Delta\}_{v}$ and conclusion $\mathcal G\{\Diamond_iA\}_{u}\{\Delta\}_{v}$. The source and target may coincide.

Introduce one propositional variable $p$ for the distinguished matrix $A$. Replace the retained source occurrence $\Diamond_iA$ by $\Diamond_ip$ and the inserted target occurrence $A$ by the same $p$. For every other formula occurrence that persists from the conclusion to the premise, use one fresh propositional variable for the corresponding occurrences on both sides. Use distinct variables for distinct persistent occurrences. Keep all nested brackets and modal indices unchanged. Let $F^{p}$ and $F^{c}$ be the recursive interpretations of the resulting propositional premise and conclusion.

We prove $\Frames(\Amod)\models F^{p}\to F^{c}.$ Take a model based on a selected frame and suppose $F^{c}$ is false at a world $w$. Semantics of position maps, Lemma~\ref{lem:position-map-semantics}, gives a position map $\iota$ rooted at $w$ that respects every edge and makes every conclusion occurrence false at its assigned component. The attached certificate and certificate semantic soundness give $\iota(u)R_i\iota(v).$ The retained occurrence $\Diamond_ip$ is false at $\iota(u)$, so every $R_i$ successor of $\iota(u)$ falsifies $p$. In particular, $p$ is false at $\iota(v)$. Every formula already present at the target was false there, so adding the new $p$ occurrence leaves all premise occurrences false under the same position map. The certificate rule changes no edge, so the map remains edge respecting. Semantics of position maps then makes $F^{p}$ false at $w$. This completes the contraposition argument.

By Theorem~\ref{thm:prop-canonical-completeness}, $H_{\Amod}^{\mathrm{prop}}\vdash F^{p}\to F^{c}.$ Uniformly substitute the original first-order formulas for the abstract propositional variables. Every Hilbert scheme and rule is closed under uniform formula substitution in the temporary signature, so $H_{\Amod}[Q]\vdash \sem{\text{premise}}\to\sem{\text{conclusion}}.$ Hence $H_{\Amod}[Q]$ theoremhood of the premise interpretation entails $H_{\Amod}[Q]$ theoremhood of the conclusion interpretation.
\end{proof}

\subsection{Hilbert lemmas for deep contexts}\label{app:remaining-hilbert-proof}

Let $Q\subseteq\mathsf{Par}$ be an arbitrary finite set, let $\mathcal G\{\}$ be a finite one-hole nested context over the temporary signature $\Sigma\cup Q$, and let
$\Phi_{\mathcal G}(A)=\sem{\mathcal G\{A\}}$.
The fixed formulas and brackets of the context are treated as parameters.

\begin{lemma}[Hilbert principles for deep contexts]\label{lem:deep-hilbert-principles}
The following statements hold in $H_{\Amod}[Q]$.
\begin{enumerate}
\item If $H_{\Amod}[Q]\vdash A\to B$, then
$H_{\Amod}[Q]\vdash\Phi_{\mathcal G}(A)\to\Phi_{\mathcal G}(B).$
\item If $H_{\Amod}[Q]\vdash A$, then $H_{\Amod}[Q]\vdash\Phi_{\mathcal G}(A)$.
\item
$H_{\Amod}[Q]\vdash (\Phi_{\mathcal G}(A)\land\Phi_{\mathcal G}(B))\to\Phi_{\mathcal G}(A\land B).$
\item If $y$ is absent from the fixed part of $\mathcal G\{\}$, then
$H_{\Amod}[Q]\vdash \forall y\,\Phi_{\mathcal G}(B(y))\to\Phi_{\mathcal G}(\forall yB(y)).$
\item If $a\in\mathsf{Par}\setminus Q$ is fresh for the fixed context and for $B$ except through $B(a)$, and $H_{\Amod}[Q\cup\{a\}]$ proves $\Phi_{\mathcal G}(B(a))$, then $H_{\Amod}[Q]$ proves $\Phi_{\mathcal G}(\forall yB(y))$ for a fresh object variable $y$.
\end{enumerate}
\end{lemma}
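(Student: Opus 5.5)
I would prove all five clauses by induction on the depth of the hole in $\mathcal G\{\}$, that is, on the number of brackets enclosing the distinguished position. At depth zero, $\Phi_{\mathcal G}(A)$ is $\Gamma_0\lor A$, where $\Gamma_0$ is the disjunction of the fixed root material (formulas and boxed children), which may be $\bot$. At positive depth the recursive interpretation gives $\Phi_{\mathcal G}(A)=\Gamma_0\lor\Box_i\Phi_{\mathcal G'}(A)$, where $\mathcal G'\{\}$ is the context inside the child containing the hole and $\Gamma_0$ collects the remaining root material. Every clause therefore reduces to one propositional step for the outer disjunction and one modal step for the box.

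Clause 1 at depth zero is propositional monotonicity of disjunction. In the induction step, the hypothesis gives $\Phi_{\mathcal G'}(A)\to\Phi_{\mathcal G'}(B)$. Necessitation and $\mathrm K_i$ lift this to $\Box_i\Phi_{\mathcal G'}(A)\to\Box_i\Phi_{\mathcal G'}(B)$, and disjunction monotonicity then finishes the step. Clause 2 follows from clause 1 and from $A\to\Phi_{\mathcal G}(A)$, which itself follows from clause 1 applied to a theorem. Concretely, at depth zero $A$ yields $\Gamma_0\lor A$ by D2. In the induction step, the inductive theorem $\Phi_{\mathcal G'}(A)$ gives $\Box_i\Phi_{\mathcal G'}(A)$ by necessitation, and D2 then gives the disjunction.

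For clause 3, depth zero is distribution: $(\Gamma_0\lor A)\land(\Gamma_0\lor B)\to\Gamma_0\lor(A\land B)$. In the induction step I combine three facts. The first is the boxed conjunction theorem $\Box_iC\land\Box_iD\to\Box_i(C\land D)$, derived in the proof of Theorem~\ref{thm:prop-canonical-completeness}. The second is the inductive hypothesis lifted through $\Box_i$ by necessitation and $\mathrm K_i$. The third is distribution over the outer $\Gamma_0$.

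Clause 4 is the main obstacle, because the universal quantifier must be pushed past both the fixed disjuncts and the boxes. Depth zero uses the first-order law $\forall y(\Gamma_0\lor B(y))\to\Gamma_0\lor\forall yB(y)$, which is valid since $y\notin\FV(\Gamma_0)$. I would derive it from Q2 by classical rewriting: write the disjunction as $\neg\Gamma_0\to B(y)$, apply Q2, and rewrite back. In the induction step I start from $\forall y(\Gamma_0\lor\Box_i\Phi_{\mathcal G'}(B(y)))$ and first apply the depth-zero law to obtain $\Gamma_0\lor\forall y\Box_i\Phi_{\mathcal G'}(B(y))$. The Barcan formula $\mathrm{BF}_i$ then gives $\Gamma_0\lor\Box_i\forall y\Phi_{\mathcal G'}(B(y))$. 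Finally the induction hypothesis, lifted through $\Box_i$ by necessitation and $\mathrm K_i$, yields $\Gamma_0\lor\Box_i\Phi_{\mathcal G'}(\forall yB(y))$. The Barcan step is where constant domains are essential. The care required is in checking that $y$ is absent from every fixed part at every level; this is why the hypothesis asks for $y$ absent from the whole fixed context.

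For clause 5, I take a proof in $H_{\Amod}[Q\cup\{a\}]$ of $\Phi_{\mathcal G}(B(a))$. As in Lemma~\ref{lem:fresh-constant-conservativity}, I uniformly replace $a$ by a fresh object variable $y$ absent from the proof. Since $a$ occurs in the conclusion only through $B(a)$, this yields an $H_{\Amod}[Q]$ proof of $\Phi_{\mathcal G}(B(y))$, every axiom instance and rule application being preserved. Theorem generalization then gives $\forall y\,\Phi_{\mathcal G}(B(y))$. Because $y$ is fresh for the fixed context, clause 4 and modus ponens give $\Phi_{\mathcal G}(\forall yB(y))$.
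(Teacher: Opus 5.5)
Your proof is correct and follows the paper's argument essentially step for step. The paper also inducts on bracket depth with the decomposition $\Gamma_0\lor\Box_i\Phi_{\mathcal G'}(-)$, uses distribution plus the boxed conjunction theorem for clause 3, and handles clause 4 by root quantifier movement, then $\mathrm{BF}_i$, then the induction hypothesis lifted through $\Box_i$. For clause 5 it likewise replaces $a$ by a fresh variable, generalizes, and applies clause 4. Your derivation of the depth-zero quantifier law via Q2 applied to $\neg\Gamma_0\to B(y)$ differs only cosmetically from the paper's auxiliary formula $\neg R\land\forall y(R\lor F(y))$.

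There is one small slip. Your opening remark for clause 2 is false, since $A\to\Phi_{\mathcal G}(A)$ is not a theorem at positive depth (it would include $A\to\Box_iA$). This does not affect the result, because the concrete necessitation-plus-D2 argument you then give is exactly the paper's and suffices.
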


\begin{proof}
We prove clauses 1--4 simultaneously by induction on the number of bracket boundaries between the hole and the root.

Base case. At depth zero there is a fixed disjunctive remainder $R$ such that, modulo the fixed association convention, $\Phi_{\mathcal G}(A)=R\lor A$.

\begin{caselist}
\item Case deep monotonicity. Use the propositional theorem $(A\to B)\to((R\lor A)\to(R\lor B))$.
\item Case theorem lifting. Use disjunction introduction.
\item Case deep conjunction. Use $((R\lor A)\land(R\lor B))\to(R\lor(A\land B))$.
\item Case universal movement. First derive $\forall y(R\lor F(y))\to(R\lor\forall yF(y))$ when $y\notin\FV(R)$. Let $C_0=\neg R\land\forall y(R\lor F(y))$. Universal instantiation and propositional reasoning give $C_0\to F(y)$. Since $y\notin\FV(C_0)$, theorem generalization followed by the first-order distribution scheme gives $C_0\to\forall yF(y)$. Propositional rearrangement yields the required formula.
\end{caselist}

Induction step. Assume clauses 1--4 for a smaller context $\mathcal G'\{\}$. At positive depth there are a fixed $R$, an index $i$, and such a context with $\Phi_{\mathcal G}(A)=R\lor\Box_i\Phi_{\mathcal G'}(A)$.

\begin{caselist}
\item Case deep monotonicity. The induction hypothesis gives $\Phi_{\mathcal G'}(A)\to\Phi_{\mathcal G'}(B)$. Necessitation and $\mathrm{K}_i$ lift it through $\Box_i$, and propositional reasoning lifts the result through $R\lor(-)$.
\item Case theorem lifting. The induction hypothesis gives $\Phi_{\mathcal G'}(A)$ as a theorem, necessitation gives $\Box_i\Phi_{\mathcal G'}(A)$, and disjunction introduction adds $R$.
\item Case deep conjunction. Write $F_1=\Phi_{\mathcal G'}(A)$, $F_2=\Phi_{\mathcal G'}(B)$, and $F_3=\Phi_{\mathcal G'}(A\land B)$. The induction hypothesis gives $(F_1\land F_2)\to F_3$. Normal modal logic proves $(\Box_iF_1\land\Box_iF_2)\to\Box_i(F_1\land F_2)$. Necessitation and $\mathrm{K}_i$ lift the induction implication to $\Box_i(F_1\land F_2)\to\Box_iF_3$. Combine the two modal implications and lift the result through $R\lor(-)$ propositionally.
\item Case universal movement. Set $F_1(y)=\Phi_{\mathcal G'}(B(y))$ and $F_2=\Phi_{\mathcal G'}(\forall yB(y))$. The induction hypothesis gives $\forall yF_1(y)\to F_2$. Quantifier movement at the root gives $\forall y(R\lor\Box_iF_1(y))\to R\lor\forall y\Box_iF_1(y)$. The Barcan scheme of the constant-domain base gives $\forall y\Box_iF_1(y)\to\Box_i\forall yF_1(y)$. Necessitation and $\mathrm{K}_i$ lift the induction implication to $\Box_i\forall yF_1(y)\to\Box_iF_2$. Compose these implications and lift through the fixed disjunction.
\end{caselist}

For clause 5, assume a finite $H_{\Amod}[Q\cup\{a\}]$ proof of $\Phi_{\mathcal G}(B(a))$. Choose an object variable $y$ absent from that proof and from the context, alpha converting bound variables where necessary. Uniformly replace $a$ by $y$ throughout the finite proof. The transformation preserves axiom instances, modus ponens, and necessitation. After the preliminary alpha conversion, it also preserves theorem generalization. Hence $H_{\Amod}[Q]$ proves $\Phi_{\mathcal G}(B(y))$. Generalize to $\forall y\Phi_{\mathcal G}(B(y))$ and use clause 4.
\end{proof}

The preceding principles for deep contexts provide the syntactic tools needed for the remaining primitive kernel rules. We use them to prove Lemma~\ref{lem:remaining-rule-hilbert}.

\begin{proof}[Detailed proof of Lemma~\ref{lem:remaining-rule-hilbert}]
Fix a finite $Q\subseteq\mathsf{Par}$ containing every proof parameter in the rule instance, let $H=H_{\Amod}[Q]$, and consider a deep context $\mathcal G\{\}$ around the active occurrence.

For a truth initial, $H\vdash\top$, so theorem lifting from Lemma~\ref{lem:deep-hilbert-principles} gives $H\vdash\Phi_{\mathcal G}(\top)$. For atomic identity, classical logic proves $p(\bar t)\lor\neg p(\bar t)$, which is the Hilbert formula corresponding to the complementary core literals. Theorem lifting places this theorem in the surrounding context.

The disjunction premise $\mathcal G\{A,B\}$ and conclusion $\mathcal G\{A\lor B\}$ have propositionally equivalent recursive interpretations. For conjunction, theoremhood of both premises gives $\Phi_{\mathcal G}(A)\land\Phi_{\mathcal G}(B)$, and the deep conjunction clause gives $\Phi_{\mathcal G}(A\land B)$.

For the retaining existential rule, first-order existential introduction gives $A(t/x)\to\exists xA.$ Propositional reasoning gives $(\exists xA\lor A(t/x))\to\exists xA.$ Deep monotonicity lifts this implication through the context.

For the universal eigenparameter rule with premise $\mathcal G\{A(a/x)\}$ and conclusion $\mathcal G\{\forall xA\}$, put $Q_0=Q\setminus\{a\}$. Freshness of $a$ in the conclusion makes the premise theoremhood judgment an $H_{\Amod}[Q_0\cup\{a\}]$ judgment to which the clause for a fresh parameter of Lemma~\ref{lem:deep-hilbert-principles} applies. It yields the conclusion already in $H_{\Amod}[Q_0]$, and hence also in $H_{\Amod}[Q]$. Use alpha equivalence of the quantified matrix as needed.

For box, the premise child $[A]_i$ contributes the same formula $\Box_iA$ as the principal formula in the conclusion, so the recursive interpretations coincide.

For selected seriality, the premise adds an empty $i$ child whose local contribution is $\Box_i\bot$. The formula $\top$ denotes $\neg\bot$ in Hilbert judgments. Since $\top$ is a theorem, necessitation gives $\Box_i\neg\bot$. Modal duality and classical reasoning then give $\neg\Diamond_i\bot$. The selected instance $\mathrm D_i$ at $\bot$ gives $\Box_i\bot\to\Diamond_i\bot$. Classical reasoning yields $\Box_i\bot\to\bot$. Deep monotonicity lifts this implication through the surrounding context. No other primitive rule remains.
\end{proof}

\section{Structural soundness}\label{app:raw-structural}

The raw calculus is defined in Definition~\ref{def:raw-calculus}. This appendix proves the local validity preservation needed for the admissibility result in the main text. Theorem~\ref{thm:fanout-counterexample} gives a finite modal module for which the raw calculus is incomplete.

\subsection{Semantic soundness}\label{app:raw-semantics}

\begin{proof}[Detailed proof of Proposition~\ref{prop:raw-semantic-soundness}]
We argue contrapositively using counterinterpretations based on position maps.

For $[\mathrm{t}_i]$, map the active component of an invalid conclusion to $w$. Reflexivity gives $wR_iw$. Map the new $i$ child of the premise to the same world. All formulas remain false.

For $[\mathrm{d}_i]$, map the active component to $w$. Seriality gives $v$ with $wR_iv$. Map the fresh empty child to $v$.

For $[\mathrm I(i,j)]$, an invalid conclusion has a displayed $j$ edge $wR_jv$. The selected inclusion condition gives $wR_iv$, so the same component assignment respects the premise.

For $[\mathrm b(i,j)]$, let the parent map to $w$ and its displayed $i$ child to $v$. The selected symmetry condition gives $vR_jw$. Keep the $i$ child at $v$ and map the premise's new $j$ child to $w$. The formulas moved into that child remain false at the world where they were false in the conclusion.

For $[4(i,j,k)]$, let the parent map to $w$, the displayed $j$ child to $v$, and its $k$ child to $u$. The selected condition gives $wR_iu$. Map the premise's $i$ child to $u$ and its $j$ child to $v$. Every formula remains at its previous counterexample world.

For $[5(i,j,k)]$, let the parent map to $w$, the $i$ child to $u$, and the $j$ child to $v$. The selected condition gives $vR_ku$. Map the premise's nested $k$ child to $u$ and keep the $j$ child at $v$. Again every formula remains false at the same world.
\end{proof}


\begin{thebibliography}{99}

\bibitem{brunnler2009deep}
K. Br\"unnler,
``Deep Sequent Systems for Modal Logic,''
\emph{Archive for Mathematical Logic}, 48(6), 551--577, 2009.
\doi{10.1007/s00153-009-0137-3}.

\bibitem{brunnler2010nested}
K. Br\"unnler,
\emph{Nested Sequents},
Habilitationsschrift, Institut f\"ur Informatik und angewandte Mathematik, Universit\"at Bern, 2010, arXiv:1004.1845.
\doi{10.48550/arXiv.1004.1845}.

\bibitem{demri2005regular}
S. Demri and H. de Nivelle,
``Deciding Regular Grammar Logics with Converse Through First-Order Logic,''
\emph{Journal of Logic, Language and Information}, 14(3), 289--329, 2005.
\doi{10.1007/s10849-005-5788-9}.

\bibitem{tiu2012grammar}
A. Tiu, E. Ianovski, and R. Gor\'e,
``Grammar Logics in Nested Sequent Calculus: Proof Theory and Decision Procedures,''
in T. Bolander, T. Bra\"uner, S. Ghilardi, and L. Moss (eds.), \emph{Advances in Modal Logic 9},
pp. 516--537, College Publications, 2012.

\bibitem{lyon2022firstorder}
T. S. Lyon,
``Nested Sequents for First-Order Modal Logics via Reachability Rules,''
arXiv:2210.00789, 2022, revised 2023.
\doi{10.48550/arXiv.2210.00789}.

\bibitem{lyon2023quantified}
T. S. Lyon and E. Orlandelli,
``Nested Sequents for Quantified Modal Logics,''
in R. Ramanayake and J. Urban (eds.), \emph{Automated Reasoning with Analytic Tableaux and Related Methods},
Lecture Notes in Computer Science, 14278, pp. 449--467, Springer, Cham, 2023.
\doi{10.1007/978-3-031-43513-3_24}.

\bibitem{lyon2026horn}
T. S. Lyon and E. Orlandelli,
``Nested Sequents for Horn-Characterizable Quantified Modal Logics with Equality via Reachability Rules,''
arXiv:2604.18403, 2026.
\doi{10.48550/arXiv.2604.18403}.

\bibitem{farinas1986molog}
L. Fari\~nas del Cerro,
``MOLOG: A System That Extends PROLOG with Modal Logic,''
\emph{New Generation Computing}, 4(1), 35--50, 1986.
\doi{10.1007/BF03037381}.

\bibitem{balbiani1988declarative}
P. Balbiani, L. Fari\~nas del Cerro, and A. Herzig,
``Declarative Semantics for Modal Logic Programs,''
in \emph{Proceedings of the International Conference on Fifth Generation Computer Systems 1988},
pp. 507--514, ICOT, 1988.

\bibitem{debart1992multimodal}
F. Debart, P. Enjalbert, and M. Lescot,
``Multimodal Logic Programming Using Equational and Order-Sorted Logic,''
\emph{Theoretical Computer Science}, 105(1), 141--166, 1992.
\doi{10.1016/0304-3975(92)90290-V}.

\bibitem{nonnengart1994modalities}
A. Nonnengart,
``How to Use Modalities and Sorts in Prolog,''
in C. MacNish, D. Pearce, and L. M. Pereira (eds.), \emph{Logics in Artificial Intelligence},
Lecture Notes in Computer Science, 838, pp. 365--378, Springer, Berlin, 1994.
\doi{10.1007/BFb0021985}.

\bibitem{orgun1994overview}
M. A. Orgun and W. Ma,
``An Overview of Temporal and Modal Logic Programming,''
in D. M. Gabbay and H. J. Ohlbach (eds.), \emph{Temporal Logic},
Lecture Notes in Computer Science, 827, pp. 445--479, Springer, Berlin, 1994.
\doi{10.1007/BFb0014004}.

\bibitem{baldoni1996framework}
M. Baldoni, L. Giordano, and A. Martelli,
``A Framework for Modal Logic Programming,''
in M. Maher (ed.), \emph{Proceedings of the Joint International Conference and Symposium on Logic Programming},
pp. 52--66, MIT Press, 1996.

\bibitem{baldoni1998modal}
M. Baldoni, L. Giordano, and A. Martelli,
``A Modal Extension of Logic Programming: Modularity, Beliefs and Hypothetical Reasoning,''
\emph{Journal of Logic and Computation}, 8(5), 597--635, 1998.
\doi{10.1093/logcom/8.5.597}.

\bibitem{nguyen2003fixpoint}
L. A. Nguyen,
``A Fixpoint Semantics and an SLD-Resolution Calculus for Modal Logic Programs,''
\emph{Fundamenta Informaticae}, 55(1), 63--100, 2003.
\doi{10.3233/FUN-2003-55105}.

\bibitem{nguyen2006multimodal}
L. A. Nguyen,
``Multimodal Logic Programming,''
\emph{Theoretical Computer Science}, 360(1--3), 247--288, 2006.
\doi{10.1016/j.tcs.2006.03.026}.

\bibitem{nguyen2007databases}
L. A. Nguyen,
``Foundations of Modal Deductive Databases,''
\emph{Fundamenta Informaticae}, 79(1--2), 85--135, 2007.
\doi{10.3233/FUN-2007-791-206}.

\bibitem{nadathur1993harrop}
G. Nadathur,
``A Proof Procedure for the Logic of Hereditary Harrop Formulas,''
\emph{Journal of Automated Reasoning}, 11(1), 115--145, 1993.
\doi{10.1007/BF00881902}.

\bibitem{miller1991uniform}
D. Miller, G. Nadathur, F. Pfenning, and A. Scedrov,
``Uniform Proofs as a Foundation for Logic Programming,''
\emph{Annals of Pure and Applied Logic}, 51(1--2), 125--157, 1991.
\doi{10.1016/0168-0072(91)90068-W}.

\bibitem{harland1997goal}
J. Harland,
``On Goal-Directed Provability in Classical Logic,''
\emph{Computer Languages}, 23(2--4), 161--178, 1997.
\doi{10.1016/S0096-0551(97)00013-1}.

\bibitem{nadathur1998uniform}
G. Nadathur,
``Uniform Provability in Classical Logic,''
\emph{Journal of Logic and Computation}, 8(2), 209--229, 1998.
\doi{10.1093/logcom/8.2.209}.

\bibitem{gabbay2000goal}
D. M. Gabbay and N. Olivetti,
\emph{Goal-Directed Proof Theory},
Applied Logic Series, 21, Kluwer Academic Publishers, Dordrecht, 2000.
\doi{10.1007/978-94-017-1713-7}.

\bibitem{stone2005disjunction}
M. Stone,
``Disjunction and Modular Goal-Directed Proof Search,''
\emph{ACM Transactions on Computational Logic}, 6(3), 539--577, 2005.
\doi{10.1145/1071596.1071599}.

\bibitem{miller2022survey}
D. Miller,
``A Survey of the Proof-Theoretic Foundations of Logic Programming,''
\emph{Theory and Practice of Logic Programming}, 22(6), 859--904, 2022.
\doi{10.1017/S1471068421000533}.

\bibitem{miller1992unification}
D. Miller,
``Unification under a Mixed Prefix,''
\emph{Journal of Symbolic Computation}, 14(4), 321--358, 1992.
\doi{10.1016/0747-7171(92)90011-R}.

\bibitem{nadathur1995scoping}
G. Nadathur, B. Jayaraman, and K. Kwon,
``Scoping Constructs in Logic Programming: Implementation Problems and Their Solution,''
\emph{The Journal of Logic Programming}, 25(2), 119--161, 1995.
\doi{10.1016/0743-1066(95)00037-K}.

\bibitem{andreoli1992focusing}
J.-M. Andreoli,
``Logic Programming with Focusing Proofs in Linear Logic,''
\emph{Journal of Logic and Computation}, 2(3), 297--347, 1992.
\doi{10.1093/logcom/2.3.297}.

\bibitem{liang2009focusing}
C. Liang and D. Miller,
``Focusing and Polarization in Linear, Intuitionistic, and Classical Logics,''
\emph{Theoretical Computer Science}, 410(46), 4747--4768, 2009.
\doi{10.1016/j.tcs.2009.07.041}.

\bibitem{chaudhuri2008logical}
K. Chaudhuri, F. Pfenning, and G. Price,
``A Logical Characterization of Forward and Backward Chaining in the Inverse Method,''
\emph{Journal of Automated Reasoning}, 40(2--3), 133--177, 2008.
\doi{10.1007/s10817-007-9091-0}.

\bibitem{chaudhuri2016focusednested}
K. Chaudhuri, S. Marin, and L. Stra{\ss}burger,
``Focused and Synthetic Nested Sequents,''
in B. Jacobs and C. L\"oding (eds.), \emph{Foundations of Software Science and Computation Structures},
Lecture Notes in Computer Science, 9634, pp. 390--407, Springer, Berlin, 2016.
\doi{10.1007/978-3-662-49630-5_23}.

\bibitem{chaudhuri2016modularfocused}
K. Chaudhuri, S. Marin, and L. Stra{\ss}burger,
``Modular Focused Proof Systems for Intuitionistic Modal Logics,''
in D. Kesner and B. Pientka (eds.), \emph{1st International Conference on Formal Structures for Computation and Deduction},
Leibniz International Proceedings in Informatics, 52, pp. 16:1--16:18, Schloss Dagstuhl--Leibniz-Zentrum f\"ur Informatik, 2016.
\doi{10.4230/LIPIcs.FSCD.2016.16}.

\end{thebibliography}
\end{document}